\documentclass[12pt]{article}
\usepackage{myclass}

\newcommand*{\QEDD}{\null\nobreak\hfill\ensuremath{\square}}

\newcommand{\Id}{\mathrm{Id}}
\newcommand{\diam}{\mathrm{diam}}

\newcommand{\Homeo}{\mathrm{Homeo}}
\newcommand{\Diff}{\mathrm{Diff}}

\newcommand{\C}{\mathcal{C}}

\newcommand{\R}{\mathbb{R}}
\newcommand{\Z}{\mathbb{Z}}

\newcommand{\Y}{\mathbf{Y}}
\newcommand{\HH}{\mathcal{H}}
\newcommand{\hh}{\mathbf{H}}
\newcommand{\pt}{\prec_\theta}
\newcommand{\st}{\succ_\theta}
\newcommand{\simt}{\sim_\theta}
\newcommand{\dP}{d_{\mathcal{P}}}
\newcommand{\vel}{{\operatorname{vel}}}
\newcommand{\PY}{\mathcal{P}_K^\dagger(\mathbf{Y})}
\newcommand{\CY}{\mathcal{C}^\dagger(\mathbf{Y})}
\newcommand{\scl}{\mathrm{scl}}
\newcommand{\asdim}{\mathrm{asdim}}
\newcommand{\LL}{\mathcal{L}}
\newcommand{\inte}{\mathrm{int}}

\title{\bf Fine projection complex and subsurface homeomorphisms with positive stable commutator length}
\author{\textsc{Yongsheng JIA} \and \sc Yusen LONG}
\date{}

\begin{document}

\maketitle

\begin{abstract}
Drawing inspiration from \cite{bestvina2015constructing}, we construct a family of unbounded quasi-trees for a connected closed oriented surface \(S_g\) of genus \(g\geq 2\), upon which the group \(\Homeo_0(S_g)\) acts coboundedly by isometries. As an application, we show that some surface homeomorphisms preserving a non-sporadic essential subsurface or an essential subsurface homeomorphic to a once-bordered torus can have positive stable commutator length in \(\Homeo_0(S_g)\). Moreover, we provide a version of projection complex that does not require the finiteness condition.
\end{abstract}
\noindent{\it Keywords}: surface homeomorphisms, stable commutator length, quasi-morphisms, fine projection complex.

\vspace{0.25cm}
\noindent{\it 2020 Mathematics subject classification}: 20F65, 57S05, 57K20.

\tableofcontents

\section{Introduction}

\epigraph{\it It is our business to puncture gasbags and discover seeds of truth.}{Virginia Woolf,\\ \textit{Thoughts on Peace in an Air Raid (1940)}}

A central theme in geometric group theory is the study of isometric group actions on various non-positively curved spaces or with negative curvature characteristics. Such a methodology has found profound applications in geometric topology, particularly in the understanding of mapping class groups of surfaces. An important example is the curve graph, see for example \cite{masur1999geometry, masur2000geometry}.

Consider a connected closed oriented surface \(S\). In order to give a negative answer to a question asked by Burago, Ivanov and Polterovich (see \cite{BIP2008conjugation}) on whether the group \(\mathrm{Diff}_0(S)\) is \emph{uniformly perfect}, recently Bowden, Hensel and Webb introduced in \cite{bowden2022quasi} a Gromov hyperbolic graph, called the \emph{fine curve graph} (see \autoref{subsec: fine curve graph}) and denoted by \(\C^\dagger(S)\), analogous to the curve graph. Since then, interests grew significantly in actions of homeomorphism groups on this graph (see for example, \cite{leroux2024automorphisms, long2025automorphisms, booth2025automorphisms, choi2025metric, booth2026automorphisms, fanoni2026approximating}), as well as in the geometric and topological properties of this graph (see for example, \cite{fanoni2024curvegraphs, bowden2024boundary, long2025connected, dickmann2026homotopy}). Moreover, much effort has been made to understand the connection between the group actions on these spaces and the dynamics of homeomorphisms on the surface; also see, for example \cite{bowden2022rotationsets, guiheneuf2024parabolic, einabadi2024torusdiff, hensel2025rotation}. Similar strategies are also applied to study the groups of diffeomorphisms on non-orientable surfaces (see for example \cite{kimura2025quasimorphisms, boke2026nonorientablesurfacesstablyunbounded}) and the group of Hamiltonian diffeomorphisms on the 2-sphere \cite{jia2025balanced}. The underlying philosophy is to investigate groups of surface homeomorphisms and diffeomorphisms in a way analogous to the study of mapping class groups.

This paper concerns the group of surface homeomorphisms isotopic to the identity and its actions on several new spaces of non-positive curvature characteristics. 

\subsection{Actions on quasi-trees}
It is known in \cite{bestvina2015constructing} that mapping class groups admit isometric cobounded actions on quasi-trees of infinite diameter. We also remark that there is another more recent version of this construction that uses a sharper version of Behrstock inequality \cite{bestvina2019acylindrical}. In this paper, by a similar approach, we construct a Gromov hyperbolic space called \emph{fine projection complex}, which is also quasi-isometric to a simplicial tree. It turns out that the isometric action of \(\Homeo_0(S)\) on this space is cobounded:

\begin{theorem}\label{thm: cobdd action on quasi tree}
Let \(S\) be a compact connected closed oriented surface. Then the group \(\Homeo_0(S)\) admits cobounded isometric actions on unbounded quasi-trees.
\end{theorem}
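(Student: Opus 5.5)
The plan is to imitate the Bestvina--Bromberg--Fujiwara construction of \cite{bestvina2015constructing}, replacing curves in the curve graph by a $\Homeo_0(S)$-orbit of "fine" essential subsurfaces. Fix an essential subsurface $Y\subset S$ that is either non-sporadic or a once-bordered torus, and take the family $\Y=\Homeo_0(S)\cdot Y$ of its images. Each such image is a topological subsurface with non-smooth boundary. To each $Z\in\Y$ attach the fine curve graph $\C^\dagger(Z)$ of curves essential in $Z$, or a suitable analogue for the interior of $Z$. These are hyperbolic by \cite{bowden2022quasi}. For two overlapping elements $Z,W\in\Y$, define the projection $\pi_Z(W)$ as the set of fine curves in $Z$ obtained by surgery from $\partial W\cap Z$, in the manner of subsurface projection.

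The key steps come in order. First, I would restrict to a subcollection $\Y_0\subset\Y$ in which any two members overlap, i.e.\ their boundaries intersect essentially. Following \cite{bestvina2015constructing}, this can be done by colouring: pick an element $h\in\Homeo_0(S)$ (even a pseudo-Anosov mapping class realised by a homeomorphism) and take the orbit of $Y$ under a finite-index-type family of translates. For fine subsurfaces, however, one should instead take all translates $gY$ whose boundary meets $\partial Y$ essentially, up to a transitive subfamily. Second, I would verify the projection axioms:
\begin{itemize}
\item $\diam\,\pi_Z(W)$ is uniformly bounded;
\item the Behrstock inequality $\min\{d_Z(X,W),d_W(X,Z)\}\le\theta$ holds;
\item the finiteness axiom.
\end{itemize}
The first two follow from disjointness and surgery arguments as in the mapping class group case, using that fine curves can be approximated by smooth ones and bounded diameter is preserved. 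The finiteness axiom fails badly here, because there are uncountably many subsurfaces and arbitrarily small perturbations. This is exactly why the paper announces a projection complex "without the finiteness condition". So the third step is to build the complex $\PY$ directly: vertices are elements of $\Y_0$, and $Z,W$ are joined whenever every $X$ has $d_X(Z,W)\le K$, using the modified distances $d_X$ of \cite{bestvina2015constructing}. The work is to show $\PY$ is a quasi-tree via Manning's bottleneck criterion. Here I would rely on the standing guiding-path structure, the ordering $\pt$, and the coarse transitivity of $\simt$, all derived from Behrstock alone.

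Finally, the action and its properties. $\Homeo_0(S)$ acts by isometries because projections are natural. The action is cobounded because it is transitive on vertices, since $\Y_0$ is a single orbit. To show unboundedness, I would exhibit $Z,W$ with $d_X(Z,W)$ large for some $X$. Applying a homeomorphism supported in $X$ and acting loxodromically on $\C^\dagger(X)$ (for instance a pseudo-Anosov-like map supported in $X$) makes $d_X(Z,h^nW)$ grow, which forces distance in $\PY$ to grow. Alternatively I would use the distance formula lower bound.

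I expect the main obstacle to be establishing quasi-tree-ness without finiteness. In \cite{bestvina2015constructing}, finiteness makes the set of large-projection subsurfaces between two vertices finite, totally ordered, and lets one inductively connect vertices. Without it I would try to prove that, after the modification of $d_Z$ with constant $K$ large relative to $\theta$, the large-projection set $\Y_K(Z,W)$ is still finite, or at least well-ordered by $\pt$ with finite chains. To do this I would bound it by the distance in a fine curve graph of $S$ between $\partial Z$ and $\partial W$, using that each large projection contributes to that distance. If that fails, I would replace connectedness via finite chains with a direct argument that the graph is connected and satisfies the bottleneck property.
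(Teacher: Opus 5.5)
There is a genuine gap, and it sits exactly where the fine setting departs from the mapping class group case. Every $g\in\Homeo_0(S)$ is isotopic to the identity, so $\Homeo_0(S)\cdot Y$ lies inside the isotopy class of $Y$. No colouring trick is available either: $\Homeo_0(S)$ is simple, so it has no proper finite-index subgroup.

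Any $\Homeo_0(S)$-invariant family containing $Y$ also contains a slightly shrunken isotopic copy $Z\subset Y$, or a small perturbation of $Y$. For such a pair, $\partial Z$ does not meet $Y$ essentially, so there is no projection of $Z$ to $Y$ at all. Hence your $\Y_0$ cannot be both pairwise overlapping and a single orbit, and you need it to be an invariant single orbit for the action and for coboundedness.

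The missing idea is the paper's notion of \emph{velcrot} subsurfaces: $\C^\dagger(X)\cap\C^\dagger(Y)$ is unbounded in both, or equivalently $X$ and $Y$ contain a common subsurface isotopic to both (\autoref{prop: witness}). Velcrotness must play the role that equality plays in the Bestvina--Bromberg--Fujiwara axioms. Isotopic pairs are either velcrot or intersect essentially (\autoref{cor: homo not intersec = velcrot}). You then need four facts:
\begin{itemize}
\item projection distances to velcrot surfaces coarsely agree (\autoref{cor: velcrot same base});
\item velcrot surfaces project close together (\autoref{lem: velcrot small proj});
\item Behrstock holds for pairwise non-velcrot triples, and its proof itself replaces surfaces by velcrot ones via bigon surgery;
\item finiteness holds modulo velcrotness (\autoref{prop: finiteness}).
\end{itemize}
The hierarchy $\HH(X,Z)$ and the modified distances are then built with ``velcrot'' in place of ``equal''.

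Your fallback hope that $\Y_K(Z,W)$ is finite, or well-ordered with finite chains, cannot work. It is in general uncountable, since small perturbations of a member with projection well above $K$ remain members. Moreover, the order cannot distinguish its velcrot members, because their mutual projections are undefined. What is true, and what the guard and barrier arguments actually use, is that $\Y_K(Z,W)$ has finitely many velcrot classes and these classes are totally ordered.

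Your unboundedness argument also fails as stated. A homeomorphism $h$ supported in $X$ fixes the vertex $X$, so $\dP(Z,h^nW)\le\dP(Z,X)+\dP(X,W)$ stays bounded however large $d_X(Z,h^nW)$ becomes. A single large projection only forces distance at least $2$.

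What you need is a lower bound counting velcrot classes, $\dP(X,Z)\ge|\Y_{K'}(X,Z)|_\vel$, together with a way to produce many pairwise non-velcrot subsurfaces with large projection. This does not fall out of the quasi-tree argument: a geodesic in $\PY$ need not pass through a given $Y$ with huge projection, only through something velcrot to it (\autoref{lem: huge YK velcrot in geodesic}).

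The paper obtains the lower bound by puncturing:
\begin{itemize}
\item choose a finite $F\subset S$ putting the relevant boundaries in minimal position on $S\setminus F$;
\item compare $\PY$ with the classical Bestvina--Bromberg--Fujiwara complex built from the surviving curve graphs $\C^s(X\setminus F)$, where the usual axioms hold;
\item iterate homeomorphisms that are pseudo-Anosov relative to finitely many points to make arbitrarily many projections large.
\end{itemize}
Coboundedness, by contrast, is immediate once this is in place: taking $\Y$ to be finitely many isotopy classes gives finitely many orbits.
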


\begin{remark}
For any fine projection complex on which \(\Homeo_0(S)\) acts coboundedly by isometries, if this action can be extended to a \(\Homeo(S)\)-action (\emph{e.g.} starting from a \(\Homeo(S)\)-invariant collection of subsurfaces, see \autoref{subsec: 4.A}), then the \(\Homeo(S)\)-action is also cobounded. In contrast, there are fine projection complexes with cobounded \(\Homeo(S)\)-action but the \(\Homeo_0(S)\)-action is not cobounded.
\end{remark}

However, we remark that this space cannot be obtained by directly applying the results in \cite{bestvina2015constructing} for the following reasons. The first reason is that Behrstock's inequality (see \cite{behrstock2006asymptotic} or the \ref{P1} axiom in \cite{bestvina2015constructing}) fails in general for arbitrary collection of subsurfaces. 

Now, consider an essential subsurface \(X\subset S\). As in \cite{bestvina2015constructing}, we wish to build a hyperbolic graph of which the vertices contain the \(\Homeo_0(S)\)-orbit of \(X\), denoted by \(\mathscr{X}\). The finiteness condition \ref{P2} in \cite{bestvina2015constructing} never holds for any collection of subsurfaces containing \(\mathscr{X}\): given \(\theta>0\), if \(X,Y,Z\subset S\) are such that the projection distance (see \eqref{eq: proj dist curve} for definition)
\[d_X^\pi(Y,Z)>\theta,\]
then the image \(X'\) of \(X\) under any small perturbation, which is still in \(\mathscr{X}\), will also verify \(d_{X'}^\pi(Y,Z)>\theta\), and there are uncountably many of them.

In order to deal with these defects, we introduce the notion of \emph{velcrot subsurfaces}, see \autoref{subsec: velcrot subsurfaces}. This notion is inspired by \cite{choi2025metric, hensel2025rotation} where they established a \emph{metric weak proper discontinuity} for some surface homeomorphisms of their actions on the fine curve graph. Roughly speaking, an element \(g\in G\) acting on a metric space enjoys the classical weak proper discontinuity, \emph{abbrv. WPD}, if the joint coarse stabiliser of distant \(g\)-orbit is finite; see \cite{bestvina2002bounded} for the precise definition. However, for similar reasons as above, this finiteness condition never holds for the action of \(\Homeo_0(S)\) on the fine curve graph. Nevertheless, independently in \cite{choi2025metric, hensel2025rotation}, they proposed that such set can be covered by finitely many translates of an arbitrarily small piece around the identity, namely \emph{\(\varepsilon\)-coarse} elements (by adopting the terminology from \cite{choi2025metric}). The first examples of velcrot subsurfaces are a subsurface \(X\subset S\) and its images under \(\varepsilon\)-coarse elements with small \(\varepsilon>0\). Another quintessential example for velcrot subsurfaces is nested isotopic subsurfaces. By introducing velcrotness, we can also show a similar finiteness condition to \ref{P2} axiom in \cite{bestvina2015constructing}, see \autoref{subsec: finiteness}. However, it is worth remarking that velcrotness of subsurfaces is not an equivalent relation and collapsing all velcrot subsurfaces into a ``velcrot class'' will eventually yield the isotopy class of subsurfaces, which is a consequence of \autoref{cor: velcrot implies isotopic}.

Under the mild condition of velcrotness, we are able to prove a weaker version of Behrstock's inequality for the fine setting. The key point is that, upon changing a surface into another one velcrot to it, the projection distance does not change much, see \autoref{cor: velcrot same base}.
\begin{theorem}\label{thm: berhstock main}
There exists \(M>0\) such that the following holds. Let \(X_1,X_2,X_3\) be three essential subsurfaces of \(S\) pairwise intersecting each other essentially. Assume that each \(X_i\) is either non-sporadic or homeomorphic to a once-bordered torus. Suppose in addition that $X_i$ and $X_j$ ($i\neq j$) are either overlapping or isotopic. If \(d^\pi_{X_1}(X_2,X_3)>M\), then
\[d^\pi_{X_2}(X_1,X_3),d^\pi_{X_3}(X_1,X_2)< M.\]
\end{theorem}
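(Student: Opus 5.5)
The plan is to reduce to the classical Behrstock inequality for subsurface projections in the (coarse) curve graph, by passing from the fine setting to isotopy classes. The reduction uses the fact, stated in the introduction, that the projection distance changes only by a bounded amount when a subsurface is replaced by one velcrot to it (\autoref{cor: velcrot same base}).

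\textbf{Setting up the projections.} First I unpack \(d^\pi_{X_1}(X_2,X_3)\). It is the diameter, in the curve graph of \(X_1\), of the union of the projections of \(\partial X_2\) and \(\partial X_3\); since we are in the fine setting, these are projections of actual (not isotopy classes of) curves. Up to a uniformly bounded error, these fine projections agree with the classical projections \(\pi_{[X_1]}([\partial X_2])\) and \(\pi_{[X_1]}([\partial X_3])\) in the curve graph of the isotopy class \([X_1]\). The reason is that the surgery and arc-closing procedures commute with isotopy up to bounded diameter once the curves are put in minimal position. Making this comparison work is where I would use that each \(X_i\) is non-sporadic or a once-bordered torus: in those cases the curve graph (with the Farey-type modification for the torus) is connected and hyperbolic, and the projection is coarsely well-defined.

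\textbf{Case split.} Next I split according to the hypothesis that each pair is overlapping or isotopic. Suppose all three pairs overlap. Then the isotopy classes \([X_i]\) pairwise overlap, and Behrstock's inequality (\cite{behrstock2006asymptotic}, with Leininger's uniform constant \(10\)) gives the conclusion with \(M\) equal to \(10\) plus twice the comparison error. Suppose instead some pair is isotopic, say \([X_2]=[X_3]\). Then \(\partial X_2\) and \(\partial X_3\) project to sets of uniformly bounded diameter in any subsurface they both cut, so \(d^\pi_{X_1}(X_2,X_3)\) is bounded, contradicting the hypothesis once \(M\) is large. Now suppose \([X_1]\) is isotopic to \([X_2]\) (the case \([X_1]=[X_3]\) is symmetric). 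Then \(d^\pi_{X_2}(X_1,X_3)\) must be compared with a term where \(X_1\) is essentially \(X_2\) itself. Here I would invoke velcrotness: isotopic subsurfaces can be connected through a bounded chain of nested or velcrot ones, and \autoref{cor: velcrot same base} says that changing \(X_2\) to a surface velcrot to it moves the projection distance by a bounded amount. Once the surfaces are moved into a common position, the boundary of \(X_1\) projected to \(X_2\) has bounded diameter. Combined with the triangle inequality, this shows the required terms are less than \(M\).

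\textbf{Main obstacle.} I expect the hardest step to be the uniform comparison between fine projections and classical projections. Fine curves need not be in minimal position with \(\partial X\), and they may create many inessential bigons. The surgery must remove these without leaving the bounded neighbourhood in \(\C(X)\). I would first try to isotope the entire configuration (with the \(X_i\) fixed up to velcrot modifications) to minimal position using a small ambient homeomorphism. I would then argue that projection is invariant under this homeomorphism, which again reduces to \autoref{cor: velcrot same base}.
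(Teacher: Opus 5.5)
Your reduction to classical Behrstock fails at its first step. Fine projections do \emph{not} agree with the classical projections \(\pi_{[X_1]}([\partial X_2])\) up to bounded error: the forgetful map \(\C^\dagger(X_1)\to\C(X_1)\) is \(1\)-Lipschitz but collapses unbounded sets.

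Concretely, take \(h\in\Homeo_0(X_1;\partial X_1)\) acting loxodromically on \(\C^\dagger(X_1)\), as in the proof of \autoref{prop: ind loxo}. Since \(h\) is isotopic to the identity rel \(\partial X_1\), \(h^n(X_3)\) is isotopic to \(X_3\), so every classical quantity is unchanged. Yet \(\pi_{X_1}(\partial h^nX_3)=h^n\pi_{X_1}(\partial X_3)\) escapes every bounded set of \(\C^\dagger(X_1)\) while \(\pi_{X_1}(\partial X_2)\) stays put, so \(d^\pi_{X_1}(X_2,h^nX_3)\to\infty\). Hence both implications you need are false: fine large \(\Rightarrow\) classical large (for the hypothesis) and classical small \(\Rightarrow\) fine small (for the conclusion). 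Putting boundaries in minimal position on \(S\) cannot repair this, because the example survives the bounded-cost velcrot bigon surgeries.

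The same example breaks your isotopic cases. With \(X_3=h^n(X_2)\), the pair \(X_2,X_3\) is isotopic, both overlap \(X_1\), and \(d^\pi_{X_1}(X_2,X_3)\to\infty\). By \autoref{lem: velcrot small proj} they are non-velcrot for large \(n\), hence intersect essentially. So this is a genuine instance of the theorem, not a vacuous case. Likewise, isotopic surfaces are not joined by velcrot chains of bounded length. By \autoref{lem: velcrot small proj} and the triangle inequality, each velcrot step moves \(d^\pi_W\) by less than \(M\), so a chain from \(X\) to \(X'\) has length at least \(d^\pi_W(X,X')/M\), which is unbounded.

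The missing idea is that the inequality must be proved inside the fine curve graphs, by Leininger's intersection count on the actual boundary curves. This is what the paper does in \autoref{thm: berhstock}, using the torus versions where needed.
\begin{enumerate}
\item Perturb, then perform bigon surgery to replace \(X_2,X_3\) by nested velcrot subsurfaces whose boundaries are transverse with no inessential intersections. The change in all relevant projection distances is bounded by the triangle inequality together with \autoref{lem: velcrot small proj}.
\item By \autoref{prop_disjoint_subsurface_proj}, \(d^\pi_{X_1}(X_2,X_3)>M\) forces essential arcs of \(\partial X_2\cap X_1\) and \(\partial X_3\cap X_1\) to cross more than three times.
\item This yields an essential arc of \(\partial X_2\cap X_3\) disjoint from \(\partial X_1\). \autoref{prop_disjoint_subsurface_proj} then bounds \(d^\pi_{X_3}(X_1,X_2)\), and symmetrically \(d^\pi_{X_2}(X_1,X_3)\).
\end{enumerate}
Here isotopic-but-essentially-intersecting pairs are treated exactly like overlapping ones, so no case split on isotopy classes is needed.

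If you want a reduction, the right target is the surviving curve graph of \(X_1\setminus P\), for a finite set \(P\) putting all boundaries in minimal position (via \autoref{lem: dagger}), not \(\C([X_1])\). But Behrstock there is again proved by the same Leininger count.
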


Moreover, for a fixed pair of subsurfaces \(X,Z\subset S\), the collection of subsurfaces \(Y\) on which the projection distance \(d^\pi_Y(X,Z)\) is large can be covered by finitely many ``velcrot'' pieces, see \autoref{prop: finiteness} and \autoref{prop: finiteness for torus}. 

These two results above serve as a fine variant of axioms \ref{P1} and \ref{P2} in \cite{bestvina2015constructing}, and allow us to proceed a construction of Bestvina--Bromberg--Fujiwara type.

Two main applications of the Bestvina--Bromberg--Fujiwara construction for mapping class groups are extending quasi-morphisms and computing the asymptotic dimension of mapping class groups \cite{bestvina2015constructing}. For comparison, we will elaborate on these two topics respectively in \autoref{subsec: intro qm} and \autoref{subsec: intro asdim} for \(\Homeo_0(S)\).

\subsection{Extension of quasi-morphisms}\label{subsec: intro qm}

The construction \emph{quasi-morphisms} (for the definition, see \autoref{subsec: qm}) has played a prominent role in various topics in mathematics such as geometric group theory, symplectic geometry, and dynamics, ever since
Gromov’s introduction of bounded cohomology in \cite{gromov1982volume}. Moreover, quasi-morphisms are also closely related to the notion of \emph{stable commutator length} (also see \autoref{subsec: qm}), via \emph{Bavard duality} (\autoref{thm: Bavard Duality}). For a more detailed introduction to stable commutator length, refer to \cite{calegari2009scl}.

When it comes to \(\Diff_0(S)\) or \(\Homeo_0(S)\) for a connected compact oriented surface \(S\), Bowden, Hensel and Webb first constructed unbounded quasi-morphisms on these groups in \cite{bowden2022quasi} using the famous Bestvina--Fujiwara machinery \cite{bestvina2002bounded}. These quasi-morphisms take non-zero values on some homeomorphisms acting loxodromically on the fine curve graph \(\C^\dagger(S)\), and they are homeomorphisms isotopic to a pseudo-Anosov homeomorphism relative to a finite number of points on \(S\) (see \cite[Theorem~1.3]{bowden2022rotation} and \cite[Theorem~A]{guiheneuf2023hyperbolic}). Then these results indicate that some homeomorphisms \(\varphi\in\Homeo_0(S)\) with the closed \(\overline{\mathrm{supp}(\varphi)}=S\) has positive stable commutator length in \(\Homeo_0(S)\). Moreover, in \cite{choi2025metric}, Choi also provided some elements \(\varphi\in\Homeo_0(S)\) with \(\overline{\mathrm{supp}(\varphi)}\subsetneq S\) that has positive stable commutator length in \(\Homeo(S)\).

However, the quasi-morphisms on \(\Homeo_0(S)\) from \cite{bowden2022quasi} take zero value on elements acting elliptically on the fine curve graph, in particular, for \(\varphi\in\Homeo_0(S)\) such that \(\overline{\mathrm{supp}(\varphi)}\) is contained in an essential proper subsurface. So it does not help detecting the positiveness of their stable commutator length on \(\Homeo_0(S)\).

As a positive stable commutator length on the group of homeomorphisms often appears as an indicator of complicated dynamics on the manifold, it is reasonable to guess that complicated dynamics on subsurfaces can also be reflected by a positive stable commutator length on \(\Homeo_0(S)\). Hence, it is natural to ask the following question: \emph{is there any \(\varphi\in\Homeo_0(S)\) that has positive stable commutator length in \(\Homeo_0(S)\), while \(\overline{\mathrm{supp}(\varphi)}\subsetneq S\) is contained in an essential proper subsurface of \(S\)?}

The answer to this question is positive. The solution we provide in this paper can be viewed as extending to \(\Homeo_0(S)\) the Bestvina--Fujiwara type quasi-morphisms defined on the subgroup \(\Homeo_0(X;\partial X)<\Homeo_0(S)\), the group of homeomorphisms on a subsurface \(X\) fixing pointwise the boundary \(\partial X\), via the Bestvina--Bromberg--Fujiwara type construction. This will further yield a positive stable commutator length on \(\Homeo_0(S)\) for elements in \(\Homeo_0(X;\partial X)\) after the Bavard duality. 

For curiosity, we mention that \cite{bestvina2016scl} gives a characterisation of elements with positive stable commutator length in the mapping class group of a compact surface. For non-compact surfaces with non-displaceable compact subsurface, the Bestvina--Bromberg--Fujiwara type construction also yields various quasi-morphisms taking non-zero values on mapping classes that preserves a subsurface, see \cite{domat2022big, horbez2022big}.

Extensions of quasi-morphisms usually require certain hyperbolic characteristics. For more information about extending subgroup quasi-morphisms to the entire group, see for example an incomplete list of references on extension of quasi-morphisms: \cite{hull2013induced, kawasaki2024survey, tao2025extensiontheoremquasimorphisms}.

Modifying the fine projection complex in \autoref{thm: cobdd action on quasi tree} into a \emph{blown-up fine projection complex} and investigating the action of \(\Homeo_0(S)\) on it, we are able to show the following:

\begin{theorem}\label{thm: qm version}
Let \(S\) be a connected closed oriented surface of genus at least two and \(G\) be \(\Homeo_0(S)\). Then for every essential subsurface \(\Sigma\subset S\) that is either non-sporadic or a once-bordered torus, there exists an unbounded (\(C^0\)-continuous) homogeneous quasi-morphism \(\varphi\colon G\to \R\) and an element \(g\in G\) such that \(g(\Sigma)=\Sigma\) and \(\varphi(g)\neq 0\).
\end{theorem}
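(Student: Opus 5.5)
The plan follows the Bestvina--Bromberg--Fujiwara strategy for extending quasi-morphisms from a vertex stabiliser to the whole group. Write \(\mathscr{X}\) for the \(G\)-orbit of \(\Sigma\). By \autoref{thm: berhstock main} together with \autoref{prop: finiteness} (or \autoref{prop: finiteness for torus} when \(\Sigma\) is a once-bordered torus), this collection satisfies the fine analogues of axioms \ref{P1} and \ref{P2}. Hence it yields a fine projection complex on which \(G\) acts by isometries, as in \autoref{thm: cobdd action on quasi tree}.

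\textbf{Blown-up complex.} Replace each vertex \(Y\in\mathscr{X}\) by a copy of the fine curve graph \(\C^\dagger(Y)\); for a bordered subsurface one uses the appropriate fine arc-and-curve graph. Attach these pieces along the subsurface projections \(\pi_Y(Z)\subset\C^\dagger(Y)\), exactly as in the quasi-tree-of-spaces construction. The aim is to show that this blown-up complex \(\mathcal{C}\) is Gromov hyperbolic and that each \(\C^\dagger(Y)\) embeds isometrically, or at least quasi-isometrically, into it. The key input is the bounded geodesic image property in the fine setting: if \(d^\pi_Y(Z,W)\) is large, then every geodesic from \(Z\) to \(W\) passes near \(\C^\dagger(Y)\). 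This follows from the \(\theta\)-ordering arguments, using the fine Behrstock inequality up to the velcrot ambiguity controlled by \autoref{cor: velcrot same base}.

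\textbf{Choice of \(g\) and the quasi-morphism.} Pick \(g\in\Homeo_0(\Sigma;\partial\Sigma)\), extended by the identity to \(S\), so that \(g(\Sigma)=\Sigma\), and such that \(g\) acts loxodromically on \(\C^\dagger(\Sigma)\). Concretely, take \(g\) isotopic to a pseudo-Anosov on \(\Sigma\) relative to finitely many points, following Bowden--Hensel--Webb. Since \(\C^\dagger(\Sigma)\) sits undistorted in \(\mathcal{C}\), \(g\) is also loxodromic on \(\mathcal{C}\). To obtain a homogeneous quasi-morphism with \(\varphi(g)\neq 0\), we apply the Bestvina--Fujiwara counting quasi-morphism on the hyperbolic space \(\mathcal{C}\). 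For this we must check that \(g\) is not equivalent to \(g^{-1}\): no element \(h\in G\) may map a long subsegment of an axis of \(g\) close to an axis of \(g^{-1}\).

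\textbf{Main obstacle.} The hardest step is this non-equivalence of \(g\) and \(g^{-1}\). The classical WPD argument fails, so I would use the metric-WPD / \(\varepsilon\)-coarse technique of Choi and Hensel et al. Suppose \(h\) nearly reverses a long piece of the axis of \(g\) inside \(\C^\dagger(\Sigma)\). By the bounded geodesic image property, \(h\Sigma\) must be velcrot to \(\Sigma\), which makes \(h\) essentially preserve \(\Sigma\) up to isotopy (\autoref{cor: velcrot implies isotopic}). The question then reduces to whether \(h\) conjugates \(g\) to roughly \(g^{-1}\) in \(\Homeo_0(\Sigma)\) up to an \(\varepsilon\)-coarse error. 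To rule this out, choose \(g\) with suitable chirality, for example a pseudo-Anosov relative to a finite set whose invariant data, such as the orientation of singular prongs or the rotation behaviour, is not reversed by orientation-preserving maps. Alternatively, use the flexibility to take \(g\) with distinct dilatation behaviour on distinct puncture sets. Finally, \(C^0\)-continuity comes from the fact that \(\varepsilon\)-small perturbations move points of \(\mathcal{C}\) a bounded amount.
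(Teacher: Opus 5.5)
There is a genuine gap, at the step you yourself flag as the main obstacle: nothing in the proposal produces an element \(g\) with \(g(\Sigma)=\Sigma\) and \(g\nsim g^{-1}\) for the action on the blown-up complex. The ``chirality'' suggestions (prong orientations, rotation behaviour, different dilatations on different puncture sets) are heuristics, not arguments. The competing element \(h\in\Homeo_0(S)\), even after it is corrected to preserve \(\Sigma\), need not preserve the finite set \(P\) relative to which \(g\) is pseudo-Anosov. So invariants of the class of \(g\) in \(\mathrm{Mod}(\Sigma\setminus P)\) are not obviously respected. Moreover, a pseudo-Anosov class can be conjugate to its inverse by an orientation-preserving map, so some genuine input is needed to rule this out. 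Note also that ``\(h\) preserves \(\Sigma\) up to isotopy'' holds for every \(h\in\Homeo_0(S)\) and carries no information. What the velcrot conclusion actually gives you is a witness \(Z\subset\Sigma\cap h\Sigma\) (\autoref{prop: witness}). This lets you replace \(h\) by some \(h'h\) with \(h'|_Z=\Id_Z\) and \(h'h(\Sigma)=\Sigma\), at the cost of moving points of \(hA_g\) by at most \(2L\) in \(\CY\).

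The missing idea is that no special \(g\) is needed. The paper takes the two independent loxodromics \(f,g\in\Diff_0(\Sigma;\partial\Sigma)\) on \(\C^\dagger(\Sigma)\) from Bowden--Hensel--Webb and shows they remain independent on \(\CY\) (\autoref{prop: ind loxo}). If \(h\Sigma\) is not velcrot to \(\Sigma\), \autoref{cor: projection consistent} confines \(hA_g\cap\mathcal{N}_B(A_f)\) to a bounded neighbourhood of \(\iota_{h\Sigma}(\pi_{h\Sigma}(\Sigma))\), so a long segment escapes. If \(h\Sigma\) is velcrot to \(\Sigma\), the correction \(h'h\) above reduces the question to independence inside \(\C^\dagger(\Sigma)\). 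Then \autoref{rem: existence of non coarsely invertible} (Bestvina--Fujiwara's Proposition~2) provides non-quasi-invertible loxodromics in \(\langle f,g\rangle\subset\Homeo_0(\Sigma;\partial\Sigma)\), which automatically preserve \(\Sigma\), and \autoref{thm: non coarsely invertible} yields \(\varphi\). Your reduction would close the same way if you applied Proposition~2 to the Bowden--Hensel--Webb pair already in \(\C^\dagger(\Sigma)\).

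Two smaller points. First, gluing only ``along \(\pi_Y(Z)\)'' never joins velcrot pieces, since \(\pi_Y(\partial Z)=\emptyset\) for velcrot \(Y,Z\) by \autoref{lem:velcrot not intersec}. The paper adds the edges \ref{L2}, and these are exactly what give the \(2L\) bound above. Second, your continuity argument differs from the paper's, which restricts to \(\Diff_0(S)\), invokes Kotschick's automatic continuity and extends by density (hence the insistence on smooth \(f,g\)). Your route is viable: the Bestvina--Fujiwara function satisfies \(|h_w(k)|\leq d(x_0,kx_0)\), and a homogeneous quasi-morphism bounded near \(\Id\) is continuous. However, the uniform displacement bound for \(C^0\)-small \(k\) still has to be checked, again using the \ref{L2} edges.
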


The following theorem can be viewed as a corollary of \autoref{thm: qm version}, from which it follows combining with \autoref{thm: non coarsely invertible} and \autoref{rem: existence of non coarsely invertible}:

\begin{theorem}\label{thm: main}
Let \(S\) be a connected closed oriented surface of genus at least two and \(G\) be \(\Homeo_0(S)\). Then for every essential subsurface \(\Sigma\subset S\) that is either non-sporadic or a once-bordered torus, there exists an element \(g\in G\) such that \(g(\Sigma)=\Sigma\) and \(\scl_G(g)>0\).
\end{theorem}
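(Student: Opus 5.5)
The plan is to deduce \autoref{thm: main} from \autoref{thm: qm version} using Bavard duality (\autoref{thm: Bavard Duality}). Fix an essential subsurface \(\Sigma\subset S\) that is either non-sporadic or a once-bordered torus. \autoref{thm: qm version} gives an unbounded homogeneous quasi-morphism \(\varphi\colon G\to\R\) and an element \(g\in G\) with \(g(\Sigma)=\Sigma\) and \(\varphi(g)\neq 0\).

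The first step is to check that \(\varphi\) has finite defect \(D(\varphi)>0\). Finiteness is part of being a quasi-morphism. Positivity holds because a homogeneous quasi-morphism with zero defect is a homomorphism, and \(\Homeo_0(S)\) is perfect; indeed it is simple for closed \(S\), by the results of Fisher, Anderson and Thurston–Mather. So a homogeneous quasi-morphism with \(D(\varphi)=0\) vanishes identically, which contradicts \(\varphi(g)\neq0\).

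Bavard duality then gives
\[
\scl_G(g)\;=\;\sup_{\psi}\frac{|\psi(g)|}{2D(\psi)}\;\geq\;\frac{|\varphi(g)|}{2D(\varphi)}\;>\;0,
\]
where the supremum runs over homogeneous quasi-morphisms that are not homomorphisms. One should also check that \(g\) lies in \([G,G]\), or in the relevant normal closure, so that \(\scl_G(g)\) is defined. This is automatic because \(G\) is perfect.

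The paper's stated route goes through \autoref{thm: non coarsely invertible} and \autoref{rem: existence of non coarsely invertible}. That suggests the element produced in \autoref{thm: qm version} has to be chosen more carefully, for instance as a ``non coarsely invertible'' element of \(\Homeo_0(\Sigma;\partial\Sigma)\) acting loxodromically on the fine curve graph of \(\Sigma\). That theorem presumably certifies \(\varphi(g)\neq 0\) for such elements.

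I expect the main obstacle, if one does not simply cite \autoref{thm: qm version} as a black box, to be exactly this point. One has to produce \(g\) supported in \(\Sigma\) whose Bestvina–Fujiwara quasi-morphism on \(\Homeo_0(\Sigma;\partial\Sigma)\) is nonzero and survives the extension to \(G\) through the blown-up fine projection complex. This requires \(g\) not to be coarsely conjugate to its inverse within the relevant stabiliser. I would take the existence of such \(g\) from \autoref{rem: existence of non coarsely invertible} and use \autoref{thm: non coarsely invertible} to get nonvanishing of the extended quasi-morphism. The duality step above then finishes the proof.
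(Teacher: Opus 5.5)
Your argument is correct and is essentially the paper's. The paper also treats \autoref{thm: main} as a corollary of \autoref{thm: qm version}: it takes the element to be a non-quasi-invertible loxodromic in \(\langle f_1,f_2\rangle<\Homeo_0(\Sigma;\partial\Sigma)\), where \(f_1,f_2\) are the independent loxodromics of \autoref{prop: ind loxo} and non-quasi-invertibility comes from \autoref{rem: existence of non coarsely invertible}, and your Bavard-duality step is exactly the ``in particular'' clause of \autoref{thm: non coarsely invertible}.

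Two small remarks. Non-quasi-invertibility must hold for the whole \(\Homeo_0(S)\)-action on \(\CY\), not within a stabiliser. Perfectness of \(G\) is harmless but not needed: \(\varphi(g)\neq 0\) for \(g\in[G,G]\) already forces \(D(\varphi)>0\), and otherwise you can pass to a power of \(g\) or use the convention \(\scl=\infty\).
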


The detailed construction of blown-up fine projection complex is given in \autoref{sec-5}. This graph is a \(\delta\)-hyperbolic (see \autoref{thm: CY is hyperbolic}) and we show this by using the \emph{Guessing Geodesics Lemma} (see \autoref{prop: ggl}). This is a powerful tool for proving the Gromov hyperbolicity of a metric space, and the readers can refer to \cite{hamenstadt2007geometry, masur2013geometry, bowditch2014uniform} for further details. Examples of the application of the Guessing Geodesics Lemma can be also found in \cite{hensel2015slim, przytycki2017note, jia2025balanced}.

For the construction of the blown-up fine projection complex, roughly speaking, we start from a \(\Homeo_0(S)\)-invariant collection \(\Y\) of subsurfaces that are pairwise overlapping or isotopic and that are either non-sporadic or homeomorphic to a once-bordered torus, then by taking the union of fine curve graph \(\C^\dagger(X)\) of any subsurface \(X\in \Y\) and connecting them in a way that they are quasi-isometrically embedded as subgraphs, we can build an unbounded Gromov hyperbolic graph \(\CY\). For velcrot subsurfaces \(X,Z\in \Y\), the images of \(\C^\dagger(X)\) and \(\C^\dagger(Z)\) in \(\CY\) have bounded Hausdorff distance. Moreover, two elements \(f,g\in \Homeo_0(X;\partial X)\) acting by independent loxodromic isometries on \(\C^\dagger(X)\) will also act by independent loxodromic isometries on \(\CY\), see \autoref{prop: ind loxo}. The construction we provide herein is also similar to the Bestvina--Bromberg--Fujiwara construction, whereas we need to deal with the technicalities brought forth by velcrotness.

Although there is a way to axiomatise our construction of the fine projection complex and the blown-up fine projection complex as in \cite{bestvina2015constructing}, the axioms that we will need are numerous and it seems to us that there is no obvious and canonical way to check the unboundedness of the fine projection complex. For the reason of keeping the geometric intuition, we do not offer an axiomatisation in the current article.

\subsection{Asymptotic dimension and the Gromov boundary}\label{subsec: intro asdim}

An other principal result in \cite{bestvina2015constructing} is that they are able to embed the mapping class group of the surface into a finite product of blown-up projection complexes and show that the \emph{asymptotic dimension} of mapping class groups is finite. This notion was first introduced by Gromov in \cite{gromov1993asymptotic} as an asymptotic invariant for finitely generated groups. The asymptotic dimension can be defined for general metric spaces and is a quasi-isometric invariance, see, for example, \cite{bell2008asymptotic}; in connection to the coarse setting, see \cite{roe2003lectures}. Hence, one can talk about the asymptotic dimension of a group with well-defined quasi-isometry type, \emph{e.g.} finitely generated groups and compactly generated groups.

However, one can discuss the asymptotic dimension for topological groups far beyond the locally compact setting. For Polish groups, \emph{i.e.} completely metrisable and separable groups, there is also a well established theory for their large-scale geometry \cite{rosendal2021coarse}. Similarly to the locally compact cases, there is a class of Polish groups admitting well-defined quasi-isometry type, namely the \emph{coarsely bounded generated} Polish groups, or \emph{CB generated} Polish groups for abbreviation. These groups can be equipped with a class of maximal left-invariant compatible metrics \cite[Theorem~1.2]{rosendal2021coarse}, and by maximality, these metrics are quasi-isometric to each other. This implies that CB generated groups have well-defined quasi-isometry type and thus well-defined asymptotic dimension. In particular, Mann and Rosendal showed that for a compact surface \(S\), the group \(\Homeo_0(S)\) equipped with the compact-open topology, which renders the group Polish, is CB generated \cite{mann2018large}.

In \cite{bestvina2015constructing}, by embedding the mapping class groups quasi-isometrically into a finite product of blown-up projection complexes, which have a finite asymptotic dimension, Bestvina, Bromberg and Fujiwara showed that the asymptotic dimension of mapping class groups are finite. Nevertheless, this strategy fails drastically in the fine setting: unlike mapping class groups, homeomorphism groups \(\Homeo_0(S)\) have an infinite asymptotic dimension, as separable metric spaces of arbitrarily large asymptotic dimension can be coarsely embedded into \(\Homeo_0(S)\) \cite[Proposition~20]{mann2018large}. Moreover, the product of blown-up fine projection complexes that we constructed in this article is also insufficient to encode the large-scale geometry of \(\Homeo_0(S)\). Indeed, the group \(\Homeo_\Z(\R)\), which is quasi-isometric to \(\Z\), can be coarsely embedded into \(\Homeo_0(S)\) via orientation-preserving homeomorphisms on the interior of an essential annulus \cite[Proposition~40]{mann2018large}, but its action on any fine projection complex that we construct in this paper is elliptic. 

Furthermore, the blown-up fine projection complexes have infinite asymptotic dimension, see \autoref{cor: CY infinite asdim}. This is a corollary of the following result, as one can embed the fine curve graph \(\C^\dagger(X)\) of a non-sporadic subsurface \(X\) quasi-isometrically into \(\CY\) (see \autoref{prop: fine curve graph bi-lip}):

\begin{theorem}\label{thm: asdim}
Let \(S\) be a compact connected oriented non-sporadic surface. Then the asymptotic dimension of \(\C^\dagger(S)\) is infinite.
\end{theorem}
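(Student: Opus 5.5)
The plan is to show that for every \(n\) the fine curve graph \(\C^\dagger(S)\) contains a quasi-isometrically (or at least coarsely) embedded copy of a space of asymptotic dimension at least \(n\), with uniform constants. Since asymptotic dimension is monotone under coarse embeddings, this proves the theorem. A natural model is \(\Z^n\), or a product of \(n\) trees, or a product of \(n\) hyperbolic spaces. The key feature of the fine setting is that one can fit arbitrarily many pairwise disjoint small copies of a subsurface into \(S\), each carrying its own loxodromic dynamics. Disjoint supports give commuting homeomorphisms, which is impossible at the level of mapping classes, where rank is bounded.

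\textbf{Step 1 (commuting loxodromics).} Fix \(n\). Choose \(n\) disjoint closed discs \(D_1,\dots,D_n\subset S\), and in each \(D_i\) pick finitely many points \(P_i\). Choose homeomorphisms \(f_i\) supported in \(D_i\) that are pseudo-Anosov relative to \(P_i\) in the sense that the point-pushing or braid class is pseudo-Anosov on \(D_i\setminus P_i\). Alternatively, and better suited to \(\C^\dagger(S)\), take \(f_i\in\Homeo_0(S)\) with \(f_i\) isotopic to a pseudo-Anosov relative to the finite set \(P=\bigcup P_j\), but arranged so that the \(f_i\) still commute. I expect the cleanest route to go through the known fact (cf.\ \cite{bowden2022rotation, guiheneuf2023hyperbolic}) that a homeomorphism isotopic to a pseudo-Anosov relative to finitely many points acts loxodromically on \(\C^\dagger(S)\). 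Combined with the bi-Lipschitz comparison between \(\C^\dagger(S)\) and curve graphs of \(S\) punctured at finite sets, this lets one transfer lower bounds to the punctured curve graphs \(\C(S\setminus P)\).

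\textbf{Step 2 (coarse embedding of \(\Z^n\)).} Fix a base curve \(\alpha\) disjoint from the \(D_i\), or suitably positioned. Consider the orbit map \(\Z^n\to\C^\dagger(S)\), \((k_1,\dots,k_n)\mapsto f_1^{k_1}\cdots f_n^{k_n}\alpha\). The upper Lipschitz bound is immediate. For the lower bound I would use the forgetful maps \(\C^\dagger(S)\to\C(S\setminus P_i)\), which are defined on curves avoiding \(P_i\) and are coarsely Lipschitz after passing to a cofinal subgraph of curves transverse to the finite set. Under this projection only \(f_i\) acts nontrivially, since the other \(f_j\) are supported in discs disjoint from \(P_i\) and away from \(D_i\) up to isotopy rel \(P_i\). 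So \(f_i\) acts as a pseudo-Anosov of \(S\setminus P_i\), and the projected distance grows linearly in \(|k_i|\). Taking the maximum over \(i\) bounds the \(\ell^\infty\)-norm of \(k\) from below, which gives a quasi-isometric embedding of \(\Z^n\). Hence \(\asdim\C^\dagger(S)\ge n\) for all \(n\).

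\textbf{Main obstacle.} The difficulty is that \(f_j\), for \(j\neq i\), must be trivial or bounded in \(\mathrm{Mod}(S\setminus P_i)\), while \(f_i\) must be pseudo-Anosov there. A homeomorphism supported in a disc \(D_i\) is isotopic to the identity rel \(P_j\) when \(P_j\cap D_i=\emptyset\), which handles the first requirement. For the second, a homeomorphism supported in a disc cannot be pseudo-Anosov on \(S\setminus P_i\) if \(P_i\subset D_i\), since the disc boundary is an invariant curve. So I would instead take \(P_i\) spread over \(S\) and \(f_i\) a point-push of a single point \(p_i\) along a filling loop \(\gamma_i\). By Kra's theorem this is pseudo-Anosov on \(S\setminus\{p_i\}\). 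The issue is then making the \(f_i\) commute, since the filling loops intersect. I would fall back on a weaker statement that suffices: the \(\varepsilon\)-coarse idea shows that the \(\Z^n\)-like set \(\{\gamma\text{-pushes}\}\) need not be a group action. One can directly define points \(x_k\in\C^\dagger(S)\) as curves that are, rel each \(p_i\), the \(k_i\)-th push image, realised simultaneously by making the curve pass near each \(p_i\) in disjoint thin regions. Verifying the lower bound through the forgetful maps to \(\C(S\setminus\{p_i\})\) is the crux, and also the step most likely to need adjustment.
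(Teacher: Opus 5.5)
There is a genuine gap, and it lies in the design of the argument rather than in a technical step: the object you are building cannot exist inside \(\C^\dagger(S)\). This graph is Gromov hyperbolic, and a geodesic space that quasi-isometrically embeds in a hyperbolic space is itself hyperbolic. So for \(n\ge 2\) there is no quasi-isometric embedding of \(\Z^n\), nor of a product of \(n\) unbounded trees or hyperbolic spaces, into \(\C^\dagger(S)\), and the conclusion of your Step 2 is false.

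Each of your mechanisms fails visibly.
\begin{itemize}
\item A homeomorphism supported in a disc \(D_i\) fixes every essential curve disjoint from \(D_1\cup\dots\cup D_n\). Hence the group generated by your disc-supported \(f_i\) is elliptic and the orbit map is bounded.
\item Suppose instead the \(f_i\) are commuting loxodromics. For every \(h\) in the group, \(hA_{f_1}\) is a quasi-axis of \(hf_1h^{-1}=f_1\). By the Morse lemma the whole orbit stays in a uniform neighbourhood of the quasi-line \(A_{f_1}\), so it is not even coarsely \(n\)-dimensional.
\item In the non-group fallback, the upper bound is no longer ``immediate''; that is special to orbit maps.
\end{itemize}
Your hedge ``or at least coarsely'' does not rescue the mechanism. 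What maximising over the forgetful maps to \(\C(S\setminus\{p_i\})\) is meant to deliver is a map \(\Z^2\to\C^\dagger(S)\) with bounded unit steps and a linear lower bound, and no such map exists. A path of length \(\ell\) in a \(\delta\)-hyperbolic space passes within \(O(\delta\log\ell)\) of every point of the geodesic between its endpoints. Apply this to the images of the two boundary paths of a large square in \(\Z^2\) joining opposite corners: it produces points \(u,v\) on different sides whose images are \(O(\log R)\)-close, hence \(|u-v|=O(\log R)\). This forces \(u,v\) to be near a corner, which contradicts the linear lower bound at the midpoint.

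The missing idea is that large asymptotic dimension in a hyperbolic space comes from exponentially distorted families, like horospheres in \(\mathbb{H}^{n+1}\), and these are most easily detected at infinity. The paper keeps your observation that disjointly supported disc homeomorphisms commute, but uses it on \(\partial\C^\dagger(S)\) rather than on orbits.
\begin{itemize}
\item Fix a minimal geodesic lamination \(\lambda\) on \(S\setminus F\) meeting every essential curve. It gives a boundary point \(\xi_\lambda\) whose stabiliser is exactly the set of homeomorphisms preserving \(\lambda\) (\autoref{thm: bnd}).
\item Choose disjoint discs \(D_i\) meeting \(\lambda\). They carry Alexander-trick isotopies \(f_i^t\) that move \(\lambda\) to pairwise distinct laminations.
\item Then \((t_i)\mapsto\left(\prod_i f_i^{t_i}\right)(\xi_\lambda)\) is a continuous injection \([0,1]^n\to\partial\C^\dagger(S)\), hence an embedding by compactness (\autoref{prop: embed}).
\item \autoref{prop: bnd dim} then yields \(\asdim\C^\dagger(S)\ge n+1\) for every \(n\).
\end{itemize}
Note that the same disc-supported maps that are useless on the graph, because they are elliptic, are exactly what moves boundary points continuously in \(n\) independent directions.
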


This topic is somehow related to the topology of the Gromov boundary of \(\C^\dagger(S)\). We prove \autoref{thm: asdim} by homeomorphically embedding a compact subset in \(\partial \C^\dagger(S)\) of arbitrarily large topological dimension (\autoref{prop: embed}). As the asymptotic dimension of a Gromov hyperbolic space is bounded below by the topological dimension of compact subset on its Gromov boundary (\autoref{prop: bnd dim}), this allows us to conclude \autoref{thm: asdim}.

Another example of spaces that can contain completely metrisable subspaces with any topological dimension is the infinite-dimensional separable Hilbert space \(\ell^2\) and spaces that is locally homeomorphic to \(\ell^2\), \emph{i.e.} an \emph{\(\ell^2\)-manifold}. Among them \(\Homeo(S)\) is itself an \(\ell^2\)-manifold \cite[Théorème~7.1.3]{leroux1997etude}. As \(\Homeo(S)\) acts continuously and minimally on the Gromov boundary \(\partial\C^\dagger(S)\) \cite[Proposition~6.4]{long2025connected}, it is also natural to ask:
\begin{question}
Given a surface \(S\) that is either non-sporadic or a once-bordered torus, is the Gromov boundary of \(\C^\dagger(S)\) an \(\ell^2\)-manifold?
\end{question}

\subsection*{Acknowledgement} The authors wish to thank Indira Chatterji, Inhyeok Choi, Federica Fanoni, Koji Fujiwara, Pierre-Antoine Guihéneuf, Sebastian Hensel, Frédéric Le Roux, Dong Tan, Robert Tang, Bingxue Tao, Richard Webb, and Wenyuan Yang for their stimulating questions and helpful discussions. The authors also thank Mladen Bestvina, Francesco Fournier-Facio and Alessandro Sisto for valuable comments.

This work was partially carried out during the visits of the second author in the University of Manchester and BICMR, Peking University. The second author wishes to express his gratitude for their hospitality.

The first author is supported by EPSRC DTP EP/V520299/1. The second author acknowledges the support from the ANR project Grant GALS (ANR-23-CE40-0001).
\section{Preliminaries}

This section aims at giving the definition of some basic notions and reviewing several known results on Gromov hyperbolic space, quasi-morphisms and stable commutator length, Bestvina--Fujiwara machinery, as well as fine curve graphs and surface homeomorphisms.

\subsection{Gromov hyperbolicity}\label{S2.A} Some classical references for Gromov hyperbolicity are \cite{bridson2013metric, das2017geometry}. Here we will briefly recall the definition of a geodesic metric space satisfying this condition, and provide several properties of the isometries on this space.

Let \((X,d)\) be a geodesic metric space. We denote by \([x,y]\) any geodesic segment between two points \(x,y\in X\). We say that \(X\) is {\it\(\delta\)-hyperbolic} for some \(\delta\geq 0\) if for any \(x,y,z\in X\), we have
\[\mathcal{N}_\delta([x,y]\cup [y,z])\supset [x,z]\,,\]
where \(\mathcal{N}_\delta(A)\) stands for the \(\delta\)-neighbourhood of the subset \(A\subset X\)

We will now introduce a remarkable and very powerful criterion for proving Gromov hyperbolicity, namely the \emph{Guessing Geodesics Lemma}. This was originally discovered by Masur and Schleimer, with an alternative proof by Bowditch, see \cite{masur2013geometry,bowditch2014uniform}.

\begin{proposition}[Guessing geodesics lemma]\label{prop: ggl}
Let \((\Gamma,d_\Gamma)\) be a graph. If there exists some constant \(\lambda> 0\) such that for any \(x,y\in \Gamma\), we can find a connected subgraph \(\mathcal{L}(x,y)\subset \Gamma\) containing \(x,y\) and satisfying the following conditions:
\begin{enumerate}[label=(G\arabic*),topsep=0pt, itemsep=-1ex, partopsep=1ex, parsep=1ex ]
    \item for any \(x,y,z\in \Gamma\), \(\mathcal{L}(y,z)\) is contained in the \(\lambda\)-neighbourhood of \(\mathcal{L}(x,y)\cup \mathcal{L}(y,z)\);
    \item for any \(x,y\in \Gamma\) with \(d_\Gamma(x,y)\leq 1\), the diameter of \(\mathcal{L}(x,y)\) in \(\Gamma\) is at most \(\lambda\);
\end{enumerate}
Then \(\Gamma\) is \(\delta\)-hyperbolic for some \(\delta>0\) depending only on \(\lambda\).
\end{proposition}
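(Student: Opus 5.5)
We would follow Bowditch's proof rather than Masur--Schleimer's. Throughout we read (G1) in its intended form: \(\mathcal{L}(x,z)\) lies in the \(\lambda\)-neighbourhood of \(\mathcal{L}(x,y)\cup\mathcal{L}(y,z)\). The plan is to show that every \(\mathcal{L}(x,y)\) is at uniformly bounded Hausdorff distance from every geodesic \([x,y]\). Once this holds, slimness of geodesic triangles follows directly from (G1), so \(\Gamma\) is \(\delta\)-hyperbolic with \(\delta\) depending only on \(\lambda\).

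\emph{Step 1 (logarithmic control of \(\mathcal{L}\) by paths).} Let \(\alpha\) be an edge path from \(x\) to \(y\) of length \(n\leq 2^k\). We claim \(\mathcal{L}(x,y)\subset\mathcal{N}_{\lambda k+\lambda}(\alpha)\). Cut \(\alpha\) at its midpoint \(m\) and apply (G1) to get \(\mathcal{L}(x,y)\subset\mathcal{N}_\lambda(\mathcal{L}(x,m)\cup\mathcal{L}(m,y))\). Repeat on each half. After \(k\) rounds every piece is an edge or a point, and there (G2) gives diameter at most \(\lambda\). Each round costs \(\lambda\), so the bound is of order \(\lambda\log_2 n\).

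\emph{Step 2 (geodesics stay near \(\mathcal{L}\)).} Fix a geodesic \([x,y]\), and let \(z\in[x,y]\) maximise \(r=d(z,\mathcal{L}(x,y))\). We want \(r\leq C(\lambda)\).
\begin{enumerate}
\item Take \(x',y'\in[x,y]\) on either side of \(z\) at distance \(2r\) from \(z\), or equal to the endpoint if that is closer.
\item Choose \(x'',y''\in\mathcal{L}(x,y)\cup\{x,y\}\) within \(r\) of \(x',y'\) respectively.
\item Join \(x''\) to \(y''\) by a path \(\beta\) that avoids the ball \(B(z,r)\). This uses connectedness of \(\mathcal{L}(x,y)\), together with (G1) and (G2) to compare \(\mathcal{L}(x,y)\) with \(\mathcal{L}(x'',y'')\) up to bounded error.
\item Alternatively, use the path \([x'',x']\cup[x',y']\cup[y',y'']\), which has length at most \(6r\). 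By Step 1, \(\mathcal{L}(x'',y'')\) lies within \(O(\lambda\log r)\) of this path.
\item Since \(\mathcal{L}(x'',y'')\) is connected and links points near \(x'\) to points near \(y'\), some point of it lies close to \(z\). Transferring this point back to \(\mathcal{L}(x,y)\) by (G1) gives \(r\leq c\lambda\log r+c\lambda\), which bounds \(r\) by a constant depending only on \(\lambda\).
\end{enumerate}

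\emph{Step 3 (Hausdorff closeness and thin triangles).} Apply Step 1 to the geodesic \([x,y]\) itself. This gives a sublinear bound on how far \(\mathcal{L}(x,y)\) can wander from \([x,y]\). Combining it with Step 2 and connectedness of \(\mathcal{L}(x,y)\) (any far excursion would have to return past points near the geodesic) upgrades it to a uniform bound, so \(d_{\mathrm{Haus}}(\mathcal{L}(x,y),[x,y])\leq C'(\lambda)\). Finally, for a geodesic triangle \(x,y,z\), (G1) gives
\[
[x,z]\subset\mathcal{N}_{C'}(\mathcal{L}(x,z))\subset\mathcal{N}_{C'+\lambda}(\mathcal{L}(x,y)\cup\mathcal{L}(y,z))\subset\mathcal{N}_{2C'+\lambda}([x,y]\cup[y,z]),
\]
so \(\Gamma\) is \((2C'+\lambda)\)-hyperbolic.

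The main obstacle is Step 2. There one must turn the merely logarithmic estimate of Step 1 into a uniform bound, by choosing the comparison points at scale proportional to \(r\) so that the inequality \(r\lesssim\lambda\log r\) closes up. The delicate part is producing a point of \(\mathcal{L}(x,y)\) itself near \(z\), not merely a point of \(\mathcal{L}(x'',y'')\). We would first try to do this by connectedness of \(\mathcal{L}(x,y)\) plus one extra application of (G1) to the quadruple \(x,x'',y'',y\).
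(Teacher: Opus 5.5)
The paper does not prove this lemma; it quotes it from Masur--Schleimer and Bowditch. So the question is whether your sketch closes on its own. Step~1 is fine, and so is your final slim-triangle display; you are also right to read (G1) as $\mathcal{L}(x,z)\subset\mathcal{N}_\lambda(\mathcal{L}(x,y)\cup\mathcal{L}(y,z))$, since the printed version is a typo. There is a genuine gap in Steps~2 and~3, which is exactly where a uniform constant has to be produced. In Step~2 you get a point $p\in\mathcal{L}(x'',y'')$ within $O(\lambda\log r)$ of $z$. However, (G1) never moves you from $\mathcal{L}(x'',y'')$ into $\mathcal{L}(x,y)$. Applied to $x'',x,y,y''$ it only gives $\mathcal{L}(x'',y'')\subset\mathcal{N}_{2\lambda}\bigl(\mathcal{L}(x'',x)\cup\mathcal{L}(x,y)\cup\mathcal{L}(y,y'')\bigr)$. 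The point $p$ may lie near a flanking piece $\mathcal{L}(x'',x)$ or $\mathcal{L}(y,y'')$, and Step~1 controls those only up to $O(\lambda\log d(x,y))$. Sub-step~3 claims that $\mathcal{L}(x'',y'')$ is comparable to $\mathcal{L}(x,y)$ when $x'',y''\in\mathcal{L}(x,y)$; that is a ``sub-path'' property you cannot use before the lemma is proved. Connectedness of $\mathcal{L}(x,y)$ also gives nothing near $z$, because $\mathcal{L}(x,y)$ misses $B(z,r)$ by definition. Step~3 fails for the same reason. A finger of $\mathcal{L}(x,y)$ of length about $\lambda\log d(x,y)$, leaving and returning near one point of $[x,y]$, is compatible with Step~1, Step~2 and connectedness, so these facts cannot upgrade the logarithmic bound. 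You maximise $r$ inside a single configuration, and even maximising over all pairs would not help. The bad case produces a point of some $\mathcal{L}(x'',x)$ far from the corresponding path, which says how far $\mathcal{L}$ strays from geodesics, not anything about $r$.

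The missing idea is an extremal argument over all pairs, run in the other direction (the $\mathcal{L}$-near-geodesic bound, which sub-pairs inherit). Let $\rho_n$ be the supremum of $d(p,\pi)$ over $d(x,y)\le n$, geodesics $\pi$ from $x$ to $y$, and $p\in\mathcal{L}(x,y)$; Step~1 gives $\rho_n<\infty$. Take $p,\pi$ realising $\rho_n$ up to $\varepsilon$, and let $q\in\pi$ be a point closest to $p$. Let $a,b\in\pi$ be at distance $t\approx 2\rho_n+2\lambda+1$ from $q$ on either side, or the endpoints if these are closer. Two applications of (G1) put $p$ within $2\lambda$ of some $p_1\in\mathcal{L}(x,a)\cup\mathcal{L}(a,b)\cup\mathcal{L}(b,y)$.
\begin{itemize}
\item If $p_1\in\mathcal{L}(x,a)$ with $a\neq x$, then $d(x,a)\le n$ and $[x,a]\subset\pi$ is geodesic, so $p_1$ is within $\rho_n$ of some $w\in[x,a]$. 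This gives $t\le d(q,w)\le 2\rho_n+2\lambda$, a contradiction.
\item If $a=x$, then (G2) gives $d(p,x)\le 3\lambda$.
\item The case $p_1\in\mathcal{L}(b,y)$ is symmetric.
\end{itemize}
So $p_1\in\mathcal{L}(a,b)$, and Step~1 on $[a,b]$ gives $\rho_n\le 4\lambda+\lambda\log_2(4\rho_n+4\lambda+2)$. This is a bound $C(\lambda)$ independent of $n$. Only now does connectedness do the job. Along an edge path in $\mathcal{L}(x,y)$ from $x$ to $y$, colour each vertex by whether it is $C$-close to $[x,z]$ or to $[z,y]$. An adjacent pair of opposite colours gives $d(z,\mathcal{L}(x,y))\le 3C+1$. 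With both bounds in hand, your final display finishes the proof. In short, reverse the order in your plan: the $\mathcal{L}$-near-geodesic bound is the self-improving one, and the geodesic-near-$\mathcal{L}$ bound follows from it.
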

\begin{remark}
The connected subgraph \(\mathcal{L}(x,y)\) associated to \(x,y\in \Gamma\) is called the \emph{guessing geodesic} between \(x\) and \(y\), and is known to have uniformly bounded Hausdorff distance from a geodesic between \(x\) and \(y\) (bounded in terms of \(\lambda\)). In fact, \(\delta\) can be chosen as any number greater than or equal to \((3m - 10\lambda)/2\), where \(m\) is any positive real number satisfying \(2\lambda (6 + \log_2(m + 2)) \leq m\).
\end{remark}

Although the above version for graphs is sufficient for our purpose, we remark for curiosity that there is a more general version of this result in \cite[Proposition 3.5]{hamenstadt2007geometry}, which is stated for geodesic metric spaces \(X\) with a continuous path \(\eta_{xy}\colon [0,1]\to X\) connecting any pair \(x,y\in X\) and satisfying similar conditions as above.

Recall that an {\it isometry} \(f\colon X\to X\) is a map such that \(d_X\big(f(x),f(y)\big)=d_X(x,y)\) for all \(x,y\in X\). We define its {\it asymptotic translation length} as
\[|f|_X\coloneqq \lim_{n\to +\infty} \frac{1}{n}d_X\big(f^n(x),x\big).\]
For a \(\delta\)-hyperbolic space, we have the following classification of isometries according to their asymptotic translation length \cite[\S 8]{gromov1987hyperbolic}:
\begin{definition}
Let \(X\) be a \(\delta\)-hyperbolic space. An isometry \(f\colon X\to X\) is called
\begin{enumerate}[label=(\arabic*), topsep=0pt, itemsep=-1ex, partopsep=1ex, parsep=1ex ]
    \item \emph{elliptic}, if \(f\) has bounded orbits;
    \item \emph{parabolic}, if \(|f|_X=0\) and has no bounded orbit;
    \item \emph{loxodromic}, or also \emph{hyperbolic}, if \(|f|_X>0\).
\end{enumerate}
\end{definition}

Recall that a map \(f\colon (X,d_X)\to (Y,d_Y)\) between two metric spaces is a {\it \((\lambda,k)\)-quasi-isometric embedding} if there exist \(\lambda\geq 1\) and \(k>0\) such that
\begin{align}\label{eq:qi}
\frac{1}{\lambda}d_X(x,y)-k\leq d_Y\big(f(x),f(y)\big)\leq \lambda d_X(x,y)+k
\end{align}
for every \(x,y\in X\). In particular, we say that the map \(f\) is {\it \(\lambda\)-bi-Lipschitz} if \(k=0\) in \eqref{eq:qi}. If in addition, the map \(f\) is {\it essentially surjective}, {\it i.e.} there exists \(C>0\) such that \(d_Y\big(y,f(X)\big)<C<\infty\) for all \(y\in Y\), then \(f\) is a quasi-isometry and \(X\) and \(Y\) are said to be quasi-isometric. We remark that being quasi-isometric is an equivalence relation. A {\it quasi-geodesic} in a metric space \(X\) is a quasi-isometric embedding of a real interval into \(X\). We remark that for geodesic metric spaces, Gromov hyperbolicity is a quasi-isometry invariant, see for example \cite[Theorem~III.H.1.9]{bridson2013metric}.

If \(X\) is a \(\delta\)-hyperbolic space and \(g\) is a loxodromic isometry of \(X\), then it admits a bi-infinite quasi-geodesic that is \(g\)-invariant, which we will later refer to it as a \emph{quasi-axis} of \(g\). It is always convenient to blur the distinction between quasi-axis and its image. By Morse Lemma, for any parameter \((K,L)\), there exists a constant \(B\coloneqq B(K,L,\delta)\) such that any two \((K,L)\)-quasi-axis of \(g\) stay within the \(B\)-neighbourhood of each other; see, for example, \cite[Theorem~III.H.1.7]{bridson2013metric}.

For a geodesic Gromov hyperbolic space \(X\), one can also define its \emph{Gromov boundary} \(\partial X\) by the equivalence classes of quasi-geodesic rays issued from a base point \(o\in X\), and we say that two quasi-geodesic rays are equivalent if they are within a finite Hausdorff distance. Moreover, the boundary \(\partial X\) does not depend on the choice of the base point \(o\in X\). Also, it carries a complete metric called \emph{visual distance} that also defines its topology, making \(\partial X\) a completely metrisable space. See \cite[\S 3.6]{das2017geometry} and \cite{vasala2005gromov} for detailed treatment.

\subsection{Quasi-morphisms and stable commutator length}\label{subsec: qm}
Let us first review the following definition:

\begin{definition}[Quasi-morphism]
A \emph{quasi-morphism} on a group $G$ is a map $\mu:G\rightarrow \mathbb{R}$ such that there is a least constant $D(\mu)\geq 0$, depending only on $\mu$, called the \emph{defect} of $\mu$, with the property that $$|\mu(gh)-\mu(g)-\mu(h)|\leq D(\mu), \text{ for all } g,h \in G.$$
\end{definition}

The set of all quasi-morphisms on a fixed group $G$ is easily seen to be a (real) vector space; we denote this vector space by $\widehat{Q}(G)$. Respectively by $C_{b}^{1}(G;\mathbb{R})$ and $\operatorname{Hom}(G;\R)=H^1(G;\mathbb{R})$ we denote the subspaces of $\widehat{Q}(G)$ consisting of (real-valued) bounded functions and of homomorphisms. 

A quasi-morphism $\mu$ is said to be \emph{homogeneous} if $\mu(g^{n})=n\mu(g)$ for any $g\in G$ and $n\in \mathbb{Z}$. We denote by $Q(G)$ the subspace of $\widehat{Q}(G)$ consisting of homogeneous quasi-morphisms. For any quasi-morphism $\mu\in \widehat{Q}(G)$, one can obtain a homogeneous quasi-morphism $\widetilde{\mu}\in Q(G)$, called the \emph{homogenisation} of $\mu$, as follows: 
$$
\widetilde{\mu}(g)\coloneqq \lim_{n\rightarrow \infty} \frac{1}{n} \mu(g^n).
$$ 
This limit always exists for each element $g$ of $G$ since the sequence $\big(\mu(g^n)\big)_n$ is sub-additive with bounded error. Moreover, for any $g\in G$, we have $|\widetilde{\mu}(g)-\mu(g) |\leq D(\mu)$, see for example \cite{bavard1991longueur}. In other words, a quasi-morphism $\mu$ is (uniquely) written as the sum of a homogeneous quasi-morphism $\widetilde{\mu}$ and a bounded function. By definition, bounded functions on groups are quasi-morphisms. We can then identify the quotient space $\widehat{Q}(G) / C_{b}^{1}(G;\mathbb{R})$ with $Q(G)$.

Note that $C_{b}^{1}(G;\mathbb{R})\cap \operatorname{Hom}(G;\R)=0$. We are then interested in the quotient spaces 
$$
Q(G)=\widehat{Q}(G)/C_{b}^{1}(G;\mathbb{R})
$$
and 
$$
\widetilde{Q}(G)=\widehat{Q}(G)/(C_{b}^{1}(G;\mathbb{R})+\operatorname{Hom}(G;\R))\simeq Q(G)/H^1(G;\mathbb{R})
$$
as any homogeneous quasi-morphism $\mu$ is invariant under conjugations, \emph{i.e.} for all \(a,b\in G\), 
$$ \mu(aba^{-1})=\mu(b)\,. $$ 

We now introduce an algebraic dual of the quasi-morphism, which is called the \emph{stable commutator length} that we will introduce in the following.
\begin{definition}[Commutator length]
A \emph{commutator} is an element in $G$ that can be expressed in the form $[a,b]=aba^{-1}b^{-1}$ for some $a,b\in G$. The subgroup generated by commutators is called the \emph{commutator subgroup} and is denoted by \([G,G]\). The \emph{commutator length} of an element $g\in G$, denoted by $\operatorname{cl}_G(g)$, is defined to be the word length of $g$ with respect to the set of all commutators:
$$
\operatorname{cl}_G(g)\coloneqq\inf \Big\{n\in \mathbb{N}_{\geq 0}\mid g= \prod_{i=1}^n [a_i,b_i];a_i,b_i\in G\Big\}\in \mathbb{N}_{\geq 0}\cup \{\infty\},
$$
where we allow $\operatorname{cl}_G(g)=\infty$. Note that $\operatorname{cl}_G(g)<\infty$ if and only if $g\in [G,G]$.
\end{definition}

\begin{definition}[Stable commutator length]
For $g\in [G,G]$, the \emph{stable commutator length} of $g$, denoted by $\operatorname{scl}_G(g)$ is the following limit:
\begin{equation} \label{eq: scl}
\operatorname{scl}_G(g)\coloneqq\lim_{n\to \infty} \frac{\operatorname{cl}_G(g^n)}{n}.
\end{equation}
\end{definition}
\begin{remark}
For each fixed $g\in [G,G]$, the function $n\mapsto \operatorname{cl}_G(g^n)$ is non-negative and sub-additive. Hence, the limit in \eqref{eq: scl} exists. However, we can further define the stable commutator length for general elements in \(G\). If $g\notin [G,G]$ but admits a power $g^k\in [G,G]$ for some \(k>0\), then we define $\operatorname{scl}_G(g)\coloneqq\operatorname{scl}_G(g^k)/k$, and by convention define $\operatorname{scl}_G(g)=\infty$ if no positive power of $g$ lies in $[G,G]$.
\end{remark}

The following result is called the \emph{Bavard duality} and indicates how stable commutator length is related to quasi-morphisms. See, for example, \cite{bavard1991longueur} or \cite[Theorem~2.70]{calegari2009scl} for detailed proof.
\begin{theorem}[Bavard Duality]\label{thm: Bavard Duality}
Let $G$ be a group. Then for any $g\in [G,G]$, we have the following equality:
$$
\operatorname{scl}_G(g)= \sup_{\varphi\in \widetilde{Q}(G)} \frac{|\varphi(g)|}{2D(\varphi)}\ .
$$
\end{theorem}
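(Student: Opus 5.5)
The statement is the classical Bavard duality, so I would follow the standard argument. It splits into two inequalities, each proved with homogeneous quasi-morphisms modulo homomorphisms. Throughout I use the facts recalled above: each class in \(\widetilde{Q}(G)\) has a homogeneous representative that is conjugation invariant, and \(|\widetilde{\mu}-\mu|\leq D(\mu)\).

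\emph{Easy direction (\(\geq\)).} Let \(\varphi\) be homogeneous. First I would show \(|\varphi([a,b])|\leq D(\varphi)\). This holds because \(\varphi(aba^{-1}b^{-1})\) is within \(D\) of \(\varphi(aba^{-1})+\varphi(b^{-1})=\varphi(b)-\varphi(b)=0\), using conjugation invariance and \(\varphi(b^{-1})=-\varphi(b)\). Next, a product of \(n\) commutators has \(|\varphi|\leq (2n-1)D(\varphi)\): each additional factor costs one more defect on top of its own bound \(D\). Writing \(g^m\) as a product of \(\operatorname{cl}_G(g^m)\) commutators, applying homogeneity, dividing by \(m\) and letting \(m\to\infty\) gives \(|\varphi(g)|\leq 2D(\varphi)\operatorname{scl}_G(g)\). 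Homomorphisms vanish on \([G,G]\), so the value is well defined on \(\widetilde{Q}(G)\); this also justifies taking a representative that minimises the defect.

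\emph{Hard direction (\(\leq\)).} Here one must produce quasi-morphisms that nearly realise \(\operatorname{scl}\), and I expect this step to be the main obstacle. I would work in the space \(B_1^H(G)\) of real group \(1\)-chains modulo the relations \(g^n\sim ng\) and \(hgh^{-1}\sim g\), restricted to null-homologous chains. The function \(\operatorname{scl}\) extends to this space and is a seminorm on it, which one checks via the surface (admissible map) interpretation of \(\operatorname{scl}\). If \(\operatorname{scl}(g)=0\) there is nothing to prove. Otherwise, Hahn--Banach gives a linear functional \(\ell\) with \(\ell(g)=\operatorname{scl}(g)\) and \(|\ell|\leq\operatorname{scl}\). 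The crux is showing that such an \(\ell\) comes from a homogeneous quasi-morphism \(\varphi\) with \(D(\varphi)\leq 1/2\). For this I would use the inequality \(\operatorname{scl}(g+h-gh)\leq 1/2\), which holds because a pair of pants bounds this chain. This yields \(|\varphi(gh)-\varphi(g)-\varphi(h)|\leq 1/2\), hence \(|\varphi(g)|/2D(\varphi)\geq \operatorname{scl}_G(g)\).

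Since this is a well-known theorem, the paper can simply cite \cite{bavard1991longueur} or \cite[Theorem~2.70]{calegari2009scl} for the full details of the functional-analytic step.
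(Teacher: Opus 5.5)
Your outline is correct and is the standard argument of \cite[Theorem~2.70]{calegari2009scl}: the easy bound via $|\varphi([a,b])|\leq D(\varphi)$, and Hahn--Banach on $B_1^H(G)$ combined with $\operatorname{scl}(g+h-gh)\leq 1/2$. The paper does not reprove the theorem; it only cites this reference together with \cite{bavard1991longueur}, so your approach matches what it relies on.

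Two minor points of precision. First, homogeneous representatives of a class in $\widetilde{Q}(G)$ differ by homomorphisms, so they all have the same defect and there is nothing to minimise; $D$ should be read as this common defect, since non-homogeneous representatives give neither a well-defined value at $g$ nor the same defect. Second, in the hard direction you should first extend $\ell$ linearly from $B_1^H(G)$ to all homogenised chains before setting $\varphi(h)=\ell(h)$, so that $\varphi$ is defined on all of $G$.
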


\subsection{Bestvina--Fujiwara quasi-morphisms} In the following, we will introduce the famous Bestvina--Fujiwara machinery from \cite{bestvina2002bounded}. For our convenience, we will restate some definitions and results from \cite{bestvina2002bounded}. However, readers should note that the terminology that we use here is different from the original work \cite{bestvina2002bounded}.

Recall that given a loxodromic element \(g\) acting on a \(\delta\)-hyperbolic space \(X\), there is a constant \(B\coloneqq B(K,L,\delta)\) such that any two \((K,L)\)-quasi-axis of \(g\) stay within the \(B\)-neighbourhood of each other.

\begin{definition}[Independent loxodromics]
Let \(X\) be a \(\delta\)-hyperbolic graph and \(G\) be a group acting on it by isometries. Let us consider two loxodromic elements \(f,g\in G\) with respective quasi-axes \(A_f,A_g\). We say that they are \emph{independent}, denoted by \(f\nsim g\), if for any \(B\geq B(K,L,\delta)\), there exists a segment \(J\subset A_g\) such that for any \(h\in G\), the translate \(hJ\) is not contained in the \(B\)-neighbourhood of \(A_f\). Otherwise, we say that they are dependent and we write \(f\sim g\).
\end{definition}

We remark that if \(f\nsim g\), then by taking \(h=\Id\), we can conclude that \(A_f\) and \(A_g\) are not within a finite Hausdorff distance. This implies that the existence of two independent loxodromic elements ensures that the \(G\)-action on \(X\) is \emph{of general type} (or also \emph{non-elementary} in literature); see, for example, \cite{caprace2015amenable}.

\begin{theorem}[Theorem 1, \cite{bestvina2002bounded}]\label{Bestvina--Fujiwara machinery}
Suppose that \(G\) acts on a \(\delta\)-hyperbolic graph by isometries and the action is of general type. Suppose also that there exist independent loxodromic elements \(f\nsim g\). Then the space $\widetilde{Q}(G)$ is infinite-dimensional.
\end{theorem}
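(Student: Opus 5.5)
The statement is Theorem 1 of \cite{bestvina2002bounded}, quoted here as background and not proved in the present paper. Still, here is how I would prove it, following the original argument.

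The approach is to build explicit counting quasi-morphisms. For a finite oriented path (a quasi-geodesic segment) \(w\) in \(X\), define \(c_w(\gamma)\) as the maximal number of non-overlapping copies of \(G\)-translates of \(w\) appearing in a path \(\gamma\). Fix a basepoint \(x_0\) and a window \(W>|w|\). For \(g\in G\), set
\[c_{w,W}(g)=d(x_0,gx_0)-\inf_{\alpha}\big(|\alpha|-W c_w(\alpha)\big),\]
where \(\alpha\) ranges over paths from \(x_0\) to \(gx_0\). Then put \(h_w=c_{w,W}-c_{w^{-1},W}\).

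\textbf{Step 1: \(h_w\) is a quasi-morphism.} The realizing paths \(\alpha\) are uniform quasi-geodesics, because any detour costs length faster than the copies of \(w\) it gains. In a \(\delta\)-hyperbolic space, paths realizing \(g_1\), \(g_2\) and \(g_1g_2\) form a thin triangle. Copies of \(w\) along its sides can be transferred between sides except near the center of the triangle, and there only boundedly many copies fit. This gives a defect bound depending only on \(\delta\), \(W\) and the quasi-geodesic constants, and the bound is uniform in \(w\).

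\textbf{Step 2: produce many independent words.} Since \(f\nsim g\), standard ping-pong gives that suitable powers \(f^N\) and \(g^N\) generate a free group acting with quasi-geodesic orbits. I would take words \(w_k\) built from long segments of the axes, for example \(k_i=f^{a_i}g^{b_i}\) with distinct exponents. The independence hypothesis is exactly what is needed here. Segments of \(A_g\) have no translate fellow-travelling \(A_f\), so long segments of \(k_i\)-axes cannot fellow-travel any translate of the axis of \(k_i^{-1}\) or of \(k_j\) for \(j\ne i\).

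\textbf{Step 3: evaluate.} The homogenizations of the \(h_{w_i}\) then satisfy \(\tilde h_{w_i}(k_j)\) growing linearly when \(i=j\) and being bounded (hence \(0\)) when \(i\neq j\). A triangular, or infinite-diagonal, matrix argument then shows the classes are linearly independent modulo \(\mathrm{Hom}(G;\R)\). To pass modulo homomorphisms, note that each \(k_j\) can be chosen in \([G,G]\), for example as commutators of powers of \(f\) and \(g\). All homomorphisms vanish on such elements, while the quasi-morphisms do not.

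The main obstacle is Step 2. One must show that \(w^{-1}\) does not reappear in the realizing paths for powers of \(w\). This is precisely where the hypothesis that no translate of \(J\) lies near \(A_f\) enters, via a careful Morse-lemma estimate.
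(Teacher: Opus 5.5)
There is a genuine gap in Step~2. Your overall plan is the right one: Brooks-type counting quasi-morphisms $h_w$, test elements in $[G,G]$ to kill $\mathrm{Hom}(G;\R)$, and a triangular matrix. But the hypothesis $f\nsim g$ does not by itself make your test elements non-quasi-invertible, and for the words you propose this conclusion can actually fail.

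Independence only ensures that a $G$-translate of a long segment of the axis of a word in $f^N,g^N$ sends $f$-pieces to $f$-pieces and $g$-pieces to $g$-pieces, with lengths preserved up to bounded error. It says nothing about whether $G$ contains elements reversing translates of $A_f$ or $A_g$; that is, $f\sim f^{-1}$ or $g\sim g^{-1}$ is still allowed.

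For a concrete counterexample, let $G=F(a,b)\rtimes\mathbb{Z}/2$ act on the Cayley tree of $F(a,b)$, with the involution $\tau$ acting via $a\mapsto a$, $b\mapsto b^{-1}$. The action preserves edge labels, so $a\nsim b$ and the action is of general type. Yet for $k=[a^m,b^n]$ one computes
\[(b^n\tau)\,k\,(b^n\tau)^{-1}=b^n a^m b^{-n}a^{-m}=k^{-1}.\]
So $k$ is conjugate to its inverse, and \emph{every} homogeneous quasi-morphism vanishes on it. All your diagonal entries $\tilde h_{w_i}(k_i)$ are therefore $0$. Replacing $\tau$ by $\tau'\colon a\mapsto a^{-1},\ b\mapsto b^{-1}$ gives $(a^m\tau')(a^mb^n)(a^m\tau')^{-1}=(a^mb^n)^{-1}$, so your first candidates $f^{a_i}g^{b_i}$ fail as well. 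Your closing remark, that non-quasi-invertibility follows from the hypothesis that no translate of $J$ lies near $A_f$ via a Morse-lemma estimate, is exactly where the argument breaks.

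The missing ingredient is combinatorial. Since translates may reverse individual pieces, you need words whose cyclic sequence of (type, length) of pieces is not invariant under reversal, and differs for different $i$. One choice is
\[k_i=f^{c_iN}g^{c_iN}f^{2c_iN}g^{2c_iN}f^{-3c_iN}g^{-3c_iN}\]
with distinct $c_i$. These have zero exponent sums, so they also lie in $[G,G]$, which handles the homomorphism part. Only with such a pattern do $f\nsim g$ and the Morse lemma rule out $k_i\sim k_i^{-1}$ and $k_i\sim k_j^{\pm1}$. This is the step the paper attributes to \cite[Proposition~2]{bestvina2002bounded} in \autoref{rem: existence of non coarsely invertible}.

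Two smaller corrections:
\begin{itemize}
\item The window must satisfy $0<W<|w|$, not $W>|w|$. Your own justification, that detours cost more length than the copies they gain, is precisely the condition $W<|w|$. For $W>|w|$ the infimum defining $c_{w,W}$ need not be bounded below.
\item In general each $w_i$ can only be taken long relative to finitely many of $k_1,\dots,k_{i-1}$ and $k_i^{-1}$. So what you control is $\tilde h_{w_i}(k_j)=0$ for $j<i$, and the independence argument should be run as a triangular one rather than a diagonal one.
\end{itemize}
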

All the quasi-morphisms in \autoref{Bestvina--Fujiwara machinery} can be constructed explicitly. In the remanent of this subsection, we will briefly recall their construction. The model case of the free group is due to Brooks~\cite{brooks1980some}.

Let \(w\) be a finite (oriented) path in \(X\). Let \(|w|\) denote the length of \(w\). For \(g\in G\), we denote by the composition \(g\circ w\) a copy of \(w\) by \(g\)-translation. It is clear that \(|g\circ w|=|w|\). Let \(\alpha\) be a finite path. We define
$$
|\alpha|_{w}=\{\text{the maximal number of non-overlapping copies of $w$ in $\alpha$}\}.
$$
Suppose that \(x,y\in X\) are two vertices and that \(R\) is an integer with \(0<W<|w|\). We define the integer 
$$
c_{w,W}(x,y)=d(x,y)-\inf_{\alpha}(|\alpha|-W|\alpha|_{w}),
$$
where \(\alpha\) ranges over all paths from \(x\) to \(y\). Fixing a base point \(x_{0}\in X\), we define \(h_{w}\colon G\rightarrow \mathbb{R}\) by
$$
h_{w}(g)= c_{w,W}(x_{0},g(x_{0}))-c_{w^{-1},W}(x_{0},g(x_{0})),
$$
which is a quasi-morphism defined on \(G\) but not necessarily homogeneous.

The following theorem provides a sufficient condition for an element \(g\in G\) to have positive stable commutator length.
\begin{theorem}[Proposition~5, \cite{bestvina2002bounded}]\label{thm: non coarsely invertible}
Let \(X\) be a \(\delta\)-hyperbolic space and \(G\) be a group acting on \(X\) by isometries. If \(f\in G\) is a loxodromic element such that \(f\nsim f^{-1}\), then there is a homogeneous quasi-morphism that is unbounded on the group generated by \(f\), and \emph{a fortiori} taking non-zero values on \(f\). In particular, \(f\) has positive stable commutator length.
\end{theorem}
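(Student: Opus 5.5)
The plan is to reduce positive stable commutator length to the existence of a homogeneous quasi-morphism taking a non-zero value on \(f\), and to build that quasi-morphism with the Bestvina--Fujiwara counting construction along the quasi-axis of \(f\).

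\emph{Step 1: construct a non-trivial homogeneous quasi-morphism on \(\langle f\rangle\).} Let \(f\) be loxodromic with a \((K,L)\)-quasi-axis \(A_f\). Since \(f\nsim f^{-1}\), the definition of independence (applied with \(B\geq B(K,L,\delta)\) large, depending only on \(\delta,K,L\)) gives a segment \(J\subset A_{f^{-1}}=A_f\) such that no translate \(hJ\) lies in the \(B\)-neighbourhood of \(A_f\) with the orientation of \(f^{-1}\). Enlarging \(J\) if needed, we may take \(w\) to be a path along \(A_f\) from \(x_0\) to \(f^{m}(x_0)\) with \(m\) large, oriented in the \(f\)-direction. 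The reversed path \(w^{-1}\) then has no \(G\)-translate fellow-travelling \(A_f\) coherently with \(f\).

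Fix \(0<W<|w|\) and set \(h_w(g)=c_{w,W}(x_0,gx_0)-c_{w^{-1},W}(x_0,gx_0)\). The standard Bestvina--Fujiwara argument shows that \(h_w\) is a quasi-morphism with defect bounded in terms of \(\delta\) and \(W\). In outline, realising paths can be chosen to be uniform quasi-geodesics, and slim triangles allow copies of \(w\) to be exchanged across the sides of a triangle up to a bounded number of errors.

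\emph{Step 2: show that \(h_w(f^n)\) grows linearly.} A path from \(x_0\) to \(f^{n}x_0\) along \(A_f\) contains roughly \(n/m\) disjoint copies of \(w\), so \(c_{w,W}(x_0,f^nx_0)\geq W\lfloor n/m\rfloor - C\). Next we must bound \(c_{w^{-1},W}(x_0,f^nx_0)\) by a constant. Any path that realises a large value of \(c_{w^{-1},W}\) is a quasi-geodesic, so by the Morse Lemma it stays in the \(B\)-neighbourhood of \(A_f\). A copy \(h\circ w^{-1}\) inside it would then fellow-travel \(A_f\) in the \(f\)-direction. Reversing orientation, this produces a translate of \(J\) within \(B\) of \(A_f\), contradicting the choice of \(J\). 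Hence only boundedly many copies of \(w^{-1}\) occur near the endpoints, and \(h_w(f^n)\geq cn-C'\) with \(c>0\). The homogenisation \(\widetilde{h}_w\) therefore satisfies \(\widetilde{h}_w(f)\geq c>0\), and it is unbounded on \(\langle f\rangle\).

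\emph{Step 3: apply Bavard duality.} If \(f^k\in[G,G]\) for some \(k>0\), then \autoref{thm: Bavard Duality} applied to \(f^k\) gives \(\scl_G(f)=\scl_G(f^k)/k\geq |\widetilde{h}_w(f^k)|/(2kD)>0\). Here we use that \(\widetilde{h}_w\) is non-zero modulo \(\mathrm{Hom}(G;\R)\). This holds because homomorphisms vanish on \([G,G]\), while \(\widetilde{h}_w(f^k)\neq 0\), so \(\widetilde{h}_w\) is not a homomorphism. If no power of \(f\) lies in \([G,G]\), then \(\scl_G(f)=\infty\) by convention.

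The main obstacle is Step 2. One must pass carefully from the statement ``no translate of \(J\) lies near \(A_f\) with reversed orientation'' to the claim that a realising path for \(c_{w^{-1},W}\) contains only boundedly many copies of \(w^{-1}\). This requires first showing that the realising paths are quasi-geodesics with constants independent of \(n\); that is the quasi-geodesic lemma of Bestvina--Fujiwara, and I would follow their argument here.
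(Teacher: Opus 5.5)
The paper gives no proof of this statement; it quotes \cite[Proposition~5]{bestvina2002bounded}. Your outline follows the Bestvina--Fujiwara route: a counting quasi-morphism $h_w$ with $w$ a long piece of the axis, then $f\nsim f^{-1}$ to forbid copies of $w^{-1}$ in realising paths, then Bavard duality. You were right to read independence with orientations. As printed, the paper's definition has none, and $h=\Id$ with $J\subset A_{f^{-1}}=A_f$ would make $f\nsim f^{-1}$ impossible. The exclusion argument and Step~3 are fine.

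The gap is the lower bound $c_{w,W}(x_0,f^nx_0)\geq W\lfloor n/m\rfloor-C$ in Step~2. Take $n=km$ and $\alpha=w\cdot f^mw\cdots f^{(k-1)m}w$. The definition then only gives
\[
c_{w,W}(x_0,f^{km}x_0)\geq kW-\big(k|w|-d(x_0,f^{km}x_0)\big).
\]
The correction term is at least $k\big(|w|-d(x_0,f^mx_0)\big)$, which grows linearly in $k$ rather than being a constant. For an arc $w$ of a $(K,L)$-quasi-axis, the excess $|w|-d(x_0,f^mx_0)$ can be of order $(1-1/K)|w|$. That exceeds $W$ when, for instance, $K>2$ and $|w|\geq 2W$; and $|w|\geq 2W$ is exactly what you need for realising paths to be uniform quasi-geodesics. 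In that case the bound is vacuous.

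The repair is to take $w$ to be a geodesic from $x_0\in A_f$ to $f^mx_0$. By the Morse lemma, the points $f^{jm}x_0$ lie within a constant $B_0=B_0(K,L,\delta)$ of a geodesic $[x_0,f^{km}x_0]$, and for $m$ large they lie in order along it. So each junction of the concatenation costs at most $E=2B_0$, and $c_{w,W}(x_0,f^{km}x_0)\geq k(W-E)$.

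This also fixes the order of the constants, which your Step~1 leaves inconsistent. You choose $B$ depending only on $\delta,K,L$ but fix $W$ afterwards, with only $W<|w|$. Yet the Morse constant of realising paths, and hence the $B$ used in Step~2, depends on $W$ and on $|w|/(|w|-W)$. The correct order is:
\begin{enumerate}
\item choose $W>E$;
\item choose $B$ at least the Morse constant of $(2,4W)$-quasi-geodesics plus $2B_0$;
\item choose $J$ from $f\nsim f^{-1}$ for this $B$;
\item choose $m$ so large that $|w|\geq 2W$ and $w$ fellow-travels an $f$-translate of $J$.
\end{enumerate}
With these choices, a realising path for $c_{w^{-1},W}(x_0,f^nx_0)$ contains no copy of $w^{-1}$, so that term is $0$. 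Hence $\widetilde{h}_w(f)=\widetilde{h}_w(f^m)/m\geq (W-E)/m>0$, and the rest of your argument goes through unchanged.
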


\begin{remark}\label{rem: existence of non coarsely invertible}
We say that a loxodromic element \(f\in G\) is \emph{quasi-invertible} if \(f\sim f^{-1}\). From \cite[Proposition~2]{bestvina2002bounded}, we can see that if \(G\) acts on a Gromov hyperbolic graph \(X\) by isometries and the action is of general type with two independent loxodromic elements \(g_1,g_2\in G\), then there are infinitely many loxodromic elements in the subgroup of \(\langle g_1,g_2\rangle<G\) that are not quasi-invertible. In particular, the non-quasi-invertible element \(f\) in \autoref{thm: non coarsely invertible} can be chosen in \(\langle g_1, g_2\rangle\).
\end{remark}

\subsection{Fine curve graph}\label{subsec: fine curve graph} Let \(S=S_{g,b}\) be a connected closed oriented surface of finite type of genus \(g \geq 2\), with \(b\) boundary components. Let \(\xi(S)=3g+b-3\) be the complexity of \(S\). We denote by \(\partial S\) the boundary of $S$. We say that a surface is \emph{non-sporadic} if \(\xi(S)\geq 2\).

An {\it essential simple closed curve} \(\gamma\) on \(S\) is a proper \(C^0\)-embedding of the circle, \(\gamma: S^1 \hookrightarrow S\), with the property that \(\gamma\) does not bound a disc nor a boundary component. In the sequel, for most of the time, we will identify a curve with its image.

In \cite{bowden2022quasi}, Bowden, Hensel and Webb introduce an analogue to curve graph for the group of homeomorphisms on \(S\), namely, the {\it fine curve graph}, denoted by \(\mathcal{C}^\dagger(S)\). It is a graph whose vertices correspond to essential simple closed curves on \(S\) and two vertices in this graph are connected by an edge if the corresponding curves are disjoint in \(S\). Endowing \(\mathcal{C}^\dagger(S)\) with the simplicial distance \(d^\dagger\), there exists a \(\delta>0\), independent of the complexity of the surface, such that the fine curve graph \(\mathcal{C}^\dagger(S)\) is \(\delta\)-hyperbolic \cite[Theorem 3.8]{bowden2022quasi}. The idea is to approximate the fine curve graph by the \emph{surviving curve graph} of finitely punctured surfaces, and the uniformity of the hyperbolic constant \(\delta\) find its root in the uniform hyperbolicity of the non-separating curve graphs \cite{rasmussen2020uniform}.

To be more precise, for a finite subset \(P\subset S\), let us denote by \(\C^s(S\setminus P)\) the surviving curve graph on \(S\setminus P\). The vertices of this graph are the isotopy classes of essential simple closed curves on \(S\setminus P\) that remain essential on \(S\) and we connect two vertices if the corresponding two curves admit disjoint representatives. Now, we have the following:
\begin{lemma}[Lemma 3.4, \cite{bowden2022quasi}]\label{lem: dagger}
Let \(\alpha,\beta\in \C^\dagger(S)\) be two transverse curves. Then for a finite subset \(P\subset S\) such that \(\alpha,\beta\) are in minimal position on \(S\setminus P\), we have
\[d^\dagger(\alpha,\beta)=d_{\C^s(S\setminus P)}([\alpha]_{S\setminus P},[\beta]_{S\setminus P}).\]
\end{lemma}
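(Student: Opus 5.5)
The plan is to prove the two inequalities separately. Throughout, I take \(P\) disjoint from \(\alpha\cup\beta\); this is implicit in the statement, since \([\alpha]_{S\setminus P}\) and \([\beta]_{S\setminus P}\) must make sense. Both classes are vertices of \(\C^s(S\setminus P)\), because \(\alpha,\beta\) are essential in \(S\).

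\emph{Inequality \(d_{\C^s(S\setminus P)}\le d^\dagger\).} Take a geodesic \(\alpha=\gamma_0,\gamma_1,\dots,\gamma_n=\beta\) in \(\C^\dagger(S)\). The intermediate curves \(\gamma_1,\dots,\gamma_{n-1}\) may pass through points of \(P\), so I push them off \(P\) one at a time, in order of the index. Suppose \(\gamma_1,\dots,\gamma_{i-1}\) have already been modified. Disjoint compact sets are at positive distance, so I choose \(\varepsilon_i>0\) smaller than the distance from \(\gamma_i\) to \(\gamma_{i-1}\) (already modified) and to \(\gamma_{i+1}\) (not yet modified). I then modify \(\gamma_i\) by an \(\varepsilon_i\)-small isotopy of \(S\), supported in small discs around the finitely many points of \(P\cap\gamma_i\), that pushes the curve off these points. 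The endpoints \(\alpha,\beta\) already avoid \(P\) and are left unchanged. This keeps consecutive curves disjoint and keeps every curve essential in \(S\), since a small isotopy in \(S\) preserves essentiality. Each modified curve lies in \(S\setminus P\) and is essential in \(S\). Hence it is essential and non-peripheral in \(S\setminus P\): a curve bounding a (punctured) disc in \(S\setminus P\) would bound a disc in \(S\). So each curve defines a vertex of \(\C^s(S\setminus P)\). Consecutive curves are disjoint, so their classes are adjacent or equal, and we obtain a path of length \(\le n\) from \([\alpha]_{S\setminus P}\) to \([\beta]_{S\setminus P}\).

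\emph{Inequality \(d^\dagger\le d_{\C^s(S\setminus P)}\).} Take a geodesic \([\alpha]=c_0,c_1,\dots,c_n=[\beta]\) in \(\C^s(S\setminus P)\). Fix a complete hyperbolic metric on \(S\setminus P\), with cusps at \(P\), and let \(\hat c_i\) be the geodesic representative of \(c_i\). Geodesic representatives of classes with disjoint representatives are disjoint, so consecutive \(\hat c_i\) are disjoint. Each \(\hat c_i\) stays away from the cusps, so it is a simple closed curve in \(S\), and it is essential in \(S\) because \(c_i\) is surviving. It remains to move \(\hat c_0,\hat c_n\) back to \(\alpha,\beta\). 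Since \(\alpha,\beta\) are in minimal position on \(S\setminus P\), as are \(\hat c_0,\hat c_n\), there is a homeomorphism \(h\) of \(S\setminus P\), isotopic to the identity, with \(h(\alpha)=\hat c_0\) and \(h(\beta)=\hat c_n\). This homeomorphism extends to \(S\) by fixing \(P\). Then \(\alpha,h^{-1}(\hat c_1),\dots,h^{-1}(\hat c_{n-1}),\beta\) is a path of length \(n\) in \(\C^\dagger(S)\).

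\emph{Main obstacle.} I expect the hard step to be constructing \(h\) for merely \(C^0\) curves. The plan is to use transversality: \(\alpha\) and \(\beta\) cross at finitely many points and are locally flat there, so the Schoenflies theorem applies. Minimality means there are no bigons or punctured-free half-discs, by the bigon criterion. Then \(S\setminus(\alpha\cup\beta\cup P)\) and \(S\setminus(\hat c_0\cup\hat c_n\cup P)\) have the same combinatorial cell structure: the two graphs are isomorphic as embedded graphs, with faces of matching topological type. One builds \(h\) face by face using Schoenflies and Alexander-trick extensions. Finally, \(h\) can be taken isotopic to the identity because the two pairs are isotopic and \(\mathrm{Homeo}_0\) acts transitively on such minimal configurations.
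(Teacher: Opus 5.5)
Your proposal matches the standard proof in \cite{bowden2022quasi}, which this paper cites without reproducing: pushing a \(\C^\dagger(S)\)-geodesic off \(P\) gives \(d_{\C^s(S\setminus P)}\le d^\dagger\), and pulling back disjoint hyperbolic geodesic representatives along a homeomorphism that straightens the minimal-position pair \(\alpha,\beta\) gives the converse. The straightening step is the classical simultaneous-isotopy fact for curves in minimal position (e.g.\ Lemma~2.9 in Farb--Margalit's \emph{Primer}, applied once the transverse \(C^0\) curves are tamed via Schoenflies), so cite it rather than rebuild it; the \(\Homeo_0\)-transitivity claim in your sketch is circular, but you do not need \(h\in\Homeo_0\) at all, since any homeomorphism of \(S\) preserving \(P\) keeps curves essential and disjoint; finally, your converse, like the statement itself, tacitly requires \([\alpha]_{S\setminus P}\neq[\beta]_{S\setminus P}\) (two disjoint curves cobounding a \(P\)-free annulus have \(d^\dagger=1\) but surviving distance \(0\)), which is automatic once \(\alpha\cap\beta\neq\emptyset\).
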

\begin{remark}
Although \cite[Lemma 3.4]{bowden2022quasi} is originally stated for non-sporadic closed surfaces, the arguments therein also hold for torus and surfaces with boundary components.
\end{remark}

Similarly to the classical case of curve graphs, on the fine curve graph, the distance between two transverse curves is also controlled by their intersection number.
\begin{proposition}[Proposition 3.8, \cite{long2025connected}]\label{prop: intersection}
Let \(S\) be above and let \(\alpha,\beta\in \mathcal{C}^\dagger(S)\) be two transverse curves on \(S\). Then \(d^\dagger(\alpha,\beta)\leq 2|\alpha\cap \beta|+1\).
\end{proposition}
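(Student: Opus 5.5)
The plan is to pass to a punctured surface on which the two curves are in minimal position, and bound the surviving curve graph distance there by the classical surgery argument.

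\emph{Reduction.} Since \(\alpha\) and \(\beta\) are transverse, \(\alpha\cap\beta\) is finite, say \(n=|\alpha\cap\beta|\). If \(n=0\) the curves are disjoint, so \(d^\dagger(\alpha,\beta)\leq 1\) and the bound holds. For \(n\geq 1\), choose a finite set \(P\subset S\setminus(\alpha\cup\beta)\) with one point in every complementary disc of \(\alpha\cup\beta\) (finitely many components, by transversality). Then \(\alpha\cup\beta\) bounds no bigon in \(S\setminus P\), so the curves are in minimal position there. By \autoref{lem: dagger},
\[
d^\dagger(\alpha,\beta)=d_{\C^s(S\setminus P)}\bigl([\alpha],[\beta]\bigr),
\]
and it suffices to bound the right-hand side by \(2n+1\).

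\emph{Induction on \(n\).} I would show there is a curve \(\gamma\) in \(S\setminus P\) with:
\begin{itemize}
\item \(\gamma\) disjoint from \(\alpha\) (after a small push-off);
\item \(\gamma\) essential in \(S\), not merely in \(S\setminus P\);
\item \(|\gamma\cap\beta|\leq n-1\), or at worst \(|\gamma\cap\beta|\leq n/2\).
\end{itemize}
The candidate is the standard surgery. Pick an arc \(b\subset\beta\) between consecutive intersection points with \(\alpha\), with interior disjoint from \(\alpha\). The two arcs of \(\alpha\) cut at the endpoints of \(b\), each concatenated with \(b\), give two curves \(\gamma_1,\gamma_2\) disjoint from \(\alpha\) after push-off. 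Each meets \(\beta\) in at most the number of points of \(\alpha\cap\beta\) on the chosen subarc of \(\alpha\), so at most \(n-1\) points. Induction, or more directly \(d^\dagger(\alpha,\beta)\leq 1+d^\dagger(\gamma_i,\beta)\), then gives a bound linear in \(n\). The constant \(2n+1\) is generous and allows two steps per intersection point, e.g.\ passing through an auxiliary curve when the surgery curve is inessential.

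\emph{Main obstacle.} The hard part is surviving essentiality: \(\gamma_1,\gamma_2\) are essential in \(S\setminus P\), but one may bound a disc in \(S\) containing punctures. My first attempt is the homological observation that \([\gamma_1]\pm[\gamma_2]=\pm[\alpha]\) (up to orientation). So if \(\alpha\) is non-separating, at least one \(\gamma_i\) is non-separating and hence essential in \(S\).

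If \(\alpha\) is separating, I would first spend one step to replace \(\alpha\) by a non-separating curve \(\alpha'\) disjoint from it. This costs one extra edge and possibly extra intersections with \(\beta\). That cost must be controlled, for instance by choosing \(\alpha'\) inside a component of \(S\setminus\alpha\) following arcs of \(\beta\), so that \(|\alpha'\cap\beta|\leq n\) or so. This bookkeeping is exactly where the factor \(2\) in \(2n+1\) would be consumed.

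Finally, I would check that the disc case, where \(\gamma_i\) bounds a disc in \(S\) containing points of \(P\), cannot occur for both curves simultaneously. Otherwise \(\alpha\) would bound a disc or be inessential, contradicting that \(\alpha\) is essential.
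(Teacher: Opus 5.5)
There is a genuine gap in the inductive step, and it is not the one you flag. You assert that both surgery curves \(\gamma_i=a_i\cup b\) become disjoint from \(\alpha\) after a push-off. That is true only when the arc \(b\subset\beta\) leaves \(\alpha\) at \(p\) and returns to \(\alpha\) at \(q\) on the \emph{same} side of \(\alpha\). If it returns from the other side, then \([\gamma_i]\cdot[\alpha]=\pm1\), so every curve isotopic to \(\gamma_i\) meets \(\alpha\). The inequality \(d^\dagger(\alpha,\beta)\le 1+d^\dagger(\gamma_i,\beta)\) is then simply unavailable. A good choice of \(b\) does not avoid this. Whenever all points of \(\alpha\cap\beta\) have the same sign, every arc of \(\beta\setminus\alpha\) crosses from one side of \(\alpha\) to the other. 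This always happens for \(n=1\), where moreover \(p=q\) and your surgery degenerates.

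This opposite-side case is exactly what the factor \(2\) in \(2n+1\) pays for. Let \(k_i\) be the number of points of \(\alpha\cap\beta\) in the interior of \(a_i\). A push-off \(\gamma_i'\) can be arranged to meet \(\alpha\) exactly once and \(\beta\) in at most \(k_i+1\le n-1\) points. Since \(b\subset\beta\), you must also push \(b\) off \(\beta\), and the corners at \(p,q\) contribute; choosing the side suitably costs at most one extra point, so your count ``at most the number of points on the subarc'' needs this correction. A curve meeting \(\alpha\) once is non-separating, hence essential in \(S\), so there is no surviving issue. The boundary of a regular neighbourhood of \(\alpha\cup\gamma_i'\) is an essential curve (genus \(\geq 2\)) disjoint from both, so \(d^\dagger(\alpha,\gamma_i')\le 2\).

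In the same-side case your last paragraph already gives essentiality. Capping two boundary curves of the pair of pants \(N(\alpha\cup b)\) by discs would make \(\alpha\) bound a disc; if \(S\) has boundary, you must also exclude both \(\gamma_i'\) being peripheral. So the homology argument and the unquantified detour through a non-separating \(\alpha'\) are unnecessary. Indeed, if \(\alpha\) separates, each arc of \(\beta\setminus\alpha\) lies in one complementary component, so you are automatically in the same-side case.

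In both cases you get \(\gamma\) with \(d^\dagger(\alpha,\gamma)\le 2\) and \(|\gamma\cap\beta|\le n-1\). The base cases are \(n=0\) and \(n=1\); for \(n=1\), the boundary of \(N(\alpha\cup\beta)\) gives distance \(2\). Induction then yields \(2+2(n-1)+1=2n+1\). The passage to \(S\setminus P\) plays no role, since the induction runs directly in \(\C^\dagger(S)\).
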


Let \(Y\) be an essential non-sporadic proper subsurface of \(S\), \emph{i.e.} \(\partial Y\) is a finite collection of essential curves on \(S\). Let \(\alpha \in \mathcal{C}^\dagger(S)\) be a simple closed curve on \(S\). We say that \(\alpha\) intersects \(Y\) {\it essentially} if at least one connected component of \(\alpha \cap Y\) is an essential arc or curve in \(Y\), {\it i.e.}, not isotopically trivial in \(Y\). It is worth remarking that a general curve \(\alpha\) may not intersect \(\partial Y\) transversely and this notion of essential intersection may not be invariant under isotopy.

The subsurface projection of a curve on \(S\) that is essentially intersecting \(Y\) can be defined as follows:
\begin{definition}[Subsurface projection]\label{def: subsurface proj}
Suppose that $S$ and $Y$ are given as above. Let $\mathcal{C}^\dagger(Y)$ be the fine curve graph for $Y$ and let $\mathcal{P}\left(\mathcal{C}^\dagger(Y)\right)$ be its power set. We define a map $\pi_Y\colon \mathcal{C}^\dagger(S)\to\mathcal{P}\left(\mathcal{C}^\dagger(Y)\right)$ in the following way: for each $\alpha\in \mathcal{C}^\dagger(S)$, the image $\pi_Y(\alpha)$ is defined as
\begin{itemize}[topsep=0pt, itemsep=-1ex, partopsep=1ex, parsep=1ex ]
    \item $\{\alpha\}$ if $\alpha\subset Y$;
    \item $\emptyset$ if $\alpha$ does not intersect $Y$ essentially;
    \item all essential curves in \(\C^\dagger(Y)\) that is disjoint from one essential subarc of \(\alpha\cap Y\), if $a\not\subset Y$ but $\alpha$ intersects $Y$ essentially.
\end{itemize}
The map $\pi_Y$ is called the {\it subsurface projection} of $\alpha$ on $Y$.
\end{definition}

Moreover, this projection is in fact coarsely well-defined in the following sense:
\begin{proposition}[Proposition 4.2, \cite{long2025connected}]\label{prop: bdd diam}
With the simplicial distance $\big(\mathcal{C}^\dagger(Y),d^\dagger_Y\big)$, for any curve $\alpha \in \mathcal{C}^\dagger(S)$ intersecting \(Y\) essentially, the diameter of the set $\pi_Y(\alpha)$ is bounded in $\mathcal{C}^\dagger(Y)$, {\it i.e.}, $\diam_{d^\dagger_Y}(\pi_Y(\alpha)) \leq 12.$
\end{proposition}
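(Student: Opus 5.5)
The plan is to reduce the diameter bound to a bounded-intersection estimate between a few explicitly built curves, and then apply \autoref{prop: intersection}. Let \(c,c'\in\pi_Y(\alpha)\). By \autoref{def: subsurface proj} there are essential subarcs \(a,b\) of \(\alpha\cap Y\) with \(c\cap a=\emptyset\) and \(c'\cap b=\emptyset\); possibly \(a=b\). Since \(\alpha\) is simple, \(a\) and \(b\) are either equal or disjoint. I will bound \(d^\dagger_Y(c,c')\) by passing through two auxiliary curves \(\gamma_a,\gamma_b\) built directly from the arcs.

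\textbf{Step 1: an auxiliary curve \(\gamma_a\) at distance at most \(1\) from \(c\).} Let \(\partial_1,\partial_2\) be the components of \(\partial Y\) containing the endpoints of \(a\) (possibly \(\partial_1=\partial_2\)). Take a regular neighbourhood \(N_a\) of \(a\cup\partial_1\cup\partial_2\) in \(Y\); it is a pair of pants or a once-bordered annulus-type piece. Let \(\gamma_a\) be a boundary component of \(N_a\) lying in the interior of \(Y\).
\begin{itemize}
\item Since \(Y\) is non-sporadic and \(a\) is essential, at least one such component is essential and non-peripheral in \(Y\). This is the classical argument for arc-to-curve projections, and it is purely topological, so it carries over verbatim.
\item The curve \(c\) is compact and lies in the interior of \(Y\), while \(c\cap a=\emptyset\). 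Hence \(c\) has positive distance from \(a\cup\partial Y\), and we may choose \(N_a\) so thin that \(N_a\cap c=\emptyset\).
\end{itemize}
Then \(\gamma_a\) is disjoint from \(c\), so \(d^\dagger_Y(c,\gamma_a)\le 1\). The choice of \(\gamma_a\) depends on \(c\) only through the thickness of \(N_a\). The same construction with \(b\) and \(c'\) gives \(\gamma_b\) with \(d^\dagger_Y(c',\gamma_b)\le 1\).

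\textbf{Step 2: bounding \(d^\dagger_Y(\gamma_a,\gamma_b)\).} First choose the neighbourhoods \(N_a,N_b\) in general position: transverse to each other, and with the collars around shared boundary components of \(Y\) nested. If \(a\) and \(b\) are disjoint, \(\gamma_a\) and \(\gamma_b\) then meet only near the boundary components of \(Y\) that both arcs touch. Near each such component, \(\gamma_a\) runs parallel to \(\partial_i\) except along a thin band following \(a\), and likewise for \(\gamma_b\). A local count of crossings therefore gives \(|\gamma_a\cap\gamma_b|\le 4\) after isotoping within the allowed thin neighbourhoods, using that \(a\cap b=\emptyset\). If \(a=b\), take \(N_b\) strictly inside \(N_a\), so that the two curves are disjoint or equal. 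In either case \autoref{prop: intersection} yields
\[
d^\dagger_Y(\gamma_a,\gamma_b)\le 2\cdot 4+1=9 .
\]

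\textbf{Step 3: conclusion.} By the triangle inequality,
\[
d^\dagger_Y(c,c')\le 1+9+1=11\le 12 .
\]

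I expect the main obstacle to be the intersection count in Step 2. If the collars cannot be kept nested while also avoiding both \(c\) and \(c'\), the count may need more care, for instance by shrinking the thinner neighbourhood first. Should the explicit count exceed \(4\), I would instead pass to a punctured surface \(Y\setminus P\) in which \(\gamma_a\) and \(\gamma_b\) are in minimal position. There, \autoref{lem: dagger} together with the classical bound for surviving curve graphs gives a uniform constant, and I would then recheck that the final constant is still at most \(12\).
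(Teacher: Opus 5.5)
The paper does not prove this statement; it quotes it from \cite{long2025connected}, so there is no in-paper argument to compare yours with. Judged on its own, your proof is correct. The case $\alpha\subset Y$ is trivial, since then $\pi_Y(\alpha)=\{\alpha\}$. Otherwise the chain $c,\gamma_a,\gamma_b,c'$ together with \autoref{prop: intersection} gives $d^\dagger_Y(c,c')\le 1+9+1=11\le 12$. Choosing $\gamma_a$ among the interior boundary components of $N_a$ is also fine, because $Y$ is non-sporadic.

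\textbf{Step 2 needs one ingredient.} The crossing count is where the $C^0$ setting matters. The arcs $a,b$ are only topological arcs; for example, they may oscillate in and out of every collar of $\partial Y$ near their endpoints. Thin neighbourhoods chosen independently need not be in general position, so ``isotoping within the allowed thin neighbourhoods'' does not yet bound $|\gamma_a\cap\gamma_b|$. The fix is as follows.
\begin{enumerate}
\item Straighten the arcs. Apply a homeomorphism $h$ of $Y$ taking $a\cup b$ to smooth arcs transverse to $\partial Y$; this exists because arcs in surfaces are tame. Such an $h$ acts isometrically on $\mathcal{C}^\dagger(Y)$ and preserves $c\cap a=\emptyset$ and $c'\cap b=\emptyset$.
\item Choose the widths. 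Use metric collars and bands of widths $\epsilon_b<\epsilon_a$. Both widths must be smaller than $\operatorname{dist}(c,a\cup\partial Y)$ and $\operatorname{dist}(c',b\cup\partial Y)$. In addition, $\epsilon_a+\epsilon_b$ must be smaller than the distance between the two arcs and the distance from each arc to the boundary components it does not touch.
\item Count. With these choices, $\gamma_a$ stays outside the $\epsilon_b$-collar and the two bands are disjoint. The only intersections are the two edges of the band around $b$ crossing the part of $\gamma_a$ parallel to $\partial_1\cup\partial_2$. Each edge crosses once near every endpoint of $b$ lying on $\partial_1\cup\partial_2$. This gives at most $4$ transverse intersections, as you claimed.
\end{enumerate}

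\textbf{Two small corrections.} First, distinct essential subarcs need not be disjoint: if you read them as arcs with interior in $\mathrm{int}(Y)$, they can share an endpoint where $\alpha$ touches $\partial Y$ from inside. Away from that common endpoint the conditions of item 2 apply unchanged. Near it, each edge of the band around $b$ leaves $N_a$ at most once, so the bound $4$ still holds. Second, Step 1 uses that vertices of $\mathcal{C}^\dagger(Y)$ avoid $\partial Y$. This is the convention the paper itself relies on (cf.\ the proof of \autoref{lem:velcrot not intersec}). With these points handled, your punctured-surface fallback is unnecessary.
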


Moreover, we have the following distance estimation:
\begin{proposition}[Proposition 4.3, \cite{long2025connected}]\label{prop_disjoint_subsurface_proj}
    Let $S$, $Y$, $d^\dagger_Y$, and $\pi_Y$ be as defined above. For any two transverse curves $\alpha, \beta \in \mathcal{C}^\dagger(S)$ that intersect the subsurface $Y$ essentially. Suppose in addition that there are essential arcs \(\alpha'\subset \alpha\cap Y\) and \(\beta'\subset \beta\cap Y\) such that $|\alpha' \cap \beta'| = p \geq 0$. Then for any $\gamma_\alpha \in \pi_Y(\alpha)$ and $\gamma_\beta \in \pi_Y(\beta)$, the following inequality holds:
    $$d_Y^\dagger\big(\gamma_\alpha, \gamma_\beta\big) \leq 8p + 12.$$
\end{proposition}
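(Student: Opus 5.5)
Throughout I write \(\pi_Y(\alpha)\), \(\pi_Y(\beta)\) for the sets of Definition~\ref{def: subsurface proj}. The statement is the distance estimate Proposition~\ref{prop_disjoint_subsurface_proj}: the projections to \(Y\) of two transverse curves lie within \(8p+12\) of each other, where \(p\) is the number of intersections between chosen essential arcs \(\alpha'\subset\alpha\cap Y\) and \(\beta'\subset\beta\cap Y\).

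The plan is to reduce to a single pair of curves, one disjoint from \(\alpha'\) and one disjoint from \(\beta'\), built from a neighbourhood of \(\alpha'\cup\beta'\). I will then control the distance between these two curves by Proposition~\ref{prop: intersection}, and absorb the remaining error into the diameter bound of Proposition~\ref{prop: bdd diam}.

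\emph{Reduction step.} Every curve in \(\pi_Y(\alpha)\) lies within \(12\) of any other element of \(\pi_Y(\alpha)\) by Proposition~\ref{prop: bdd diam}. The same holds for \(\beta\). It is therefore enough to find a \emph{particular} pair of curves
\[
c_\alpha\in\pi_Y(\alpha), \qquad c_\beta\in\pi_Y(\beta),
\]
with \(c_\alpha\) disjoint from \(\alpha'\) and \(c_\beta\) disjoint from \(\beta'\), such that \(d^\dagger_Y(c_\alpha,c_\beta)\le 8p+C\) for a small constant \(C\).

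\emph{Construction of the curves.} Take \(c_\alpha\) to be an essential boundary component of a regular neighbourhood \(N(\alpha'\cup\partial Y)\), or more precisely of \(N(\alpha')\) together with the boundary components of \(Y\) that \(\alpha'\) meets. This is the standard arc-to-curve surgery, and \(c_\alpha\) can be chosen disjoint from \(\alpha'\). Perturb so that everything is transverse, and build \(c_\beta\) from \(\beta'\) in the same way.

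Each of \(c_\alpha\), \(c_\beta\) runs at most twice parallel to its arc, plus pieces near \(\partial Y\). Choosing the two neighbourhoods of \(\partial Y\) at different depths, so that the boundary-parallel pieces are nested and disjoint, gives
\[
|c_\alpha\cap c_\beta|\le 4p + O(1).
\]
Proposition~\ref{prop: intersection}, applied in \(Y\), then yields
\[
d^\dagger_Y(c_\alpha,c_\beta)\le 2|c_\alpha\cap c_\beta|+1\le 8p+O(1).
\]

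\emph{Closing the constants.} Combining this bound with the reduction step gives a bound of the shape \(8p+\text{const}\). Getting the constant down to \(12\) requires care. One alternative route is to pass through an intermediate curve disjoint from both \(c_\alpha\) and \(c_\beta\) when \(p=0\), which gives distance at most \(2\) in that case. The other is to invoke Lemma~\ref{lem: dagger} and work in the surviving curve graph of \(Y\setminus P\), so that the classical bounds apply.

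\emph{Main obstacle.} The hardest step is making sure the surgered curves are essential in \(Y\), so that they are genuinely vertices of \(\C^\dagger(Y)\). For some arcs both boundary components of \(N(\alpha'\cup\partial Y)\) can be peripheral, for instance on a once-bordered torus or near sporadic configurations. There one must instead use a curve meeting the regular neighbourhood once.

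The second obstacle is bounding the boundary contributions to \(|c_\alpha\cap c_\beta|\) without assuming that \(\alpha\) and \(\beta\) are in minimal position. Here I would first pass to a finite set \(P\), via Lemma~\ref{lem: dagger}, that puts the arcs in minimal position. After that the surgeries can be done combinatorially.
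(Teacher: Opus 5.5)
\paragraph{Gap: the reduction cannot give the constant \(12\).} The paper quotes this proposition without proof, so the benchmark is the statement itself, and your outline does not reach it. The constant \(12\) is exactly what you leave open, and your reduction step makes it unreachable.

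Replacing \(\gamma_\alpha\) by \(c_\alpha\) and \(c_\beta\) by \(\gamma_\beta\) through Proposition~\ref{prop: bdd diam} already costs up to \(12+12=24\), before \(d^\dagger_Y(c_\alpha,c_\beta)\ge 0\) is even added. With the explicit count \(|c_\alpha\cap c_\beta|\le 4p+4\), your route gives \(d^\dagger_Y(\gamma_\alpha,\gamma_\beta)\le 8p+33\). That proves a weaker coarse statement, which would suffice for every use in this paper, but it is not the proposition.

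Neither proposed repair addresses this loss. An intermediate curve when \(p=0\) only improves the middle term. Passing to \(\C^s(Y\setminus P)\) via Lemma~\ref{lem: dagger} is an equality of distances, so it is only a reformulation unless you supply a sharper bound there, which you do not.

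\paragraph{The missing idea: adapt the surgery to \(\gamma_\alpha,\gamma_\beta\).} Build the surgery curves \emph{from the given} \(\gamma_\alpha,\gamma_\beta\) instead of fixing \(c_\alpha,c_\beta\) in advance.

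Suppose \(\gamma_\alpha\) is disjoint from the essential subarc \(a\) of \(\alpha\cap Y\). Since \(\gamma_\alpha\) is a compact subset of the interior of \(Y\) missing \(a\), a thin enough regular neighbourhood of \(a\), together with the boundary components of \(Y\) it meets, is disjoint from \(\gamma_\alpha\). Its essential boundary curve \(c_a\) therefore satisfies \(d^\dagger_Y(\gamma_\alpha,c_a)\le 1\), and likewise \(d^\dagger_Y(\gamma_\beta,c_b)\le 1\). You never need Proposition~\ref{prop: bdd diam}.

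Now take the collar of \(\partial Y\) used for \(c_b\) strictly thinner than the one used for \(c_a\). Choose both, after taming the finite graph formed by \(a\), \(b\) and \(\partial Y\), so that each arc crosses each collar in one radial segment near each endpoint. The intersections are then controlled as follows:
\begin{itemize}
\item Each surgery curve runs at most twice along its arc, so parallel strands contribute at most \(4|a\cap b|\) crossings.
\item The two collar pieces lie at different depths and are disjoint.
\item The strands of \(c_a\) stay out of the thinner collar.
\item The strands of \(c_b\) meet the collar piece of \(c_a\) at most \(4\) times.
\end{itemize}
Hence \(|c_a\cap c_b|\le 4|a\cap b|+4\), and Proposition~\ref{prop: intersection} gives
\[
d^\dagger_Y(\gamma_\alpha,\gamma_\beta)\le 1+(8|a\cap b|+9)+1\le 8|a\cap b|+12 .
\]

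This is the claimed bound when \(\gamma_\alpha,\gamma_\beta\) avoid \(\alpha',\beta'\) themselves. That is how the proposition is applied in this paper: typically \(\alpha\cap\beta=\emptyset\), so whichever subarcs they avoid are disjoint. If they avoid other subarcs, some comparison step like yours seems unavoidable, and nothing in your outline yields \(12\) in that case either.

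\paragraph{Your two stated obstacles are not the real ones.} For non-sporadic \(Y\), an essential arc always produces an essential, non-peripheral surgery curve; otherwise \(Y\) would be a pair of pants or an annulus. No minimal position is needed either, since Proposition~\ref{prop: intersection} only requires transversality.
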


Finally, similar to the case of curve graphs, we have the following version of the bounded geodesic image theorem for fine curve graphs:
\begin{theorem}[Theorem 1.3, \cite{long2025connected}]\label{thm: bgit}
Given a surface \(S\) with \(g(S) \geq 2\), there exists a constant \(M>0\) such that whenever \(Y\) is an essential subsurface that is either non-sporadic or homeomorphic to a once-bordered torus, given any \(g = (\gamma_i)\) geodesic path in \(\mathcal{C}^\dagger(S)\) such that \(\gamma_i\) intersects \(Y\) essentially for all \(i\), then \(\diam_{d_Y^\dagger} \big(\pi_Y(g)\big) \leq M\).
\end{theorem}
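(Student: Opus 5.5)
The plan is to reduce to the classical bounded geodesic image theorem on finitely punctured surfaces. The tool for the reduction is the approximation principle of \autoref{lem: dagger}, and the main task is to keep every constant independent of the number of punctures.

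\textbf{Step 1: make the path combinatorial.} Let \(g=(\gamma_0,\dots,\gamma_n)\) be a geodesic in \(\C^\dagger(S)\) such that each \(\gamma_i\) meets \(Y\) essentially. Consecutive curves are disjoint compact sets, so each pair lies at positive distance. Hence a sufficiently small perturbation of each \(\gamma_i\) has three properties at once. It keeps consecutive curves disjoint. It makes all the \(\gamma_i\) pairwise transverse and transverse to \(\partial Y\). It moves each \(\pi_Y(\gamma_i)\) by a uniformly bounded amount in \(\C^\dagger(Y)\), using \autoref{prop_disjoint_subsurface_proj} with \(p=0\) on a chosen essential arc.

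\textbf{Step 2: pass to a punctured surface.} Choose a finite set \(P\subset S\setminus\partial Y\) meeting every bigon between \(\gamma_0\) and \(\gamma_n\), between each \(\gamma_i\) and \(\partial Y\), and between the relevant essential arcs of \(\gamma_i\cap Y\). Then \autoref{lem: dagger} gives \(d^\dagger(\gamma_0,\gamma_n)=n=d_{\C^s(S\setminus P)}([\gamma_0],[\gamma_n])\). The sequence \(([\gamma_i])\) has length \(n\) in \(\C^s(S\setminus P)\), so it is a geodesic of the surviving curve graph. Likewise, \(Y\setminus P\) is an essential subsurface of \(S\setminus P\), and each \([\gamma_i]\) cuts it essentially. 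Applying \autoref{lem: dagger} to \(Y\) shows that \(d^\dagger_Y\) between elements of \(\pi_Y(\gamma_i)\) is bounded by the surviving curve graph distance between the classical projections \(\pi_{Y\setminus P}([\gamma_i])\), up to an additive constant coming from \autoref{prop: bdd diam}.

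\textbf{Step 3: a uniform BGIT on the punctured surface.} It then suffices to prove a bounded geodesic image theorem for surviving curve graphs of \(S\setminus P\) and \(Y\setminus P\), with a constant independent of \(|P|\). I expect this to be the main obstacle. The classical uniform bounded geodesic image theorem (Webb) is stated for full curve graphs, and the surviving curve graph is only quasi-isometric to them with constants depending on \(|P|\).

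To get around this, I would first rerun Webb's argument, which is via unicorn arcs and uniformly slim bicorn paths. The plan is to observe that all curves produced there can be chosen to survive. For non-sporadic \(Y\), or for any \(Y\) when \(g(S)\ge2\) (the hypothesis that guarantees the complement of a once-bordered torus still has genus), one can surger along nonseparating pieces. This would make the relevant surviving paths uniformly quasi-geodesic, in the spirit of Rasmussen's uniform hyperbolicity of non-separating curve graphs. Combined with the standard Masur--Minsky argument, this yields a uniform bound \(M'\) on \(\diam\,\pi_{Y\setminus P}(g)\). The argument is: consecutive disjoint curves have projections at bounded distance, and a large projection diameter would force a vertex of \(g\) to miss \(Y\).

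Transferring \(M'\) back through Step 2 and adding the perturbation and diameter errors then gives the constant \(M\).
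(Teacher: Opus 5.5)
\paragraph{There is a genuine gap: Step 3 is announced, not proved.}
The paper does not reprove this statement. It imports it from \cite{long2025connected}, and the remark after it says the proof rests on two things: the uniform hyperbolicity of surviving curve graphs, and the upper bound of fine distance in \(Y\) by intersection numbers (\autoref{prop: intersection}, or \autoref{lem: intersection} for once-bordered tori).

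Your Steps 1--2 point in that direction, but they only rewrite the statement. By \autoref{lem: dagger}, a geodesic of \(\C^s(S\setminus P)\) realised by curves in minimal position is a fine geodesic, and conversely. So a bounded geodesic image theorem for surviving curve graphs, with constant independent of \(|P|\), is essentially the theorem itself. All the content therefore sits in Step 3, and there it is announced rather than proved:
\begin{itemize}
\item ``a large projection diameter would force a vertex of \(g\) to miss \(Y\)'' is the conclusion, not an argument;
\item ``all curves produced there can be chosen to survive'' is precisely the non-trivial point;
\item invoking the Masur--Minsky argument does not help, since its constant depends on the complexity of \(S\setminus P\), which is exactly the dependence you must avoid.
\end{itemize}
Your stated reason for not applying Webb directly is also inaccurate. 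Once \(|P|\geq 2\), the inclusion of \(\C^s(S\setminus P)\) into the curve graph of \(S\setminus P\) is not a quasi-isometric embedding at all. A curve bounding a disc with several punctures is adjacent to everything outside that disc, while two surviving curves outside it can be arbitrarily far apart in \(\C^s(S\setminus P)\).

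\paragraph{What the argument actually needs.}
First, you need a lemma that turns a large projection into proximity to \(\partial Y\).
\begin{itemize}
\item If \(\pi_Y(a)\) and \(\pi_Y(b)\) are far apart in \(\C^\dagger(Y)\), then by \autoref{prop_disjoint_subsurface_proj} (which rests on \autoref{prop: intersection}) essential arcs of \(a\cap Y\) and \(b\cap Y\) cross many times.
\item From these crossings you must build a curve lying in \(Y\) and essential there, hence surviving.
\item This curve must lie on a surviving surgery path from \(a\) to \(b\), of bicorn or Rasmussen type \cite{rasmussen2020uniform}, that uniformly fellow-travels geodesics of \(\C^s(S\setminus P)\).
\item Then every geodesic from \(a\) to \(b\) passes within a uniform distance \(D\) of \(\partial Y\).
\end{itemize}
Second, you need a splitting argument. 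Fix \(R>D\). The vertices of \(g\) within distance \(R\) of \(\partial Y\) are confined to a subpath of length at most \(2R+1\). Along it, each step moves the projection by at most \(12\), by \autoref{prop_disjoint_subsurface_proj}. On the two complementary subpaths, applying the first lemma to any two of their vertices bounds the projection.

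\paragraph{Two smaller points.}
In Step 2, \autoref{lem: dagger} computes the distance between the \emph{perturbed} endpoints, and this can be smaller than \(n\); for example, two curves that touch without crossing become disjoint. A perturbation inside a thin annulus stays within distance \(2\) of the original curve. So you only get a path of length \(n\) whose endpoints are at distance at least \(n-4\), not a geodesic. The splitting argument tolerates this, but your reduction as stated does not.

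For once-bordered tori, \(\C^\dagger(Y)\) has edges between curves meeting at most once. The transfer in Step 2 must therefore use the corresponding surviving graph and \autoref{lem: intersection}, not \autoref{lem: dagger} verbatim.
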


\begin{remark}
The proof of \autoref{thm: bgit} also applies to subsurfaces that are homeomorphic to a once-bordered torus, as the principal ingredients of the proof are the uniform hyperbolicity of the surviving curve graph (see for example \cite[\S 5.2]{bowden2022quasi}) and the upper bound of distance in the fine curve graph of the subsurface by the intersection numbers, which holds in once-bordered tori, see \autoref{sec-6}.
\end{remark}
\section{Surgery on surface}\label{sec: surf surg}
The Bestvina--Bromberg--Fujiwara construction of projection complex requires us to provide a collection \(\Y\) of metric spaces with coarsely well-defined projection to each other so that if the projections of \(Y, Z\) to \(X\) are far away, then the projections of \(X,Y\) to \(Z\), as well as the projections of \(X,Z\) to \(Y\), are close, and that for fixed \(Y,Z\), there are only finitely many \(X\) to which the projections of \(Y, Z\) are far away, see \cite{bestvina2015constructing}. However, for the fine curve graph of essential subsurfaces (not the isotopy classes of them), these conditions do not hold. This section aims at introducing the notion of velcrot subsurfaces to establish weaker conditions that allow us to run a similar machinery.

\subsection{Velcrot subsurfaces}\label{subsec: velcrot subsurfaces}
The following notion is inspired by the metric WPD properties introduced in \cite{choi2025metric, hensel2025rotation}, but the definition of finiteness condition in their metric WPD properties relies on some \(\C^0\)-distance. Here, we offer a topological analogue for subsurfaces.

\begin{definition}
Let \(X,Y\subset S\) be two non-sporadic essential subsurfaces. We say that \(X\) and \(Y\) are \emph{velcrot} if $\C^{\dagger}(X)\cap \C^{\dagger}(Y)$ is of infinite diameter in both $\C^{\dagger}(X)$ and $\C^{\dagger}(Y)$.
\end{definition}
\begin{remark}
In particular, $\C^{\dagger}(X)\cap \C^{\dagger}(Y)$ is a subset of both $\pi_{X}(\C^{\dagger}(Y))$ and $\pi_{Y}(\C^{\dagger}(X))$. Hence, if $X$ and $Y$ are velcrot, then both $\pi_{X}(\C^{\dagger}(Y))$ and $\pi_{Y}(\C^{\dagger}(X))$ are unbounded.
\end{remark}

This definition is purely metric geometric. It might be useful for settings other than subsurfaces on a surface. However, under the setting of essential subsurfaces and their fine curve graphs, the definition of velcrotness can also be given in a purely topological way, as we will explain below.

\begin{lemma}\label{lem:velcrot not intersec}
If \(X,X'\subset S\) are velcrot, then \(\partial X\) does not intersect \(X'\) essentially and \emph{vice versa}.
\end{lemma}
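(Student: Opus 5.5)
We argue by contradiction, exploiting the fact that a fixed arc has bounded-diameter projection while the common curves are unbounded. Suppose some boundary component \(\beta\subset\partial X\) intersects \(X'\) essentially. Then some component \(a\) of \(\beta\cap X'\) is an essential arc or curve in \(X'\).

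If \(a\) is a closed curve, then \(\beta\subset X'\) is an essential curve of \(X'\). If \(a\) is an arc, then \(\pi_{X'}(\beta)\) is non-empty, and by \autoref{prop: bdd diam} it has diameter at most \(12\) in \(\C^\dagger(X')\). In either case we fix a curve \(\gamma_0\in\pi_{X'}(\beta)\), with \(\gamma_0=\beta\) in the closed case.

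By velcrotness, \(\C^\dagger(X)\cap\C^\dagger(X')\) has infinite diameter in \(\C^\dagger(X')\). We can therefore choose a curve \(\gamma\subset X\cap X'\), essential in both \(X\) and \(X'\), with \(d^\dagger_{X'}(\gamma,\gamma_0)\) as large as we like, say larger than \(14\). Since \(\gamma\subset X\) and \(\beta\subset\partial X\), the curves \(\gamma\) and \(\beta\) are disjoint, unless \(\gamma\) touches \(\partial X\).

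To avoid that issue, we would like \(\gamma\) to lie in the interior of \(X\). We can arrange this either by pushing \(\gamma\) slightly into the interior of \(X\), or by noting that the curves meeting \(\partial X\) do not affect unboundedness. The push should be an isotopy within \(X\cap X'\), and it changes the position by distance at most \(2\) in \(\C^\dagger(X')\). If \(\gamma\) lies near \(\partial X\) the push is harmless. Alternatively, we can replace \(\gamma\) by a disjoint parallel copy inside \(X\cap X'\); this copy is at distance \(\le 1\) from \(\gamma\) in both graphs.

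Now \(\gamma\) is disjoint from \(a\), because \(a\subset\beta\) and \(\gamma\cap\beta=\emptyset\).

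We then check that \(\gamma\) lies close to \(\gamma_0\). In the closed case, \(\gamma\) and \(\beta\) are disjoint curves in \(X'\), so \(d^\dagger_{X'}(\gamma,\beta)\le 1\). In the arc case, \(\gamma\) is an essential curve of \(X'\) disjoint from the essential subarc \(a\) of \(\beta\cap X'\). By \autoref{def: subsurface proj} this means \(\gamma\in\pi_{X'}(\beta)\), so \(d^\dagger_{X'}(\gamma,\gamma_0)\le 12\).

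This bound contradicts the choice \(d^\dagger_{X'}(\gamma,\gamma_0)>14\). Swapping the roles of \(X\) and \(X'\) gives the ``vice versa'' statement.

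The main obstacle is ensuring that the common curve \(\gamma\) is genuinely disjoint from \(\beta\). A curve in \(X\) may touch \(\partial X\), and essentiality is not isotopy-invariant in the fine setting. The plan is to handle this by perturbing \(\gamma\) into the interior of \(X\cap X'\) at bounded cost in \(\C^\dagger(X')\). A small push off \(\partial X\) keeps the curve inside \(X'\) if we push inward along a collar of \(X\). The cost of this push should be bounded via \autoref{prop: intersection}, or by taking a disjoint nearby curve.
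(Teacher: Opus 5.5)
Your argument is correct and is essentially the paper's proof. A common curve of \(\C^\dagger(X)\cap\C^\dagger(X')\) is disjoint from an essential arc of \(\partial X\cap X'\), so it lies in the bounded set \(\pi_{X'}(\partial X)\), and this contradicts the unboundedness of the common curves in \(\C^\dagger(X')\). The paper takes for granted that curves of \(\C^\dagger(X)\) miss \(\partial X\) (vertices are properly embedded circles), so your perturbation paragraph is unnecessary. Its ``distance at most \(2\)'' bound is also never justified, so drop it rather than rely on it.
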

\begin{proof}
We may assume for contradiction that \(\partial X\) intersect \(X'\) essentially, then for any \(\alpha\in \C^\dagger(X)\cap \C^\dagger(X')\), the essential arc \(\partial X\cap X'\) and the curve \(\alpha\) are disjoint, so by \autoref{prop_disjoint_subsurface_proj}, we have \(d^\dagger_{X'}\big(\alpha,\beta\big)\leq12\) for any \(\beta\in \pi_{X'}(\partial X)\). But \autoref{prop: bdd diam} and \autoref{prop_disjoint_subsurface_proj} imply that the diameter of \(\pi_{X'}(\partial X)\) is bounded in \(\C^\dagger(X')\), which forces that \(\alpha\) is contained in a bounded subset in \(\C^\dagger(X')\), contradicting the assumption that \(X,X'\) are velcrot.
\end{proof}

The following lemma gives a quintessential example for velcrot subsurfaces:

\begin{lemma}\label{lem: annul homo velcrot}
Let \(X,Z\) be two essential non-sporadic subsurfaces. Suppose in addition that \(Z\) is contained in \(X\) and is homotopic to \(X\). Then \(X,Z\) are velcrot.
\end{lemma}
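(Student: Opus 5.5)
Since \(Z\subset X\), every essential curve of \(Z\) lies in \(X\), so the plan is to show that \(\C^\dagger(Z)\) sits inside \(\C^\dagger(X)\) and that its image there is unbounded. Then \(\C^\dagger(X)\cap\C^\dagger(Z)=\C^\dagger(Z)\) has infinite diameter in \(\C^\dagger(Z)\) and in \(\C^\dagger(X)\), which is the definition of velcrot.

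The first step is essentiality. Because \(Z\) is homotopic to \(X\) and contained in it, the region \(X\setminus \inte(Z)\) is a union of annuli, each bounded by a component of \(\partial Z\) and the corresponding component of \(\partial X\). In other words, \(Z\) is \(X\) with collars removed. A curve \(\gamma\subset Z\) that is essential in \(Z\) neither bounds a disc nor is parallel to a component of \(\partial Z\). It therefore does neither in \(X\), since gluing on the collars creates no new discs or peripheral annuli. This gives the inclusion \(\C^\dagger(Z)\subset\C^\dagger(X)\) as vertex sets, and disjointness is the same relation in both graphs. Consequently \(d^\dagger_X\le d^\dagger_Z\) on \(\C^\dagger(Z)\).

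The second step is unboundedness in each graph. For \(\C^\dagger(Z)\) this is just the fact that \(\C^\dagger(Z)\) is unbounded for non-sporadic \(Z\); this follows, for instance, from \autoref{lem: dagger} together with infinite diameter of surviving curve graphs, or from the existence of pseudo-Anosov-like homeomorphisms of \(Z\) fixing \(\partial Z\) that act loxodromically. The inequality \(d^\dagger_X\le d^\dagger_Z\) goes the wrong way for transferring this to \(X\), so a separate comparison is needed.

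For \(\C^\dagger(X)\), fix a homeomorphism \(h\colon X\to Z\) that is isotopic to the identity in \(X\) and shrinks the collars, for example by pushing along the annuli of \(X\setminus\inte(Z)\). Then \(h\) induces an isometric embedding \(\C^\dagger(X)\to\C^\dagger(Z)\), because \(h\) is a homeomorphism onto \(Z\) preserving disjointness and essentiality. Composing with the inclusion \(\C^\dagger(Z)\hookrightarrow\C^\dagger(X)\) gives the map \(\gamma\mapsto h(\gamma)\) on \(\C^\dagger(X)\), which is the isometry of \(\C^\dagger(X)\) induced by the homeomorphism \(h\) viewed in \(\Homeo(X)\) after extending it. More simply, choose \(\gamma_n\in\C^\dagger(X)\) with \(d^\dagger_X(\gamma_0,\gamma_n)\to\infty\). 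The curves \(h(\gamma_n)\) lie in \(Z\), and \(d^\dagger_X(h\gamma_0,h\gamma_n)\) should equal \(d^\dagger_X(\gamma_0,\gamma_n)\).

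The main obstacle is this last equality, because \(h\) is not a self-map of \(X\). I would handle it by extending \(h\) to a homeomorphism \(\hat h\) of a slightly larger surface, or more directly by using \autoref{lem: dagger}: \(h\) is isotopic to the identity in \(X\) relative to any finite set \(P\subset\inte(Z)\) after adjusting, so the intersection patterns and surviving-curve-graph distances of \(h\gamma_0,h\gamma_n\) in \(X\setminus h(P)\) match those of \(\gamma_0,\gamma_n\) in \(X\setminus P\). This yields infinite \(d^\dagger_X\)-diameter of \(\C^\dagger(Z)\).
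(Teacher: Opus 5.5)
Your set-up is correct: essential curves of $Z$ stay essential in $X$, so $\C^\dagger(Z)$ is a subgraph of $\C^\dagger(X)$ and is unbounded in its own metric. But the whole content of the lemma is the step you flag as the main obstacle, and neither of your proposed resolutions closes it.

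\begin{itemize}
\item The sentence calling $\gamma\mapsto h(\gamma)$ an isometry of $\C^\dagger(X)$ induced by extending $h$ inside $\Homeo(X)$ is false. Since $h(X)=Z\neq X$, no such extension exists.
\item Extending $h$ to a homeomorphism $\hat h$ of a larger surface $X'\supset X$ only gives $d^\dagger_{X'}(h\gamma_0,h\gamma_n)=d^\dagger_{X'}(\gamma_0,\gamma_n)$. Since $d^\dagger_X\ge d^\dagger_{X'}$ on curves of $X$, you would then need curves of $X$ that are far apart in $\C^\dagger(X')$. That is the present lemma for the pair $X\subset X'$, so this route is circular.
\item The second route needs $h$ isotopic to the identity rel a finite set $P\subset\inte(Z)$. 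But $P$ is dictated by the bigons between $\gamma_0$ and $\gamma_n$, and for arbitrary $\gamma_n\in\C^\dagger(X)$ these bigons can lie entirely in the collars $X\setminus\inte(Z)$. No homeomorphism $X\to Z$ fixes points there.
\item Even when $P\subset\inte(Z)$, you still have to check that $h\gamma_0,h\gamma_n$ are in minimal position on $X\setminus P$ before \autoref{lem: dagger} applies.
\end{itemize}

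The missing observation, which is exactly what the paper's proof uses, is that for curves lying in $Z$ nothing changes when you pass from $Z$ to $X$. A disc in $X$ whose boundary lies in $Z$ is contained in $Z$, because each collar is attached along $\partial Z$ and contains a component of $\partial X$. Consequently, for transverse $\alpha,\beta\in\C^\dagger(Z)$:
\begin{itemize}
\item the puncture set $P$ can be taken inside $Z$;
\item minimal position on $Z\setminus P$ and on $X\setminus P$ coincide;
\item the inclusion identifies $\C^s(Z\setminus P)$ with $\C^s(X\setminus P)$.
\end{itemize}
\autoref{lem: dagger} then gives $d^\dagger_Z(\alpha,\beta)=d^\dagger_X(\alpha,\beta)$ directly. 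So unboundedness of $\C^\dagger(Z)$ transfers to $\C^\dagger(X)$, and $h$ is superfluous.

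If you want to keep $h$, replace the isotopy rel $P$ by the composite isomorphism $\C^s(X\setminus P)\to\C^s(Z\setminus h(P))\to\C^s(X\setminus h(P))$, together with the same bigon argument.

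An even cheaper closure: $\gamma\mapsto[\gamma]$ is $1$-Lipschitz from $\C^\dagger(X)$ to the ordinary curve graph $\mathcal{C}(X)$. Every isotopy class of essential curves in $X$ has a representative inside $Z$, and $\mathcal{C}(X)$ has infinite diameter for non-sporadic $X$.
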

\begin{proof}
As $Z\subset X$, $\C^{\dagger}(X)\cap \C^{\dagger}(Z)=\C^\dagger(Z)$ is unbounded in $\C^{\dagger}(Z)$. Conversely, for any transverse \(\alpha,\beta\in \C^\dagger(Z)\subset \C^\dagger(X)\), let $P$ be a finite subset of $Z$ such that $\alpha$ and $\beta$ are in minimal position on $X\setminus P$. By \autoref{lem: dagger}, we have
    \[d_{\C^\dagger(Z)}(\alpha,\beta)=d_{\C^s(Z\setminus P)}([\alpha]_{Z\setminus P},[\beta]_{Z\setminus P})=d_{\C^s(X\setminus P)}([\alpha]_{X\setminus P},[\beta]_{X\setminus P})=d_{\C^\dagger(X)}(\alpha,\beta),\]
    where we have used a natural identification between \(\C^s(X\setminus P)\) and \(\C^s(Z\setminus P)\). Since two transverse curves in $\C^{\dagger}(Z)$ can have an arbitrarily large distance, the inclusion \(\C^\dagger(Z)\hookrightarrow\C^\dagger(X)\) is unbounded.
\end{proof}

However, we remark that velcrotness is not an equivalence relation since it is not transitive. This can be easily seen from the following result, which gives a topological characterisation for velcrotness:

\begin{proposition}\label{prop: witness}
Two non-sporadic essential subsurfaces \(X,X'\subset S\) are velcrot if and only if there exists an essential subsurface \(Z\subset X\cap X'\) such that \(Z\) is homotopic to both \(X\) and \(X'\).
\end{proposition}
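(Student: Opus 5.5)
The proof splits into the two directions.

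For the \emph{if} direction, suppose \(Z\subset X\cap X'\) is an essential subsurface homotopic to both \(X\) and \(X'\). By \autoref{lem: annul homo velcrot} applied to \(Z\subset X\), the inclusion \(\C^\dagger(Z)\hookrightarrow \C^\dagger(X)\) is isometric on transverse pairs and has unbounded image. The same holds for \(Z\subset X'\). Since \(\C^\dagger(Z)\subset \C^\dagger(X)\cap\C^\dagger(X')\), this intersection contains a set of infinite diameter in both \(\C^\dagger(X)\) and \(\C^\dagger(X')\). Hence \(X\) and \(X'\) are velcrot.

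For the \emph{only if} direction, assume \(X,X'\) are velcrot. By \autoref{lem:velcrot not intersec}, no component of \(\partial X\cap X'\) is an essential arc or curve in \(X'\), and symmetrically for \(\partial X'\cap X\). I would first perturb \(\partial X'\) slightly, keeping it inside a small collar, so that it is transverse to \(\partial X\); a small perturbation does not create essential intersections. Then every arc of \(\partial X\cap X'\) is inessential in \(X'\): it cuts off a disc, or a region parallel to a boundary component. Every closed component of \(\partial X\) lying in \(X'\) is peripheral or inessential in \(X'\). Performing innermost-bigon surgeries, I would remove these inessential pieces. The aim is to show that each component of \(\partial X\) is isotopic, inside \(S\), to a curve disjoint from \(X'\) or parallel to \(\partial X'\), and vice versa.

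The candidate witness is \(Z\), defined as the component of \(X\cap X'\) that carries the essential topology, with the small disc and annulus pieces cut off by inessential arcs thrown away. More concretely, I would take \(Z\) to be \(X\cap X'\) with those discs filled in and those peripheral annular regions deleted. I would then check three things:
\begin{enumerate}
\item \(Z\) is an essential subsurface;
\item the inclusion \(Z\subset X\) is a homotopy equivalence (indeed isotopic to a homeomorphism), because \(X\setminus Z\) consists only of annuli along \(\partial X\);
\item the same holds for \(Z\subset X'\).
\end{enumerate}

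The main obstacle is ruling out that \(X\cap X'\) is homotopically smaller than \(X\), that is, that some essential part of \(X\) lies outside \(X'\). This is where the infinite-diameter hypothesis must be used in full, not just through \autoref{lem:velcrot not intersec}. Suppose some essential curve or subsurface \(W\subset X\) is disjoint from \(X'\) up to isotopy, meaning \(X\cap X'\) is not homotopic to \(X\). Then every \(\alpha\in\C^\dagger(X)\cap\C^\dagger(X')\) lies in \(X\cap X'\). Each such \(\alpha\) is therefore disjoint from a fixed essential curve of \(X\), namely a boundary curve of the essential part of \(X\cap X'\). This bounds the distance in \(\C^\dagger(X)\) by \(2\) from a fixed curve, possibly up to bounded error from the small inessential pieces, which I would control with \autoref{prop: intersection}. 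That contradicts infinite diameter. The delicate point is that \(X\cap X'\) may be topologically messy, since the boundaries are only \(C^0\); I would handle this by taking the transverse perturbation first and arguing on the resulting finite configuration.
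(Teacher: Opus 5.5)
Your proposal is correct and follows essentially the same route as the paper. You use \autoref{lem:velcrot not intersec} in both directions to reduce the interaction of \(\partial X\) and \(\partial X'\) to bigons and peripheral annuli, take the core piece as \(Z\), and use curves of \(\C^\dagger(X)\cap\C^\dagger(X')\) to place that core inside \(X\cap X'\) and conclude that it is homotopic to both (the paper only needs one curve essential in both for this step); note also that the disc pieces must be discarded rather than filled in, since filling in boundary bumps of \(X\setminus X'\) would push \(Z\) outside \(X'\).
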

\begin{proof}
For the ``if'' part, by virtue of \autoref{lem: annul homo velcrot}, $\C^{\dagger}(Z)\subset \C^{\dagger}(X)\cap \C^{\dagger}(X')$ is unbounded in both $\C^{\dagger}(X)$ and $\C^{\dagger}(X')$. Hence, $X$ and $X'$ are velcrot by definition.

For the ``only if'' part, \autoref{lem:velcrot not intersec} implies that the boundaries of \(\partial X\) and \(\partial X'\) only bound bigons or annuli. More specifically, this implies that the boundaries of a small regular neighbourhood of \(\partial X\cup \partial X'\) in \(X\) are peripheral curves in \(X\). These peripheral curves then bound a subsurface \(Z\subset X\) homotopic to \(X\). But since there are curves in \(X\cap X'\) that are essential in both, we can conclude that \(Z\subset X\cap X'\). However, we also know that \(\partial Z\subset X'\) are also peripheral, otherwise they would yield a well-defined projection of \(\partial X\) in \(X'\), contradicting \autoref{lem:velcrot not intersec}. Hence, \(Z\) is also homotopic to \(X'\).
\end{proof}

Since homotopy among subsurfaces is an equivalent relation, we can soon conclude the following results:

\begin{corollary}\label{cor: velcrot implies isotopic}
Two non-sporadic velcrot essential subsurfaces are homotopic. \QEDD
\end{corollary}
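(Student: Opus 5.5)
The corollary follows directly from \autoref{prop: witness} together with transitivity of homotopy among subsurfaces.

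Let \(X,X'\subset S\) be two non-sporadic essential subsurfaces that are velcrot. By the ``only if'' direction of \autoref{prop: witness}, there is an essential subsurface \(Z\subset X\cap X'\) that is homotopic to \(X\) and also homotopic to \(X'\). Homotopy of subsurfaces is an equivalence relation. Hence symmetry and transitivity applied to \(X\simeq Z\) and \(Z\simeq X'\) give \(X\simeq X'\), which is the claim.

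If one wants the stronger word ``isotopic'' used in the label, I would add one step. For essential subsurfaces of a closed surface, homotopic boundary multicurves are isotopic; this is the Baer--Epstein theorem, which applies to topological curves after approximating by smooth ones. Moreover, \(Z\subset X\) with \(Z\) homotopic to \(X\) means that each component of \(X\setminus \inte Z\) is an annulus (the boundary curves being peripheral, as in the proof of \autoref{prop: witness}). One can then isotope \(X\) onto \(Z\) by collapsing these collar annuli, and do the same for \(X'\). Composing the two isotopies gives an isotopy from \(X\) to \(X'\).

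The main obstacle, if any, is this collar step in the \(C^0\) setting. One must know that the region between a topological essential curve and a homotopic disjoint curve is an annulus. This follows from the Schoenflies-type classification of compact surfaces with boundary, since such a region has Euler characteristic zero and two boundary components. For the statement as written, which asks only for homotopy, no obstacle remains: it is immediate from \autoref{prop: witness}.
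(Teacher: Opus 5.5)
Your argument is correct and is exactly the paper's: the corollary is read off from the ``only if'' direction of \autoref{prop: witness}, which gives $Z\subset X\cap X'$ homotopic to both $X$ and $X'$, combined with the fact that homotopy of subsurfaces is an equivalence relation. Your extra paragraph upgrading homotopy to isotopy is not needed for the statement as written, and the paper does not carry out that step, though it is a reasonable sketch of why the label says ``isotopic''.
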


\begin{corollary}\label{cor: homo not intersec = velcrot}
Given two homotopic non-sporadic essential subsurfaces, they are velcrot if and only if the boundaries of one surface do not project to an essential arc on the other surface. \QEDD
\end{corollary}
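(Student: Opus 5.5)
The argument splits into the two directions of the equivalence, and both reduce to \autoref{prop: witness} combined with \autoref{lem:velcrot not intersec}. Let \(X,X'\) be homotopic non-sporadic essential subsurfaces.

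For the ``only if'' direction, suppose \(X\) and \(X'\) are velcrot. Then \autoref{lem:velcrot not intersec} applies directly: \(\partial X\) does not intersect \(X'\) essentially, and symmetrically \(\partial X'\) does not intersect \(X\) essentially. In other words, no component of \(\partial X\cap X'\) is an essential arc (or curve) of \(X'\), and vice versa, which is exactly the stated condition. Homotopy is not needed here; this direction holds for any velcrot pair.

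For the ``if'' direction, assume neither boundary projects to an essential arc on the other surface. I would construct a witness \(Z\) as in the proof of \autoref{prop: witness}.

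\begin{enumerate}
\item Since every component of \(\partial X'\cap X\) is inessential in \(X\), each such arc cuts off a disc or a peripheral region of \(X\) (a region adjacent to \(\partial X\)). The same holds with the roles of \(X\) and \(X'\) swapped.
\item Take a small regular neighbourhood \(N\) of \(\partial X\cup\partial X'\) inside \(X\). Its frontier curves in \(X\) are then peripheral or inessential, and discarding the discs and collars leaves a subsurface \(Z\subset X\) homotopic to \(X\).
\item Show that \(Z\subset X'\). This is where the homotopy hypothesis is used, and I expect it to be the main obstacle. Because \(X\) and \(X'\) are homotopic, an essential curve of \(Z\) can be homotoped into \(X'\). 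Because \(\partial X'\) meets \(Z\) only inside \(N\), \(Z\) lies on a single side of \(\partial X'\). That side must be \(X'\), not \(S\setminus X'\): otherwise a curve essential in \(X\) would be disjoint from \(X'\), contradicting that \(X\) and \(X'\) are homotopic (hence filled by the same curves).
\item Show \(Z\) is homotopic to \(X'\), by the symmetric argument from the same proof: the components of \(\partial Z\) are peripheral in \(X'\), since otherwise \(\partial X\) would yield an essential projection to \(X'\).
\end{enumerate}

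Once \(Z\subset X\cap X'\) is homotopic to both \(X\) and \(X'\), \autoref{prop: witness} (equivalently \autoref{lem: annul homo velcrot} applied to both pairs) gives that \(X\) and \(X'\) are velcrot. The delicate point throughout is the bookkeeping in steps 3 and 4, namely ruling out that \(Z\) lands in the complement of \(X'\); I would handle it using the homotopy hypothesis together with the non-sporadic assumption, which guarantees that \(Z\) contains essential non-peripheral curves.
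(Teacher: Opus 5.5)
Your proposal is correct and is essentially the paper's argument: the forward direction is \autoref{lem:velcrot not intersec}, and the converse reruns the construction of $Z$ from the proof of \autoref{prop: witness}, with the homotopy hypothesis replacing the common essential curves to force $Z\subset X'$. You can drop step 4, since $Z\simeq X\simeq X'$ already gives $Z\simeq X'$ by transitivity, which is exactly the remark the paper makes just before the corollary.
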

\begin{remark}
We first remark that the homotopy condition cannot be removed, since, for example, two disjoint subsurfaces are never velcrot. In the sequel, we will often use \autoref{cor: homo not intersec = velcrot} implicitly.  Moreover, as a consequence, a small perturbation on the boundary of an essential subsurface within a small regular neighbourhood of it will yield a new surface that is velcrot to the original one. This fact will also be frequently used in the following without mentioning explicitly.
\end{remark}

For any \(\alpha,\beta\in\C^\dagger (S)\) and for any essential non-sporadic subsurface \(X\subset S\), we define the \emph{projection distance} by
\begin{equation}\label{eq: proj dist curve}
d^\pi_X(\alpha,\beta)\coloneqq \diam_{\C^\dagger(X)}\big(\pi_X(\alpha)\cup \pi_X(\beta)\big)>0
\end{equation}
if both  \(\alpha,\beta\) intersect \(X\) essentially; otherwise, we set \(d^\pi_X(\alpha,\beta)\coloneqq\infty\) whenever \(\alpha\) or \(\beta\) do not intersect \(X\) essentially. Moreover, for any triple of essential non-sporadic subsurfaces \(X, Y,Z\subset S\), we define
\begin{equation}\label{eq: proj dist surf}
d^\pi_X(Y,Z)\coloneqq \inf \left\{d^\pi_X(\alpha,\beta): \alpha\subset \partial Y, \beta\subset \partial Z\right\}.
\end{equation}
It is not difficult to see that with the above definition, we then have a triangle inequality
\begin{equation}\label{eq: trian ineq proj dist}
    d^\pi_X(Y,Z)\leq d^\pi_X(Y,Z')+d^\pi_X(Z',Z)
\end{equation}
whenever \(\partial Y,\partial Z,\partial Z'\) all intersect \(X\) essentially. 

\begin{proposition}\label{cor: velcrot same base}
There exists \(M>0\) that verifies the following. Let \(X,X'\) be two velcrot non-sporadic essential subsurfaces. Suppose that \(x,z\in \C^\dagger(S)\) intersect \(X,X'\) essentially, then we have
\[|d^\pi_X(x,z)-d^\pi_{X'}(x,z)|<M.\]
\end{proposition}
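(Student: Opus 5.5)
By \autoref{prop: witness}, $X$ and $X'$ are velcrot, so there is an essential subsurface $Z\subset X\cap X'$ that is homotopic to both $X$ and $X'$. The plan is to pass through $Z$. I will show that $d^\pi_X(x,z)$ and $d^\pi_Z(x,z)$ differ by a uniformly bounded amount, and that the same holds for $X'$; the triangle inequality then gives the statement. By symmetry it is enough to compare $X$ with $Z$.

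\textbf{Step 1: $\C^\dagger(Z)\hookrightarrow\C^\dagger(X)$ is an isometric embedding.} The argument of \autoref{lem: annul homo velcrot} shows that for transverse $\alpha,\beta\in\C^\dagger(Z)$ we have $d^\dagger_Z(\alpha,\beta)=d^\dagger_X(\alpha,\beta)$. For non-transverse pairs I will perturb slightly inside $Z$, keeping the perturbed curves at distance at most $1$ from the originals in both graphs (if an extra bounded error appears here it is harmless). Since $X\setminus Z$ is a union of collar annuli around $\partial X$, every curve of $\C^\dagger(X)$ can be pushed into $Z$ by a homeomorphism supported near those collars. This will show that $\C^\dagger(Z)$ is coarsely dense in $\C^\dagger(X)$, though we will not need that.

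\textbf{Step 2: compare the projections of a single curve.} Let $x$ intersect $X$ essentially. I first check that $x$ also intersects $Z$ essentially. An essential arc $a$ of $x\cap X$ crosses the collar $X\setminus Z$ and so contains a subarc $a'$ of $x\cap Z$. If $a$ is essential in $X$, then some component of $x\cap Z$ is essential in $Z$, because an inessential arc in $Z$ together with collar pieces cannot produce an arc that is essential in $X$. I expect this to be the main obstacle. For wild $C^0$ curves, $x\cap\partial Z$ can be infinite or non-transverse, so the argument should go via isotopy classes relative to a finite puncture set, as in \autoref{lem: dagger}, or by choosing $Z$ generically with $\partial Z$ meeting $x$ and $z$ in finitely many points. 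The latter is allowed because the witness surface can be shrunk slightly within the collar.

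Now take $\gamma\in\pi_Z(x)$, a curve in $Z$ disjoint from an essential arc $a'\subset x\cap Z$. I would like to compare it with a curve $\gamma'\in\pi_X(x)$ disjoint from an essential arc $a\subset x\cap X$. Choose $a$ to extend $a'$, or to be disjoint from it. Then $\gamma$, viewed in $X$, is disjoint from the essential arc $a'$, or more precisely from a subarc of $a$. Applying \autoref{prop_disjoint_subsurface_proj} inside $X$, with intersection number $p=0$ between the relevant arcs, gives $d^\dagger_X(\gamma,\gamma')\le 12$. This needs $\gamma$ to be counted in the $X$-projection estimate; if that fails, I would instead use a curve in $X$ disjoint from $a\cup a'$ and combine with \autoref{prop: bdd diam}. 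Either way, $\pi_Z(x)$ and $\pi_X(x)$ are within a uniform Hausdorff distance $C$ in $\C^\dagger(X)$, with $C$ around $24$.

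\textbf{Step 3: conclude.} By Step 1, diameters computed in $\C^\dagger(Z)$ equal diameters computed in $\C^\dagger(X)$ for subsets of $\C^\dagger(Z)$. Combining this with Step 2 gives
\[
|d^\pi_X(x,z)-d^\pi_Z(x,z)|\le 2C.
\]
The same bound holds with $X'$ in place of $X$, so
\[
|d^\pi_X(x,z)-d^\pi_{X'}(x,z)|\le 4C,
\]
and we may take $M=4C+1$, which is independent of all the data.
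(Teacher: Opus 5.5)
\textbf{Verdict: same architecture as the paper, but Step 2's comparison of $\pi_Z(x)$ with $\pi_X(x)$ has a genuine gap.}

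Your outline matches the paper's proof. You pass to the witness $Z\subset X\cap X'$ of \autoref{prop: witness} and use \autoref{lem: dagger} to see that $\C^\dagger(Z)$ sits isometrically in $\C^\dagger(X)$ and $\C^\dagger(X')$. You then compare projections and conclude with \autoref{prop: bdd diam} and the triangle inequality. Step 1, Step 3, and the transfer of essential intersection from $X$ to $Z$ are fine; that transfer is not the real obstacle.

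The gap is the comparison step. \autoref{prop_disjoint_subsurface_proj} in $X$ concerns essential arcs of $x\cap X$, i.e.\ components that are proper essential arcs of $X$. Your $a'$ has its endpoints on $\partial Z\subset\mathrm{int}\,X$, so it is not one. Moreover, $\gamma\cap a'=\emptyset$ gives no control on the rest of the component $a\supset a'$ of $x\cap X$: that component can leave $Z$ through the collar and re-enter it, crossing $\gamma$. So $p=0$ is not available.

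Worse, $a$ need not be essential in $X$ at all. Let $a$ cross the collar, run through $Z$ along an essential arc $a'$, make a small bump into the collar, return along a parallel copy of $a'$, and exit. This is exactly the phenomenon in the remark after the proposition. If the only essential component $b$ of $x\cap X$ enters $Z$, then any curve of $Z$ that misses $a'$ but meets $b$ lies in $\pi_Z(x)$ and not in $\pi_X(x)$. Nothing in your argument bounds its distance to $\pi_X(x)$, and your fallback curve disjoint from $a\cup a'$ is not tied to $\gamma$ by any estimate either.

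The paper compresses this step into the assertion $\pi_Z(x)\subset\pi_X(x)$ ``with the natural inclusion''. The same example shows this is not literally true, so it cannot be quoted to close the gap. Two weaker statements do hold:
\begin{itemize}
\item The reverse inclusion $\pi_X(x)\cap\C^\dagger(Z)\subset\pi_Z(x)$ holds. A curve of $Z$ missing an essential component $b$ of $x\cap X$ misses $b\cap Z$, which contains an essential arc of $Z$ by your own argument. Combined with the diameter bounds and $d^\dagger_X\le d^\dagger_Z$ on $\C^\dagger(Z)$, this gives a uniform Hausdorff bound as soon as some essential curve of $Z$ misses an essential component of $x\cap X$.
\item If $x$ met $X\setminus\mathrm{int}\,Z$ only in arcs crossing the collars, the forward inclusion would hold verbatim.
\end{itemize}
However, $Z$ has to lie in $X\cap X'$, so it cannot in general be chosen thin relative to both $X$ and $X'$. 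A component of $x\cap X$ that makes many excursions into the collar may leave no curve of $Z$ disjoint from it. What is missing, both in your proposal and in the paper's one-line version of this step, is an argument that controls these excursions.
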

\begin{proof}
Note that \(x,z\) also intersect \(Z\) essentially for \(Z\subset X\cap X'\) from \autoref{prop: witness}. Now, let \(\alpha\in \pi_Z(x)\) and \(\beta\in \pi_Z(z)\). A geodesic path \((x_i)_{i}\) in \(\C^\dagger(Z)\) between \(\alpha\) and \(\beta\) will also yield a path in both \(\C^\dagger(X)\) and \(\C^\dagger(X')\), a finite subset \(P\subset Z\) such that \(x_i\)'s are in minimal position on \(Z\), and thus also on \(X,X'\). Hence, by \autoref{lem: dagger}, we have
\[d_{\C^\dagger(Z)}(\alpha,\beta)=d_{\C^s(Z\setminus P)}([\alpha]_{Z\setminus P},[\beta]_{Z\setminus P})=d_{\C^s(X\setminus P)}([\alpha]_{X\setminus P},[\beta]_{X\setminus P})=d_{\C^\dagger(X)}(\alpha,\beta),\]
where \(\C^s(Z\setminus P)\) is the surviving curve graph for \(Z\setminus P\), is defined up to isotopy, and hence is naturally identified with \(\C^s(X\setminus P)\). The same also holds for \(X'\). Moreover, we remark that with the natural inclusion map we have \(\pi_Z(x)\subset \pi_{X}(x), \pi_{X'}(x)\) and  \(\pi_Z(z)\subset \pi_{X}(z), \pi_{X'}(z)\). The desired result follows from the triangle inequality and \autoref{prop: bdd diam}.
\end{proof}

\begin{remark}
We remark that the condition of \(x,z\in \C^\dagger(S)\) intersecting both \(X,X'\) essentially cannot be dropped. Indeed, there are situations where \(X\subset X'\) and \(x\cap X\) contains essential arcs in \(X\) while \(x\cap X'\) does not admit any essential subarc in \(X'\). Conversely, if \(x,z\) intersect \(X'\) essentially, they also intersect \(X\) essentially.
\end{remark}

\subsection{Fine Behrstock inequality}

We offer a combinatorial proof of Behrstock inequality in the fine curve graph setup following the proof of Leininger (see also \cite{mangahas2010uniform}).

Recall that two subsurfaces in \(S\) are {\it overlapping} if their boundaries cannot be made disjoint via homotopy on \(S\). Note that if two subsurfaces \(X,Y\) are overlapping, then \(\partial X\cap Y\) contains an essential arc on \(Y\) and \emph{vice versa}.

\begin{lemma}\label{lem: velcrot small proj}
There exists \(M>0\) such that the following holds. Consider a triple $(X_1,X_2,X_3)$ of non-sporadic essential subsurfaces of $S$ such that $X_i$ and $X_j$ ($i\neq j$) are either overlapping or isotopic. Suppose in addition that $X_2$ and $X_3$ are velcrot and that both $\partial X_2$ and $\partial X_3$ intersect $X_1$ essentially. Then $d^{\pi}_{X_1}(X_2,X_3)<M$.
\end{lemma}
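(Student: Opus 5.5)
The plan is to find a boundary curve of \(X_2\) and a boundary curve of \(X_3\) whose arcs inside \(X_1\) can be compared with only a bounded amount of intersection. Then \autoref{prop_disjoint_subsurface_proj} together with \autoref{prop: bdd diam} will give a uniform bound on \(d^\pi_{X_1}(X_2,X_3)\). Since \(d^\pi_{X_1}(X_2,X_3)\) is an infimum over boundary components, it is enough to exhibit a single pair \(\alpha\subset\partial X_2\), \(\beta\subset\partial X_3\) with \(d^\pi_{X_1}(\alpha,\beta)\) uniformly bounded.

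By \autoref{prop: witness}, there is an essential subsurface \(Z\subset X_2\cap X_3\) homotopic to both \(X_2\) and \(X_3\). I will use \(Z\) as an intermediary. Each boundary component \(\gamma\) of \(Z\) is homotopic to a component \(\alpha\) of \(\partial X_2\) and to a component \(\beta\) of \(\partial X_3\). Moreover, \(\gamma\) lies in a collar region between them: by \autoref{lem:velcrot not intersec}, \(\partial X_2\) and \(\partial X_3\) cobound only bigons and annuli. As \(\partial X_2\) intersects \(X_1\) essentially, I first choose \(\alpha\) to be a component with an essential arc in \(X_1\). The corresponding \(\beta\) and \(\gamma\) are homotopic to \(\alpha\).

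The key point is that a curve disjoint from \(\alpha\) can be produced with controlled projection. Push \(\gamma\) slightly into \(\inte Z\) to get a curve \(\gamma'\). Then \(\gamma'\) is disjoint from both \(\alpha\) and \(\beta\), since \(Z\subset X_2\cap X_3\) while \(\alpha,\beta\) lie on the boundaries of \(X_2,X_3\). Because \(\gamma'\) is homotopic to \(\alpha\), and \(X_1\) overlaps or is isotopic to \(X_2\), I expect \(\gamma'\) to intersect \(X_1\) essentially. I would argue this by noting that \(\alpha\) crosses \(\partial X_1\) essentially, so no curve homotopic to \(\alpha\) can be homotoped off \(X_1\) or into a non-essential position. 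Then \autoref{prop_disjoint_subsurface_proj} with \(p=0\), applied to essential arcs of \(\alpha\cap X_1\) and \(\gamma'\cap X_1\), gives \(d^\dagger_{X_1}\) between projections at most \(12\). The same holds for \(\gamma'\) and \(\beta\). Combining with the diameter bound \(12\) of \autoref{prop: bdd diam}, the triangle inequality yields \(d^\pi_{X_1}(\alpha,\beta)\le 12+12+12+12\), up to small additive constants. So \(M=50\) suffices.

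The main obstacle is the step where disjointness must be turned into a projection bound. \autoref{prop_disjoint_subsurface_proj} requires essential arcs \(\alpha'\subset\alpha\cap X_1\) and \(\gamma''\subset\gamma'\cap X_1\) that are disjoint, and it requires both curves to meet \(X_1\) essentially. The first condition is automatic from disjointness of the curves. The second needs care, because essential intersection is not homotopy invariant in the fine setting, as the remark after \autoref{cor: velcrot same base} warns. I would handle it through minimal position: choose a finite set \(P\) as in \autoref{lem: dagger}, and use that \(\gamma'\) and \(\alpha\) cobound an annulus in \(S\). Any arc of \(\gamma'\cap X_1\) then lies parallel to an arc of \(\alpha\cap X_1\), or to \(\partial X_1\). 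If all arcs of \(\gamma'\cap X_1\) were inessential, then \(\alpha\) could be isotoped off \(X_1\) through that annulus, contradicting the overlapping assumption. If \(X_1\) is isotopic to \(X_2\) rather than overlapping, the hypothesis that \(\partial X_2\) intersects \(X_1\) essentially is used directly in the same way.
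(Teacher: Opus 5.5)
The gap is the step where you need $\gamma'$, and your particular $\beta$, to meet $X_1$ essentially. You tie $\gamma$ and $\beta$ to $\alpha$ by homotopy and then argue homotopy-theoretically. However, the hypothesis that $\alpha\cap X_1$ contains an essential arc is a fine property, not a homotopy-invariant one.

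In the overlapping case this is only a slip. Choose $\alpha$ to be a component of $\partial X_2$ whose \emph{class} cannot be homotoped off $X_1$; one exists because $X_1$ and $X_2$ overlap. Then $\gamma'$ and $\beta$, being homotopic to $\alpha$, do meet $X_1$ essentially, and your argument goes through.

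In the isotopic case the argument fails. Here $\alpha,\gamma',\beta$ are all homotopic to components of $\partial X_1$, so $\alpha$ \emph{can} be isotoped off $X_1$ and your annulus argument gives no contradiction. In fact the claim that $\gamma'$ meets $X_1$ essentially is false. Here is an example.
\begin{itemize}
\item Let $c$ be a non-separating curve, and set $X_2=X_3=S\setminus N(c)$ with $\partial X_2=\alpha\sqcup\alpha_2$.
\item Let $Z$ be $X_2$ minus thin collars of $\alpha$ and $\alpha_2$, and let $\gamma\subset\partial Z$ be the component parallel to $\alpha$.
\item In $Z$ take a collar $E$ of $\gamma$, then a thin annulus $A$ parallel to $\gamma$ just beyond $E$.
\item Attach to the far side of $A$ a thin finger $F$. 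It runs through $Z$, across the collar of $\alpha_2$, across $N(c)$ and across $\alpha$, and it stops inside the collar of $\alpha$ before reaching $\gamma$.
\item Put $X_1=S\setminus\inte(A\cup F)$. Then $X_1$ is isotopic to $X_2$.
\end{itemize}
Now $\alpha\cap X_1$ is a single arc with endpoints on the walls of $F$. It cuts off from $X_1$ an annulus, namely the collar of $\alpha$ minus $F$ together with $E$, and this annulus contains the other boundary component of $X_1$. So the arc is essential and $\alpha$ meets $X_1$ essentially. But $\gamma$, and any push-off $\gamma'$ of it into $Z$, lies in $X_1$ and is parallel to $\partial X_1$ through $E$. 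Hence $\gamma'$ does not meet $X_1$ essentially, and \autoref{prop_disjoint_subsurface_proj} cannot be applied to the pair $(\alpha,\gamma')$. Note that the \emph{other} component of $\partial Z$, the one parallel to $\alpha_2$, does meet $X_1$ essentially here.

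The missing idea is the paper's argument. Suppose $Z$ were velcrot to $X_1$. Then \autoref{prop: witness} gives $W\subset X_1\cap Z\subset X_1\cap X_2$ homotopic to both, so $X_1$ and $X_2$ would be velcrot. This contradicts \autoref{lem:velcrot not intersec}, because $\partial X_2$ meets $X_1$ essentially. Hence, by \autoref{cor: homo not intersec = velcrot}, \emph{some} component of $\partial Z$ meets $X_1$ essentially; in the overlapping case this is automatic, since $Z$ overlaps $X_1$.

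All of $\partial Z$ is disjoint from $\partial X_2\cup\partial X_3$. So you can pair that component with \emph{arbitrary} components of $\partial X_2$ and $\partial X_3$ that meet $X_1$ essentially, which the hypothesis provides. Then apply \autoref{prop_disjoint_subsurface_proj} with $p=0$ twice. Dropping the homotopy matching of $\alpha,\gamma,\beta$ also removes a requirement you never checked: that your specific $\beta$ meets $X_1$ essentially. Without that, $d^\pi_{X_1}(\alpha,\beta)$ is infinite by definition.
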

\begin{proof}
Assume first that \(X_1\) and \(X_2,X_3\) are overlapping. Since \(X_2\) and \(X_3\) are velcrot, by \autoref{prop: witness}, we can find a subsurface \(Z\subset X_2\cap X_3\) that is isotopic to both \(X_2\) and \(X_3\). Note that \(X_1\) and \(Z\) are also overlapping but \(\partial Z\) is disjoint from both \(\partial X_2\) and \(\partial X_3\). This indicates that there is an essential arc on \(Y\) that is disjoint from both \(\partial X_2\cap Y\) and \(\partial X_3\cap Y\), forcing $d^{\pi}_{X_1}(X_2,X_3)<M$ for some uniform \(M>0\) after \autoref{prop_disjoint_subsurface_proj}.

If \(X_1\) is isotopic to \(X_2\) and \(X_3\), with the similar arguments, the proof is done when one can find a subsurface \(Z\subset X_2\cap X_3\) isotopic to both \(X_2\) and \(X_3\) but not velcrot to \(X_1\). Assume now that the subsurface \(Z\) constructed in \autoref{prop: witness} is velcrot to \(X_1\), then by \autoref{prop: witness} again, one can find a subsurface \(W\subset X_1\cap Z\subset X_1\cap X_2\cap X_3\) that is isotopic to \(X_i\) for \(i=1,2,3\). This, by \autoref{prop: witness} again, forces \(X_1\) to be velcrot to \(X_2\) and \(X_3\), contradicting our hypothesis on essential intersection. Hence, the subsurface \(Z\) must intersect \(X_1\) essentially, which concludes the proof.
\end{proof}

\begin{remark}
\autoref{lem: velcrot small proj} does not hold if the overlapping condition is removed. One may consider three essential subsurfaces \(X_1, X_2,X_3\) such that \(X_2,X_3\) are velcrot and \(\partial X_2,\partial X_3\) intersect \(X_1\) essentially. In addition, assume that \(X_2\cap X_3\) is disjoint from \(X_1\). Then we may apply a point-pushing homeomorphism on \(X_1\) to modify \(\partial X_3\cap X_1\) so that \(d^\pi_{X_1}(X_2,X_3)\) can be arbitrarily large. See \autoref{fig: point pushing can cause arbitrarily long projection distance}.
\begin{figure}[H]
    \centering
    \includegraphics[width=1.0\textwidth]{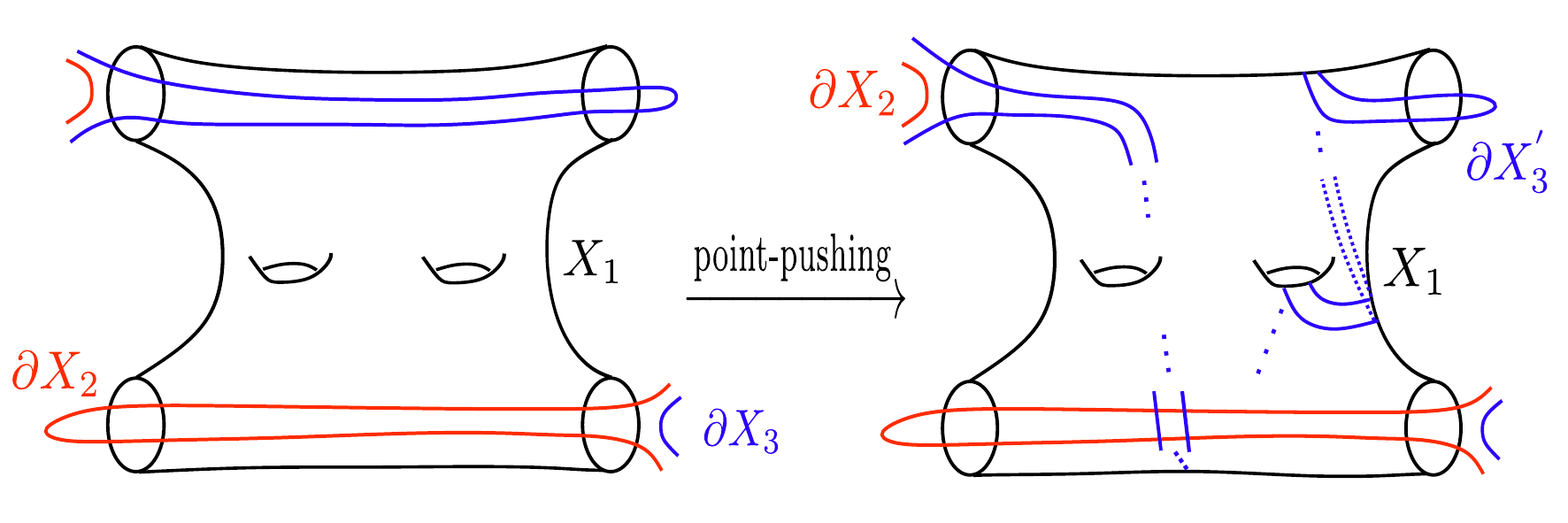}
    \caption{ \(d^\pi_{X_1}(X_2,X_3)\) can be arbitrarily large.}
    \label{fig: point pushing can cause arbitrarily long projection distance}
\end{figure}
\end{remark}

\begin{theorem}[Fine Behrstock's inequality]\label{thm: berhstock}
There exists \(M>0\) such that the following holds. Let \(X_1,X_2,X_3\) be three non-sporadic essential subsurfaces of \(S\) pairwise intersecting each other essentially. Suppose in addition that $X_i$ and $X_j$ ($i\neq j$) are either overlapping or isotopic. If \(d^\pi_{X_1}(X_2,X_3)>M\), then
\[d^\pi_{X_2}(X_1,X_3),d^\pi_{X_3}(X_1,X_2)< M.\]
\end{theorem}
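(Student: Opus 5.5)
We follow Leininger's combinatorial argument for the classical Behrstock inequality, adapted to fine curve graphs. The basic tool is \autoref{prop_disjoint_subsurface_proj}: if we find essential arcs in $X_i$ coming from two curves that meet in a bounded number of points, then their projections to $X_i$ are uniformly close. We split into cases according to which pairs among $X_1,X_2,X_3$ are isotopic and which are overlapping.

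\textbf{Main case: all three pairwise overlapping.} We may pick boundary components $\alpha_i\subset\partial X_i$ so that $\alpha_j$ meets $X_i$ essentially for $i\neq j$. Perturbing slightly, these may be assumed transverse; a small perturbation changes projections only by a bounded amount. Suppose $d^\pi_{X_2}(X_1,X_3)\geq M$ and $d^\pi_{X_1}(X_2,X_3)\geq M$ with $M$ large, and aim for a contradiction.
\begin{enumerate}
\item Every essential arc $a$ of $\alpha_1\cap X_2$ must intersect every essential arc of $\alpha_3\cap X_2$. Otherwise \autoref{prop_disjoint_subsurface_proj} with $p=0$, combined with \autoref{prop: bdd diam}, bounds $d^\pi_{X_2}(\alpha_1,\alpha_3)$ by a constant. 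In particular $a$ crosses $\alpha_3$.
\item Symmetrically, every essential arc $b$ of $\alpha_2\cap X_1$ crosses every essential arc of $\alpha_3\cap X_1$.
\item Take an essential arc $b$ of $\alpha_2\cap X_1$. Its endpoints lie on $\alpha_1$, and following $\alpha_1$ from such an endpoint we enter $X_2$. Using arcs of $\alpha_1$ inside $X_2$ together with arcs of $\alpha_2$ inside $X_1$, we locate a subarc of $\alpha_3$ that must lie in $X_1\cap X_2$ near $\partial X_1\cup\partial X_2$. Leininger's argument then shows that this subarc is disjoint from some essential arc of $\alpha_1\cap X_2$ or from some essential arc of $\alpha_2\cap X_1$. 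That contradicts step 1 or step 2.
\end{enumerate}
Since the whole argument is topological, the fine setting should not change it: arcs are compared directly rather than up to isotopy. The only care needed is that ``essential arc'' is taken in the fine sense. We expect to handle this by fixing a finite set $P$ and working in $S\setminus P$, using \autoref{lem: dagger}.

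\textbf{Cases with isotopic pairs.} Suppose two of the subsurfaces, say $X_2$ and $X_3$, are isotopic. If they are velcrot, then \autoref{lem: velcrot small proj} already gives $d^\pi_{X_1}(X_2,X_3)<M$, so the hypothesis fails and there is nothing to prove. If they are isotopic but not velcrot, then by \autoref{cor: homo not intersec = velcrot} each boundary meets the other surface essentially. In that situation the combinatorial argument above still runs, since it uses only essential intersections of boundaries with the surfaces and never uses non-isotopy. The case where $X_1$ is isotopic to another $X_j$ is handled the same way. Finally, \autoref{cor: velcrot same base} lets us replace any $X_i$ by a velcrot witness $Z$ from \autoref{prop: witness}, at the cost of an additive constant, whenever that simplifies the position of the boundaries.

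\textbf{Main obstacle.} We expect step 3 to be the hard part. There we must produce, purely combinatorially, an arc of $\alpha_3$ disjoint from some essential arc of the other curve, in a setting where curves are not in minimal position and arcs may be inessential in the fine sense. We would first try to reduce to minimal position on $S\setminus P$ for a suitable finite set $P$. Bigons may be unavoidable; if they appear, we would absorb their contribution by enlarging $M$ using \autoref{prop: intersection}.
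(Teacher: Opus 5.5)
There is a genuine gap, and it sits exactly at the point you flag as the main obstacle. Leininger's mechanism gives a component $c'$ of $\partial X_3\cap X_2$ (or of $\partial X_2\cap X_3$) lying in $\mathrm{int}(X_1)$, hence disjoint from $\partial X_1$. That bounds a projection distance only if $c'$ is an \emph{essential} arc of $X_2$, because $\pi_{X_2}$ uses only essential subarcs. Classically, essentiality comes from putting the boundaries in minimal position, which you cannot do with fixed curves. So your claim that ``the whole argument is topological'' and carries over unchanged is where the proof breaks.

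Your proposed repairs do not close this gap.
\begin{itemize}
\item Choosing $P$ so that the boundaries are in minimal position on $S\setminus P$ only makes $c'$ essential in $X_2\setminus P$. It can still cobound, with an arc of $\partial X_2$, a disc in $X_2$ containing points of $P$. Then it is inessential in $X_2$ and invisible to $\pi_{X_2}$.
\item A bound in $\mathcal{C}^s(X_2\setminus P)$ does not transfer back, since \autoref{lem: surv proj} says surviving distances are \emph{lower} bounds for fine distances.
\item Bigons cannot be ``absorbed into $M$'' via \autoref{prop: intersection}, because their number is not bounded by anything.
\end{itemize}
Your step 3 as written, following $\alpha_1$ from an endpoint of an arc of $\alpha_2\cap X_1$, is also not Leininger's mechanism, and it does not by itself locate such a $c'$.

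The missing idea is to remove bigons by moving the \emph{subsurfaces} rather than the curves. After a transverse perturbation, suppose some component of $\partial X_3\cap X_2$ is inessential, and take an innermost one. It cobounds a disc $D\subset X_2$ with an arc of $\partial X_2$. An isotopy supported near $D$ shrinks $X_2$ to some $Y\subset X_2$ avoiding that arc, which lowers $|\partial X_2\cap\partial X_3|$ by two.

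Iterating on both surfaces gives $Y\subset X_2$ and $Z\subset X_3$ with the following properties.
\begin{itemize}
\item They are homotopic to and contained in the originals, hence velcrot to them by \autoref{lem: annul homo velcrot}.
\item There are no inessential arcs of $\partial Z$ in $Y$, nor of $\partial Y$ in $Z$.
\item By \autoref{lem: velcrot small proj} and \autoref{cor: velcrot same base}, all relevant projection distances move by a bounded amount.
\end{itemize}

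The argument is then direct, with no need for a contradiction set-up. Since $d^\pi_{X_1}(Y,Z)$ is still large, \autoref{prop_disjoint_subsurface_proj} forces an essential arc of $\partial Y\cap X_1$ to meet $\partial Z$ at least three times. Hence it contains an entire component of $\partial Y\cap Z$ inside $\mathrm{int}(X_1)$. This component misses $\partial X_1$ and is essential in $Z$ by construction. So \autoref{prop_disjoint_subsurface_proj} with $p=0$ bounds $d^\pi_Z(X_1,Y)$, and hence $d^\pi_{X_3}(X_1,X_2)$. The bound for $X_2$ is symmetric.

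You do mention replacing surfaces by velcrot ones at the cost of a constant, and that is the right tool. It has to be used as this bigon surgery, and in every case, not only for isotopic pairs.
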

\begin{proof}
By triangle inequality \eqref{eq: trian ineq proj dist} and \autoref{lem: velcrot small proj}, up to a small perturbation, we may assume that \(\partial X_1, \partial X_2, \partial X_3\) are intersecting transversely without triple intersections. If \(\partial X_3\) contains non-essential subarcs on \(X_2\), say \(\alpha\subset \partial X_3\cap X_2\), then \(\alpha\) and a subarc of \(\partial X_2\) will bound a topological disk \(D\subset X_2\). Note that there exists a small regular neighbourhood \(U\) of \(D\) such that \(X_2\setminus U\) is a deformation retract of \(X_2\). So we can isotope \(X_2\) into a new subsurface \(Y\subset X_2\) that is disjoint from \(\alpha\) and the isotopy can be chosen to be supported on \(U\). See \autoref{fig: bigon surg}. 
By this means, we obtain a pair of subsurfaces with strictly one less non-essential intersection. Now by proceeding this operation for \(X_2\) and \(X_3\) for finite steps, we can find two subsurfaces \(Y\subset X_2\) and \(Z\subset X_3\) such that they have no non-essential intersection. By \autoref{prop: witness}, we can conclude that \(Y\) and \(X_2\) are velcrot, so are \(Z\) and \(X_3\). Again, by triangle inequality \eqref{eq: trian ineq proj dist} and \autoref{lem: velcrot small proj}, we may finally assume that \(X_2\) and \(X_3\) do not have non-essential intersection. As \(d^\pi_{X_1}(X_2,X_3)>M\), by \autoref{prop_disjoint_subsurface_proj}, we can take \(M>0\) sufficiently large such that the number of intersections between essential subarcs of \(\partial X_2\cap X_1\) and \(\partial X_3\cap X_1\) is large than \(3\). This in particular implies that we can find an essential subarc of \(\partial X_2\cap X_3\) disjoint from \(\partial X_1\). By \autoref{prop_disjoint_subsurface_proj}, we can conclude that \(d^\pi_{X_3}(X_1,X_2)< M\). By the same means, we also have \(d^\pi_{X_2}(X_1,X_3)< M\).
\end{proof}

\begin{figure}[H]
    \centering
    \includegraphics[scale=0.55]{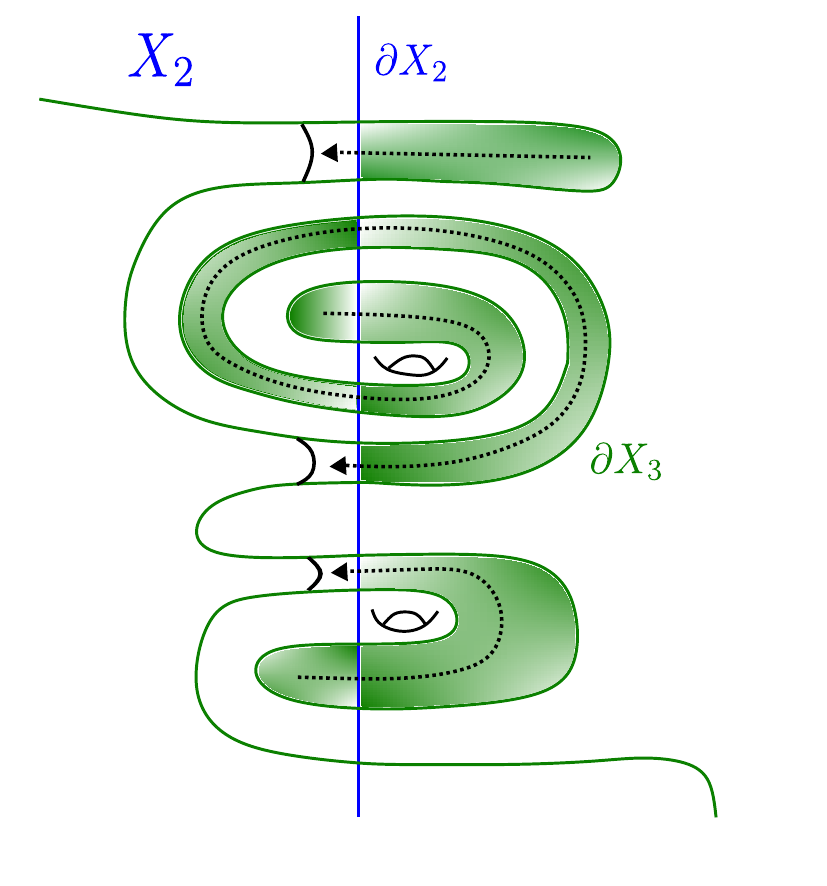}
    \caption{An example of the isotopies operated in \autoref{thm: berhstock}.}
    \label{fig: bigon surg}
\end{figure}

\subsection{Finiteness condition}\label{subsec: finiteness} Unlike the usual setting in \cite{bestvina2015constructing}, without considering isotopy classes, there is generally no finiteness condition for essential subsurfaces. However, with the notion of velcrot subsurfaces, we can show that the for fixed subsurfaces \(Y,Z\), one can choose finitely many subsurfaces such that each subsurface \(X\) with \(d^\pi_X(Y,Z)\gg 1\) is velcrot to one of them. This will serve as a weaker finiteness condition to run the Bestvina--Bromberg--Fujiwara type construction.

\begin{lemma}\label{lem: dual graph}
Let \(\alpha,\beta\in  \C^\dagger(S)\) be two filling transverse curves. Then up to velcrotness, there is a finite canonical family of essential subsurfaces such that their boundaries intersect each subarc of \((\alpha\cup \beta)\setminus(\alpha\cap \beta)\) at most twice and the intersections with \(\alpha\cup \beta\) are essential.
\end{lemma}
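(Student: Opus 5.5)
The plan is to reduce the statement to a finite combinatorial problem on the cell decomposition of \(S\) cut out by \(\alpha\cup\beta\).

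\textbf{The cell structure.} Since \(\alpha,\beta\) are transverse, \(|\alpha\cap\beta|\) is finite. Since they fill, \(\alpha\cup\beta\) is a finite 4-valent graph \(\Gamma\) embedded in \(S\). Its vertices are the points of \(\alpha\cap\beta\), its edges are the subarcs of \((\alpha\cup\beta)\setminus(\alpha\cap\beta)\), and every component of \(S\setminus\Gamma\) is an open disc. Let \(\Gamma^*\) be the dual cell structure: one vertex in each complementary disc, and one dual edge crossing each edge of \(\Gamma\) exactly once. For any essential subsurface \(X\) whose boundary meets each edge of \(\Gamma\) at most twice, the condition that \(\partial X\cap(\alpha\cup\beta)\) consists of essential pieces forbids bigons between \(\partial X\) and \(\Gamma\). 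So each arc of \(\partial X\) inside a complementary disc \(D\) joins two distinct sides of \(D\), and it is determined up to isotopy rel \(\Gamma\) by the pair of edges of \(\partial D\) it connects and the order in which it meets each edge.

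\textbf{Encoding and finiteness.} I would encode \(\partial X\) by a combinatorial datum: the cyclic sequence of edges of \(\Gamma\) crossed by each boundary component, together with the relative order of the at most two crossing points on each edge. Each edge is crossed at most twice, so each boundary curve crosses at most \(2|E(\Gamma)|\) edges. The number of boundary components is bounded by the complexity of \(S\), or alternatively because distinct components are disjoint and all cross \(\Gamma\). Hence only finitely many such data exist. Two subsurfaces with the same datum differ by an isotopy of \(S\) preserving \(\Gamma\) setwise, namely moving the boundary arcs within each disc and sliding crossing points along edges. I would pick one representative \(X_\sigma\) per datum \(\sigma\) and call this the canonical family. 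A canonical choice is, for instance, to route each arc of \(\partial X\) inside \(D\) along a fixed model arc through the dual vertex, with crossing points at prescribed positions on the edges.

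\textbf{Main obstacle: same datum implies velcrot.} The hardest step is to show that two such \(X,X'\) with the same datum are velcrot. I would invoke \autoref{prop: witness}, or more conveniently \autoref{cor: homo not intersec = velcrot}. The two subsurfaces are isotopic, since they have the same combinatorics relative to a filling cell decomposition, and in particular they are homotopic. It remains to check that \(\partial X\) has no essential arc inside \(X'\). Because the data agree, I would first perturb inside each disc \(D\) and along each edge, within the allowed isotopy, so that corresponding arcs of \(\partial X\) and \(\partial X'\) are parallel, and are either disjoint or cross in a controlled way. Then every component of \(\partial X\cap X'\) stays inside a thin band formed by a union of strips, one per disc, between parallel arcs. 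Such a component is therefore isotopic into \(\partial X'\), i.e. it is inessential. The difficulty is to make this local-to-global argument precise: the strips in adjacent discs must glue to an annular neighbourhood of \(\partial X'\), and one must avoid crossings that bound bigons across several discs. The bigon-removal surgery used in the proof of \autoref{thm: berhstock} handles the latter, since it preserves velcrotness. Once this is settled, \autoref{cor: homo not intersec = velcrot} gives velcrotness, and so every admissible \(X\) is velcrot to some \(X_\sigma\) in the finite family.
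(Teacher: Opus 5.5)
Your strategy matches the paper's: finitely many combinatorial types of $\partial X$ relative to the cell structure cut out by $\alpha\cup\beta$, one canonical representative per type, and velcrotness because the comparison isotopy lives in the complementary discs. The step you single out as the main obstacle is not merely unproven but false, because the pattern of $\partial X$ relative to $\alpha\cup\beta$ does not control velcrotness.

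Suppose that, for some complementary disc $D$, a component $B$ of $X\cap D$ is a band. That is, $B$ is bounded by arcs $a_1,a_2\subset\partial X$ and by segments $s_e\subset e$, $s_f\subset f$ of two edges, as happens when two strands of $\partial X$ run side by side through $D$ with $X$ between them. Then $s_e$ is a component of $(\alpha\cup\beta)\cap X$, hence essential in $X$ by hypothesis. Any cross-section $\tau$ of $B$ from $a_2$ to $a_1$ is isotopic to $s_e$ in $X$, so it is essential too.

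Now let $h$ be a finger move supported in $\inte(D)$ that drags a small disc of $D\setminus X$ adjacent to $a_2$ across $\tau$ and slightly beyond $a_1$. Since $h$ fixes $\alpha\cup\beta$ pointwise, $h(X)$ satisfies the hypotheses of the lemma with the same crossing points and the same datum as $X$, but $\tau\cap h(X)=\emptyset$. Hence every curve in $\C^\dagger(X)\cap\C^\dagger(h(X))$ avoids the essential arc $\tau$. Any two such curves are disjoint from a common essential boundary curve of a thin regular neighbourhood of $\tau$ together with the components of $\partial X$ it meets. So this set has diameter at most $2$ in $\C^\dagger(X)$, and $X$, $h(X)$ are not velcrot. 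Taking $X=X_\sigma$ canonical, $h(X_\sigma)$ is not velcrot to the representative of its own datum.

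The same example shows why the repairs you sketch cannot close the gap.
\begin{itemize}
\item Perturbing $X$ ``within the allowed isotopy'' until its arcs run parallel to those of $X'$ replaces $X$ by a copy differing from it precisely by disc-supported moves of this kind. Velcrotness is not transitive, so chaining gains nothing.
\item The bigon surgery from the proof of \autoref{thm: berhstock} does not help either. The finger creates no bigon with $\alpha\cup\beta$. Moreover, every $Z\subset h(X_\sigma)$ homotopic to $h(X_\sigma)$ must still run through the displaced band $h(B)$ from $s_e$ to $s_f$, so the finger survives in $Z$.
\end{itemize}
A minor point: the datum must also record on which side of $\partial X$ the subsurface lies. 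Otherwise $X$ and the closure of $S\setminus X$, when the latter is also an essential subsurface, share the same datum.

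The paper's proof rests on the same one-line assertion, namely that isotopies carried out inside closed complementary discs create no essential intersection between $\partial X$ and the canonical $X'$. The example with $X=h(X')$ contradicts that as well. What is missing in both arguments is control of $\partial X$ inside the complementary discs, not just of the intersection pattern $\partial X\cap(\alpha\cup\beta)$. Without it, the claimed finiteness up to velcrotness does not follow.
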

\begin{proof}
Let \(\alpha,\beta\in \C^\dagger(S)\) be as above. Then, by definition, the surface \(S\) is cut by \(\alpha\cup \beta\) into topological discs. Now, let \(\mathcal{G}\) be a dual graph of these discs, {\it i.e.} \(\mathcal{G}\) is a graph on \(S\) where each topological disc above contains a unique vertex and two vertices are connected by an edge if the associated topological discs share a common subarc. Now, for each edge of \(\mathcal{G}\), we fix a small regular neighbourhood. Let \(X\) be an essential subsurface as in the claim. On the closure of each topological disc, we can isotope \(X\) to a subsurface \(X'\) such that \(\partial X'\) consists of concatenations of boundaries of the small regular neighbourhoods chosen above and that the intersection pattern between \(\partial X\) and the subarcs \((\alpha\cup \beta)\setminus(\alpha\cap \beta)\) is preserved. Since the possible ways of \(\partial X\) intersecting \(\alpha\cup \beta\) are combinatorially finite, there are only finite possibilities of \(X'\). Moreover, as the isotopies are made within (closed) topological discs, there is no essential intersection of \(\partial X\cap X'\), which implies that \(X,X'\) are velcrot. See \autoref{fig: finiteness}.
\end{proof}

\begin{figure}[H]
    \centering
    \includegraphics[scale=0.28]{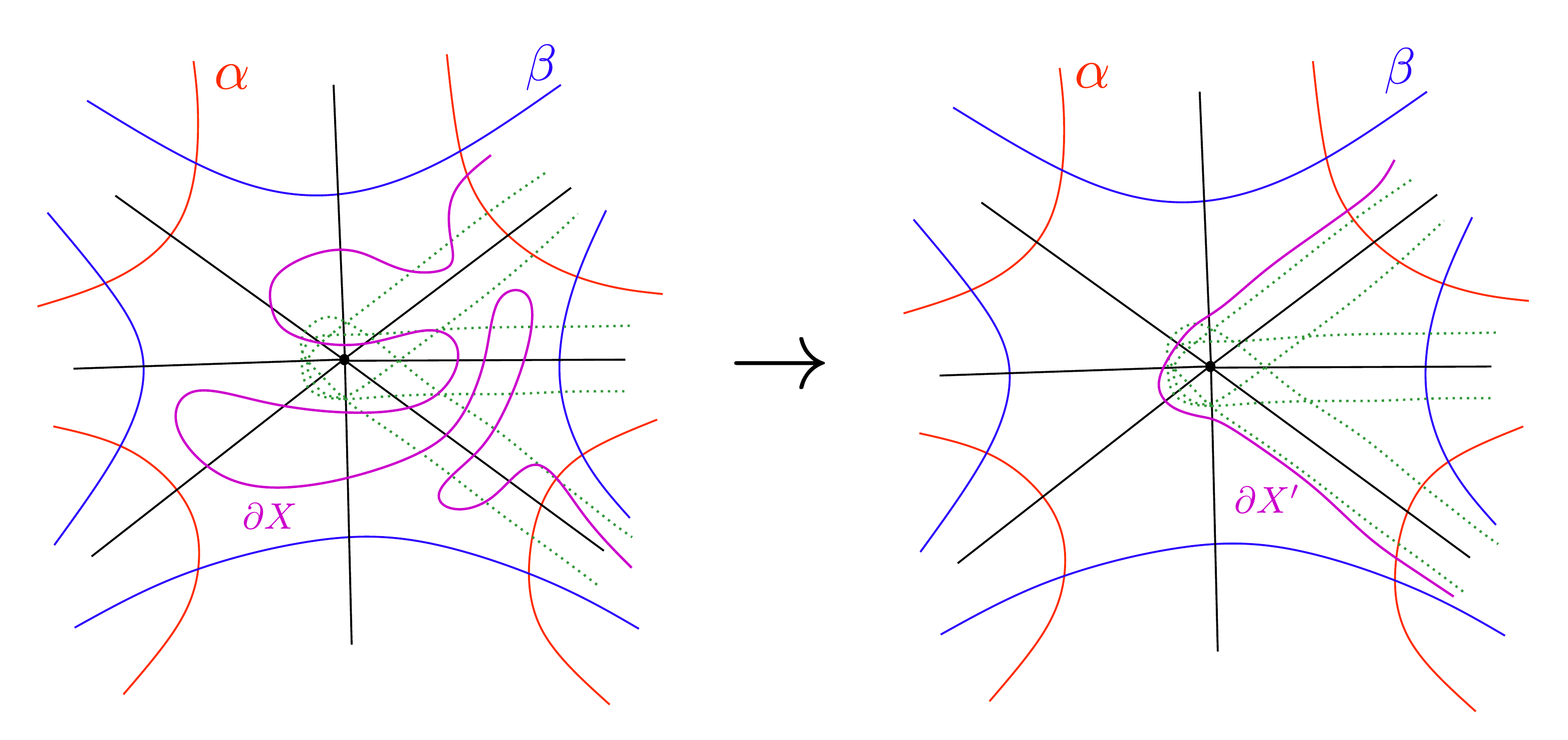}
    \caption{Isotoping subsurface boundaries to boundaries of neighbourhoods of dual graph.}
    \label{fig: finiteness}
\end{figure}

\begin{proposition}\label{prop: finiteness}
There exists a constant \(M>0\), such that the following holds. For any two curves \(\alpha,\beta\in \C^\dagger(S)\), there are finitely many essential non-sporadic subsurface \((X_i)_{i=1}^n\) of \(S\) such that if \(Z\) is an essential non-sporadic subsurface of \(S\) with \(M<d^\pi_Z(\alpha,\beta)<\infty\), then \(Z\) is velcrot to one of \(X_i\)'s. In particular, these \(X_i\)'s can be made pairwise non-velcrot with \(M<d^\pi_{X_i}(\alpha,\beta)<\infty\).
\end{proposition}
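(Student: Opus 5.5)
We reduce to \autoref{lem: dual graph} by showing that whenever the projection distance to \(Z\) is large, the boundary \(\partial Z\) can be isotoped within a velcrot class to meet \(\alpha\cup\beta\) in a combinatorially controlled way.

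\emph{Step 1: reduction to transverse, filling curves.} After small perturbations we may assume \(\alpha,\beta\) are transverse. Perturbing \(\partial Z\) inside a small regular neighbourhood only replaces \(Z\) by a velcrot subsurface, and by \autoref{cor: velcrot same base} this changes \(d^\pi_Z(\alpha,\beta)\) by a bounded amount, so such perturbations are harmless after enlarging \(M\). If \(\alpha\cup\beta\) does not fill \(S\), we replace \(\beta\) by a curve \(\beta'\) chosen so that \(\alpha\cup\beta'\) fills, and handle the resulting discrepancy in \(d^\pi_Z\) by the triangle inequality \eqref{eq: trian ineq proj dist}. This case needs more care; alternatively one can argue directly that a subsurface \(Z\) with large \(d^\pi_Z(\alpha,\beta)\) has boundary that must cut every complementary non-disc region inessentially, and add \(\alpha\cup\beta\) together with a fixed filling pair if necessary.

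\emph{Step 2: normalising \(\partial Z\) against \(\alpha\cup\beta\).} First, \(\partial Z\) may be assumed transverse to \(\alpha\cup\beta\). Next we remove bigons between \(\partial Z\) and \(\alpha\cup\beta\) by the surgery used in the proof of \autoref{thm: berhstock}: isotope \(Z\) across the disc, producing a subsurface contained in \(Z\), hence velcrot to it by \autoref{prop: witness} or \autoref{lem: annul homo velcrot}. The key claim is then the following. If some subarc \(e\) of \((\alpha\cup\beta)\setminus(\alpha\cap\beta)\) meets \(\partial Z\) at least three times essentially, we obtain essential arcs of \(\alpha\cap Z\) (or \(\beta\cap Z\)) that are disjoint from, or intersect a bounded number of times, essential arcs of \(\beta\cap Z\) (respectively \(\alpha\cap Z\)). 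Indeed, three crossings of a single edge \(e\) produce a short subarc of \(e\) inside \(Z\) meeting the other curve at most at its endpoints, that is, at most twice. \autoref{prop_disjoint_subsurface_proj} then gives \(d^\pi_Z(\alpha,\beta)\le 8\cdot 2+12+\) a constant. Choosing \(M\) larger than this bound forces every edge of \((\alpha\cup\beta)\setminus(\alpha\cap\beta)\) to meet the normalised \(\partial Z\) at most twice, with all intersections essential.

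\emph{Step 3: conclusion.} Apply \autoref{lem: dual graph}. Up to velcrotness there are only finitely many such subsurfaces, namely those whose boundaries are made of boundaries of the chosen neighbourhoods of the dual graph. Let \(X_1,\dots,X_n\) be these canonical representatives. Each \(Z\) with \(M<d^\pi_Z(\alpha,\beta)<\infty\) is velcrot to the normalised copy of itself, which is in turn velcrot to some \(X_i\). Here we must check that the composite relation is still velcrotness. Since velcrotness is not transitive, we argue directly: in each step the new surface sits inside the old one up to isotopies supported in discs, so \autoref{cor: homo not intersec = velcrot} applies to \(Z\) and \(X_i\). For the final sentence, discard every \(X_i\) with \(d^\pi_{X_i}(\alpha,\beta)\le M\) or \(=\infty\), and among mutually velcrot ones keep a single one.

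The main obstacle is Step 2, specifically extracting the bounded intersection estimate from three crossings of one edge in a way that is robust across the iterated non-transitive velcrot changes. This step may also force the constant used for pruning in Step 3 to be \(M\) minus the constant of \autoref{cor: velcrot same base}.
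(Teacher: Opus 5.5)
Your filling-case argument is essentially the paper's: shrink \(Z\) to a transverse velcrot copy, remove bigons with \(\alpha\cup\beta\) by the surgery of \autoref{thm: berhstock}, show each edge of \((\alpha\cup\beta)\setminus(\alpha\cap\beta)\) is then crossed at most twice, and apply \autoref{lem: dual graph}. Your three-crossing observation is the justification behind the paper's one-line claim that such crossings force a bigon: an arc of \(\alpha\cap Z\) between consecutive crossings of one edge is disjoint from \(\beta\), so if it is essential, \autoref{prop_disjoint_subsurface_proj} bounds \(d^\pi_Z(\alpha,\beta)\). Step 3 should be closed as the paper does. Since \(Z''\subset Z\) is velcrot to \(X_i\), \autoref{prop: witness} gives \(W\subset Z''\cap X_i\subset Z\cap X_i\) homotopic to \(Z''\), \(X_i\) and \(Z\). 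Then \autoref{prop: witness} again shows that \(Z\) and \(X_i\) are velcrot. Saying the isotopies are ``supported in discs'' does not by itself verify the hypothesis of \autoref{cor: homo not intersec = velcrot}.

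The genuine gap is the non-filling case in Step 1. Your main suggestion does not work as stated: replace \(\beta\) by \(\beta'\) with \(\alpha\cup\beta'\) filling, and absorb the error with \eqref{eq: trian ineq proj dist}. The inequality only gives \(d^\pi_Z(\alpha,\beta')\geq d^\pi_Z(\alpha,\beta)-d^\pi_Z(\beta,\beta')\). For a general \(\beta'\), the error term is controlled only by about \(8|\beta\cap\beta'|+12\) via \autoref{prop_disjoint_subsurface_proj}. That bound depends on \(\beta'\), hence on \(\alpha,\beta\), so the resulting lower bound on \(d^\pi_Z(\alpha,\beta')\) need not exceed the universal threshold your Step 2 requires. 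Moreover, nothing guarantees that \(\beta'\) meets \(Z\) essentially, and without that \eqref{eq: trian ineq proj dist} is not available.

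The missing idea, which the paper uses, is that large projection already confines \(Z\) to the subsurface \(S'\) filled by \(\alpha\cup\beta\). Suppose \(Z\) had an essential curve \(\gamma\) outside \(S'\). Then \(\gamma\) is disjoint from \(\alpha\) and \(\beta\), so it lies in \(\pi_Z(\alpha)\cap\pi_Z(\beta)\) by \autoref{def: subsurface proj}. Hence \(d^\pi_Z(\alpha,\beta)\le 24\) by \autoref{prop: bdd diam}; the paper phrases this via \autoref{thm: bgit}. So every relevant \(Z\) has all its essential curves meeting \(S'\), and the filling argument is run with \(S'\) in place of \(S\). Your ``alternatively'' sentence gestures at this but never states the disjoint-curve observation that makes the reduction work.

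A smaller point in the same step concerns making \(\alpha,\beta\) transverse. Take pushoffs disjoint from \(\alpha\) and \(\beta\) respectively, so that \autoref{prop_disjoint_subsurface_proj} bounds the change in \(d^\pi_Z\) by \(24\). \autoref{cor: velcrot same base} only controls perturbations of \(Z\), not of the curves.
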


\begin{proof}
Note that we can perturb \(\alpha,\beta\) into new curves \(\alpha',\beta'\) intersecting each other transversely such that \(\alpha'\cap \alpha=\emptyset\) and \(\beta'\cap \beta=\emptyset\). By \autoref{prop_disjoint_subsurface_proj}, if \(M<d^\pi_Z(\alpha,\beta)\) for some subsurface \(Z\subset S\), then \(M'<d^\pi_Z(\alpha',\beta')\) for some \(M'> M-24>0\). In turn, if the statement is true for transverse \(\alpha',\beta'\) for any sufficiently large \(M'>0\), then it is also true for \(\alpha,\beta\) and for any sufficiently large \(M>0\). So we may now assume that \(\alpha,\beta\) are transverse.

Moreover, we need only to consider the case when \(\alpha,\beta\) are filling. Indeed, if not, let \(S'\subset S\) be any subsurface containing \(\alpha\cup \beta\) and that \(\alpha,\beta\) fill \(S'\), then whenever \(X\setminus S'\) contains an essential curve on \(X\), we have \(d^\pi_{X}(\alpha,\beta)<M\) by \autoref{thm: bgit}. 

Suppose now that \(\alpha,\beta\) are filling on \(S\), or equivalently, \(\alpha\cup\beta\) cuts \(S\) into finitely many topological discs. Let \(Z\) be any essential non-sporadic subsurface of \(S\) with \(M<d^\pi_Z(\alpha,\beta)<\infty\). We first slightly perturb \(Z\) into \(Z'\subset Z\) such that \(\partial Z'\) intersects \(\alpha,\beta\) transversely without triple intersections. If \(\partial Z'\) intersects any subarc of \((\alpha\cup \beta)\setminus(\alpha\cap \beta)\) more than twice, since we have assumed that \(d^\pi_Z(\alpha,\beta)\) and thus, by \autoref{cor: velcrot same base}, \(d^\pi_{Z'}(\alpha,\beta)\) are sufficiently large, then \(\alpha\) or \(\beta\) admits a subarc bound a bigon with \(\partial Z'\). With the same techniques in \autoref{fig: bigon surg}, we can now find a velcrot subsurface \(Z''\subset Z'\) so that \(Z''\) remains transverse to \(\alpha\cup\beta\) without triple intersections and disjoint from the bigons. Hence, \(\partial Z''\) intersect each subarc of \((\alpha\cup \beta)\setminus(\alpha\cap \beta)\) at most twice.

Now we can isotope \(Z''\) to one of the canonical subsurfaces from \autoref{lem: dual graph}, denoted by \(X\) and there are only finitely many choices of \(X\). We claim that it is also velcrot to \(Z\). First, note that \(Z\) and \(X\) are homotopic. Note that \(Z''\) and \(X\) are velcrot. By \autoref{prop: witness}, we can find \(W\subset Z''\cap X\subset Z\cap X\) that is also homotopic to both \(Z''\) and \(X\), and hence also to \(Z\). By \autoref{prop: witness} again, this implies that \(Z\) and \(X\) are velcrot.
\end{proof}

\section{Fine projection complex}\label{sec-4}

This section will be an adaption of the discussion in \cite[\S 3]{bestvina2015constructing} to our setup. However, we remark that the results we obtained here cannot be implied directly from their theory, and we need to modify various notions introduced in \cite{bestvina2015constructing}. Thus, we provide a relatively detailed proof whenever modifications are involved.

\subsection{Hierarchy}\label{subsec: 4.A}

Let \(\Y\) be a collection of non-sporadic subsurfaces of \(S\) verifying the following two conditions:
\begin{enumerate}[label=(Y\arabic*), topsep=0pt, itemsep=-1ex, partopsep=1ex, parsep=1ex ]
    \item If \(X\in\Y\), then we also have \(f(X)\in \Y\) for every \(f\in\Homeo_0(S)\); \label{Y1}
    \item For each pair \(X,Y\in\Y\), they are either homotopic or overlapping. \label{Y2}
\end{enumerate}
For simplicity, given two subsurfaces \(X,Y\subset S\), we write \(\pi_X(Y)\coloneqq \bigcup_{\alpha\subset \partial Y}\pi_X(\alpha)\).

With these conditions, together with \autoref{cor: homo not intersec = velcrot}, we can easily conclude:
\begin{lemma}
For \(X,Y\in \Y\), either they are velcrot, or \(\partial X\) intersect \(Y\) essentially and \emph{vice versa}. \QEDD
\end{lemma}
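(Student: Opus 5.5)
The lemma is a dichotomy for a pair \(X,Y\in\Y\), and I would prove it by splitting along \ref{Y2}.

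\emph{Overlapping case.} By the remark at the start of the subsection on the fine Behrstock inequality, overlapping subsurfaces satisfy: \(\partial X\cap Y\) contains an essential arc of \(Y\), and symmetrically \(\partial Y\cap X\) contains an essential arc of \(X\). So \(\partial X\) intersects \(Y\) essentially and \emph{vice versa}, and the second alternative holds. For completeness I would also note that overlapping subsurfaces cannot be velcrot. By \autoref{lem:velcrot not intersec}, velcrot subsurfaces have boundaries that do not intersect each other essentially, which contradicts what was just shown. Hence the two alternatives are exclusive in this case.

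\emph{Homotopic case.} Here I would apply \autoref{cor: homo not intersec = velcrot} directly. It says that \(X\) and \(Y\) are velcrot if and only if neither boundary projects to an essential arc on the other surface. So if \(X,Y\) are not velcrot, then at least one of the two holds: \(\partial X\) meets \(Y\) essentially, or \(\partial Y\) meets \(X\) essentially.

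The remaining point, which I expect to be the only real obstacle, is to get the symmetric conclusion: both boundaries meet the other surface essentially, as the phrase ``and \emph{vice versa}'' requires. Suppose \(\partial X\) meets \(Y\) essentially but \(\partial Y\) does not meet \(X\) essentially. Then every component of \(\partial Y\cap X\) is an inessential arc of \(X\) or a peripheral curve of \(X\). I would then rerun the argument in the ``only if'' part of \autoref{prop: witness}. The inessential arcs of \(\partial Y\) in \(X\) cut off only discs and boundary-parallel annuli from \(X\). Removing a small neighbourhood of these gives a subsurface \(Z\subset X\cap Y\) that is homotopic to \(X\), and hence also to \(Y\), because \(X\) and \(Y\) are homotopic. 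By \autoref{prop: witness}, \(X\) and \(Y\) would then be velcrot, which is a contradiction. This establishes the symmetry.

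If this symmetry step proves delicate, a fallback is to read the statement as ``\(\partial X\) intersects \(Y\) essentially or \emph{vice versa}''. That weaker form is exactly \autoref{cor: homo not intersec = velcrot} and needs no further argument.
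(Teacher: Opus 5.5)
Your proposal is correct and follows the paper's route: split along \ref{Y2}, use the remark that overlapping subsurfaces have boundaries meeting each other in essential arcs, and invoke \autoref{cor: homo not intersec = velcrot} (i.e.\ \autoref{prop: witness} together with \autoref{lem:velcrot not intersec}) in the homotopic case. Your explicit symmetry step is the ``only if'' construction of \autoref{prop: witness} made one-sided, which the paper leaves implicit in that corollary; the one point to add is why the core \(Z\) lies in \(Y\) and not in \(S\setminus Y\): \(Z\) is connected and misses \(\partial Y\), and a surface homotopic to the non-sporadic \(Y\) cannot be disjoint from it.
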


Let \(\theta>0\) be a constant larger than \(3M\) for \(M>0\) verifying any results from \autoref{sec: surf surg}.
\begin{definition}\label{def: hierarchy}
For non-velcrot \(X,Z\in \Y\), we define \(\HH(X,Z)\subset \Y\times \Y\) to be the pairs \((X',Z')\) such that \(X',Z'\) are not velcrot and they satisfy one of the following conditions:
\begin{enumerate}[label=(H\arabic*), topsep=0pt, itemsep=-1ex, partopsep=1ex, parsep=1ex ]
    \item\label{H1} \(2 \theta <d^\pi_X(X',Z'),d^\pi_Z(X',Z')<\infty\). 
    \item\label{H2} \(X,X'\) are velcrot and \(2\theta < d^\pi_Z(X,Z')<\infty\). 
    \item\label{H3} \(Z,Z'\) are velcrot and \(2\theta < d^\pi_X(X',Z)<\infty\). 
    \item\label{H4} \(X,X'\) are velcrot and \(Z,Z'\) are velcrot. 
\end{enumerate}
Moreover, we define \(\hh(X,Z)\subset \Y\) to be the subsurfaces \(Y\) contained in a pair from \(\HH(X,Z)\).
\end{definition}

\begin{remark}\label{rem: velcrot in hier}
It is clear from the definition that given two non-velcrot \(X,Z\in \Y\), if \(Y\in \Y\) is velcrot to either \(X\) or \(Z\), then \(Y\in \hh(X,Z)\). Moreover, with our definition of the projection distance above, we can see that \((X',Z')\in \HH(X,Z)\) if and only if \(d_X^\pi(X',Z'), d^\pi_Z(X',Z')>2\theta\).
\end{remark}

We define the \emph{modified projection distance} by
\begin{equation} \label{eq def: modified proj dist}
d_Y(X,Z)\coloneqq \inf \{ d_Y^\pi(X',Z'): (X',Z')\in \HH(X,Z)\},
\end{equation}
if \(Y\notin \hh(X,Z)\) and \(X,Z\) are not velcrot. Otherwise, we set \(d_Y(X,Z)=0\). 

The following proposition shows that upon changing to another pair in the hierarchy, the projection distance does not change much, thus there is a coarse equality between the projection distance and the modified one.

\begin{proposition}\label{prop: mod dist coarse to proj dist}
For any non-velcrot \(X,Z\in \Y\) and any \((X',Z')\in \HH(X,Z)\), if \(Y\in \Y\) is any subsurface that is not velcrot to \(X\) or \(Z\), then
\[d^\pi_Y(X,Z)-d^\pi_Y(X',Z')\leq 2\theta.\]
\end{proposition}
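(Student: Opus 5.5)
We use the triangle inequality \eqref{eq: trian ineq proj dist} on \(Y\):
\[
d^\pi_Y(X,Z)\le d^\pi_Y(X,X')+d^\pi_Y(X',Z')+d^\pi_Y(Z',Z).
\]
It then suffices to show \(d^\pi_Y(X,X')\le\theta\) and \(d^\pi_Y(Z,Z')\le\theta\); by symmetry we treat only the pair \(X,X'\). Since \(Y\) is not velcrot to \(X\) or \(Z\), the remark after \ref{Y2} (every pair in \(\Y\) is velcrot or cuts essentially) shows that \(\partial X\) and \(\partial Z\) meet \(Y\) essentially. The terms involving \(X\) and \(Z\) are therefore finite. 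If \(d^\pi_Y(X',Z')=\infty\) there is nothing to prove, so we may also assume \(\partial X',\partial Z'\) meet \(Y\) essentially.

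We split according to the four cases of \autoref{def: hierarchy}.

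\emph{Cases \ref{H2} and \ref{H4} (\(X\), \(X'\) velcrot).} Here \(d^\pi_Y(X,X')<M\le\theta\) by \autoref{lem: velcrot small proj}, applied to the triple \((Y,X,X')\). Its hypotheses hold: the members of \(\Y\) are pairwise overlapping or isotopic, and \(\partial X,\partial X'\) both meet \(Y\) essentially.

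\emph{Cases \ref{H1} and \ref{H3} (\(X\), \(X'\) not velcrot).} Now \(d^\pi_X(X',Z')>2\theta>M\) in case \ref{H1}, and in case \ref{H3} \(d^\pi_X(X',Z)>2\theta\). We argue by contradiction: suppose \(d^\pi_Y(X,X')>\theta\ge M\). The triple \((Y,X,X')\) consists of pairwise non-velcrot surfaces, hence pairwise essentially intersecting. The fine Behrstock inequality (\autoref{thm: berhstock}) then gives \(d^\pi_X(Y,X')<M\).

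\emph{Case \ref{H1}.} The triangle inequality in \(X\) gives
\[
d^\pi_X(Y,Z')\ge d^\pi_X(X',Z')-d^\pi_X(Y,X')>2\theta-M>M .
\]
Applying Behrstock to \((X,Y,Z')\) (or a velcrot-adjusted version if \(Y,Z'\) are velcrot) yields \(d^\pi_Y(X,Z')<M\). Similarly, \(d^\pi_Z(X',Z')>2\theta\) together with the same style of argument controls \(d^\pi_Y(Z,Z')\).

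Rather than chasing these estimates pairwise, it is cleaner to close with a three-surface comparison on \(Y\). We have \(d^\pi_Y(X,Z')<M\), and symmetrically \(d^\pi_Y(X',Z)<M\) from the \(Z\) side. Then
\[
d^\pi_Y(X,Z)\le d^\pi_Y(X,Z')+d^\pi_Y(Z',X')+d^\pi_Y(X',Z)\le d^\pi_Y(X',Z')+2M\le d^\pi_Y(X',Z')+2\theta .
\]
This is the desired bound, obtained by using the cross terms \(X\!-\!Z'\) and \(X'\!-\!Z\) instead of \(X\!-\!X'\). I would reorganise the proof along this line: in each case, bound \(d^\pi_Y(X,Z')\) and \(d^\pi_Y(X',Z)\) by \(M\) (or by \(0\)-type bounds in velcrot cases), then apply the triangle inequality above.

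\emph{Bounding the cross terms.} For \(d^\pi_Y(X,Z')\): if \(Y\) is velcrot to \(Z'\), use \autoref{cor: velcrot same base}/\autoref{lem: velcrot small proj} to pass to \(Z\)-type estimates. Otherwise Behrstock on \((X,Y,Z')\) applies provided \(d^\pi_X(Y,Z')>M\). If instead \(d^\pi_X(Y,Z')\le M\), then \(d^\pi_X(Y,X')\ge d^\pi_X(X',Z')-M>M\), and Behrstock on \((X,Y,X')\) gives \(d^\pi_Y(X,X')<M\). That alternative bound also suffices, since then the first displayed inequality gives \(d^\pi_Y(X,Z)\le d^\pi_Y(X',Z')+M+d^\pi_Y(Z,Z')\).

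The main obstacle is this case bookkeeping. Each of the two sides (\(X\) versus \(X'\), and \(Z\) versus \(Z'\)) must contribute at most \(\theta\) however the dichotomy falls. The velcrot degeneracies (\(Y\) velcrot to \(X'\) or \(Z'\), and the \(\infty\) conventions) must be handled via \autoref{lem: velcrot small proj}, since Behrstock requires non-velcrot triples. Because \(\theta>3M\), the error of at most \(M\) on each side fits within \(2\theta\).
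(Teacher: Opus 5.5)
Your final argument is correct, but it is organised differently from the paper's. The paper first shows \(d^\pi_Y(X,Z)\le 2\theta\) when \(Y\in\hh(X,Z)\) and notes the claim is trivial whenever \(d^\pi_Y(X,Z)\le 2\theta\). It then uses the standing assumption \(d^\pi_Y(X,Z)>2\theta\) to couple the two sides in \ref{H1}. Taking (by the \(X'\leftrightarrow Z'\) symmetry of \ref{H1}) the branch \(d^\pi_X(X',Y)>\theta\), Behrstock gives \(d^\pi_Y(X,X')<\theta\), hence \(d^\pi_Y(X',Z)>\theta\), hence \(d^\pi_Z(X',Y)<\theta\). This forces \(d^\pi_Z(Y,Z')>\theta\) and so \(d^\pi_Y(Z,Z')<\theta\), and the single chain \(X,X',Z',Z\) closes.

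You instead run the \(X\)-side dichotomy (\(d^\pi_Y(X,X')<M\) or \(d^\pi_Y(X,Z')<M\)) and the \(Z\)-side dichotomy (\(d^\pi_Y(Z,Z')<M\) or \(d^\pi_Y(X',Z)<M\)) independently. You then close each combination by a triangle inequality on \(Y\). This needs neither the preliminary reduction nor the ``without loss of generality''. The paper's coupling, in exchange, forces matching branches and so avoids the four-way bookkeeping.

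That bookkeeping is where your write-up has loose ends.
\begin{itemize}
\item Your opening claim that it suffices to prove \(d^\pi_Y(X,X'),d^\pi_Y(Z,Z')\le\theta\) is not established and need not hold. In the branch \(d^\pi_Y(X,X')>\theta\) your argument produces no contradiction, only \(d^\pi_Y(X,Z')<M\).
\item For the same reason, ``we have \(d^\pi_Y(X,Z')<M\), and symmetrically \(d^\pi_Y(X',Z)<M\)'' is one branch, not an unconditional fact.
\item The mixed outcomes are never treated, and this is the main omission. Suppose the \(X\)-side gives \(d^\pi_Y(X,X')<M\) but the \(Z\)-side gives \(d^\pi_Y(X',Z)<M\). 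Then your sentence invoking the first displayed inequality does not apply, since \(d^\pi_Y(Z,Z')\) is uncontrolled. Close instead with \(d^\pi_Y(X,Z)\le d^\pi_Y(X,X')+d^\pi_Y(X',Z)<2M\), and symmetrically through \(Z'\) in the other mixed case.
\item In \ref{H3} the \(X\)-side dichotomy must come from \(d^\pi_X(X',Z)>2\theta\). Its second branch gives \(d^\pi_Y(X,Z)<M\) directly by Behrstock on \((X,Y,Z)\), while \autoref{lem: velcrot small proj} supplies \(d^\pi_Y(Z,Z')<M\). You state this hypothesis but never run the argument.
\item The subcase ``\(Y\) velcrot to \(Z'\)'' is vacuous, since you already reduced to \(d^\pi_Y(X',Z')<\infty\).
\end{itemize}
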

\begin{proof}
If \(Y\) is velcrot to either \(X'\) or \(Z'\), then \(d^\pi_Y(X',Z')=\infty\), there is nothing to proof. So we may assume that \(Y\) is not velcrot to \(X'\) or \(Z'\) as well.

Suppose first that \(Y\in \hh(X,Z)\). Since \(Y\) is not velcrot to either \(X\) or \(Z\), the only possibility is that \ref{H1} is satisfied, \emph{i.e.} there exists \(Y'\in\Y\) such that \(d^\pi_X(Y,Y'),d^\pi_Z(Y,Y')>2\theta\). Then by the fine Behrstock's inequality (\autoref{thm: berhstock}), we can conclude that \(d^\pi_Y(X,Y'),\ d^\pi_Y(Z,Y')\leq \theta\). Now applying \eqref{eq: trian ineq proj dist}, we obtain
\[d^\pi_Y(X,Z)\leq d^\pi_Y(X,Y')+d^\pi_Y(Z,Y')\leq 2\theta\]
As \(d^\pi_Y(X',Z')\geq 0\), the inequality is verified.

The same argument also holds whenever \(d^\pi_Y(X,Z)\leq 2\theta\). So we may now assume that \(d^\pi_Y(X,Z)> 2\theta\). In particular, this implies that \(Y\notin \hh(X,Z)\) and that \(Y\) is velcrot neither to \(X\) nor to \(Z\), after \autoref{rem: velcrot in hier}.

First, let us assume that \(X,X'\) and \(Z,Z'\) are not velcrot. By \eqref{eq: trian ineq proj dist}, as \((X',Z')\in \HH(X,Z)\) verifying \ref{H1} condition, we have
\[d^\pi_X(X',Y)+d^\pi_X(Y,Z')\geq d^\pi_X(X',Z')>2\theta,\]
hence, \(\max\{d^\pi_X(X',Y),d^\pi_X(Y,Z')\}>\theta\). Without loss of generality, we may assume that \(d^\pi_X(X',Y)>\theta\). By the fine Behrstock's inequality, we get \(d^\pi_Y(X,X')<\theta\). Now applying the triangle inequality \eqref{eq: trian ineq proj dist} again, we obtain
\[d^\pi_Y(X,X')+d^\pi_Y(X',Z)\geq d^\pi_Y(X,Z)>2\theta,\]
so \(d^\pi_Y(X',Z)>2\theta-d^\pi_Y(X,X')>\theta\), indicating \(d^\pi_Z(X',Y)<\theta\) after applying the fine Behrstock's inequality. Using the same arguments as the beginning of this paragraph to \(Z\), we also have \(\max\{d^\pi_Z(X',Y),d^\pi_Z(Y,Z')\}>\theta\), but as \(d^\pi_Z(X',Y)<\theta\), this forces \(d^\pi_Z(Z',Y)>\theta\). Now, the fine Behrstock's inequality also applies to get \(d^\pi_Y(Z,Z')<\theta\). Finally, using the triangle inequality for the projection distance \eqref{eq: trian ineq proj dist} again, we have
\[d^\pi_Y(X,X')+d^\pi_Y(X',Z')+d^\pi_Y(Z,Z')\geq d^\pi_Y(X,Z),\]
which in turn implies
\[d^\pi_Y(X,Z)-d^\pi_Y(X',Z')\leq d^\pi_Y(X,X')+d^\pi_Y(Z,Z')<2\theta.\]

Now, let us assume that \(Z,Z'\) are velcrot but \(X,X'\) are not. Then, by our assumption \((X',Z')\in \HH(X,Z)\), this implies that \(d^\pi_X(X',Z')>2\theta\). Applying \eqref{eq: trian ineq proj dist}, we have
\[d^\pi_X(X',Y)+ d^\pi_X(Y,Z')\geq d^\pi_X(X',Z')>2\theta,\]
which implies \(\max\{d^\pi_X(X',Y), d^\pi_X(Y,Z')\}>\theta\). But we also have assumed \(d^\pi_Y(X,Z)>2\theta\), so together with the triangle inequality \eqref{eq: trian ineq proj dist}, \autoref{lem: velcrot small proj} and our assumption \(\theta>3M\), we have
\[d^\pi_Y(X,Z')\geq d^\pi_Y(X,Z)-d^\pi_Y(Z,Z')>2\theta-M>\theta.\]
The fine Behrstock's inequality then implies that \(d^\pi_X(Y,Z')<\theta\), and thus \(d^\pi_X(X',Y)>\theta\). Again, the fine Behrstock's inequality further yields \(d^\pi_Y(X,X')<\theta\). We once again apply the triangle inequality \eqref{eq: trian ineq proj dist} to see that
\[d^\pi_Y(X,X')+d^\pi_Y(X',Z)\geq d^\pi_Y(X,Z),\]
and therefore
\begin{align*}
d^\pi_Y(X,Z)-d^\pi_Y(X',Z')& \leq d^\pi_Y(X,Z)-d^\pi_Y(X',Z)-d^\pi_Y(Z',Z)\\
&<\theta-M<2\theta.
\end{align*}
By symmetry, the situation when \(X,X'\) are velcrot but \(Z,Z'\) are not can be discussed in the same way.

Finally, if \(X,X'\) and \(Z,Z'\) are velcrot, the desired result is a simple application of \autoref{lem: velcrot small proj} after the assumption that \(3M<\theta\). 
\end{proof}

Let us define \(x\pt y\) or \(y\st x\) if \(x-y\) is bounded above by a constant that only depends on \(\theta>0\). We also define \(x\simt y\) if both \(x\pt y\) and \(x\st y\).

Next, for a large \(K>0\) and any \(X,Z\), we define \(\Y_K(X,Z)\) to be the set of \(Y\in \Y\) such that \(d_Y(X,Z)>K\). In particular, we can see that if \(X,Z\) are velcrot, then \(\Y_K(X,Z)=\emptyset\); conversely, if \(Y\in\Y_K(X,Z)\neq \emptyset\), then \(X,Y,Z\) are pairwise not velcrot.

\begin{theorem}\label{thm: BBF prerequi}
There exists a \(\Theta > 0\), depending only on \(\theta\), such that the
following properties hold:
\begin{enumerate}[label=(\Roman*), topsep=0pt, itemsep=-1ex, partopsep=1ex, parsep=1ex ]
\item\label{I} {\bf Symmetry}.
\[d_Y(X,Z)=d_Y(Z,X)\]
\item\label{II} {\bf Coarse equality}. For any pairwise non-velcrot triple \(X,Y,Z\)
\[d_Y^\pi(X,Z)\pt d_Y(X,Z)\leq d^\pi_Y(X,Z).\]
If \(X,Z\) are velcrot but \(Y\) is velcrot to neither \(X\) nor \(Z\), then
\[d_Y^\pi(X,Z)\simt d_Y(X,Z)=0.\]
\item\label{III} {\bf Velcrot coarse equality}. For any velcrot \(Y,Y'\in \Y\) and any pair \(X,Z\in \Y\) not velcrot to \(Y,Y'\),
\[d_Y(X,Z)\simt d_{Y'}(X,Z).\]
\item\label{IV} {\bf Coarse triangle inequality}. For \(Y\) not velcrot to \(Z\),
\[d_Y(X,Z)+d_Y(Z,W)\st d_Y(X,W).\]
\item\label{V} {\bf Inequality on triples}.
\[\min\{d_Y(X,Z),d_Y(Z,W)\}\simt 0.\]
\item \label{VI} {\bf Finite velcrot covering}. For any \(X,Z\in \Y\), there exist finitely many pariwise non-velcrot \(Y_i\in \Y_\Theta(X,Z)\) such that every \(Z\in \Y_\Theta(X,Z)\) is velcrot to one of \(X_i\)'s.
\item \label{VII} {\bf Monotonicity}. If \(Y\in \Y_\Theta(X,Z)\neq \emptyset\), then for any \(W\in \Y\) not velcrot to \(X\) or \(Z\), both \(d_W(X,Y), d_W(Y,Z)\leq d_W(X,Z)\).
\item \label{VIII} {\bf Order}. Given \(X,Z\in \Y\) non-velcrot, for any pairwise non-velcrot \(Y_i\)'s in \(\Y_\Theta(X,Z)\), there is a total order on \((Y_i)\) such that if \(Y_0<Y_1<Y_2\), then
\[d_{Y_1}(X,Z)\pt d_{Y_1}(Y_0,Y_2)\leq d_{Y_1}(X,Z)\]
and
\[d_{Y_0}(Y_1,Y_2), d_{Y_2}(Y_0,Y_1)\simt 0.\]
Moreover, the order can be extended to \(X\) and \(Z\) with \(X\) being the minimal element and \(Z\) the maximal.
\item \label{IX} {\bf Barrier property}. If \(Y\in \Y_\Theta(X_0,Z)\) and \(Y\in \Y_\Theta(X_1,Z)\), then \(d_Z(X_0,X_1)<\Theta\).
\end{enumerate}
\end{theorem}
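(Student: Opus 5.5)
I will follow \cite[\S 3]{bestvina2015constructing} item by item. Throughout, \autoref{thm: berhstock} and \autoref{lem: velcrot small proj} replace axiom (P1), and \autoref{prop: finiteness} replaces axiom (P2). I will take \(\Theta\) to be a large multiple of \(\theta\), fixed at the end.

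\textbf{The formal items \ref{I}--\ref{III}.} Item \ref{I} is immediate: \(\HH(X,Z)\) is symmetric in \((X,Z)\) up to swapping coordinates, and \(d^\pi_Y\) is symmetric. For \ref{II}, the upper bound holds because \((X,Z)\in\HH(X,Z)\) by \ref{H4}. The lower bound is exactly \autoref{prop: mod dist coarse to proj dist}; the case \(Y\in\hh(X,Z)\) is handled in the first paragraph of its proof. When \(X,Z\) are velcrot, \(d_Y(X,Z)=0\) by definition and \(d^\pi_Y(X,Z)<M\) by \autoref{lem: velcrot small proj}. Item \ref{III} then follows from \ref{II} together with \autoref{cor: velcrot same base}, applied to boundary curves of \(X,Z\). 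By the remark after \autoref{cor: velcrot same base}, one should check that these curves meet both \(Y\) and \(Y'\) essentially; this holds because \(X,Z\) are non-velcrot to \(Y,Y'\) and (Y2) forces overlapping.

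\textbf{Items \ref{IV}, \ref{V} and \ref{VII}.} For \ref{IV} I pass to \(d^\pi\) via \ref{II}, use \eqref{eq: trian ineq proj dist}, and absorb the \(2\theta\) errors. The velcrot degenerate cases become trivial because \(d_Y=0\) or, by \autoref{lem: velcrot small proj}, \(d^\pi_Y\) is small. Item \ref{V} is the fine Behrstock inequality applied to \(d^\pi\) and then transferred by \ref{II}. I read \ref{V} as \(\min\{d_Y(X,Z),d_Z(X,Y)\}\simt0\), since the displayed version appears to contain a typo.

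For \ref{VII}, I argue as in BBF. Suppose \(d_W(X,Y)\) were larger than \(d_W(X,Z)\). Then \ref{IV} forces \(d_W(Y,Z)\) to be large, so Behrstock gives \(d_Y(W,Z)\simt 0\). Since \(d_Y(X,Z)>\Theta\), it follows that \(d_Y(X,W)\) is large, and hence \(d_W(X,Y)\simt0\), a contradiction. The real content of \ref{VII} is the exact (not coarse) inequality, and this is why the infimum over \(\HH\) is used. I would show that if \(d_W(X,Z)\) is realised by some \((X',Z')\in\HH(X,Z)\), then a suitable pair is also in \(\HH(X,Y)\) with no larger projection, so the infimum can only drop. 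Concretely, I would check that \(\HH(X,Z)\subset\HH(X,Y)\)-type inclusions hold after replacing \(Z'\) by \(Y\) when \(d^\pi_Y(X',Z')\) is large, using \ref{H1}--\ref{H3}.

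\textbf{Items \ref{VI}, \ref{VIII} and \ref{IX}.} Item \ref{VI} follows from \ref{II} and \autoref{prop: finiteness}, applied to a pair of boundary curves \(\alpha\subset\partial X\), \(\beta\subset\partial Z\). Here \(d_Y(X,Z)>\Theta\) gives \(d^\pi_Y(\alpha,\beta)>\Theta-2\theta>M\). Finitely many non-velcrot representatives exist, and \autoref{prop: witness} shows that velcrotness to a representative passes to all of \(\Y_\Theta\).

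For \ref{VIII}, I define \(Y_0<Y_1\) iff \(d_{Y_0}(X,Y_1)\) is large (equivalently, \(d_{Y_1}(Y_0,Z)\) is large) and show, as in BBF Lemma 3.3, that this is a total order on non-velcrot representatives. For two elements, exactly one of \(d_{Y_0}(X,Y_1)\) and \(d_{Y_0}(Y_1,Z)\) is large, by \ref{IV} and \ref{V}; transitivity then comes from \ref{V}. The estimate \(d_{Y_1}(Y_0,Y_2)\simt d_{Y_1}(X,Z)\) follows from the triangle inequality combined with \(d_{Y_1}(X,Y_0),d_{Y_1}(Y_2,Z)\simt0\). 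Item \ref{IX} is Behrstock twice: \(d_Y(X_i,Z)>\Theta\) gives \(d_Z(X_i,Y)\simt0\) for \(i=0,1\), and \ref{IV} then gives \(d_Z(X_0,X_1)<\Theta\).

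I expect the main obstacle to be the exact (rather than coarse) inequality in \ref{VII}, and more generally bookkeeping the velcrot degenerate cases. There, \(d_Y\) is defined to be \(0\) and \(d^\pi\) may be \(\infty\), so each Behrstock application must be preceded by a check that the triple is pairwise non-velcrot and overlapping, with \autoref{lem: velcrot small proj} covering the remaining cases.
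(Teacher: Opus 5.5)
Your treatment of (I)--(VI), (VIII) and (IX) follows the paper. You use the same ingredients: \autoref{prop: mod dist coarse to proj dist}, \autoref{lem: velcrot small proj}, \autoref{cor: velcrot same base}, \autoref{prop: finiteness}, and the BBF arguments for order and barriers. Your reading of (V) as \(\min\{d_Y(X,Z),d_Z(X,Y)\}\simt 0\) is the intended one.

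The genuine gap is (VII), which you flag as the main obstacle but do not prove. Your coarse argument fails as written. (IV) only gives \(d_W(Y,Z)\st d_W(X,Y)-d_W(X,Z)\), so \(d_W(X,Y)>d_W(X,Z)\) does not make \(d_W(Y,Z)\) large. A case split on whether \(d_W(Y,Z)\) is large would repair it, but only to \(d_W(X,Y)\pt d_W(X,Z)\). Your plan for the exact inequality, ``replacing \(Z'\) by \(Y\)'', goes the wrong way. The new pair \((X',Y)\) has projection \(d^\pi_W(X',Y)\), which need not be at most \(d^\pi_W(X',Z')\), so nothing forces the infimum down.

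The missing observation is that no replacement is needed, because the same pair already lies in the new hierarchies. Let \(Y\in\Y_\Theta(X,Z)\). Then \(Y\) is velcrot to neither \(X\) nor \(Z\), so \autoref{prop: mod dist coarse to proj dist}, with \(Y\) as viewpoint, gives for every \((X',Z')\in\HH(X,Z)\) that \(d^\pi_Y(X',Z')\geq d^\pi_Y(X,Z)-2\theta\geq d_Y(X,Z)-2\theta>2\theta\) once \(\Theta>4\theta\). Together with \(d^\pi_X(X',Z'),d^\pi_Z(X',Z')>2\theta\), the criterion in \autoref{rem: velcrot in hier} yields \(\HH(X,Z)\subset\HH(X,Y)\cap\HH(Y,Z)\). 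Hence \(\hh(X,Z)\subset\hh(X,Y)\cap\hh(Y,Z)\), which settles the case \(d_W(X,Z)=0\). Otherwise \(d_W(X,Y)\) and \(d_W(Y,Z)\) are infima of the same quantity \(d^\pi_W(X',Z')\) over larger sets, hence at most \(d_W(X,Z)\) exactly. This is the paper's proof. It is also what gives the exact bound \(d_{Y_1}(Y_0,Y_2)\leq d_{Y_1}(X,Z)\) in (VIII), which your triangle-inequality argument yields only coarsely.

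A smaller point concerns (VI). \autoref{prop: witness} cannot make velcrotness ``pass'' from a representative to the rest of \(\Y_\Theta(X,Z)\). That would be transitivity, which fails, as the paper stresses. The paper instead relies directly on the ``in particular'' clause of \autoref{prop: finiteness}. That proposition should also be applied to each of the finitely many pairs of boundary components of \(X\) and \(Z\). For one fixed pair \((\alpha,\beta)\), some \(Y\in\Y_\Theta(X,Z)\) may have \(d^\pi_Y(\alpha,\beta)=\infty\), and such a \(Y\) is not covered.
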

\begin{proof}
\ref{I} is deduced directly from the definition. The right-hand side of the first part of \ref{II} is clear from the definition, while the left-hand side is a consequence of \autoref{prop: mod dist coarse to proj dist}. The second part of \ref{II} is a reformulation of \autoref{lem: velcrot small proj} with our new notation. \ref{III} is a consequence of \ref{II} and \autoref{cor: velcrot same base}. \ref{IV} is a result of the triangle inequality \eqref{eq: trian ineq proj dist} and the coarse equality \ref{II}. \ref{V} can also be deduced directly from the definition of the modified projection distance and the fine Behrstock's inequality. \ref{VI} is also a consequence of \autoref{prop: finiteness}.

For \ref{VII}, if \(W\) is velcrot to \(Y\), then the proof is trivial. Suppose now \(W,Y\) are not velcrot. We claim that \(\Theta>4\theta\) verifies the condition that if \(Y\in\Y_\Theta(X,Z)\neq \emptyset\), then
\[\HH(X,Z)\subset \HH(X,Y)\cap \HH(Y,Z).\]
Indeed, the distance \(d_W(X,Z)\) is the infimum of \(d^\pi_W(X',Z')\) for \((X',Z')\in\HH(X,Z)\), while \(d_W(X,Y)\) and \(d_W(Y,Z)\) are infimum taken over \(\HH(X,Y)\) and \(\HH(Y,Z)\) respectively, and the inclusion will give us the desired inequality. Now, let us verify it. If \((X', Z') \in \HH(X, Z)\), then by \autoref{prop: mod dist coarse to proj dist}, we have
\[d_Y^\pi(X,Z)-d^\pi_Y(X',Z')<2\theta.\]
But \(d^\pi_Y(X,Z)\geq d_Y(X,Z)>\Theta>4\theta\), then \(d^\pi_Y(X',Z')>2\theta\). In any case (despite the velcrotness between the surfaces), this indicates \((X',Z')\in \HH(X,Y)\cap \HH(Y,Z)\).

For \ref{VIII}, since \((Y_i)\)'s are already pairwise non-velcrot and none of them is velcrot to either \(X\) or \(Z\), the same arguments in the proof of \cite[Theorem 3.3]{bestvina2015constructing} also apply here.

For \ref{IX}, if \(X_0,X_1\) are velcrot, then the desired result is nothing more than the coarse equality \ref{II}; otherwise, they are pairwise non-velcrot, and the discussion goes \emph{verbatim} as in \cite[Theorem 3.3]{bestvina2015constructing}.

Finally, we take \(\Theta>0\) such that it bounds \emph{three times} of all the differences appear in the coarse relation in \ref{I}--\ref{IX}.
\end{proof}

\begin{remark}\label{rem: order}
More precisely, the order in \ref{VIII} is defined as below: fix \(\theta'>4\theta>0\), for two non-velcrot \(W,Y\in \Y_\Theta(X,Z)\), we say that \(W<Y\) if one, and hence all, of the following equivalent conditions is satisfied:
\begin{enumerate}[label=(\roman*), topsep=0pt, itemsep=-1ex, partopsep=1ex, parsep=1ex ]
\item \(d_W(X,Y)>\theta'\,\);
\item \(d_Y(X,W)\leq\theta'\,\);
\item \(d_Y(W,Z)>\theta'\,\);
\item \(d_W(Y,Z)\leq \theta'\,\).
\end{enumerate}
Moreover, we say that \(W\leq Y\) in \( \Y_\Theta(X,Z)\) if either \(W<Y\) or else \(W,Y\) are velcrot.
\end{remark}

\subsection{Quasi-tree}\label{subsec: 4.B} Let \(\Theta\) still be the constant in \autoref{thm: BBF prerequi}. For \(K\geq 2\Theta\), let us define the following graph:
\begin{definition}
    The \emph{fine projection complex} \(\PY\) is a graph where the vertices are elements in \(\Y\) and an edge is connected between \(X,Z\in\Y\) if \(\Y_K(X,Z)=\emptyset\). Denote the simplicial distance on \(\PY\) by \(\dP\).
\end{definition}

In the following, we will discuss the metric geometric properties of \(\PY\).

For a subset \(U\subset \Y\), we define \(|U|_\vel\) to be the minimal \(k\geq 0\) needed so that there exists \((Y_i)_{i=1}^k\subset U\) such that any \(Y\in U\) is velcrot to some \(Y_i\), with the convention \(|\emptyset|_\vel=0\). By \ref{VI}, we see that \(|\Y_K(X,Z)|_\vel<\infty\) for any \(X,Z\in \Y\).

\begin{proposition}\label{prop: PK connect}
For any \(X,Z\in \Y\), we have \(\dP(X,Z)\leq |\Y_K(X,Z)|_\vel+1\). In particular, the graph \(\PY\) is connected.
\end{proposition}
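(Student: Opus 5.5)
We argue by induction on \(n\coloneqq|\Y_K(X,Z)|_\vel\), following the proof of the analogous statement in \cite[\S 3]{bestvina2015constructing}, with velcrot classes playing the role of single vertices.

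\textbf{Base case \(n=0\).} Then \(\Y_K(X,Z)=\emptyset\), so \(X\) and \(Z\) are joined by an edge by definition and \(\dP(X,Z)\leq 1\). This case also covers \(X,Z\) velcrot, since then \(\Y_K(X,Z)=\emptyset\).

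\textbf{Inductive step \(n\geq 1\).} Choose pairwise non-velcrot representatives \(Y_1,\dots,Y_n\in\Y_K(X,Z)\), one for each velcrot class. This is possible by \ref{VI}, as \(K\geq 2\Theta>\Theta\). Since they lie in \(\Y_\Theta(X,Z)\), the order \ref{VIII} applies; relabel so that \(Y_1<Y_2<\dots<Y_n\). We claim \(\Y_K(X,Y_1)=\emptyset\), giving an edge \(X\)--\(Y_1\), and \(|\Y_K(Y_1,Z)|_\vel\leq n-1\). The induction hypothesis then gives \(\dP(Y_1,Z)\leq n\), hence \(\dP(X,Z)\leq n+1\).

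For both claims, take \(W\in\Y_K(X,Y_1)\) or \(W\in\Y_K(Y_1,Z)\). Then \(W\) is not velcrot to \(X,Y_1\) (resp.\ \(Y_1,Z\)), and we first show \(W\in\Y_\Theta(X,Z)\).

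\begin{enumerate}[label=(\alph*)]
\item \emph{Case \(W\) not velcrot to \(Z\) (resp.\ \(X\)).} By \ref{V} applied to the triple \(X,Y_1,Z\) from the viewpoint of \(W\), one of \(d_W(X,Y_1)\) and \(d_W(Y_1,Z)\) is \(\simt 0\), so the other is \(>K\). By the coarse triangle inequality \ref{IV}, \(d_W(X,Z)\succ_\theta K-\Theta>\Theta\), using \(K\geq2\Theta\) and absorbing the additive errors into \(\Theta\) as at the end of the proof of \autoref{thm: BBF prerequi}.
\item \emph{Case \(W\) velcrot to \(Z\) (resp.\ \(X\)).} Then \ref{III} replaces \(d_W\) by \(d_Z\) up to bounded error, and we use the barrier property \ref{IX} and monotonicity \ref{VII} to rule this case out, or to reduce it to case (a).
\end{enumerate}

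Granted \(W\in\Y_\Theta(X,Z)\), we use the order characterisation of \autoref{rem: order}.

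\begin{itemize}
\item \emph{First claim.} If \(W\in\Y_K(X,Y_1)\) and \(W\) is not velcrot to \(Y_1\), then \(d_W(X,Y_1)>K>\theta'\) forces \(W<Y_1\). Moreover \(d_W(X,Z)\succ_\theta d_W(X,Y_1)\) via \ref{VII}, so \(W\in\Y_K(X,Z)\) up to adjusting constants. Hence \(W\) is velcrot to some \(Y_i\), and by \ref{III} it inherits the order position of \(Y_i\), so \(Y_i<Y_1\). This contradicts the minimality of \(Y_1\). Therefore \(\Y_K(X,Y_1)=\emptyset\).
\item \emph{Second claim.} Every \(W\in\Y_K(Y_1,Z)\) satisfies \(Y_1<W\) and is velcrot to some \(Y_i\) with \(i\geq2\). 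This gives \(|\Y_K(Y_1,Z)|_\vel\leq n-1\).
\end{itemize}

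\textbf{Main obstacle.} The delicate step is the bookkeeping of constants: showing that membership in \(\Y_K(X,Y_1)\) actually yields membership in \(\Y_K(X,Z)\), not merely in \(\Y_{K-\text{const}}(X,Z)\). As in \cite{bestvina2015constructing}, one may need the two-sided estimate \(d_W(X,Y_1)\simt d_W(X,Z)\) for \(W<Y_1\), which follows from \ref{VIII}, \ref{IV} and \ref{V}. Failing that, we would prove the slightly weaker statement with thresholds \(K\) and \(K-2\Theta\) and rely on \(K\geq2\Theta\).

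A second subtlety is that velcrotness is not transitive. So whenever we replace \(W\) by a velcrot representative \(Y_i\), we apply \ref{III} only to pairs of surfaces that are genuinely velcrot, never chaining two velcrot relations together.

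Connectedness of \(\PY\) follows immediately, since \(|\Y_K(X,Z)|_\vel<\infty\) by \ref{VI}.
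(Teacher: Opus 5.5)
Your first claim, \(\Y_K(X,Y_1)=\emptyset\), is exactly the paper's argument: \ref{VII} puts an offending \(W\) into \(\Y_K(X,Z)\), and \ref{III} then places its velcrot partner \(Y_i\) strictly below \(Y_1\). The gap is in the inductive step, at the inference ``every \(W\in\Y_K(Y_1,Z)\) is velcrot to some \(Y_i\) with \(i\geq 2\), hence \(|\Y_K(Y_1,Z)|_\vel\leq n-1\)''. By definition, \(|U|_\vel\) counts representatives chosen \emph{inside} \(U\). But \(Y_2,\dots,Y_n\) need not lie in \(\Y_K(Y_1,Z)\). Indeed, \ref{VIII} only gives \(d_{Y_i}(Y_1,Z)\st d_{Y_i}(X,Z)\), i.e.\ \(d_{Y_i}(Y_1,Z)>K-c\) for a constant \(c\) depending on \(\theta\), and \ref{VII} gives the inequality in the wrong direction. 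Nor can you swap \(Y_i\) for a member of \(\Y_K(Y_1,Z)\) velcrot to it. Because velcrotness is not transitive, the members of \(\Y_K(Y_1,Z)\) velcrot to \(Y_i\) need not be velcrot to each other, so several representatives may be needed. Thus \(U\mapsto|U|_\vel\) is not monotone under inclusion, and the induction hypothesis does not deliver \(\dP(Y_1,Z)\leq n\). This is exactly where the non-transitivity you flagged causes trouble; your applications of \ref{III} are fine.

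The repair is the paper's route. Drop the induction and check directly that \(X,Y_1,\dots,Y_n,Z\) is a path in \(\PY\); the bound \(n+1\) is then read off without ever estimating \(|\Y_K(Y_1,Z)|_\vel\). For \(W\in\Y_K(Y_i,Y_{i+1})\), apply \ref{VII} twice: once with \(Y_i\in\Y_\Theta(X,Z)\), and once with \(Y_{i+1}\in\Y_\Theta(Y_i,Z)\), which follows from \ref{VIII} since \(K\geq 2\Theta\). This gives \(d_W(X,Z)\geq d_W(Y_i,Z)\geq d_W(Y_i,Y_{i+1})>K\). Hence \(W\) is velcrot to some \(Y_j\) with \(j\neq i,i+1\), and \ref{III} together with \ref{VIII} forces \(Y_i<Y_j<Y_{i+1}\), contradicting consecutiveness. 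The edge \(Y_n\)--\(Z\) is symmetric to your first claim.

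Two smaller remarks. First, the ``main obstacle'' you identify is not one. \ref{VII} is an exact inequality \(d_W(X,Y_1)\leq d_W(X,Z)\), which is how the paper gets \(W\in\Y_K(X,Z)\) with no loss, so your case (a) detour through \ref{V} and \ref{IV} is unnecessary. Second, you must rule out \(W\) velcrot to \(Z\) (or to \(X\)) before \ref{VII} applies; the paper leaves this implicit. It is disposed of by Behrstock, not \ref{IX}. For example, if \(W\in\Y_K(X,Y_1)\) is velcrot to \(Z\), then \(d_W(X,Y_1)\simt d_Z(X,Y_1)\) by \ref{III}. But \(d_{Y_1}(X,Z)>K\) forces \(d_Z(X,Y_1)\simt 0\) by the inequality on triples in its Behrstock form \(\min\{d_{Y_1}(X,Z),d_Z(X,Y_1)\}\simt 0\), a contradiction.
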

\begin{proof}
If \(\Y_K(X,Z)=\emptyset\), then they are connected in \(\PY\) and 
\[\dP(X,Z)= |\Y_K(X,Z)|_\vel+1=1.\]
Suppose now that \(\Y_K(X,Z)\neq\emptyset\). Then there exists \((Y_i)_{i=1}^k\subset \Y_K(X,Z)\) pairwise non-velcrot with \(k=|\Y_K(X,Z)|_\vel\) such that any \(Y\in \Y_K(X,Z)\) is velcrot to one of \(Y_i\)'s. Moreover, as \(d_{Y_i}(X,Z)>0\), \(Y_i\)'s are not velcrot to \(X\) or \(Z\). By \ref{VIII}, we may assume that \(Y_i<Y_j\) whenever \(i<j\).

We claim that \(X, Y_1,\dots, Y_k,Z\) is a path connecting \(X\) to \(Z\) in \(\PY\). We first show that \(\Y_K(X,Y_1)=\emptyset\). Suppose for contradiction that \(Y\in \Y_K(X,Y_1)\neq\emptyset\). Then \(Y\) is not velcrot to \(Y_1\). By monotonicity \ref{VII}, we have \(d_Y(X,Z)\geq d_Y(X,Y_i)>K\), which implies that \(Y\in \Y_K(X,Z)\) and is velcrot to some \(Y_i\). However, by \ref{III}, \(d_Y(X,Y_1)\simt d_{Y_i}(X,Y_1) > K>\theta'\), where \(\theta'\) is as in \autoref{rem: order}, showing that \(Y_i<Y_1\), a contradiction. For the adjacency between \(Y_i\) and \(Y_{i+1}\), as well as between \(Y_k\) and \(Z\), the discussion remains very similar, if it is not exactly the same.
\end{proof}

For two vertices in \(\PY\), \autoref{prop: PK connect} gives an upper bound for the distance between them, but does not produce a lower bound estimate. The following notion of \emph{guard} is introduced to give a coarse positioning for geodesic paths in \(\PY\). Roughly speaking, a guard \(W\) for \(Y\) is a vertex such that its projection is always close to the projection of \(Y\) from every viewpoint.

\begin{definition}
We say that \(W \in \Y\) is a \emph{guard} for \(Y\) if for every vertex \(X \in \Y\) with \(W \in \Y_\Theta(X, Y )\) and every \(Z \in \Y_K (X, Y ) \subset \Y_\Theta(X, Y )\), one has \(Z \leq W\) in \(\Y_\Theta(X, Y )\).    
\end{definition}

The following lemma offers a sufficient condition for a vertex \(W\) to be a guard for another vertex \(Y\):

\begin{lemma}\label{lem: K/2 empty means guard}
For \(K>\) sufficiently large and vertices \(X, Y, Z\) and \(W\) in \(\Y\), if \(W \in \Y_\Theta(X, Y )\), \(Z \in \Y_K (X, Y )\) and \(W < Z\) in \(\Y_\Theta(X, Y )\), then \(Z \in \Y_{K/2}(W, Y )\).

In particular, if \(\Y_{K/2}(W, Y )=\emptyset\) then \(W\) is a guard for \(Y\).
\end{lemma}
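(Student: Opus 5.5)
We follow the corresponding lemma in \cite[\S 3]{bestvina2015constructing}, using the properties of \autoref{thm: BBF prerequi} in place of the BBF axioms. The quantity to bound from below is \(d_Z(W,Y)\); we need it to exceed \(K/2\).

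\textbf{Step 1: non-velcrotness.} First check that \(W,Z,Y\) are pairwise non-velcrot, so that the modified distances below are not forced to vanish.
\begin{itemize}
\item \(Z\) and \(Y\) are not velcrot, since \(Z\in\Y_K(X,Y)\).
\item \(W\) and \(Y\) are not velcrot, since \(W\in\Y_\Theta(X,Y)\).
\item \(W\) and \(Z\) are not velcrot, because \(W<Z\) is a strict relation in \(\Y_\Theta(X,Y)\) (see \autoref{rem: order}).
\end{itemize}

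\textbf{Step 2: the main estimate.} By the coarse triangle inequality \ref{IV}, applied with base \(Z\) (not velcrot to \(W\)),
\[d_Z(X,Y)\pt d_Z(X,W)+d_Z(W,Y).\]
Since \(W<Z\), criterion (ii) of \autoref{rem: order} gives \(d_Z(X,W)\leq\theta'\). Together with \(d_Z(X,Y)>K\) this yields
\[d_Z(W,Y)\geq K-\theta'-C,\]
where \(C\) depends only on \(\theta\). Choosing \(K\) larger than \(2(\theta'+C)\) gives \(d_Z(W,Y)>K/2\), that is, \(Z\in\Y_{K/2}(W,Y)\).

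\textbf{Main obstacle.} The delicate point is applying \ref{IV} when some of the surfaces are velcrot. The coarse inequalities involving \(d_Z(X,W)\) require \(X\) and \(W\) not to be velcrot to \(Z\); both hold, since \(Z\in\Y_K(X,Y)\) and by Step 1.

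If \ref{IV} turns out to be awkward in the modified-distance setting, we will instead pass through the coarse equality \ref{II}:
\begin{itemize}
\item compare \(d_Z\) with \(d^\pi_Z\) for each of the pairs \((X,Y)\), \((X,W)\), \((W,Y)\), which are pairwise non-velcrot triples;
\item use the honest triangle inequality \eqref{eq: trian ineq proj dist} for \(d^\pi_Z\);
\item return to \(d_Z\) via \ref{II}.
\end{itemize}
This costs only an additive constant depending on \(\theta\), which is absorbed by taking \(K\) large.

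\textbf{The ``in particular''.} Suppose \(\Y_{K/2}(W,Y)=\emptyset\), and let \(X\) satisfy \(W\in\Y_\Theta(X,Y)\) and \(Z\in\Y_K(X,Y)\). Since \(\le\) is the total order of \ref{VIII} extended by velcrotness, exactly one of the following holds:
\begin{itemize}
\item \(Z<W\), or \(Z\) and \(W\) are velcrot: then \(Z\leq W\) directly.
\item \(W<Z\): then the first part gives \(Z\in\Y_{K/2}(W,Y)\), contradicting emptiness.
\end{itemize}
In every case \(Z\leq W\), so \(W\) is a guard for \(Y\).
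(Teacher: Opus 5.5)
Your proof is correct and is essentially the paper's argument. The paper invokes the order property \ref{VIII} for the chain \(W<Z<Y\), in the order on \(\Y_\Theta(X,Y)\) extended to the endpoints, to get \(d_Z(W,Y)\st d_Z(X,Y)>K\); you derive the same estimate directly from the coarse triangle inequality \ref{IV} together with criterion (ii) of \autoref{rem: order}, which gives \(d_Z(X,W)\leq\theta'\). This avoids relying on the extension of the order to the endpoint \(Y\), and your argument for the ``in particular'' part via totality of the order, which the paper leaves implicit, is fine.
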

\begin{proof}
    With the assumption above, we can deduce that \(Z,W\) and \(X,Y\) are not velcrot. Then, appealing to \ref{VIII} with \(W<Z<Y\) with respect to the order in \(\Y_\Theta(X, Y )\cup\{X,Y\}\), we have
    \[d_Z(W,Y)\st d_Z(X,Y)>K,\]
    which, by making \(K\) sufficiently large with respect to \(\theta\), implies \(d_Z(W,Y)>K/2\).
\end{proof}

The following lemma gives an example of guard:
\begin{lemma}\label{lem: maximal is guard}
The following holds for sufficiently large \(K>0\). Let \(X,Z\in \Y\) be non-velcrot. Suppose that \((Y_i)\subset \Y_{K/2}(X,Z)\) are pairwise non-velcrot and that each \(Y\in \Y_{K/2}(X,Z)\) is velcrot to one of \(Y_i\)'s. Then the minimal element in \((Y_i)\) is a guard for \(X\), while the maximal element is a guard for \(Z\). 
\end{lemma}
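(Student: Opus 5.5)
The plan is to reduce to the sufficient criterion of \autoref{lem: K/2 empty means guard}: if \(Y_{\min}\) denotes the minimal element of \((Y_i)\) in the order of \ref{VIII} on \(\Y_\Theta(X,Z)\), it suffices to show \(\Y_{K/2}(Y_{\min},X)=\emptyset\). Symmetrically, showing \(\Y_{K/2}(Y_{\max},Z)=\emptyset\) gives that \(Y_{\max}\) is a guard for \(Z\), using the symmetry \ref{I} to swap the roles of \(X\) and \(Z\). Note that \(\Y_{K/2}(X,Z)\subset \Y_\Theta(X,Z)\) since \(K\geq 2\Theta\). Hence the \(Y_i\)'s are pairwise non-velcrot elements of \(\Y_\Theta(X,Z)\), none of them velcrot to \(X\) or \(Z\), and the order of \ref{VIII} and \autoref{rem: order} applies to them.

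To prove \(\Y_{K/2}(Y_{\min},X)=\emptyset\), I argue by contradiction, exactly as in the proof of \autoref{prop: PK connect}. Suppose \(W\in\Y_{K/2}(X,Y_{\min})\), so that \(W\) is not velcrot to \(X\) or to \(Y_{\min}\). Apply monotonicity \ref{VII} with \(Y_{\min}\in\Y_\Theta(X,Z)\) and test surface \(W\). For this I must first check that \(W\) is not velcrot to \(Z\).
\begin{itemize}
\item If \(W\) were velcrot to \(Z\), then \ref{III} would give \(d_W(X,Y_{\min})\simt d_Z(X,Y_{\min})\).
\item By the order property \ref{VIII}, with \(Z\) maximal, \(d_Z(X,Y_{\min})\simt 0\).
\item This contradicts \(d_W(X,Y_{\min})>K/2\) once \(K\) is large with respect to \(\Theta\).
\end{itemize}
Once \(W\) is known not to be velcrot to \(Z\), \ref{VII} gives \(d_W(X,Z)\geq d_W(X,Y_{\min})>K/2\). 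Thus \(W\in\Y_{K/2}(X,Z)\), and by hypothesis \(W\) is velcrot to some \(Y_j\). Since \(W\) is not velcrot to \(Y_{\min}\), we have \(j\neq \min\).

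It remains to derive a contradiction with the minimality of \(Y_{\min}\). By \ref{III}, \(d_{Y_j}(X,Y_{\min})\simt d_W(X,Y_{\min})>K/2\). Taking \(K/2\) much larger than \(\theta'\) plus the coarse constant, this gives \(d_{Y_j}(X,Y_{\min})>\theta'\), which by condition (i) of \autoref{rem: order} says \(Y_j<Y_{\min}\). This contradicts minimality. The maximal case is identical, now using condition (iii)/(iv) of \autoref{rem: order}, or \ref{I} together with the reversed order.

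The main obstacle is the bookkeeping of non-velcrotness needed to invoke \ref{VII} and \ref{III}. One must verify that each surface involved is non-velcrot to the relevant pair: \(W\) versus \(Z\), and \(Y_j\) versus \(X\) and \(Y_{\min}\). One must also fix \(K\) large enough, in terms of \(\Theta\) and \(\theta'\), to absorb all the coarse constants from \ref{III}, \ref{VIII} and \autoref{lem: K/2 empty means guard}. The point requiring the most care is ruling out that \(W\) is velcrot to \(Z\), and I would handle it by the \(d_Z(X,Y_{\min})\simt0\) argument above.
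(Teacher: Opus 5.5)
Your proposal is correct and follows essentially the same route as the paper's proof. You reduce to \autoref{lem: K/2 empty means guard}, rule out that the offending \(W\) is velcrot to the far endpoint, use monotonicity \ref{VII} to put \(W\) in \(\Y_{K/2}(X,Z)\), and contradict extremality via \autoref{rem: order}. The differences are minor: you treat the minimal rather than the maximal element, and you dispose of the velcrot case with \ref{III} and \ref{VIII} instead of the inequality on triples \ref{V}. Your explicit transfer of the order from \(W\) to its velcrot representative \(Y_j\) via \ref{III} also fills in a step the paper glosses over when it asserts \(W\leq Y_n\) directly.
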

\begin{proof}
Due to symmetry, we will only demonstrate the reason why the maximal element in \((Y_i)\) is a guard for \(Z\). Assume that \(Y_n\) is the maximal element. Appealing to \autoref{lem: K/2 empty means guard}, it suffices to show that \(\Y_{K/2}(Y_n,Z)=\emptyset\). Suppose for contradiction that \(W\in\Y_{K/2}(Y_n,Z)\). If \(W\) is not velcrot to \(X\), then for \(K>0\) sufficiently large, since \(Y_n\in \Y_{K/2}(X,Z)\subset \Y_\Theta(X,Z)\), the monotonicity \ref{VII} indicates that \(K/2\leq d_W(Y_n,Z)\leq d_W(X,Z)\), forcing \(W\in \Y_{K/2}(Y_n,Z)\) and \(W\leq Y_n\), which by \autoref{rem: order} means that \(d_W(Y_n,Z)\leq \theta'<K/2\), contradiction. If \(W\) and \(X\) are velcrot, then by the inequality on triples \ref{V}, \(d_W(Y_n,Z)>K/2\) will imply that \(d_{Y_n}(W,Z)\simt d_{Y_n}(X,Z)\simt 0\), contradicting \(Y_n\in \Y_{K/2}(X,Z)\).
\end{proof}
\begin{remark}\label{rem: guard adjacent}
The similar argument will yield that if \(W\) is a guard for \(Y\), then \(\Y_{K}(W,Y)=\emptyset\) and thus \(\dP(W,Y)=1\).
\end{remark}

\begin{lemma}\label{lem: PC dist control}
Let \(X_0,X_1\in \Y\) be adjacent in \(\PY\). Suppose that \(W\in \Y\) is such that \(\dP(X_0,W)\geq 3\). Then
\[d_W(X_0,X_1)\simt 0\]
and 
\[d_W(X_0,Z)\simt d_W(X_1,Z)\]
for all \(Z\in \Y\).
\end{lemma}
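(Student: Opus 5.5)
This mirrors \cite[Lemma 3.6]{bestvina2015constructing}, and I would argue by contradiction using guards. The second assertion follows from the first: by the coarse triangle inequality \ref{IV}, applied twice (with \(W\) not velcrot to \(X_0,X_1\), which I check below),
\[d_W(X_0,Z)\pt d_W(X_0,X_1)+d_W(X_1,Z)\quad\text{and}\quad d_W(X_1,Z)\pt d_W(X_1,X_0)+d_W(X_0,Z).\]
Together with symmetry \ref{I} and \(d_W(X_0,X_1)\simt 0\), this gives \(d_W(X_0,Z)\simt d_W(X_1,Z)\). So the work is in the first assertion.

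First I dispose of degenerate cases. If \(W\) were velcrot to \(X_0\), then \(\Y_K(X_0,W)=\emptyset\), since any \(Y\) there would have to be non-velcrot to both. Hence \(\dP(X_0,W)\le 1\), contradicting \(\dP(X_0,W)\ge 3\). If \(W\) were velcrot to \(X_1\), then \(\dP(X_1,W)\le1\) and \(\dP(X_0,W)\le 2\), again a contradiction. So \(W\) is velcrot to neither \(X_0\) nor \(X_1\). If \(X_0,X_1\) are velcrot, then \(d_W(X_0,X_1)=0\) by definition (or \(\simt 0\) by \ref{II}). So assume \(X_0,X_1,W\) are pairwise non-velcrot.

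Now suppose \(d_W(X_0,X_1)>\Theta\), so that \(W\in\Y_\Theta(X_0,X_1)\). Since \(X_0,X_1\) are adjacent, \(\Y_K(X_0,X_1)=\emptyset\), so \(d_W(X_0,X_1)\le K\). The goal is to find a vertex \(V\) adjacent to both \(X_0\) and \(W\), giving \(\dP(X_0,W)\le 2\), a contradiction. I would show that \(\Y_{K}(X_0,W)\) is empty or velcrot-covered by elements \(Y\) with \(Y<W\) in the order on \(\Y_\Theta(X_0,X_1)\). Take \(Y\in\Y_K(X_0,W)\). Since \(d_Y(X_0,W)>K\), the triangle inequality \ref{IV} and \(d_Y(X_0,X_1)\le K\) give \(d_Y(W,X_1)\st d_Y(X_0,W)-d_Y(X_0,X_1)\). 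This lower bound alone is not enough, so I refine it using the inequality on triples \ref{V} applied at \(W\): since \(d_W(X_0,Y)\) is small (by \ref{V}, as \(d_Y(X_0,W)\) is large) and \(d_W(X_0,X_1)>\Theta\), the triangle inequality gives \(d_W(Y,X_1)\st d_W(X_0,X_1)>\Theta\). Then \ref{V} at \(Y\) forces \(d_Y(W,X_1)\simt 0\), and hence \(d_Y(X_0,X_1)\st d_Y(X_0,W)>K\). This would put \(Y\in\Y_K(X_0,X_1)=\emptyset\), a contradiction. So \(\Y_K(X_0,W)\) is empty up to the additive \(\theta\)-errors, which must be absorbed by the choice \(K\ge 2\Theta\) together with a threshold shift. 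The conclusion is that \(X_0\) and \(W\) are adjacent, contradicting \(\dP\ge 3\) even more strongly. Hence \(d_W(X_0,X_1)\le\Theta\), which is \(\simt 0\).

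The main obstacle is the threshold bookkeeping. The chain yields \(d_Y(X_0,X_1)> K-c\) rather than \(>K\), so strictly it only contradicts emptiness of \(\Y_{K-c}\), not of \(\Y_K\). To handle this I would use the ``room'' \(\dP(X_0,W)\ge3\) instead of adjacency. Let \(Y_{\max}\) be the maximal element of a velcrot cover of \(\Y_{K/2}(X_0,W)\); by \autoref{lem: maximal is guard} it is a guard for \(W\), and by \autoref{rem: guard adjacent} it satisfies \(\dP(Y_{\max},W)=1\). Running the same argument with threshold \(K/2\) shows that \(Y_{\max}\in\Y_{K/2-c}(X_0,X_1)\) is impossible unless \(\Y_{K/2}(X_0,W)\) is empty, in which case \(X_0\) and \(W\) are adjacent. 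The remaining step is to use \ref{VII} and \ref{III} to ensure that velcrot perturbations of \(Y\) do not break the estimate.
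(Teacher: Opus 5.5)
\textbf{There is a genuine gap in the main step.} Your reduction of the second assertion to the first is fine. So are the degenerate cases: $W$ is velcrot to neither $X_i$ because $\dP(X_0,W)\geq 3$, and $d_W(X_0,X_1)=0$ when $X_0,X_1$ are velcrot.

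The problem is the chain through the coarse triangle inequality \ref{IV} and the inequality on triples \ref{V}. For $Y\in\Y_K(X_0,W)$ it only gives $d_Y(X_0,X_1)\geq d_Y(X_0,W)-c$. As you noticed, this does not contradict $\Y_K(X_0,X_1)=\emptyset$. The edge condition in $\PY$ is the sharp threshold $K$, so no choice of $K\geq 2\Theta$ absorbs the additive error.

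Your proposed repair does not work either. Adjacency of $X_0,X_1$ only says $d_V(X_0,X_1)\leq K$ for every $V$, so nothing forbids $Y_{\max}\in\Y_{K/2-c}(X_0,X_1)$. Lowering the threshold to $K/2$ moves you further from a contradiction, not closer. The guard property of $Y_{\max}$ and $\dP(Y_{\max},W)=1$ do nothing to close it. What your chain would actually need is an element of $\Y_{K+c}(X_0,W)$. The available distance estimates only produce elements of $\Y_K(X_0,W)$, at threshold exactly $K$.

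\textbf{The missing ingredient is monotonicity \ref{VII}.} It holds with no additive constant. The modified distance is an infimum over the hierarchy, and $\HH(X_0,X_1)\subset\HH(X_0,W)$ whenever $W\in\Y_\Theta(X_0,X_1)$; this is exactly how the paper proves \ref{VII}. With it the argument closes at once, as in the paper:
\begin{itemize}
\item Pick any $Y\in\Y_K(X_0,W)$, which exists since $\dP(X_0,W)\geq 2$.
\item If $d_W(X_0,X_1)>\Theta$ and $Y$ is not velcrot to $X_1$, then \ref{VII} gives $d_Y(X_0,X_1)\geq d_Y(X_0,W)>K$. This contradicts adjacency.
\end{itemize}

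\textbf{You also overlooked the case where $Y$ is velcrot to $X_1$.} There neither \ref{VII} nor your triangle inequality through $X_1$ applies, since both require $Y$ to be non-velcrot to $X_1$. The paper handles this case with \ref{III}: it gives $d_{X_1}(X_0,W)\simt d_Y(X_0,W)>K>\Theta$, and then \ref{V} yields $d_W(X_0,X_1)<\Theta$. You mention \ref{VII} and \ref{III} only at the end, for velcrot perturbations. In fact they are the core of the proof.
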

\begin{proof}
First, it is clear that \(W\) is not velcrot to either \(X_0\) or \(X_1\).

If \(X_0\) and \(X_1\) are velcrot, then \(d_W(X_0,X_1)=0\) and  \(d_W(X_0,Z)\simt d_W(X_1,Z)\) is implied by \autoref{lem: velcrot small proj}.

Assume now that \(X_0,X_1\) are not velcrot. Since \(\dP(X_0, W ) \geq 2\), there exists \(Y \in \Y_K (X_0, W )\). If \(Y,X_1\) are velcrot, then by \ref{III}, we have
\[d_{X_1}(X_0,W)\simt d_Y(X_0,W)>K,\]
which, for sufficiently large \(K>\Theta\), implies \(d_{X_1}(X_0,W)>\Theta\). Hence, by the inequality on triples \ref{V}, this indicates \(d_W(X_0,X_1)<\Theta\). Otherwise, if \(d_W (X_0, X_1) > \Theta\) and \(Y,X_1\) are not velcrot, then by monotonicity \ref{VII} we have
\[d_Y (X_0, X_1) \geq d_Y (X_0, W ) > K,\]
which contradicts \(\dP(X_0, X_1) = 1\). Therefore, we have \(d_W (X_0, X_1) < \Theta\). Applying the coarse triangle inequality \ref{IV} we have
\[d_W (Z, X_0) + d_W (X_0, X_1) \st d_W (Z, X_1),\]
which implies half of the second inequality. The other half is proved by
swapping \(X_0\) and \(X_1\).
\end{proof}

\begin{remark}\label{rem: non-velcrot PC dist control}
If \(X_0,X_1,W\) in \autoref{lem: PC dist control} are given \emph{a priori} pairwise non-velcrot, then the result of \autoref{lem: PC dist control} also holds even given that \(\dP(X_0,W)\geq 2\). In fact, this is the classical case in \cite[Lemma~3.10]{bestvina2015constructing}.
\end{remark}

\begin{lemma}\label{lem: barrier induction}
If \(K>0\) is sufficiently large, the following holds. Let \(Z\) be in \(\Y\) and \(X_0, X_1\) be adjacent vertices in \(\PY\) with \(\dP(X_i, Z) \geq 4\). Let \(W\) be a guard for \(Z\) such that \(W \in \Y_{K/2}(X_0, Z)\). If \(W \notin \Y_{K/2}(X_1, Z)\), then there exists a guard \(W'\) for \(Z\) such that \(W' \in \Y_{K/2}(X_1, Z)\) and \(W \in \Y_\Theta(W', Z)\).
\end{lemma}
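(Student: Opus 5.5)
This is the analogue of \cite[Lemma~3.13]{bestvina2015constructing}, so I would follow that proof, keeping track of velcrotness at each step. Since \(\dP(X_i,Z)\geq 4\), the surfaces \(X_0,X_1,Z\) are pairwise non-velcrot, and \(\Y_K(X_1,Z)\neq\emptyset\). By \autoref{rem: guard adjacent}, \(W\) is adjacent to \(Z\), so \(\dP(X_i,W)\geq 3\). \autoref{lem: PC dist control} then gives
\[d_W(X_1,Z)\simt d_W(X_0,Z)>K/2.\]
The hypothesis \(W\notin \Y_{K/2}(X_1,Z)\) forces \(d_W(X_1,Z)\leq K/2\). Choosing \(K\) large compared with \(\Theta\), this still gives \(W\in\Y_\Theta(X_1,Z)\). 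In particular \(W\) is not velcrot to \(X_1\).

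Next I would construct \(W'\). Apply \ref{VI} to \(\Y_{K/2}(X_1,Z)\) to get finitely many pairwise non-velcrot representatives, ordered as in \ref{VIII} and \autoref{rem: order}. The set is non-empty because it contains \(\Y_K(X_1,Z)\). Let \(W'\) be the maximal representative. By \autoref{lem: maximal is guard}, \(W'\) is a guard for \(Z\), and by construction \(W'\in\Y_{K/2}(X_1,Z)\).

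It remains to show \(W\in\Y_\Theta(W',Z)\), and this is the main obstacle. Work inside \(\Y_\Theta(X_1,Z)\), which contains both \(W\) and \(W'\). First I would rule out that \(W\) and \(W'\) are velcrot. If they were, \ref{III} would give \(d_W(X_1,Z)\simt d_{W'}(X_1,Z)>K/2\). This only yields a coarse contradiction with \(d_W(X_1,Z)\leq K/2\), so the thresholds need care. One option is to note that velcrot representatives may be exchanged freely, so one can choose \(W'\) to be any element of its velcrot class. I expect this step to need that replacement, or a slight adjustment of the constants \(K/2\) versus \(K/2-\Theta\). Granting non-velcrotness, I would show \(W'<W\). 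Indeed, \(W\) is a guard for \(Z\) with \(W\in\Y_\Theta(X_1,Z)\). Every element of \(\Y_K(X_1,Z)\) is therefore \(\leq W\), and the representatives can be compared with \(W\) similarly using \autoref{lem: K/2 empty means guard}. So \(W'\leq W\), hence \(W'<W\).

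Once \(W'<W<Z\) holds in the order on \(\Y_\Theta(X_1,Z)\cup\{X_1,Z\}\), \ref{VIII} gives
\[d_W(W',Z)\st d_W(X_1,Z)\simt d_W(X_0,Z)>K/2.\]
Taking \(K\) large relative to \(\Theta\), this yields \(d_W(W',Z)>\Theta\), that is, \(W\in\Y_\Theta(W',Z)\). Throughout, whenever \ref{VII} or \ref{VIII} is invoked, I would check separately that the relevant surfaces are pairwise non-velcrot; where they are velcrot, the corresponding distance is \(0\) or handled by \autoref{lem: velcrot small proj}, as in the proof of \autoref{lem: PC dist control}.
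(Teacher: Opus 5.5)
Your first step, getting \(W\in\Y_\Theta(X_1,Z)\) from \autoref{lem: PC dist control}, matches the paper. The argument breaks at the choice of \(W'\) as the \emph{global} maximum of \(\Y_{K/2}(X_1,Z)\). You justify \(W'\leq W\) by saying that every element of \(\Y_K(X_1,Z)\) is \(\leq W\) and that ``the representatives can be compared with \(W\) similarly''. This is not justified, and it is false in general.

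The guard property of \(W\) only controls elements of \(\Y_K(X_1,Z)\). It says nothing about an element \(V\) with \(K/2<d_V(X_1,Z)\leq K\). \autoref{lem: K/2 empty means guard} does not help either: it is only a sufficient criterion for being a guard, and a guard \(W\) may well have \(\Y_{K/2}(W,Z)\neq\emptyset\). Such a \(V\) can satisfy \(W<V\) in \(\Y_\Theta(X_1,Z)\). This is exactly the situation in the induction of \autoref{prop: barrier}: after the first step the guards \(W_i\) are no longer maximal, since they decrease. If your \(W'\) lies above \(W\), then \ref{VIII} applied to \(W<W'<Z\) gives \(d_W(W',Z)\simt 0\). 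Hence \(W\notin\Y_\Theta(W',Z)\), and the conclusion fails. Your trouble ruling out that \(W'\) is velcrot to \(W\) is a symptom of the same wrong choice. (A minor point: \(X_0,X_1\) need not be non-velcrot, since adjacent vertices may be velcrot. This is harmless, because \autoref{lem: PC dist control} covers that case.)

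The missing idea is to pick \(W'\) \emph{below} \(W\) and then prove by hand that it is a guard, since \autoref{lem: maximal is guard} is no longer available.
\begin{itemize}
\item Since \(\dP(X_1,W)\geq 3\), there is some \(U\in\Y_K(X_1,W)\). By monotonicity \ref{VII} (using \(W\in\Y_\Theta(X_1,Z)\)), \(U\in\Y_K(X_1,Z)\), \(U\) is not velcrot to \(W\), and \(U<W\) by \autoref{rem: order}.
\item Using \ref{VI}, let \(W'\) be the largest element of \(\Y_{K/2}(X_1,Z)\) with \(W'<W\), taken up to velcrotness.
\item Then \(W'<W<Z\) and \ref{VIII} give \(d_W(W',Z)\st d_W(X_1,Z)\). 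Hence \(W\in\Y_\Theta(W',Z)\) for \(K\) large.
\item For guard-ness, take \(X\) with \(W'\in\Y_\Theta(X,Z)\) and \(V\in\Y_K(X,Z)\). Monotonicity gives \(d_W(X,Z)\geq d_W(W',Z)>\Theta\), so \(V\leq W\) because \(W\) is a guard.
\item If \(V>W'\), then \autoref{lem: K/2 empty means guard} together with \ref{VII} gives \(V\in\Y_{K/2}(W',Z)\subset\Y_{K/2}(X_1,Z)\). So \(V\) sits strictly between \(W'\) and \(W\), contradicting the choice of \(W'\). A \(V\) velcrot to \(W\) is excluded by \ref{III}, since then \(d_W(X_1,Z)\simt d_V(X_1,Z)\st K\).
\end{itemize}
This is the argument used in the paper, following Bestvina--Bromberg--Fujiwara.
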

\begin{proof}
Since \(\dP(W,Z)=1\) and \(\dP(X_0,Z)\geq 4\), we have \(\dP(X_0,W)\geq 3\), so we can apply \autoref{lem: PC dist control}, which yields
\[d_W(X_1,Z)\simt d_W(X_0,Z)>K/2.\]
This, in turn, indicates that \(W\in \Y_\Theta(X_1,Z)\) for sufficiently large \(K>0\).

Similarly, we also have \(\dP(X_1,W)\geq 3\). We see that there exists \(W'\in \Y_{K/2}(X_1,Z)\subset \Y_\Theta(X_1,Z)\) not velcrot to \(W\) such that \(W'<W\), otherwise \autoref{rem: guard adjacent} and \autoref{rem: guard adjacent} imply that \(\dP(W,X_1)=1\), a contradiction. Due to the finiteness condition \ref{VI}, we may assume that there is no \(V\in \Y_{\Theta}(X_1,Z)\) such that \(W'<V<W\) in \(\Y_{\Theta}(X_1,Z)\). By the order property \ref{VIII}, we conclude
\[d_W(W',Z)\st d_W(X_1,Z)\st K/2,\]
which indicates \(W\in \Y_\Theta(W',Z)\) when \(K>0\) is sufficiently large.

Now, we claim that \(W'\) is a guard for \(Z\). Since \(W'>W\) in \(\Y_\Theta(X_1,Z)\), according to \autoref{rem: order}, for any \(X\) velcrot to \(W\), \(d_{W'}(X,Z)\simt 0\). So it suffices to verify the maximality condition of \(W'\) for any \(X\) not velcrot to \(W\) with \(d_{W'} (X, Z) > \Theta\). By monotonicity \ref{VII}, we also have \(d_W (X, Z) > d_W(W',Z)>\Theta\). Let \(V\in \Y_K(X,Z)\). Since \(W\) is a guard for \(Z\), \(V\leq W\) in \(\Y_\Theta(X,Z)\). Note that \(V\) cannot be velcrot to \(W\) as \(d_{VI}(X,Z)>K\). Assume for contradiction that \(V>W'\) in \(\Y_{\Theta}(X,Z)\), then by \autoref{lem: K/2 empty means guard}, \(V\in \Y_{K/2}(W',Z)\subset \Y_{K/2}(X_1,Z)\), where the inclusion is a consequence of monotonicity \ref{VII} as before. But this, as described in \autoref{rem: order}, also implies \(W'<V\leq W\) in \(\Y_\Theta(X_1,Z)\), contradicting to our choice of \(W'\). So \(V<W'\)  in \(\Y_{\Theta}(X,Z)\). This verifies the maximality condition and implies that \(W'\) is a guard for \(Z\).
\end{proof}

\begin{definition}
A \emph{barrier} between a path \(\{X_0, \dots , X_k\}\) and a vertex \(Z\) is a vertex \(Y\) such that \(Y \in \Y_\Theta(X_i, Z)\) for all \(i = 0, \dots, k\).
\end{definition}

\begin{lemma}\label{lem: barrier implies small proj}
If there is a barrier between a path \(\{X_0, \dots , X_k\}\) and a vertex \(Z\), then \(d_Z(X_i,X_j)\leq \Theta\) for all \(i,j\).
\end{lemma}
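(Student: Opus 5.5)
The plan is to apply the barrier property \ref{IX} of \autoref{thm: BBF prerequi} to each pair of path vertices. Fix \(i,j\in\{0,\dots,k\}\). By the definition of a barrier, \(Y\in\Y_\Theta(X_i,Z)\) and \(Y\in\Y_\Theta(X_j,Z)\). Property \ref{IX}, with \(X_0,X_1\) there replaced by \(X_i,X_j\), then gives \(d_Z(X_i,X_j)<\Theta\), which is even slightly stronger than required. So the whole argument is a direct instantiation of \ref{IX}, run over all pairs \((i,j)\) simultaneously.

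We should still make sure \ref{IX} applies verbatim in the velcrot setting, i.e. that no hidden non-velcrotness hypothesis is being violated. First, \(Y\in\Y_\Theta(X_i,Z)\) means \(d_Y(X_i,Z)>\Theta>0\). By the convention defining the modified projection distance, this forces \(X_i,Y,Z\) to be pairwise non-velcrot. The same holds for \(X_j\).

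Next we split into cases according to the pair \(X_i,X_j\).
\begin{itemize}
\item If \(i=j\), or more generally if \(X_i,X_j\) are velcrot, then \(d_Z(X_i,X_j)=0\) by the definition of the modified distance, and there is nothing to prove. This is exactly the first case treated in the proof of \ref{IX}.
\item Otherwise \(X_i,X_j,Y,Z\) are pairwise non-velcrot, except possibly for a velcrot pair among \(X_i,X_j\) and \(Y\). That possibility is excluded, as shown above.
\end{itemize}
In the second case we are in the setting of the classical argument of \cite[Theorem~3.3]{bestvina2015constructing}, which the proof of \ref{IX} already quotes.

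For reassurance, the mechanism of that classical argument is as follows. Suppose \(d_Z(X_i,X_j)\ge\Theta\).
\begin{enumerate}
\item The coarse triangle inequality \ref{IV} shows that \(d_Z(X_i,Y)\) or \(d_Z(X_j,Y)\) is coarsely large.
\item Say \(d_Z(X_i,Y)\) is large. The inequality on triples \ref{V} then makes \(d_Y(X_i,Z)\simt 0\).
\item This contradicts \(d_Y(X_i,Z)>\Theta\), since \(\Theta\) was chosen to exceed three times every coarse constant.
\end{enumerate}

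The only point needing care, and the expected obstacle, is confirming that the constant coming out of \ref{IX} is the same \(\Theta\) as in the statement, with no loss. This holds because \(\Theta\) was fixed at the end of the proof of \autoref{thm: BBF prerequi} to dominate all the coarse constants. Hence \(d_Z(X_i,X_j)<\Theta\le\Theta\) for all \(i,j\).
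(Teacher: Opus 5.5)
Your proof is correct and is essentially the paper's: the lemma is just the barrier property (IX) applied to each pair \(X_i,X_j\). The paper's own proof simply unpacks (IX) directly, using the fine Behrstock inequality with the coarse equality (II) to get \(d_Z(Y,X_i)\le d^\pi_Z(Y,X_i)<\theta\) for every \(i\), and then the coarse triangle inequality (IV) through \(Y\); this is the non-contrapositive form of the mechanism you sketch, and it needs no case split on whether \(X_i,X_j\) are velcrot.
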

\begin{proof}
Since \(Y \in \Y_\Theta(X_i, Z)\) for all \(i\), \(Y\) is not velcrot to \(Z\) or \(X_i\), and we have \(d_Z(Y,X_i)\leq d_Z^\pi(Y,X_i)<\theta\) by \autoref{thm: berhstock} and \ref{II} in \autoref{thm: BBF prerequi}. Now, the desired result follows from the coarse triangle inequality \ref{IV}.
\end{proof}

\begin{proposition}\label{prop: barrier}
The following holds if \(K>0\) is sufficiently large. Let \(\{X_0, X_1, \dots , X_k\}\) be a path in \(\PY\) and \(Z\) a vertex of \(\PY\) such that \(\dP(Z, X_i ) \geq 4\) for all \(i\). Then there is a barrier \(W\) between the path and \(Z\). In particular, \(d_Z (X_0, X_i) \simt 0\) for all \(i\).
\end{proposition}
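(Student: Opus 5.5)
We argue by induction on \(k\), following the classical proof of \cite[Proposition~3.11]{bestvina2015constructing}; the induction maintains a barrier that is also a guard for \(Z\). The base case is \(k=0\). Since \(\dP(X_0,Z)\geq 4>1\), we have \(\Y_K(X_0,Z)\neq\emptyset\), and in particular \(X_0,Z\) are not velcrot. By the finite velcrot covering \ref{VI}, choose pairwise non-velcrot representatives \((Y_i)\) of \(\Y_{K/2}(X_0,Z)\), ordered by \ref{VIII}. This set is nonempty because \(\Y_K\subset\Y_{K/2}\). By \autoref{lem: maximal is guard}, the maximal element \(W_0\) is a guard for \(Z\), and \(W_0\in\Y_{K/2}(X_0,Z)\subset \Y_\Theta(X_0,Z)\) once \(K\ge 2\Theta\).

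For the inductive step, suppose \(W_j\) is a guard for \(Z\) with \(W_j\in \Y_{K/2}(X_j,Z)\), and that some vertex is a barrier between \(\{X_0,\dots,X_j\}\) and \(Z\). Passing to \(X_{j+1}\), there are two cases.

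\emph{Case 1: \(W_j\in\Y_{K/2}(X_{j+1},Z)\).} We keep \(W_{j+1}=W_j\).

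\emph{Case 2: \(W_j\notin \Y_{K/2}(X_{j+1},Z)\).} Since \(\dP(X_j,Z),\dP(X_{j+1},Z)\ge 4\), \autoref{lem: barrier induction} supplies a new guard \(W_{j+1}\in \Y_{K/2}(X_{j+1},Z)\) with \(W_j\in\Y_\Theta(W_{j+1},Z)\).

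In Case 2 we must show that the new \(W_{j+1}\) still lies in \(\Y_\Theta(X_i,Z)\) for all earlier \(i\le j\). The tool is monotonicity \ref{VII}. From \(W_j\in \Y_\Theta(W_{j+1},Z)\) together with the barrier/guard information, we want
\[d_{W_{j+1}}(X_i,Z)\st d_{W_{j+1}}(W_j,Z)\ \text{or a similar chain.}\]
The cleaner route is the one used by Bestvina--Bromberg--Fujiwara: prove by induction the stronger statement that the \emph{first} guard \(W_0\) remains in \(\Y_\Theta(X_i,Z)\) for all \(i\). Whenever the guard changes, \(W_0\) lies in \(\Y_\Theta(W_{j},Z)\) for the current guard. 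Since \(W_j\in\Y_{K/2}(X_j,Z)\subset\Y_\Theta(X_j,Z)\), monotonicity \ref{VII}, applied with \(W_0\) not velcrot to \(X_j\) or \(Z\), gives
\[d_{W_0}(X_j,Z)\geq d_{W_0}(W_j,Z)>\Theta.\]
This velcrot non-incidence is guaranteed because \(\dP\)-distances at least \(3\) forbid velcrotness, by \autoref{rem: guard adjacent}. So \(W_0\) is a barrier for the whole path.

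To make this work we track the chain \(W_0\in\Y_\Theta(W_1,Z)\), \(W_1\in \Y_\Theta(W_2,Z)\), and so on, and need to transfer it to \(W_0\in\Y_\Theta(W_j,Z)\). This uses \ref{VIII}: all the \(W_i\) lie in \(\Y_\Theta(X_j,Z)\) after the appropriate step, and they are ordered \(W_j<\cdots<W_0\) there. By the order property, \(d_{W_0}(W_j,Z)\simt d_{W_0}(X_j,Z)\) is large.

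The final assertion then follows from \autoref{lem: barrier implies small proj}: a barrier gives \(d_Z(X_0,X_i)\le\Theta\), that is, \(\simt 0\).

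The main obstacle is the bookkeeping in Case 2, namely propagating the barrier condition backward through repeated guard replacements. Additive errors from \(\simt\) could accumulate over many steps; we have to use the strict inequality chain of \ref{VII}, which introduces no error, together with \ref{VIII}, which introduces a single error, instead of iterating \(\simt\). A second obstacle is excluding velcrot coincidences, for example \(W_0\) velcrot to some \(X_i\). These are ruled out by \(\dP\ge 3\) between guards (which are adjacent to \(Z\)) and path vertices, together with \ref{III} whenever a velcrot replacement is needed. Enlarging \(K\) relative to \(\Theta\) absorbs all constants.
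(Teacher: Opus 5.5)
\textbf{The gap: transferring $W_0\in\Y_\Theta(W_j,Z)$ along the chain of guards.}

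Your skeleton is the paper's. You start with the maximal element $W_0$ of $\Y_{K/2}(X_0,Z)$ as a guard (\autoref{lem: maximal is guard}) and update guards with \autoref{lem: barrier induction}. You take $W_0$ as the barrier, using $W_0\in\Y_\Theta(W_j,Z)$, $W_j\in\Y_\Theta(X_j,Z)$ and monotonicity \ref{VII}, and finish with \autoref{lem: barrier implies small proj}. Your reason why $W_0$ is not velcrot to any $X_j$ is fine: guards are adjacent to $Z$, so $\dP(W_0,X_j)\geq 3$.

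The gap is the step you flag as the main obstacle: upgrading the chain $W_{j-1}\in\Y_\Theta(W_j,Z)$ to $W_0\in\Y_\Theta(W_j,Z)$. Your appeal to \ref{VIII} is circular. Ordering $W_0,\dots,W_j$ inside $\Y_\Theta(X_j,Z)$ already requires $W_0\in\Y_\Theta(X_j,Z)$, which is the barrier property at step $j$. Likewise, $d_{W_0}(W_j,Z)\simt d_{W_0}(X_j,Z)$ gives $d_{W_0}(W_j,Z)$ large only once you know $d_{W_0}(X_j,Z)$ is large, but that is exactly what you want to deduce from it. Moreover, \ref{VIII}, like \ref{VII}, needs the guards to be pairwise non-velcrot, which you never show. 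Your $\dP$ argument cannot supply this: every guard is adjacent to $Z$, so any two guards are at $\dP$-distance at most $2$.

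\textbf{How to close it.} Use a direct induction with \ref{VII}, which loses no additive constant. Assume $d_{W_i}(W_{j-1},Z)>\Theta$ for all earlier distinct guards $W_i$. Apply monotonicity to $W_{j-1}\in\Y_\Theta(W_j,Z)$ from the viewpoint $W_i$:
\[d_{W_i}(W_j,Z)\geq d_{W_i}(W_{j-1},Z)>\Theta.\]
This needs $W_j$ not velcrot to $W_i$. The paper gets this from the fine Behrstock inequality together with \ref{II} and \ref{III}. If $W_j$ were velcrot to $W_i$, then, since $d_{W_{j-1}}(W_j,Z)>\Theta$,
\[\theta>d^\pi_{W_j}(W_{j-1},Z)\geq d_{W_j}(W_{j-1},Z)\simt d_{W_i}(W_{j-1},Z)>\Theta,\]
which is a contradiction once $\Theta$ absorbs the constant. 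Steps where the guard is unchanged are trivial. Hence $W_0\in\Y_\Theta(W_j,Z)$ for every $j$, and your final monotonicity step then shows $W_0$ is a barrier.
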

\begin{proof}
We will inductively choose a family of guards \(W_i\) for \(Z\) such that for each \(1\leq i\leq k\), \(W_i \in \Y_{K/2}(X_i, Z)\), and if \(i > j\) then either \(W_i = W_j\) or \(W_j \in \Y_\Theta(W_i, Z)\).

First, let \(W_0\in \Y_{K/2}(X_0, Z)\) be a guard for \(Z\) as in \autoref{lem: maximal is guard}. Now, for the induction step, suppose that \(W_0,\dots, W_i\) for some \(i<k\) have been chosen.If \(W_{i} \in \Y_{K/2}(X_{i+1}, Z)\), then we simply set \(W_{i+1}\coloneqq W_i\). Otherwise, by \autoref{lem: barrier induction}, there
exists a guard \(W_{i+1}\in \Y_{K/2}(X_{i+1},Z)\) for \(Z\) with \(W_i\in \Y_{\Theta}(W_{i+1},Z)\). We also remark that \(W_{i+1}\) is not velcrot to any \(W_j\) for \(j< i\), otherwise by \autoref{thm: BBF prerequi}, we have
\[\theta>d_{W_{i+1}}^\pi(W_i,Z)\geq d_{W_{i+1}}(W_i,Z)\simt d_{W_j}(W_i,Z).\]
Nevertheless, by our induction hypotheses \(d_{W_j}(W_i,Z)>\Theta\) for \(j<i\) and by our assumption on \(\Theta>0\), this will yield a contradiction. Hence, \(W_0,\dots, W_{i+1}\) are pairwise non-velcrot, so by applying the monotonicity condition \ref{VII}, we can also conclude that \(W_j \in \Y_\Theta(W_{i+1}, Z)\) for all \(j<i\).

Let \(W\coloneqq W_0\). Using the monotonicity condition \ref{VII} again, we can deduce that for any \(\Y_\Theta(W_i,Z)\subset \Y_\Theta(X_i,Z)\). Therefore, we conclude that \(W\in \Y_\Theta(X_i,Z)\) for all \(i\) and thus \(W\) is a barrier between \(\{X_0, X_1, \dots , X_k\}\) and \(Z\).
\end{proof}

We recall that a \emph{quasi-tree} is a geodesic metric space that is quasi-isometric to a tree. One useful characterisation of a quasi-tree is the \emph{bottleneck criterion} introduced by Manning in \cite{manning2005geometry}. To be more precise, a geodesic metric space \(X\) is said to satisfy the bottleneck criterion if there exists a constant \(\Delta\geq0\) such that for any two points \(x,y\in X\), there exists a midpoint \(z\in X\) between \(x\) and \(y\) such that any paths in \(X\) connecting \(x\) to \(y\) intersect the \(\Delta\)-neighbourhood of \(z\). With this characterisation, we can show the following:

\begin{theorem}
For sufficiently large \(K>0\), the graph \(\PY\) is a quasi-tree.
\end{theorem}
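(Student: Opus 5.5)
We verify Manning's bottleneck criterion for \(\PY\), following \cite[Theorem~3.16]{bestvina2015constructing}. Connectivity comes from \autoref{prop: PK connect}, so \(\PY\) is a geodesic graph. Fix \(X,Z\in\Y\) and a geodesic \(\gamma\) from \(X\) to \(Z\). Let \(\eta=\{X_0=X,X_1,\dots,X_k=Z\}\) be an arbitrary path from \(X\) to \(Z\). The aim is to show that every vertex of \(\gamma\) lies within a uniform distance \(\Delta\) of \(\eta\). Taking the midpoint of \(\gamma\) as the bottleneck point then gives the criterion.

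\textbf{Step 1 (standard path near the geodesic).} When \(X,Z\) are not velcrot, choose pairwise non-velcrot representatives \(Y_1<\dots<Y_m\) of \(\Y_K(X,Z)\), ordered as in \ref{VIII}. By \autoref{prop: PK connect}, \(X,Y_1,\dots,Y_m,Z\) is a path. I would show that every vertex \(V\) of \(\gamma\) is within bounded distance of some \(Y_j\), \(X\) or \(Z\). Suppose instead that \(\dP(V,Y_j)\ge 4\) for all \(j\) and \(\dP(V,X),\dP(V,Z)\ge 4\). Then \autoref{prop: barrier}, applied to this standard path and the vertex \(V\), gives \(d_V(X,Z)\simt 0\). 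Applying \autoref{prop: barrier} instead to the two subpaths of \(\gamma\) on either side of \(V\), away from a \(4\)-neighbourhood of \(V\), would make those pieces short. I would then combine this with the upper bound of \autoref{prop: PK connect} to show that \(\gamma\) cannot travel far from the standard path. A cleaner alternative I would try first is the reverse direction: show each \(Y_j\) is within distance \(\le 4\) of every path from \(X\) to \(Z\) (Step 2). Then use that \(\gamma\) is a geodesic and consecutive \(Y_j\)'s are adjacent to conclude \(\gamma\) stays near \(\{X,Y_1,\dots,Y_m,Z\}\).

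\textbf{Step 2 (every \(Y_j\) is near every path).} Let \(Y\in\Y_K(X,Z)\) and suppose \(\dP(Y,X_i)\ge 4\) for all \(i\). \autoref{prop: barrier}, with \(Y\) in the role of \(Z\), gives \(d_Y(X_0,X_k)\simt 0\), i.e.\ \(d_Y(X,Z)\le\Theta\). This contradicts \(d_Y(X,Z)>K\) once \(K\) is large. Hence some \(X_i\) satisfies \(\dP(Y,X_i)\le 3\). The same argument applies to every \(Y\) velcrot to some \(Y_j\): a velcrot pair is adjacent, since \(\Y_K\) of a velcrot pair is empty. So the whole standard path lies in the \(4\)-neighbourhood of every path from \(X\) to \(Z\).

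\textbf{Step 3 (conclusion, and the main obstacle).} Applying Step 2 to \(\eta=\gamma\) shows the standard path lies in the \(4\)-neighbourhood of \(\gamma\). Conversely, I would show \(\gamma\) lies in a bounded neighbourhood of the standard path. Suppose a subsegment of \(\gamma\) stays at distance \(\ge 5\) from the standard path. Its endpoints project to points of the standard path that are close together, and a detour argument using that \(\gamma\) is geodesic bounds the length of such a subsegment. Hence \(\gamma\) and the standard path are at uniformly bounded Hausdorff distance. Taking \(z\) near the midpoint of \(\gamma\) and near some \(Y_j\) (or near \(X\) or \(Z\)), Step 2 shows any path meets the \(\Delta\)-neighbourhood of \(z\). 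The velcrot case of \(X,Z\) is trivial because they are then adjacent.

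The expected obstacle is the velcrot bookkeeping in Step 2. \autoref{prop: barrier} needs the relevant vertices to be pairwise non-velcrot and \(\dP\ge 4\), and the coarse constants from \autoref{thm: BBF prerequi} must be absorbed by choosing \(K\) much larger than \(\Theta\). The Hausdorff control in Step 3 may also require taking the standard path with \(K/2\) in place of \(K\), via \autoref{lem: maximal is guard}.
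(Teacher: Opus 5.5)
Your proposal is correct and is essentially the paper's proof. Your Step 2 is exactly the paper's argument: take the path \(X,Y_1,\dots,Y_k,Z\) from \autoref{prop: PK connect}, and use \autoref{prop: barrier} (giving \(d_Y(X,Z)<\Theta\), contradicting \(d_Y(X,Z)>K\)) to show that every path from \(X\) to \(Z\) passes within distance \(3\) of each of its vertices. Your Step 3 only spells out the routine bottleneck bookkeeping that the paper leaves implicit; since consecutive \(Y_j\) are adjacent, a one-line projection argument along the geodesic already puts its midpoint near some \(Y_j\), and the first half of Step 1 can be dropped.
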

\begin{proof}
Let \(X,Z\) be any two vertices on \(\PY\). By \autoref{prop: PK connect}, we can find a path \(\gamma\subset \PY\) connecting \(X\) to \(Z\). Suppose that \(X=X_0,X_1,\dots, X_n=Z\) is a path from \(X\) to \(Z\). This path has to stay inside of \(3\)-neighbourhood of \(\gamma\). Otherwise, there is \(Y\in \gamma\subset\Y_K(X,Z)\) such that \(\dP(X_i,Y)\geq 4\) for all \(X_i\), which by \autoref{prop: barrier} indicates that \(d_Y(X,Z)<\Theta\), a contradiction.
\end{proof}

\subsection{Puncturing}\label{subsec: puncturing}

In this subsection, we will mainly treat the unboundedness of the quasi-tree that we have constructed above. But the arguments from \cite{bestvina2015constructing} cannot be adapted directly. In \cite{bestvina2015constructing}, for any pair \(X,Z\in \Y\), if \(Y\in \Y_{K'}(X,Z)\) for some sufficiently large \(K'>K\), then they showed that \(Y\) must lie on \emph{any} geodesic path between \(X\) and \(Z\) in the quasi-tree. However, this cannot be true for the fine projection complex. Indeed, by perturbing \(Y\) a little, we can always construct another geodesic path that does not pass through the original \(Y\).

More precisely, we only have:

\begin{lemma}\label{lem: huge YK velcrot in geodesic}
There exists \(K'>0\) such that if \(Y\in \Y_{K'}(X,Z)\), then every
geodesic from \(X\) to \(Z\) in \(\PY\) contains an element velcrot to \(Y\).
\end{lemma}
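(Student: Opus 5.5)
We follow the proof of the analogous statement in \cite{bestvina2015constructing}, using \autoref{prop: barrier} in place of their barrier lemma. Let \(X=X_0,X_1,\dots,X_n=Z\) be a geodesic in \(\PY\) and suppose, for contradiction, that no \(X_i\) is velcrot to \(Y\). The idea is to show that the path must nevertheless come within distance \(3\) of \(Y\), and then to use the large projection \(d_Y(X,Z)>K'\) to upgrade this to velcrotness.

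\textbf{Step 1: the path comes close to \(Y\).} If \(\dP(X_i,Y)\geq 4\) for all \(i\), then \autoref{prop: barrier} (applied with \(Y\) in the role of the vertex) gives \(d_Y(X_0,X_n)\simt 0\). This contradicts \(d_Y(X,Z)>K'\) once \(K'\) exceeds the constant in \(\simt\). Hence some \(X_j\) satisfies \(\dP(X_j,Y)\leq 3\).

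\textbf{Step 2: split the geodesic at \(X_j\).} Consider the maximal initial subpath all of whose vertices lie at distance \(\geq 4\) from \(Y\), and similarly the maximal terminal subpath. Applying \autoref{prop: barrier} to each of them gives
\[d_Y(X,X_a)\simt 0, \qquad d_Y(X_b,Z)\simt 0,\]
where \(X_a\) is the last vertex of the initial subpath and \(X_b\) the first vertex of the terminal subpath. The remaining middle segment \(X_{a+1},\dots,X_{b-1}\) lies in the \(3\)-neighbourhood of \(Y\). Since the path is geodesic, this segment has length at most \(6\) or so.

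\textbf{Step 3: bound the projection across the middle segment.} Consecutive vertices \(X_i,X_{i+1}\) of the middle segment satisfy \(\Y_K(X_i,X_{i+1})=\emptyset\). Since no \(X_i\) is velcrot to \(Y\), this gives \(d_Y(X_i,X_{i+1})\leq K\). The coarse triangle inequality \ref{IV} then yields \(d_Y(X,Z)\leq 8K+C(\theta)\). Choosing \(K'\) larger than this bound gives a contradiction, so some vertex \(X_i\) must be velcrot to \(Y\).

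\textbf{Main obstacle.} The delicate points are Steps 2 and 3, because \ref{IV} and the barrier estimate require the relevant vertices to be non-velcrot to \(Y\). This is exactly our standing assumption, so these steps go through, but one must check this hypothesis at each application. The second subtlety is the hypothesis \(\dP\geq 4\) in \autoref{prop: barrier} and \autoref{lem: PC dist control}, in contrast to \(\dP\geq 2\) in the classical case. This is why the middle segment is only bounded by a constant multiple of \(K\) rather than being a single vertex, and why \(K'\) must be taken as a large multiple of \(K\).
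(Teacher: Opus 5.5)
Your proposal is correct and follows essentially the same route as the paper. You apply \autoref{prop: barrier} to the initial and terminal runs of vertices at distance \(\geq 4\) from \(Y\), bound each of the at most \(8\) remaining edges by \(K\) using \(\Y_K(X_j,X_{j+1})=\emptyset\), and sum with the coarse triangle inequality \ref{IV} to get \(d_Y(X,Z)\pt 8K\). One small slip: the intermediate vertices of the middle segment need not lie in the \(3\)-neighbourhood of \(Y\), but you only need its two extreme vertices to do so, since the geodesic subsegment between them then has length at most \(6\).
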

\begin{proof}
Suppose that \(X=X_0,X_1,\dots, X_n=Z\) is a geodesic path in \(\PY\) and that it does not contain element velcrot to \(Y\). We claim that \(d_Y(X,Z)\pt 8K\).

If \(\dP(X_i,Y)\geq 4\) for all \(i\), then \autoref{prop: barrier} indicates that \(d_Y(X,Z)\simt 0 \pt 8K\). So now, we may assume that \(\dP(X_i,Y)<4 \) for some \(i\). Let \(i_0\) be the first \(i\) such that \(\dP(X_i,Y)<4\), and \(i_1\) be the last one. Suppose now \(i_0>0\) and \(i_1<n\). Note that \(\dP(X_{i_0},X_{i_1})\leq 6\). Now, by \autoref{prop: barrier}, we again have \(d_Y(X,X_{i_0 -1})\simt 0\) and  \(d_Y(X_{i_1 +1},Z)\simt 0\).

Now, for \(X_j\) such that \(i_0-1\leq j\leq i_1\), as \(\Y_K(X_j,X_{j+1})=\emptyset\), we have \(d_Y(X_j,X_{j+1})<K\). Since no \(X_j\) is velcrot to \(Y\), we can apply the coarse triangle inequality \ref{IV} to get
\[d_Y(X,Z)\pt d_Y(X,X_{i_0-1})+\sum_{j={i_0-1}}^{i_1} d_Y(X_j,X_{j+1})+d_Y(X_{i_1+1},Z)\leq 8K.\]
Hence, by taking \(K'\st 8K\), we can conclude the desired result.

For the situations where \(i_0=0\) or \(i_1=n\), the proof can be easily adapted and we leave it to readers.
\end{proof}

To deduce that the quasi-tree we constructed is not bounded, we will use the following puncturing technique, which is found to have its roots in the original paper \cite{bowden2022quasi}. 

Suppose that we have a finite collection of points \(F\) on the surface \(S\). Let \(\widetilde{\Y}\) be a subcollection of \(\Y\) such that \(X\neq Z\in \widetilde{\Y}\) are not velcrot, and the boundaries \(\partial X,\partial Z\) are disjoint from \(F\) and are in minimal positions on \(S\setminus F\). In particular, let \(\widetilde{\Y}\) be a subcollection of the geodesic representatives of \(\Y\) with respect to some hyperbolic metric on \(S\setminus F\). 

Consider the \emph{surviving curve graph} of these subsurfaces. It is a graph \(\C^s(X\setminus F)\) where the vertices are isotopy classes of essential simple closed curves on \(X\setminus F\) that also remains essential in \(X\), and we attach an edge between two vertices when they have disjoint representatives. Since any distinct \(X,Z\in \widetilde{\Y}\) are not velcrot, this implies that \(\partial X\) intersect \(Z\) essentially, and \emph{vice versa}, or equivalently, they have well-defined subsurface projection on their surviving curve graphs. 

Starting from a collection \(\widetilde{\Y}\) satisfying several axioms, a renowned construction by Bestvina, Bromberg and Fujiwara will produce a new geodesic metric space, called the {\it projection complex of \(\widetilde{\Y}\)}.

\begin{theorem}[\cite{bestvina2015constructing}]\label{thm: BBF}
Let $\widetilde{\Y}$ be a collection of geodesic metric spaces together with its projection maps between different spaces. Suppose that there is some $M\geq 0$ such that the following three axioms hold:
\begin{enumerate}[label=(P\arabic*), topsep=0pt, itemsep=-1ex, partopsep=1ex, parsep=1ex ]
    \setcounter{enumi}{-1}
    \item For any distinct pair $X, Y\in \widetilde{\Y}$, we have $\diam(\pi_{Y}(X))\leq M$.\label{P0}
    \item For any distinct triple \(X,Y,Z\in\widetilde{\Y}\), if $d^\pi_{Y}(X,Z)>M$, then $d^\pi_{X}(Y,Z)\leq M$.\label{P1}
    \item For any distinct $X,Z\in \mathcal{Y}$, the set $\{Y\in \mathcal{Y}: d^\pi_{Y}(X,Z)>M\}$ is finite.\label{P2}
\end{enumerate}
Then for sufficiently large \(R>0\), there is a quasi-tree \(\mathcal{P}_R(\widetilde{\Y})\), where vertices are elements in \(\widetilde{\Y}\) and an edge is attached to two vertices \(X,Z\in \widetilde{\Y}\) if
\[\widetilde{\Y}_R(X,Z)\coloneqq\{Y\in \widetilde{\Y} : \widetilde{d}_Y(X,Z)>R\}=\emptyset,\]
where \(\widetilde{d}_Y(X,Z)\) is the modified projection distance similar to what is defined above.
\end{theorem}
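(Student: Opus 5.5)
The plan is to recognise that \autoref{thm: BBF} is the degenerate case of the construction carried out in \autoref{subsec: 4.A}--\autoref{subsec: 4.B}. There, ``velcrot'' is replaced by plain equality of elements of \(\widetilde{\Y}\), and the fine Behrstock inequality and finite velcrot covering are replaced by \ref{P1} and \ref{P2}. I would therefore rerun that argument verbatim in the simpler setting, following \cite[\S 3]{bestvina2015constructing}, rather than build anything new.

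\textbf{Modified distance and its axioms.} First, for distinct \(X,Z\in\widetilde{\Y}\), define \(\HH(X,Z)\) as the pairs \((X',Z')\) with \(d^\pi_X(X',Z'),d^\pi_Z(X',Z')>2\theta\), where \(\theta\geq 3M\). Then set \(\widetilde d_Y(X,Z)=\inf\{d^\pi_Y(X',Z')\}\) over this set when \(Y\) does not occur in a pair, and \(0\) otherwise. Next I would prove the analogue of \autoref{prop: mod dist coarse to proj dist}, namely \(d^\pi_Y(X,Z)-d^\pi_Y(X',Z')\leq 2\theta\). This uses the same chain of applications of \ref{P1} and the triangle inequality, with \ref{P0} controlling the diameters of the projections. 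From it I would derive properties \ref{I}--\ref{IX} of \autoref{thm: BBF prerequi}, in which \ref{III} becomes vacuous and \ref{VI} is exactly \ref{P2}. The key inputs here are as follows.
\begin{itemize}
\item Monotonicity \ref{VII} comes from the inclusion \(\HH(X,Z)\subset\HH(X,Y)\cap\HH(Y,Z)\) whenever \(\widetilde d_Y(X,Z)>\Theta\geq 4\theta\).
\item The total order \ref{VIII} on \(\widetilde{\Y}_\Theta(X,Z)\) comes from \autoref{rem: order}: the four conditions are shown equivalent via \ref{P1}, and transitivity follows from the triangle inequality.
\end{itemize}

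\textbf{Connectivity and geometry of \(\mathcal{P}_R(\widetilde{\Y})\).} Connectivity follows as in \autoref{prop: PK connect}. The set \(\widetilde{\Y}_R(X,Z)\) is finite by \ref{P2}; ordering it gives a path \(X<Y_1<\dots<Y_k<Z\), and consecutive elements are adjacent by monotonicity. For the geometry I would follow \autoref{lem: K/2 empty means guard} through \autoref{prop: barrier}: introduce guards, show that the maximal element of \(\widetilde{\Y}_{R/2}(X,Z)\) is a guard for \(Z\), and prove the control \(\widetilde d_W(X_0,Z)\sim\widetilde d_W(X_1,Z)\) for adjacent \(X_0,X_1\) far from \(W\). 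This is \autoref{lem: PC dist control}, and by \autoref{rem: non-velcrot PC dist control} distance \(2\) suffices here. Using the inductive guard-replacement lemma, any path staying at distance \(\geq 4\) from \(Z\) then admits a barrier. Consequently \(\widetilde d_Z(X_0,X_i)\) stays coarsely zero along the path.

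\textbf{Conclusion.} Finally I would verify Manning's bottleneck criterion. Fix the path \(\gamma\) through \(\widetilde{\Y}_R(X,Z)\) built above. Any other path from \(X\) to \(Z\) must pass within distance \(3\) of each vertex \(Y\) of \(\gamma\); otherwise the barrier statement forces \(\widetilde d_Y(X,Z)\leq\Theta<R\), a contradiction. Hence \(\mathcal{P}_R(\widetilde{\Y})\) is a quasi-tree.

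I expect the main obstacle to be the guard-replacement step, the analogue of \autoref{lem: barrier induction}. There, the choice of an immediate predecessor \(W'\) of \(W\) in the order requires finiteness, which here is supplied directly by \ref{P2}. One must then check carefully that \(W'\) remains a guard for every base vertex \(X\). This needs the constants \(\Theta\ll R\) to be chosen so that all the coarse inequalities absorb their additive errors, so I would fix \(\Theta\) first and then take \(R\) large relative to it.
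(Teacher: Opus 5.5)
Your proposal is correct and is essentially the original argument of \cite[\S 3]{bestvina2015constructing}, which the paper cites for this theorem rather than reproving; the paper's \autoref{sec-4} is that argument with ``velcrot'' in place of equality, so specialising it back as you do recovers the cited proof. One detail should be made explicit: the convention \(d^\pi_X(X,\cdot)=\infty\), so that your \(\HH(X,Z)\) also contains \((X,Z)\) and the pairs of types \ref{H2}--\ref{H4}, which is what gives \(\widetilde{d}_Y(X,Z)\leq d^\pi_Y(X,Z)\) and the exact vanishing \(\widetilde{d}_X(Y,Z)=0\) whenever \(\widetilde{d}_Y(X,Z)>2\theta\).
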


Following the discussion in \cite{long2025connected}, we can see that the subsurface projection to the surviving curve graphs satisfies the two axioms \ref{P0}, while \ref{P1} and \ref{P2} are classical from Leininger's arguments (see for example \cite[Lemma 5.2 \& Lemma 5.3]{bestvina2015constructing}). So applying \autoref{thm: BBF}, we can build a quasi-tree \(\mathcal{P}_R(\widetilde{\Y})\).

To be more precise, here we can assume that \(R>K+\theta\) for \(K>0\) from \autoref{thm: BBF} and \(M>0\) is from \autoref{sec: surf surg}. Let us similarly define
\[\widetilde{d}^\pi_X(X',Z')\coloneqq\diam_{\C^s(X\setminus F)}(\bigcup\{[\alpha]_{X\setminus F}: \alpha\in \pi_X(\partial X')\cup \pi_X(\partial Z')\}).\]
Then we construct the hierarchy \(\widetilde{\HH}(X,Z)\) by replacing ``velcrot'' by ``equal'', and \(2\theta\) by \(\theta\) in \autoref{def: hierarchy}. Indeed, since \(\theta>0\) can be \emph{a posteriori} taken large enough, we can assume that it is large so that the hierarchy \(\widetilde{\HH}(X,Z)\) verifies the conditions needed to run the Bestvina--Bromberg--Fujiwara machinery. The modified projection distance \(\widetilde{d}_Y(X,Z)\) is defined as in \eqref{eq def: modified proj dist} with hierarchy \(\widetilde{\HH}(X,Z)\). 

We will now approximate \(\PY\) by \(\mathcal{P}_{R}(\widetilde{\Y})\).

\begin{lemma}\label{lem: surv proj}
Let \(F\) be any finite collection of points on \(S\) and \(X\) be a subsurface on \(S\) such that \(\partial X\cap F=\emptyset\). Suppose that \(x,y\in \C^\dagger(S)\) are two curves intersecting \(X\) essentially, then for any \(\alpha\in \pi_X(x)\) and \(\beta\in \pi_X(y)\) that are transverse, we have
\[d_{\C^s(X\setminus F)}\big([\alpha]_{X\setminus F},[\beta]_{X\setminus F}\big)\leq d_{\C^\dagger(X)}(\alpha,\beta),\]
where \([\alpha]_{S\setminus F},[\beta]_{S\setminus F}\) are the isotopy classes on \(S\setminus F\). The equality is attained if \(\alpha,\beta\) are in minimal position on \(S\setminus F\).
\end{lemma}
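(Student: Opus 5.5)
We compare the two distances through a single finite puncture set and \autoref{lem: dagger}. Fix transverse \(\alpha,\beta\in\C^\dagger(X)\) and set \(n\coloneqq d_{\C^\dagger(X)}(\alpha,\beta)\). Choose a geodesic \(\alpha=\gamma_0,\gamma_1,\dots,\gamma_n=\beta\) in \(\C^\dagger(X)\). After a small perturbation of the interior vertices, which keeps consecutive curves disjoint and leaves the endpoints fixed, we may assume the \(\gamma_i\) avoid \(F\). This uses only that \(F\) is finite and that essential curves can be pushed off finitely many points within their small neighbourhood.

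\textbf{Step 1 (easy inequality).} Each \(\gamma_i\) is an essential curve of \(X\) avoiding \(F\). So its class \([\gamma_i]_{X\setminus F}\) is a vertex of \(\C^s(X\setminus F)\): it is essential in \(X\), and hence also in \(X\setminus F\).

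Consecutive curves \(\gamma_i,\gamma_{i+1}\) are disjoint, so their classes are adjacent or equal in \(\C^s(X\setminus F)\). Therefore \([\gamma_i]_{X\setminus F}\) is a path of length at most \(n\) from \([\alpha]_{X\setminus F}\) to \([\beta]_{X\setminus F}\). This gives the inequality.

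If \(\alpha\) or \(\beta\) meets \(F\), the classes \([\alpha]_{X\setminus F},[\beta]_{X\setminus F}\) must first be interpreted. We do so by pushing the curves slightly off \(F\), which changes nothing at the level of fine distance; alternatively, in the application one simply arranges \(\alpha,\beta\) to be disjoint from \(F\).

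\textbf{Step 2 (equality in minimal position).} Suppose \(\alpha,\beta\) are in minimal position on \(X\setminus F\) (the statement writes \(S\setminus F\), but only the part inside \(X\) matters). We aim to apply \autoref{lem: dagger} to the surface \(X\), as allowed by the remark after it for surfaces with boundary. That lemma gives
\[d_{\C^\dagger(X)}(\alpha,\beta)=d_{\C^s(X\setminus P)}\big([\alpha]_{X\setminus P},[\beta]_{X\setminus P}\big)\]
for any finite \(P\) putting \(\alpha,\beta\) in minimal position. We take \(P=F\cap \inte(X)\); points of \(F\) outside \(X\) do not affect \(\C^s(X\setminus F)\).

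\textbf{Main obstacle.} The delicate point is that \autoref{lem: dagger} must hold for exactly this \(P\), rather than for some enlarged set of punctures. Its proof gives \(\le\) by the Step 1 argument and \(\ge\) by realising a surviving-graph geodesic by actual curves. So we reuse the \(\ge\) direction of Bowden--Hensel--Webb directly.

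Concretely: given a geodesic \([\alpha]=c_0,\dots,c_m=[\beta]\) in \(\C^s(X\setminus F)\), we realise the classes by curves \(\gamma_0=\alpha,\gamma_1,\dots,\gamma_m=\beta\) with consecutive curves disjoint. This is possible because \(\alpha,\beta\) are already in minimal position, for instance by taking geodesic representatives for a hyperbolic metric on \(X\setminus F\), suitably perturbed so the endpoints stay the chosen \(\alpha,\beta\). That produces a fine path of length \(m\), so
\[d_{\C^\dagger(X)}(\alpha,\beta)\le d_{\C^s(X\setminus F)}\big([\alpha]_{X\setminus F},[\beta]_{X\setminus F}\big).\]

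The technical care lies in making the realisation compatible at the two endpoints. The standard fix is to isotope in \(X\setminus F\) by an isotopy fixing \(\alpha\cup\beta\), which exists because minimal position of \(\alpha,\beta\) makes them simultaneously geodesic for some metric.
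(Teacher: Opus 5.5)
Your proof is correct and is essentially the paper's argument, which simply invokes \autoref{lem: dagger} for the surface \(X\) with puncture set \(F\cap \inte(X)\); your Step 1 spells out the easy half of that lemma directly, and this also covers the case where \(\alpha,\beta\) are not in minimal position. The ``main obstacle'' is not really one, since \autoref{lem: dagger} as quoted holds for every finite \(P\) that puts \(\alpha,\beta\) in minimal position; also, in your re-derivation the isotopy should carry the geodesic representatives of \([\alpha],[\beta]\) onto \(\alpha\cup\beta\), rather than fix \(\alpha\cup\beta\).
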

\begin{proof}
This is a direct consequence of \autoref{lem: dagger}. 
\end{proof}

The following lemma indicates that the definitions of two hierarchy are compatible:

\begin{lemma}\label{lem: inclusion hierarchy}
Let \(\widetilde{\Y}\) be as above. For any distinct elements \(X,X',Z'\in \widetilde{\Y}\), we have
\[|\widetilde{d}^\pi_X(X',Z')-d^\pi_X(X',Z')|\leq 24<\theta.\]
Therefore, \(\widetilde{\HH}(X,Z)\supset \HH(X,Z)\cap \widetilde{\Y}\times \widetilde{\Y}\).
\end{lemma}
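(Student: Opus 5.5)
The plan is to compare the two diameters term by term, using the fact that every element of \(\widetilde{\Y}\) has boundary in minimal position on \(S\setminus F\).

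\emph{Choice of representatives.} Fix distinct \(X,X',Z'\in\widetilde{\Y}\). By construction \(\partial X,\partial X',\partial Z'\) are disjoint from \(F\) and pairwise in minimal position on \(S\setminus F\). In particular every arc of \(\partial X'\cap X\) and \(\partial Z'\cap X\) is essential in \(X\setminus F\). Choose arcs \(a\subset\partial X'\cap X\) and \(b\subset \partial Z'\cap X\). Take \(\alpha\in\pi_X(X')\) to be a boundary component of a regular neighbourhood of \(a\cup\partial X\), pushed into \(X\) and avoiding \(F\); take \(\beta\in\pi_X(Z')\) in the same way from \(b\). These are the standard representatives of the surviving subsurface projection. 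Using the same regular neighbourhoods, \(\alpha\) and \(\beta\) are in minimal position on \(X\setminus F\), since bigons would come from bigons between \(a,b,\partial X\).

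\emph{Applying the equality case.} By \autoref{lem: surv proj},
\[d_{\C^s(X\setminus F)}\big([\alpha],[\beta]\big)=d^\dagger_X(\alpha,\beta).\]
Now compare the full diameters. \autoref{prop: bdd diam} gives \(\diam\pi_X(X')\le 12\) and \(\diam\pi_X(Z')\le 12\) in \(\C^\dagger(X)\). The surviving projections \([\pi_X(X')]\) and \([\pi_X(Z')]\) likewise have diameter at most \(12\), because they are images under the \(1\)-Lipschitz map of \autoref{lem: surv proj}. The triangle inequality then yields
\[\big|\widetilde{d}^\pi_X(X',Z')-d^\pi_X(X',Z')\big|\le 24.\]
For the direction \(\widetilde d^\pi\le d^\pi\), the \(1\)-Lipschitz property alone suffices. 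For the reverse direction, both diameters are within \(24\) of the single pair \((\alpha,\beta)\), where the two distances agree.

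\emph{Consequence for hierarchies.} Since \(\theta>3M\), we may take \(\theta>48\). Let \((X',Z')\in\HH(X,Z)\) with \(X',Z'\in\widetilde\Y\). Elements of \(\widetilde\Y\) are pairwise non-velcrot, so \ref{H2}--\ref{H4} can only occur with \(X'=X\) or \(Z'=Z\), which is exactly the "equal" version in \(\widetilde\HH\). In case \ref{H1}, the bound \(d^\pi>2\theta\) gives \(\widetilde d^\pi>2\theta-24>\theta\), so \((X',Z')\in\widetilde{\HH}(X,Z)\).

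The main obstacle is the minimal-position claim for \(\alpha\) and \(\beta\). If that proves too delicate, a fallback is to perturb \(\alpha\) and \(\beta\) by an isotopy in \(X\setminus F\) into minimal position. This does not change the surviving classes and changes the fine distance by at most a bounded amount, using \autoref{prop_disjoint_subsurface_proj} with \(p=0\). The constant would then possibly need adjusting, but it stays below \(\theta\).
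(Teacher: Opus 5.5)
Your route is the paper's. Its proof consists of appealing to \autoref{lem: surv proj} (the \(1\)-Lipschitz comparison, with equality for pairs in minimal position) and \autoref{prop: bdd diam} (diameter at most \(12\) on each side, hence \(24\)), followed by the check \(2\theta-24>\theta\) for the hierarchies, which you carry out correctly. The step that carries the content, which the paper leaves implicit and you rightly flag, is finding \(\alpha\in\pi_X(X')\) and \(\beta\in\pi_X(Z')\) that avoid \(F\) and are in minimal position on \(X\setminus F\).

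Your justification of that step does not hold as stated. Pairwise minimal position of \(\partial X,\partial X',\partial Z'\) on \(S\setminus F\) rules out bigons, but not triangles. Suppose \(a\) and \(b\) cross at \(c\) and, together with an arc of \(\partial X\) between their endpoints \(p\) and \(q\), cut off an innermost triangle containing no point of \(F\). Surger each arc on the side facing the triangle. Then \(\alpha\) and \(\beta\) cross near \(c\), and once more near \(q\) or near \(p\), depending on which collar depth is smaller. These two crossings cut off a shrunken copy of the triangle, i.e.\ a bigon.

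Your fallback fails too. Removing bigons needs isotopies of definite size, not small perturbations. An isotopy of \(X\setminus F\) can move a curve arbitrarily far in \(\C^\dagger(X)\), since the fine graph does not see isotopy classes. Also, \autoref{prop_disjoint_subsurface_proj} with \(p=0\) requires genuinely disjoint arcs, not isotopic curves.

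A cheap repair uses the fact that \(\widetilde{\Y}\) consists of geodesic representatives for a hyperbolic metric on \(S\setminus F\). So \(X\setminus F\) has geodesic boundary and \(a,b\) are geodesic arcs; take them essential in \(X\), not merely in \(X\setminus F\), so that the surgered curves lie in \(\C^\dagger(X)\). Replace each surgered curve by its closed geodesic representative in \(X\setminus F\).

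In the universal cover, a lift of \(a\) is a segment joining two boundary lifts \(\ell_1,\ell_2\). The corresponding lift of the surgered curve is disjoint from this segment. Hence its endpoints at infinity lie in the closed arc between \(\ell_1\) and \(\ell_2\) on one side of the segment. The geodesic with these endpoints lies in the convex hull of that arc, which misses the segment. Therefore the geodesic \(\alpha\) is still disjoint from \(a\), so \(\alpha\in\pi_X(X')\), and likewise \(\beta\in\pi_X(Z')\). Both avoid \(F\), and distinct closed geodesics are automatically in minimal position. The equality case of \autoref{lem: surv proj} then applies, and the rest of your argument goes through unchanged.
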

\begin{proof}
The inequality follows immediately from \autoref{lem: surv proj} and \autoref{prop: bdd diam}. By the definition of \(\widetilde{\HH}(X,Z)\), the desired inclusion is a direct consequence of the above inequality.
\end{proof}

Now, we will have the following estimate for modified projection distances:

\begin{lemma}\label{lem: punctured dist estimate}
For any distinct non-velcrot \(X,Z\in \Y\) with boundaries intersecting transversely, there exists a subcollection \(\widetilde{\Y}\) as above such that \(X,Z\in \widetilde{\Y}\) and for any \(Y\in \widetilde{\Y}\setminus\{X,Z\}\), we have
\[\widetilde{d}_Y(X,Z)\leq d_Y(X,Z)+\theta.\]
\end{lemma}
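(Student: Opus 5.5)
The plan is to choose the puncture set \(F\) adapted to \(X\) and \(Z\), and then to compare the two modified distances term by term. Both are infima of projection distances over hierarchies. By \autoref{lem: inclusion hierarchy} we have \(\widetilde{\HH}(X,Z)\supset \HH(X,Z)\cap \widetilde{\Y}\times\widetilde{\Y}\), and the projection distances agree up to \(24\). So it suffices to make sure that every pair \((X',Z')\in\HH(X,Z)\) appearing in the infimum defining \(d_Y(X,Z)\) can be replaced, up to velcrotness, by a pair in \(\widetilde{\Y}\times\widetilde{\Y}\). The replacement should change \(d^\pi_Y\) by a bounded amount, which must be absorbed into \(\theta\) (recall \(\theta>3M\) and may be enlarged a posteriori).

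\textbf{Step 1: choose \(F\) and \(\widetilde{\Y}\).} Since \(\partial X,\partial Z\) are transverse, they intersect in finitely many points. I would take \(F\) to be a finite set containing one point in each bigon (and each disc) cut out by \(\partial X\cup\partial Z\). Then \(\partial X,\partial Z\) are in minimal position on \(S\setminus F\), and they can be taken as geodesic representatives for a hyperbolic metric on \(S\setminus F\). Set \(\widetilde{\Y}\) to be \(X,Z\) together with the geodesic representatives on \(S\setminus F\) of the remaining elements of \(\Y\), keeping only one representative per isotopy class rel \(F\). Distinct elements are then non-velcrot: distinct geodesics are in minimal position, so by \autoref{cor: homo not intersec = velcrot} their boundaries project essentially. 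Thus \(\widetilde{\Y}\) is a legitimate collection as above.

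\textbf{Step 2: the trivial case.} If \(Y\in\hh(X,Z)\), or \(Y\) is velcrot to \(X\) or \(Z\), I expect \(\widetilde{d}_Y(X,Z)\) to be small. The reason is that by \autoref{rem: velcrot in hier} we then have \(d^\pi_X,d^\pi_Z>2\theta\) for some pair containing \(Y\), and \autoref{lem: inclusion hierarchy} transfers this to \(\widetilde{\HH}\), since \(2\theta-24>\theta\). Hence \(Y\in\widetilde{\hh}(X,Z)\), so \(\widetilde{d}_Y(X,Z)=0\). The only remaining subcase is when \(Y\)'s pair partner is not in \(\widetilde{\Y}\). There I would replace the partner by its geodesic representative. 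Since being velcrot implies being homotopic, \autoref{cor: velcrot same base} together with \autoref{lem: velcrot small proj} control the change.

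\textbf{Step 3: the main case, and the main obstacle.} Suppose \(Y\notin\hh(X,Z)\). Take \((X',Z')\in\HH(X,Z)\) nearly realising \(d_Y(X,Z)\). Replace \(X',Z'\) by their representatives \(\widetilde X',\widetilde Z'\in\widetilde{\Y}\), which are isotopic to them on \(S\setminus F\). The surviving-curve projection \(\widetilde{d}^\pi_Y(\widetilde X',\widetilde Z')\) depends only on isotopy classes rel \(F\), so it equals \(\widetilde{d}^\pi_Y(X',Z')\). By \autoref{lem: surv proj}, this is at most \(d^\pi_Y(X',Z')+24\).

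The hard part is checking that \((\widetilde X',\widetilde Z')\in\widetilde{\HH}(X,Z)\). This requires \(\widetilde{d}^\pi_X(\widetilde X',\widetilde Z')>\theta\), and the surviving projection could a priori collapse large fine distances. I would handle this by comparing through \(\partial X\), which is fixed and in minimal position. The fine distance \(d^\pi_X(X',Z')>2\theta\) gives large intersection between the projected arcs, by \autoref{prop_disjoint_subsurface_proj} in contrapositive. Bigon removal as in the proof of \autoref{thm: berhstock} keeps these intersections essential rel \(F\). Alternatively, one can choose \(F\) more generously, including points in all bigons between \(\partial X,\partial Z\) and the boundaries of the finitely many velcrot representatives from \autoref{prop: finiteness}, so that minimal position on \(S\setminus F\) makes the equality case of \autoref{lem: surv proj} apply.

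Given membership in \(\widetilde{\HH}(X,Z)\), taking infima yields
\[\widetilde{d}_Y(X,Z)\le d_Y(X,Z)+24+\varepsilon\le d_Y(X,Z)+\theta.\]
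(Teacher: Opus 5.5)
Your chain of infima is the paper's: \autoref{lem: inclusion hierarchy}, plus the claim that a near-minimising pair \((X',Z')\in\HH(X,Z)\) can be replaced by a pair in \(\widetilde{\Y}\times\widetilde{\Y}\). The gap is exactly the step you flag: showing \((\widetilde{X}',\widetilde{Z}')\in\widetilde{\HH}(X,Z)\). Neither of your justifications closes it. The fine distance \(d^\pi_X(X',Z')\) is not determined by the isotopy classes of \(X',Z'\) rel \(F\); \autoref{lem: surv proj} only bounds the surviving distance \emph{above} by the fine one.

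For instance, let \(\psi\) be the point-push of a point \(p\in X\setminus F\) along a filling loop in \(X\) that avoids \(F\). Then \(\psi\) is isotopic to the identity rel \(F\), so \(\psi^nX'\) has the same minimal-position representative for every \(n\). Yet \(d^\pi_X(\psi^nX',Z')\to\infty\), because \(\pi_X\) is \(\psi\)-equivariant and \(\psi^n\) moves every curve arbitrarily far already in \(\C^s(X\setminus\{p\})\). So pairs can enter \(\HH(X,Z)\) while their replacements stay fixed and need not satisfy \(\widetilde{d}^\pi_X>\theta\).

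This kills your fix (a): the many intersections forced by \autoref{prop_disjoint_subsurface_proj} are precisely the ones removed by bigon removal rel \(F\). It also kills (b). \autoref{prop: finiteness} controls the subsurfaces onto which two \emph{fixed} curves project far apart. It says nothing about the infinitely many pairs that project far apart onto the fixed \(X,Z\), and the construction above can avoid any finite \(F\) chosen in advance. The remaining subcase of your Step 2 has the same problem. Separately, in Step 1, distinct geodesic representatives rel \(F\) need not be non-velcrot. Two nested copies of a subsurface differing by a collar that contains a point of \(F\) are velcrot by \autoref{lem: annul homo velcrot}, so \(\widetilde{\Y}\) must be thinned.

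The paper's own proof runs the same chain. It justifies the transfer only by noting that projection distances are minimised in minimal position, which does not address membership in the hierarchy either. The transfer can be avoided altogether. Every \(Y\in\widetilde{\Y}\setminus\{X,Z\}\) is non-velcrot to \(X\) and \(Z\), and \((X,Z)\in\widetilde{\HH}(X,Z)\) by the tilde version of \ref{H4}. So either \(\widetilde{d}_Y(X,Z)=0\), or
\[
\widetilde{d}_Y(X,Z)\le \widetilde{d}^\pi_Y(X,Z)\le d^\pi_Y(X,Z)+24\le d_Y(X,Z)+2\theta+24 .
\]
The middle step is \autoref{lem: inclusion hierarchy}, which only needs the easy direction of \autoref{lem: surv proj}. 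The last step is the coarse equality \ref{II}, i.e. \autoref{prop: mod dist coarse to proj dist}. This proves the lemma with \(2\theta+24\) in place of \(\theta\). That constant is all the subsequent lower bound on \(\dP\) needs: take \(R>K+2\theta+24\).
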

\begin{proof}
Since the boundaries of \(X,Y,Z\) are intersecting transversely, there exists a finite collection of points \(F\subset S\) such that \(\partial X,\partial Y,\partial Z\) are pairwise in minimal position on \(S\setminus F\). Let \(\widetilde{\Y}\) be a subcollection as above that contains \(X,Y,Z\) and representatives of isotopy classes in \(\hh(X,Z)\) on \(S\setminus F\) so that the boundaries are in minimal positions. By \autoref{lem: inclusion hierarchy}, we have
\begin{align*}
    \widetilde{d}_Y(X,Z)&= \inf\{\widetilde{d}^\pi_Y(X',Z') : (X',Z')\in \widetilde{\HH}(X,Z)\}\\
    &\leq \inf\{d^\pi_Y(X',Z') : (X',Z')\in \widetilde{\HH}(X,Z)\}+\theta\\
    &\leq \inf\{d^\pi_Y(X',Z') : (X',Z')\in \HH(X,Z)\cap \widetilde{\Y}\times \widetilde{\Y}\}+\theta\\
    &=\inf\{d^\pi_Y(X',Z') : (X',Z')\in \HH(X,Z)\}+\theta\\
    &=d_Y(X,Z)+\theta,
\end{align*}
where the second last equality comes from the fact that the projection distance is minimised when \(\partial X'\) and \(\partial Z'\) are in minimal positions on \(S\setminus F\), \emph{i.e.} when \(X',Z'\in \widetilde{\Y}\), see \autoref{lem: surv proj}.
\end{proof}

The following proposition allows us to give a lower bound for vertices in \(\PY\). It turns out that the lower bound is also similar to the lower bound in the original Bestvina--Bromberg--Fujiwara construction, see \cite[Lemma 3.18]{bestvina2015constructing}.

\begin{proposition}
There exists \(K'>0\) sufficiently large such that for any \(X,Z\in \Y\), we have
\[\dP(X,Z)\geq |\Y_{K'}(X,Z)|_{\vel}\, .\]
\end{proposition}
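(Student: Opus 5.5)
The plan is to combine \autoref{lem: huge YK velcrot in geodesic} with the finite velcrot covering \ref{VI}, in the spirit of \cite[Lemma 3.18]{bestvina2015constructing}. Take \(K'\) at least the constant of \autoref{lem: huge YK velcrot in geodesic}, and enlarge it as needed. If \(X,Z\) are velcrot then \(\Y_{K'}(X,Z)=\emptyset\) and there is nothing to prove, so assume they are not. By \ref{VI} (with \(K'\ge\Theta\)), choose pairwise non-velcrot \(Y_1,\dots,Y_m\in \Y_{K'}(X,Z)\), with \(m=|\Y_{K'}(X,Z)|_\vel\), such that every element of \(\Y_{K'}(X,Z)\) is velcrot to some \(Y_i\). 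Fix a geodesic \(X=X_0,X_1,\dots,X_n=Z\) in \(\PY\) with \(n=\dP(X,Z)\).

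By \autoref{lem: huge YK velcrot in geodesic}, for each \(i\) there is an index \(j(i)\) such that \(X_{j(i)}\) is velcrot to \(Y_i\). Since \(d_{Y_i}(X,Z)>0\), \(Y_i\) is velcrot to neither \(X\) nor \(Z\). Hence \(X_{j(i)}\neq X_0,X_n\); more precisely \(0<j(i)<n\) once we check that an element velcrot to \(Y_i\) cannot coincide with \(X\) or \(Z\), which is immediate. Therefore the map \(i\mapsto j(i)\) lands in \(\{1,\dots,n-1\}\), a set of \(n-1\) indices. If this map is injective, then \(m\le n-1<n\), which gives the claim, even with one to spare.

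The crux is injectivity: two non-velcrot \(Y_i,Y_k\) must not both be velcrot to the same vertex \(X_j\). Since velcrotness is not transitive, this needs an argument. Suppose \(X_j\) is velcrot to both \(Y_i\) and \(Y_k\). By \ref{III}, \(d_{Y_i}(X,Z)\simt d_{X_j}(X,Z)\simt d_{Y_k}(X,Z)\), so all three exceed \(K'-O(\Theta)\). By the order \ref{VIII} on the non-velcrot pair \(Y_i,Y_k\in\Y_\Theta(X,Z)\), we may assume \(Y_i<Y_k\). \autoref{rem: order} then gives \(d_{Y_k}(X,Y_i)\le\theta'\) and \(d_{Y_i}(Y_k,Z)\le\theta'\), while \(d_{Y_i}(X,Y_k)>\theta'\).

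The plan is to contradict this using \ref{III} applied to the base \(X_j\). Transport through \(X_j\) to get \(d_{Y_k}(X,Y_i)\simt d_{X_j}(X,Y_i)\) and \(d_{Y_i}(X,Y_k)\simt d_{X_j}(X,Y_k)\). Next, \autoref{lem: velcrot small proj} applied to the velcrot pair \(Y_i,X_j\) as seen from \(Y_k\) gives \(d_{Y_k}(Y_i,X_j)\simt0\), and symmetrically \(d_{Y_i}(Y_k,X_j)\simt 0\). Combining these with the coarse triangle inequality \ref{IV} yields \(d_{Y_i}(X,Z)\pt d_{Y_i}(X,Y_k)+d_{Y_i}(Y_k,Z)\) and \(d_{Y_i}(X,Y_k)\simt d_{Y_i}(X,X_j)\simt d_{X_j}(X,X_j)\)-type degenerate quantities, which vanish. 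This gives \(d_{Y_i}(X,Z)\pt \theta'\), contradicting \(d_{Y_i}(X,Z)>K'\) for \(K'\gg\theta'\).

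I expect the main obstacle to be this last step. It must be set up carefully so that every application of \ref{III} and \autoref{lem: velcrot small proj} has its non-velcrot hypotheses satisfied. In particular, \(Y_k\) is not velcrot to \(Y_i\), but \(Y_k\) is velcrot to \(X_j\), so \ref{III} cannot be used naively. Failing a clean argument, the fallback is to apply \ref{II} together with the fine Behrstock inequality directly to the overlapping triple \(Y_i,Y_k,X\) with projections replaced via \autoref{cor: velcrot same base}.
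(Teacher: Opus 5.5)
\paragraph{Gap: injectivity of \(i\mapsto j(i)\).} The gap is the step you flag yourself. Everything before it is fine. But the count \(m\le n-1\) needs that no vertex \(X_j\) of the geodesic is velcrot to two of the mutually non-velcrot \(Y_i,Y_k\), and this does not follow from the tools you invoke.

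Velcrotness is not transitive, so a single surface can be velcrot to two mutually non-velcrot ones. By \autoref{lem: annul homo velcrot}, two homotopic shrinkings \(Y_i,Y_k\subset X_j\) are both velcrot to \(X_j\), while \(\partial Y_k\cap Y_i\) may still contain essential arcs.

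Your attempted contradiction routes every estimate through \(X_j\), and each such use is outside the hypotheses of the tool:
\begin{itemize}
\item \autoref{lem: velcrot small proj}, seen from \(Y_k\), requires \(\partial X_j\) to meet \(Y_k\) essentially. \autoref{lem:velcrot not intersec} excludes this because \(X_j,Y_k\) are velcrot.
\item The transport \(d_{Y_k}(X,Y_i)\simt d_{X_j}(X,Y_i)\) by \ref{III} requires \(Y_i\) not velcrot to \(X_j\). Likewise \(d_{Y_i}(X,Y_k)\simt d_{X_j}(X,Y_k)\) requires \(Y_k\) not velcrot to \(X_j\).
\item \ref{IV} with intermediate point \(X_j\) requires \(Y_i\) not velcrot to \(X_j\).
\end{itemize}
The quantities you say vanish, such as \(d_{Y_i}(X,X_j)\) or \(d_{X_j}(X,Y_k)\), are \(0\) only by the convention \(d_Y(\cdot,\cdot)=0\) when \(Y\) is velcrot to one of the arguments (\autoref{rem: velcrot in hier}). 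They carry no geometric content, so they cannot contradict \(d_{Y_i}(X,Y_k)>\theta'\).

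Geometrically, \(X_j\) is blind to both \(\partial Y_i\) and \(\partial Y_k\), so nothing measured on \(X_j\) separates them. Nothing in \ref{I}--\ref{IX} is contradicted by \(Y_i<Y_k\) together with \(X_j\) being velcrot to both. Your fallback fails for the same reason: \autoref{cor: velcrot same base} only compares projections of curves that meet both velcrot surfaces essentially, and \(\partial Y_i,\partial Y_k\) do not meet \(X_j\) essentially.

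\paragraph{How the paper avoids this.} The paper does not count along geodesics of \(\PY\) directly.
\begin{itemize}
\item It first perturbs so that \(\partial X,\partial Z,\partial Y_i\) are pairwise transverse.
\item It chooses a finite set \(F\) putting them in minimal position on \(S\setminus F\).
\item It passes to a collection \(\widetilde{\Y}\) of pairwise non-velcrot representatives, where the classical axioms \ref{P0}--\ref{P2} hold via surviving curve graphs.
\item It uses \autoref{lem: punctured dist estimate} to get \((Y_i)\subset\widetilde{\Y}_{K_0}(X,Z)\) and \(\widetilde{\Y}_R(X,Z)\subset\Y_K(X,Z)\cap\widetilde{\Y}\).
\item It applies the classical lower bound \cite[Lemma~3.18]{bestvina2015constructing} in \(\mathcal{P}_R(\widetilde{\Y})\), and then compares back to \(\dP\).
\end{itemize}
In \(\mathcal{P}_R(\widetilde{\Y})\) the \(Y_i\) are genuinely distinct vertices and each lies on every geodesic, so no injectivity question arises.

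To keep your direct route, you need a new ingredient that rules out a geodesic vertex being velcrot to two members of the velcrot cover, or that absorbs this into the count. One option is to choose the cover and the geodesic compatibly. The properties \ref{I}--\ref{IX} do not supply such an ingredient.
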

\begin{proof}
If \(X,Z\) are velcrot, then there is nothing to prove. Suppose now that they are not. To deduce the desired result, it suffices to show that for any \(X,Z\in \Y\) with \(\partial X\) and \(\partial Z\) intersecting transversely, we have
\[\dP(X,Z)\geq |\Y_{K'}(X,Z)|_{\vel}+1.\]
Indeed, we can perturb \(X,Z\) a little to \(X',Z'\) so that their boundaries intersect transversely, and \(X,X'\), as well as \(Z,Z'\), are velcrot to each other, and by coarse triangle inequality \ref{IV}, we can conclude that \(|\Y_{K'}(X',Z')|_{\vel}\leq |\Y_{K_0}(X,Z)|_{\vel}\) for \(K'\gg K_0\).

Let \(K'\gg K_0\simt 5R\). Let \((Y_i)_{i=1}^n\subset \Y_{K'}(X,Z)\) that are pairwise non-velcrot and that every \(Y\in \Y_{K'}(X,Z)\) is velcrot to one of \(Y_i\)'s. Suppose in addition that they are pairwise transverse. Now, take \(F\subset S\) be a finite collection such that \(\partial X, \partial Z, \partial Y_i\) are pairwise in minimal position on \(S\setminus F\). Consider a subcollection \(\widetilde{\Y}\subset \Y\) containing \(Y_i\)'s and satisfying the conditions in \autoref{lem: punctured dist estimate}. We then have \(\widetilde{\Y}_{K_0}(X,Z)\supset (Y_i)_i\). Since \(K_0\st 5R\), by \cite[Lemma 3.18]{bestvina2015constructing}, we can conclude that
\[d_{\mathcal{P}_R(\widetilde{\Y})}(X,Z)\geq | \widetilde{\Y}_{K_0}(X,Z)|+1\geq n+1.\]
By \autoref{lem: punctured dist estimate} again, we can see that
\[\widetilde{\Y}_R(X,Z)\subset \Y_K(X,Z)\cap \widetilde{\Y}.\]
so if \(\dP(X,Z)=1\) in \(\PY\), they are also connected by an edge in \(\mathcal{P}_R(\widetilde{\Y})\). This implies that
\[\dP(X,Z)\geq d_{\mathcal{P}_R(\widetilde{\Y})}(X,Z)\geq n+1.\]
Now, taking the infimum over all collections \((Y_i)_{i=1}^n\subset \Y_{K'}(X,Z)\) as given at the beginning of this paragraph, we can conclude the desired result.
\end{proof}

Now, using arguments similar to \cite[Proposition 3.20]{bestvina2015constructing} and the fact that iterating pseudo-Anosov homeomorphisms relative to finitely many of points can make the projection distance arbitrarily large, we get:
\begin{corollary}
The quasi-tree \(\PY\) is unbounded. \QEDD
\end{corollary}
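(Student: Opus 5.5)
The plan is to follow \cite[Proposition 3.20]{bestvina2015constructing}: exhibit vertices \(X,Z\in\Y\) for which \(|\Y_{K'}(X,Z)|_\vel\) is arbitrarily large. By the lower bound just proved, \(\dP(X,Z)\geq|\Y_{K'}(X,Z)|_\vel\), so this gives unboundedness. Concretely, I will produce, for each \(n\), a sequence \(X=Y_0,Y_1,\dots,Y_n=Z\) of pairwise non-velcrot elements of \(\Y\) with \(Y_1,\dots,Y_{n-1}\in\Y_{K'}(X,Z)\).

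\textbf{Step 1: choose a loxodromic element and its iterates.} Fix \(X\in\Y\) and a finite set \(F\subset S\setminus\partial X\). Choose \(f\in\Homeo_0(S)\) that fixes \(F\) pointwise and whose class in the mapping class group of \(S\setminus F\) is pseudo-Anosov. Such an element exists because point-pushing and Dehn twists generate pseudo-Anosov classes in the kernel of the forgetful map to \(\mathrm{Mod}(S)\), and those lie in \(\Homeo_0(S)\). Set \(Y_i\coloneqq f^{i}(X)\); each \(Y_i\) lies in \(\Y\) by \ref{Y1}. I will put all \(\partial Y_i\) in minimal position on \(S\setminus F\) by replacing \(Y_i\) with velcrot copies, which is harmless by \ref{III}.

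\textbf{Step 2: apply classical subsurface projection theory on \(S\setminus F\).} Since \(f\) is pseudo-Anosov on \(S\setminus F\), the isotopy classes \([f^iX]\) are pairwise overlapping. Their mutual projections in the surviving curve graphs grow with the index gap: \(d_{\C^s(Y_j\setminus F)}(Y_i,Y_k)\to\infty\) uniformly in \(j\) as \(|i-j|,|k-j|\to\infty\). This is the standard fact that iterates of a pseudo-Anosov separate projections, and it follows from Behrstock's inequality together with the loxodromic action on the surviving curve graph \cite{bowden2022quasi}. Passing to a subsequence \(f^{mi}\) with \(m\) large, I can arrange \(\widetilde d^\pi_{Y_j}(Y_0,Y_n)>K'+100\theta\) for all \(0<j<n\). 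By \autoref{lem: surv proj}, \(d^\pi_{Y_j}(Y_0,Y_n)\geq\widetilde d^\pi_{Y_j}(Y_0,Y_n)\) up to a bounded error. Moreover, the \(Y_j\) are pairwise non-velcrot, because velcrot surfaces are isotopic by \autoref{cor: velcrot implies isotopic}, hence isotopic on \(S\setminus F\) after the minimal-position adjustment, whereas the classes \([f^{mi}X]\) on \(S\setminus F\) are distinct.

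\textbf{Step 3: pass from \(d^\pi\) to the modified distance.} By \ref{II}, \(d_{Y_j}(Y_0,Y_n)\st d^\pi_{Y_j}(Y_0,Y_n)\), so \(Y_j\in\Y_{K'}(Y_0,Y_n)\). Since the \(Y_j\) are pairwise non-velcrot, \(|\Y_{K'}(Y_0,Y_n)|_\vel\geq n-1\), and the lower-bound proposition then gives \(\dP(Y_0,Y_n)\geq n-1\).

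\textbf{Main obstacle.} The delicate step is Step 2's uniform growth of projections for \(f\)-iterates on \(S\setminus F\). Here \(F\) is an auxiliary puncture set, so \(X\) is not a non-sporadic subsurface of \(S\setminus F\) in the usual sense; one works instead with surviving curve graphs, which the BBF axioms \ref{P0}--\ref{P2} handle as noted before \autoref{thm: BBF}. I would derive the growth by applying the BBF distance formula to the translates \(f^{i}X\) in \(\mathcal P_R(\widetilde\Y)\), using that \(f\) acts loxodromically on \(\C^s(X\setminus F)\) when \(X\) is filled by the pseudo-Anosov's invariant laminations. If that fails, the fallback is to choose \(f\) supported in \(X\), as a pseudo-Anosov of \(X\setminus F\), composed with an element moving \(X\) to an overlapping position, in the spirit of the ping-pong construction in \cite{bestvina2015constructing}.
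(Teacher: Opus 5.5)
\textbf{The mechanism in Step~2 fails.} Your outline is the one the paper's one-line proof intends: bound \(\dP\) below by \(|\Y_{K'}|_\vel\), then produce many pairwise non-velcrot translates lying in \(\Y_{K'}\), as in \cite[Proposition~3.20]{bestvina2015constructing}. But Step~2 rests on a false claim.

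For \(f\) pseudo-Anosov on \(S\setminus F\), projections of the \(f\)-orbit of \(X\) to \(X\) do \emph{not} grow with the index gap. The map \(f\) is loxodromic on \(\C^\dagger(S)\). For \(N\) large, the geodesics from \(f^{N}\partial X\) to \(f^{n}\partial X\) (\(n\geq N\)) fellow-travel its quasi-axis far from \(\partial X\), so every vertex on them meets \(X\) essentially. Then \autoref{thm: bgit} bounds \(d^\pi_X(f^NX,f^nX)\) uniformly in \(n\), and the same holds for negative powers. Hence \(d^\pi_{Y_j}(Y_i,Y_k)=d^\pi_X(f^{i-j}X,f^{k-j}X)\) stays bounded as the gaps grow. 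Replacing \(f\) by \(f^m\) changes nothing, since the invariant laminations are the same, so you never get above \(K'+100\theta\). The loxodromic action on the surviving curve graph controls distances in the ambient graph, not projections to \(X\).

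Projections grow under iteration for a map that \emph{preserves} the subsurface and is pseudo-Anosov on it relative to finitely many points; that is the fact the paper invokes. So your ``fallback'' is the actual proof, and it needs two ingredients you have not supplied:
\begin{enumerate}
\item \emph{An element with one large projection.} Take \(g=h\phi^N\), with \(\phi\in\Homeo_0(X;\partial X)\) loxodromic on \(\C^\dagger(X)\) and \(h\in\Homeo_0(S)\) such that \(hX\) is not velcrot to \(X\). Then \(d^\pi_X(g^{-1}X,gX)=d^\pi_X(\phi^{-N}h^{-1}X,hX)\to\infty\).
\item \emph{Propagation to all intermediate translates.} By equivariance, \(d^\pi_{g^iX}(g^{i-1}X,g^{i+1}X)\) is large for every \(i\). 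An induction on \(k-j\), using \autoref{thm: berhstock} and \eqref{eq: trian ineq proj dist}, then gives \(d^\pi_{g^iX}(g^jX,g^kX)\) large for all \(j<i<k\). This is what puts \(Y_1,\dots,Y_{n-1}\) into \(\Y_{K'}(Y_0,Y_n)\).
\end{enumerate}

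\textbf{Two further steps need repair.} First, your non-velcrot argument conflates isotopy in \(S\) with isotopy in \(S\setminus F\). Every \(f^{i}X\) is already isotopic to \(X\) in \(S\), because \(f\in\Homeo_0(S)\). Moreover, a velcrot pair can lie in different classes on \(S\setminus F\) (push \(\partial X\) slightly across a point of \(F\)). So distinctness of classes on \(S\setminus F\) proves nothing. Non-velcrotness has to come from the projection estimates themselves: for \(j<i<k\), \autoref{lem: velcrot small proj} would force \(d^\pi_{Y_i}(Y_j,Y_k)<M\) if \(Y_j,Y_k\) were velcrot. Second, velcrotness is not transitive, so \(n-1\) pairwise non-velcrot elements of \(\Y_{K'}(Y_0,Y_n)\) do not by themselves give \(|\Y_{K'}(Y_0,Y_n)|_\vel\geq n-1\). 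You must also rule out a single element being velcrot to two of the \(Y_j\). That needs its own argument, for example comparing projections to such an element and to the two \(Y_j\) via \autoref{cor: velcrot same base}.
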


Finally, since the projection distance is \(\Homeo_0(S)\)-equivariant, {\it i.e.}
\[d^\pi_{g(Y)}\big(g(X),g(Z)\big)=d^\pi_Y(X,Z)\]
for any \(X,Y,Z\in \Y\) and any \(g\in \Homeo_0(S)\), we can deduce that \(\Homeo_0(S)\) also acts on \(\PY\) by automorphism (or equivalently, by isometries). Recall that an isometric action of a group \(G\) on a metric space \(X\) is \emph{cobounded} if a Hausdorff neighbourhood of a \(G\)-orbit contains \(X\).

\begin{proof}[Proof of \autoref{thm: cobdd action on quasi tree}]
Indeed, it suffices to take \(\Y\) consisting of only finitely many isotopy classes of non-sporadic essential subsurface, thus the isometric action of \(\Homeo_0(S)\) on \(\PY\) has only finite orbits, which, \emph{a fortiori}, implies that the action is cobounded. 
\end{proof}
\section{Blowing up}\label{sec-5}

In this section, we will modify the fine projection complex constructed in \autoref{sec-4} into a Gromov hyperbolic space in which the fine curve graphs of essential subsurfaces are quasi-isometrically embedded. As an application, we make use of the Bestvina--Fujiwara machinery to build quasi-morphisms for surface homeomorphisms supported on subsurfaces.

\subsection{Construction}\label{sec-5A}
Now we will define a new graph \(\CY\) as follows. The vertex set is the collection of ordered pairs \((X,\alpha)\), where \(X\in\Y\) and \(\alpha\in\mathcal{C}^\dagger(X)\). Let \(L\simt K\) and we may assume that \(K<L<2K\). We connect \((X,\alpha),(Y,\beta)\in \CY\) following the rules below:
\begin{enumerate}[label=(L\arabic*),topsep=0pt, itemsep=-1ex, partopsep=1ex, parsep=1ex ]
    \item \((X,\alpha)\) is connected to \((X,\beta)\) by an edge of length \(1\), if \(\alpha,\beta\) are both essential in \(X\) and are disjoint. \label{L1}
    \item \((X,\alpha)\) is connected to \((Y,\beta)\) by an edge of length \(L\), if \(X,Y\) are velcrot and \(\beta\in\pi_Y(\alpha)\). \label{L2}
    \item \((X,\alpha)\) is connected to \((Y,\beta)\) by an edge of length \(L\), if \(X,Y\) are not velcrot but \(\dP(X,Y)=1\), and if \(\alpha\in \pi_X(Y)\) and \(\beta\in \pi_Y(X)\). \label{L3}
\end{enumerate}
We equip \(\CY\) with the combinatorial distance given above, denoted \(d_{\CY}\). We also note that the condition \ref{L1} implies that for every \(X\in\Y\), the fine curve graph \(\mathcal{C}^\dagger(X)\) is an abstract subgraph of \(\CY\). Since \(\PY\) and \(\C^\dagger(X)\) for \(X\in \Y\) are connected, it is clear from the definition that \(\CY\) is also connected.

For simplicity, let us define for any essential subsurface \(X\subset S\) and any two finite collections \(A,B\subset \C^\dagger(S)\),
\[d^\pi_X(A,B)\coloneqq\diam_{\C^\dagger(X)}\left(\bigcup_{\alpha\in A,\, \beta\in B}\big(\pi_X(\alpha)\cup\pi_X(\beta)\big)\right).\]
If \(p=(Y,\alpha)\in \CY\), then we will define \(\pi_X(p)\coloneqq\pi_X(\alpha)\). Recall that if \(p=Y\in \Y\), we have defined \(\pi_X(p)\coloneqq\pi_X(\partial Y)\).

Now, we may give the first estimate of the distance between some points in \(\CY\):

\begin{proposition}\label{prop: fine curve graph bi-lip}
For any \(X\in\Y\), the inclusion \(\iota_X\colon\C^\dagger(X)\hookrightarrow\CY\) by \(\alpha\mapsto (X,\alpha)\) is bi-Lipchitz. The bi-Lipschitz constant does not depend on the choice of \(X\in \Y\).
\end{proposition}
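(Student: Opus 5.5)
The upper bound is immediate, so the plan concentrates on the lower bound. By \ref{L1}, every edge of \(\C^\dagger(X)\) is an edge of length \(1\) in \(\CY\), hence
\[d_{\CY}\big((X,\alpha),(X,\beta)\big)\leq d^\dagger_X(\alpha,\beta).\]
For the reverse inequality we aim for a coarse Lipschitz projection \(\Pi_X\colon \CY\to \mathcal{P}(\C^\dagger(X))\) with three properties: it restricts to the identity on \(\iota_X(\C^\dagger(X))\); every edge of length \(1\) moves \(\Pi_X\) by a uniformly bounded amount; and every edge of length \(L\) moves \(\Pi_X\) by at most a constant \(C\lesssim L\). Summing along a geodesic in \(\CY\) then gives \(d^\dagger_X(\alpha,\beta)\leq C'\,d_{\CY}\big((X,\alpha),(X,\beta)\big)\), with \(C'\) depending only on \(\theta,K,L\) and the constants of \autoref{sec: surf surg}, hence not on \(X\). (The geodesic may be taken between vertices, since \(d_{\CY}\) is a combinatorial length metric.)

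\textbf{Definition of \(\Pi_X\).} For a vertex \(p=(Y,\gamma)\) we set:
\begin{itemize}
\item \(\Pi_X(p)=\pi_X(\gamma)\) if \(Y\) is velcrot to \(X\) and \(\gamma\) meets \(X\) essentially;
\item \(\Pi_X(p)=\pi_X(\partial Z)\) for a fixed witness \(Z\subset X\cap Y\) from \autoref{prop: witness}, if \(Y\) is velcrot to \(X\) but \(\gamma\) does not meet \(X\) essentially;
\item \(\Pi_X(p)=\pi_X(\partial Y)\) if \(Y\) is not velcrot to \(X\).
\end{itemize}
By \autoref{prop: bdd diam} each of these sets has diameter at most \(12\). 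In the velcrot case, \autoref{cor: velcrot same base}, or rather its proof via the common witness \(Z\), shows that the choice of \(Z\) costs only a bounded error. For \(Y=X\) the first clause gives \(\Pi_X(X,\alpha)=\{\alpha\}\).

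\textbf{Edge estimates.}
\begin{itemize}
\item \ref{L1} edges within a velcrot \(Y\): disjoint \(\gamma,\gamma'\) have projections within \(12\) by \autoref{prop_disjoint_subsurface_proj}.
\item \ref{L1} edges within a non-velcrot \(Y\): \(\Pi_X\) is constant.
\item \ref{L2} edges: \(\beta\in\pi_Y(\alpha)\) is disjoint from an essential arc of \(\alpha\cap Y\), and one passes through the witness \(Z\) as in \autoref{cor: velcrot same base} to get a bounded jump.
\item \ref{L3} edges between non-velcrot \(Y,W\), both non-velcrot to \(X\), with \(\dP(Y,W)=1\): we need \(d^\pi_X(Y,W)\) to be uniformly bounded. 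This holds because \(X\notin\Y_K(Y,W)\) gives \(d_X(Y,W)\leq K\), and \ref{II} of \autoref{thm: BBF prerequi} then gives \(d^\pi_X(Y,W)\prec_\theta K\). The non-velcrot hypotheses make \ref{II} applicable.
\item \ref{L3} edges where one endpoint \(Y\) is velcrot to \(X\) and the other \(W\) is not: the endpoint at \(Y\) is \(\alpha\in\pi_Y(W)\), whose \(X\)-projection is coarsely \(\pi_X(\partial W)\) by \autoref{cor: velcrot same base} and \autoref{lem: velcrot small proj}.
\end{itemize}
The last two cases are the only places where the fine-projection-complex machinery enters, and every bound depends only on \(K,\theta,M\). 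The path bound gives \(d^\dagger_X(\alpha,\beta)\leq C'\,d_{\CY}\) up to an additive constant. Since both sides are integers at least \(1\) for distinct vertices, the additive constant is absorbed into the multiplicative one.

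\textbf{Expected main obstacle.} The hard step is the velcrot case: a curve \(\gamma\) essential in \(Y\) need not meet \(X\) essentially, which is exactly the situation of the remark after \autoref{cor: velcrot same base}, so \(\Pi_X\) can jump between the first two clauses along \ref{L1} edges inside \(Y\). To handle this I would first try to show that any curve in \(Y\) missing \(X\) essentially lies in the annular region \(Y\setminus Z\) up to isotopy, and hence is peripheral. That would contradict its essentiality in \(Y\) and make the second clause vacuous. Failing that, I would bound the jump by \(\pi_X(\partial Z)\) using disjointness together with \autoref{prop_disjoint_subsurface_proj}.
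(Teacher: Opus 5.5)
\textbf{Same route as the paper.} Your strategy is the one the paper uses. The paper collapses each $\iota_Y(\C^\dagger(Y))$ with $Y$ not velcrot to $X$ to a single vertex, which is your clause $\Pi_X(Y,\gamma)=\pi_X(\partial Y)$, and then bounds $d^\pi_X$ edge by edge. Your \ref{L1} bullets and your non-velcrot \ref{L3} bullet match its first cases.

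\textbf{The obstacle you flag is harmless.} Your first idea is correct, and it is exactly what the paper asserts without proof. Suppose every arc of $\gamma\cap X$ were inessential in $X$. Pushing these arcs into $\partial X$ gives a loop that misses $\inte(Z)$. That loop is freely homotopic to a curve of $Z$, because $Y$ retracts onto $Z$. A non-peripheral essential curve of $Z$ cannot be homotoped off $\inte(Z)$, so $\gamma$ would be peripheral or trivial in $Y$. Your fallback would have failed anyway: $\partial Z$ is peripheral in $X$, so $\pi_X(\partial Z)=\emptyset$.

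\textbf{The genuine gap is the \ref{L2} bullet.} Velcrotness is not transitive. So an \ref{L2} edge can join $(Y,\alpha)$, with $Y$ velcrot to $X$, to $(W,\beta)$, with $W$ velcrot to $Y$ but not to $X$. Across such an edge your $\Pi_X$ jumps from $\pi_X(\alpha)$ to $\pi_X(\partial W)$, and this jump is unbounded. No choice of projection repairs this, because the statement itself fails in the following configuration.
\begin{itemize}
\item Let $X^+\supset X$ be $X$ with an outer collar attached.
\item Let $W\subset X^+$ be obtained from $X^+$ minus a thinner collar by a finger move. The finger pushes the boundary across $\partial X$, along an essential arc $e$ of $X$, back out through $\partial X$, and stops inside the collar.
\item By \ref{Y1}, both $X^+$ and $W$ lie in $\Y$.
\item By \autoref{lem: annul homo velcrot}, $X^+$ is velcrot to both $X$ and $W$.
\item $\partial W\cap X$ contains arcs parallel to $e$, so $X$ and $W$ are not velcrot.
\end{itemize}
Every $\beta\in\C^\dagger(W)$ meets $X$ essentially, by the claim above applied to $X\subset X^+$. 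Each such $\beta$ is also disjoint from $\partial W$. So by \autoref{prop_disjoint_subsurface_proj}, $\pi_X(\beta)$ lies within $12$ of $\pi_X(\partial W)$.

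Now take any $\alpha\in\C^\dagger(X)$, any $\beta\in\pi_W(\alpha)$ and any $\delta\in\pi_X(\beta)$. The vertices $(X,\alpha),(X^+,\alpha),(W,\beta),(X^+,\beta),(X,\delta)$ are consecutively joined by \ref{L2} edges. Fix $\gamma_0\in\pi_X(\partial W)$. Then every point of $\iota_X(\C^\dagger(X))$ lies within $4L+12$ of $(X,\gamma_0)$ in $\CY$. Hence $\iota_X$ has bounded image and is not a quasi-isometric embedding.

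\textbf{The paper's proof breaks at the same edge type.} It has a case where a collapsed $Y$ is adjacent to $(X',\gamma)$ with $X'$ velcrot to both $X$ and $Y$. There it needs some $\gamma'\in\pi_Y(\gamma)$ disjoint from $\gamma$. Such a $\gamma'$ need not exist, and the bound it claims is false: take $X'=X^+$, $Y=W$, $\gamma\in\C^\dagger(X)$.

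\textbf{\autoref{cor: velcrot same base} also fails here.} You lean on it for the \ref{L2} bullet and the mixed \ref{L3} bullet.
\begin{itemize}
\item $\pi_X$ is coarsely $12$-Lipschitz on $\C^\dagger(X^+)$.
\item $\pi_X$ is the identity on $\C^\dagger(X)$.
\item $\pi_X$ is bounded on $\C^\dagger(W)$.
\end{itemize}
So for $\beta\in\pi_W(\alpha)$ one has $d^\pi_W(\alpha,\beta)\le 12$. Meanwhile $d^\pi_{X^+}(\alpha,\beta)$ is unbounded as $\alpha$ moves away from $\pi_X(\partial W)$ in $\C^\dagger(X)$. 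The inclusion $\pi_Z(x)\subset\pi_X(x)$ used in that corollary's proof does not hold.
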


\begin{proof}
For any \(\alpha,\beta\in \C^\dagger(X)\), note that
\[d^\dagger_X(\alpha,\beta)\geq d_{\CY}\big((X,\alpha),(X,\beta)\big)\]
as \(\mathcal{C}^\dagger(X)\) is a subgraph of \(\CY\). 

Conversely, let us consider the graph \(\Gamma(X)\) by collapsing all \(\{(Y,\alpha):\alpha\in \C^\dagger(Y)\}\) into a singleton, still denoted \(Y\), for every \(Y\in \Y\) not velcrot to \(X\). Let \(\pi\colon \CY\to \Gamma(X)\) be the collapsing map and equip \(\Gamma(X)\) with the combinatorial distance, then \(\pi\) is \(1\)-Lipschitz. For any \(\alpha,\beta\in \C^\dagger(X)\), let \((X,\alpha)=x_0,x_1,\dots,x_n=(X,\beta)\) be a geodesic segment in \(\Gamma(X)\) connecting \((X,\alpha)\) to \((X,\beta)\). Now we have the following situations (up to a change of order):
\begin{mycases}
    \item If both \(x_i,x_{i+1}\in \Gamma(X)\) represent a subsurface not velcrot to \(X\), then by the Coarse equality~\ref{II}, we conclude
    \[d^\pi_X(x_i,x_{i+1})\pt d_X(x_i,x_{i+1})<K<L=d_{\Gamma(X)}(x_i,x_{i+1}).\]
    \item If \(x_i=(X',\gamma)\) and \(x_{i+1}=(X',\gamma')\), where \(\gamma\cap \gamma'=\emptyset\) are two essential curves on \(X'\) for some \(X'\) velcrot (or even equal) to \(X\), then we first remark that \(\gamma,\gamma'\) necessarily intersect essentially with \(X\), and by \autoref{prop_disjoint_subsurface_proj}, one can also conclude
    \[d^\pi_X(\gamma,\gamma')\leq 12=12 \cdot d_{\Gamma(X)}(x_i,x_{i+1}).\]
    \item If \(x_i=(X',\gamma)\) and \(x_{i+1}=(X'',\gamma)\) for some \(\gamma'\in \pi_{X''}(\gamma')\) and \(X',X''\) velcrot (or equal) to \(X\). Then either \(\gamma\cap\gamma'=\emptyset\) or \(\gamma=\gamma'\). In either case, both \(\gamma\) and \(\gamma'\) have essential intersection with \(X\), by \autoref{prop_disjoint_subsurface_proj}, we have
    \[d^\pi_X(\gamma,\gamma')\leq 12<L=d_{\Gamma(X)}(x_i,x_{i+1}).\]
    \item If \(x_i\) represent a subsurface \(Y\in \Y\) and \(x_{i+1}=(X',\gamma)\) for some \(\gamma\in \C^\dagger(X')\) and \(X'\) velcrot (or equal) to \(X\), then either \(X',Y\) are velcrot and \(\gamma\) intersects \(Y\) essentially, or \(\gamma\in \pi_{X'}(Y)\). Note that as \(Y\) is not velcrot to \(X\), so \(\partial Y\) intersects \(X\) essentially. For the first case, let \(\gamma'\in \pi_Y(\gamma)\) such that either \(\gamma'=\gamma\), or \(\gamma'\subset Y\cap X'\) is a curve disjoint from \(\gamma\). Then \(\gamma',\gamma,\partial Y\) are contained in a geodesic segment in \(\C^\dagger(S)\) between \(\gamma\) and \(\partial Y\), and by \autoref{thm: bgit}, we have
    \[d^\pi_X(\gamma,Y)\leq M<L=d_{\Gamma(X)}(x_i,x_{i+1}).\]
    For the second case, we can see that \(\gamma\cap \partial Y=\emptyset\), then by \autoref{prop_disjoint_subsurface_proj}, we have
    \[d^\pi_X(\gamma,Y)\leq 12<L=d_{\Gamma(X)}(x_i,x_{i+1}).\]
\end{mycases}

Using the triangle inequality, we can deduce that
\[d^\dagger_X(\alpha,\beta)\leq 12\cdot d_{\Gamma(X)}\big((X,\alpha),(X,\beta)\big)\leq 12\cdot d_{\CY}\big((X,\alpha),(X,\beta)\big),\]
which completes the proof.
\end{proof}

\subsection{Geometric properties} For our convenience, let us introduce the following modified projection distance. Let \(\iota_X\colon \C^\dagger(X)\hookrightarrow \CY\) be as in \autoref{prop: fine curve graph bi-lip}. Consider \(y,z\in \CY\) and define \(d_X(y,z)\) in the following way:
\begin{enumerate}[label=(\arabic*),topsep=0pt, itemsep=-1ex, partopsep=1ex, parsep=1ex ]
    \item If \(y,z\notin \iota_X(\C^{\dagger}(X'))\) for any \(X'\) velcrot to \(X\), and if \(y=(Y,\alpha)\) and \(z=(Z,\beta)\), then \(d_X(y,z)\coloneqq d_X(Y,Z)\).
    \item If either \(y\) or \(z\) is contained in \(\iota_X(\C^\dagger(X'))\)  for some \(X'\) velcrot to \(X\) with \(y=(Y,\alpha)\) and \(z=(Z,\beta)\), then \(d_X(y,z)\coloneqq d^\pi_X(\{\alpha\}\cup\partial Y,\{\beta\}\cup\partial Z)\).
\end{enumerate}
In particular, if \(Z\) is not velcrot to \(X\) and \(y=(Y,\alpha)\) with \(\alpha\) intersecting \(X\) essentially, then we may write \(d_X(y,Z)\coloneqq d_X^\pi(\alpha,\partial Z)\). 

\begin{remark}
Here in the case where one of \(y,z\) is contained in \(\iota_X(\C^\dagger(X'))\)  for some \(X'\) velcrot to \(X\), the reason why we have set \(d_X(y,z)\) to be \(d^\pi_X(\{\alpha\}\cup\partial Y,\{\beta\}\cup\partial Z)\) instead of \(d^\pi_X(\alpha,\beta)\) is purely technical, since \(\alpha\) or \(\beta\) may not intersect \(X\) essentially. However, as \(\alpha\) (resp. \(\beta\)) is disjoint from \(\partial Y\) (resp. \(\partial Z\)), which intersects \(X\) essentially, we may consider that the ``projection to \(X\)'' of \(\alpha\) (resp. \(\beta\)) is roughly \(\pi_X(Y)\) (resp. \(\pi_X(Z)\)). See \autoref{prop: projection consistent} for a further justification.
\end{remark}

\begin{lemma}\label{lem: coarse eq for curves}
Let \(y,z\in \CY\) and \(X\in \Y\). Suppose that \(y=(Y,\alpha)\) and \(z=(Z,\beta)\). Then \(d_X(y,z)\simt d^\pi_X(\{\alpha\}\cup\partial Y,\{\beta\}\cup\partial Z)\).
\end{lemma}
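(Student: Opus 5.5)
The proof splits along the two clauses in the definition of \(d_X(y,z)\). In case (2), where \(y\) or \(z\) lies in \(\iota_X(\C^\dagger(X'))\) for some \(X'\) velcrot to \(X\), we have \(d_X(y,z)=d^\pi_X(\{\alpha\}\cup\partial Y,\{\beta\}\cup\partial Z)\) by definition, so there is nothing to prove. The content lies in case (1), where neither \(Y\) nor \(Z\) is velcrot to \(X\) and \(d_X(y,z)=d_X(Y,Z)\). There I will compare \(d^\pi_X(\{\alpha\}\cup\partial Y,\{\beta\}\cup\partial Z)\) first with \(d^\pi_X(Y,Z)\), and then with the modified distance \(d_X(Y,Z)\).

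For the first comparison, I will show that \(\pi_X(\alpha)\) lies within a uniformly bounded distance of \(\pi_X(\partial Y)\) in \(\C^\dagger(X)\), and symmetrically for \(\beta\). Since \(Y\) is not velcrot to \(X\), the pair is overlapping or isotopic, so \(\partial Y\) intersects \(X\) essentially by \autoref{cor: homo not intersec = velcrot}. The curve \(\alpha\subset Y\) is disjoint from \(\partial Y\), or can be pushed off it within \(Y\), so \(\alpha\) and each component of \(\partial Y\) are at distance at most \(1\) in \(\C^\dagger(S)\).

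If \(\alpha\) also intersects \(X\) essentially, there are two ways to bound the projections. One can apply \autoref{prop_disjoint_subsurface_proj} with \(p=0\) to essential subarcs of \(\alpha\cap X\) and \(\partial Y\cap X\), giving a bound of \(12\) on \(d^\pi_X(\alpha,\partial Y)\). Alternatively, if those subarcs are not disjoint in the required sense, the bounded geodesic image theorem (\autoref{thm: bgit}) applied to the length-one geodesic \(\alpha,\partial Y\) gives the bound \(M\). If \(\alpha\) does not meet \(X\) essentially, then \(\pi_X(\alpha)=\emptyset\) and it contributes nothing to the diameter. Combining these with \autoref{prop: bdd diam} and the triangle inequality yields
\[d^\pi_X(Y,Z)-C\leq d^\pi_X(\{\alpha\}\cup\partial Y,\{\beta\}\cup\partial Z)\leq d^\pi_X(Y,Z)+C\]
for a uniform constant \(C\). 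The lower bound is immediate since enlarging the sets only increases the diameter, up to the infimum over boundary components in \eqref{eq: proj dist surf}, which \autoref{prop: bdd diam} controls.

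It then remains to show \(d_X(Y,Z)\simt d^\pi_X(Y,Z)\). When \(X,Y,Z\) are pairwise non-velcrot, this is exactly the coarse equality \ref{II} of \autoref{thm: BBF prerequi}. When \(Y,Z\) are velcrot but \(X\) is velcrot to neither, the second part of \ref{II} gives \(d^\pi_X(Y,Z)\simt 0=d_X(Y,Z)\).

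I expect the main obstacle to be the degenerate situations. The first is when \(\alpha\) meets \(X\) only inessentially, so that the projection is empty; one must check that the union is still governed by \(\partial Y\), which does meet \(X\) essentially. The second is choosing between \autoref{prop_disjoint_subsurface_proj} and \autoref{thm: bgit} in the disjointness estimate. To keep this uniform, I would first try to always reduce to \autoref{thm: bgit} via the length-one geodesic \(\alpha,\partial Y\), since both curves meet \(X\) essentially whenever \(\pi_X(\alpha)\neq\emptyset\).
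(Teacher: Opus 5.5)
Your proposal is correct and follows the paper's proof: case (2) holds by definition, and in case (1) the paper likewise combines the coarse equality \ref{II} (covering both the pairwise non-velcrot case and the case of velcrot \(Y,Z\)) with \autoref{prop_disjoint_subsurface_proj} applied to the disjoint pairs \(\alpha,\partial Y\) and \(\beta,\partial Z\). Since \(\alpha\cap\partial Y=\emptyset\) always holds, \autoref{prop_disjoint_subsurface_proj} applies with \(p=0\), so the fallback to \autoref{thm: bgit} is not needed, though it is also valid.
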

\begin{proof}
If none of \(Y,Z\) is velcrot to \(X\), then together with coarse equality~\ref{II}, we have
\[d_X(y,z)=d_X(Y,Z)\simt d^\pi_X(Y,Z).\]
But, as \(\alpha\cap \partial Y=\emptyset\) and \(\beta\cap \partial Z=\emptyset\), by \autoref{prop_disjoint_subsurface_proj}, we can conclude that
\[d_X(y,z)\simt d^\pi_X(Y,Z)\simt d^\pi_X(\{\alpha\}\cup\partial Y,\{\beta\}\cup\partial Z).\]
If one of \(Y,Z\) is velcrot to \(X\), then \(d_X(y,z)=d^\pi_X(\{\alpha\}\cup\partial Y,\{\beta\}\cup\partial Z)\).
\end{proof}

The following gives a coarse triangle inequality for the blown-up fine projection complex setting:

\begin{lemma}
For any \(x,y,z\in \CY\) and any \(Y\in \Y\), we have
\begin{align}
d_Y(x,y)+d_Y(y,z)\st d_Y(x,z).\label{eq: coarse triangle ineq}
\end{align}
\end{lemma}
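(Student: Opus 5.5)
The plan is to reduce everything to \autoref{lem: coarse eq for curves}, which replaces each modified distance by a projection diameter. Write \(x=(X,\gamma)\), \(y=(Y',\alpha)\) and \(z=(Z,\beta)\), and set
\[A_x=\{\gamma\}\cup\partial X,\qquad A_y=\{\alpha\}\cup\partial Y',\qquad A_z=\{\beta\}\cup\partial Z.\]
By \autoref{lem: coarse eq for curves}, each of \(d_Y(x,y)\), \(d_Y(y,z)\) and \(d_Y(x,z)\) is coarsely equal (\(\simt\)) to \(d^\pi_Y(A_\bullet,A_\bullet)\) for the corresponding pair. The inequality \eqref{eq: coarse triangle ineq} then reduces to a coarse triangle inequality for the diameters
\[d^\pi_Y(A_x,A_y)+d^\pi_Y(A_y,A_z)\st d^\pi_Y(A_x,A_z),\]
with an additive error depending only on \(\theta\).

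For that inequality I use the fact that each \(d^\pi_Y(A,B)\) is the diameter of \(\pi_Y(A)\cup\pi_Y(B)\) in \(\C^\dagger(Y)\). The union \(\pi_Y(A_x)\cup\pi_Y(A_z)\) lies in the union of the two sets \(\pi_Y(A_x)\cup\pi_Y(A_y)\) and \(\pi_Y(A_y)\cup\pi_Y(A_z)\), which share the nonempty set \(\pi_Y(A_y)\). The diameter of a union of two sets with a common point is at most the sum of their diameters. This gives the inequality exactly, with no error, provided \(\pi_Y(A_y)\neq\emptyset\).

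The main obstacle is degeneracy: some elements of \(A_\bullet\) may fail to intersect \(Y\) essentially, and this can make the projection diameter \(\infty\) or make \(\pi_Y(A_y)\) empty. In case (1), where none of \(x,y,z\) lies over a surface velcrot to \(Y\), I would not pass through diameters at all. Instead I would use \ref{IV} of \autoref{thm: BBF prerequi} directly, since \(d_Y(\cdot,\cdot)=d_Y(X,Y')\) and so on. This requires \(Y\) not velcrot to \(Y'\), which holds in this case.

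In case (2), some curve lives in a surface \(X'\) velcrot to \(Y\). The convention in the remark before \autoref{lem: coarse eq for curves} applies here: the boundary \(\partial Y'\) of a non-velcrot surface intersects \(Y\) essentially, so \(\pi_Y(A_y)\supset\pi_Y(\partial Y')\neq\emptyset\). If instead \(Y'\) is velcrot to \(Y\), then \(\alpha\) lies in \(Y'\), which is homotopic to \(Y\). I would replace \(\alpha\) by a curve in the common witness subsurface from \autoref{prop: witness}, at bounded cost by \autoref{prop_disjoint_subsurface_proj}; that curve projects nontrivially to \(Y\). Terms that are infinite only make the left-hand side larger, so they are harmless.

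Mixed cases, where for example \(x\) is velcrot-based and \(y\) is not, are handled by the same diameter argument, using \autoref{lem: coarse eq for curves} to pass between \(d_Y(Y',Z)\) and the diameter version up to \(\simt\). The bounded-diameter facts \autoref{prop: bdd diam} and \autoref{prop_disjoint_subsurface_proj} (constant \(12\)) absorb the remaining errors.
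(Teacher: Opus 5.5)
Your proposal is correct and is essentially the paper's argument. The paper's proof simply cites \ref{IV} together with \autoref{lem: coarse eq for curves}, and you carry out the same reduction, spelling out the diameter-of-union triangle inequality (including the non-emptiness of \(\pi_Y(A_y)\)) that is needed when one of the vertices lies over a surface velcrot to \(Y\). One small point: your remark about infinite terms is unnecessary rather than wrong. No term here is actually infinite, since \(d^\pi_Y(A,B)\) for finite collections is the diameter of a finite union of bounded sets, and in case (1) \((X,Z)\in\HH(X,Z)\) makes the modified distance finite.
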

\begin{proof}
It follows from coarse triangle inequality \ref{IV} and \autoref{lem: coarse eq for curves}.
\end{proof}

As we have defined the modified projection distance for the vertices in \(\CY\), we may also define \(\Y_K(x,z)\), for \(K>0\) and \(x,z\in \CY\), by the collection of all \(Y\in \Y\) such that \(d_Y(x,z)>K\). Moreover, for \(x\in \CY\) and \(Z\in \Y\), we can define \(\Y_K(x,Z)\) in a similar way. We remark that for \(x\in\iota_X(\C^\dagger(X))\) and \(z\in \iota_Z(\C^\dagger(Z))\), the set \(\Y_K(x,z)\) might also contain \(X\) and \(Z\).

\begin{lemma}\label{lem: C dist control}
Let \(X_0,X_1\) be two non-velcrot vertices in \(\PY\) such that \(\dP(X_0,X_1)=1\). Assume that \(x_0\in \iota_{X_0}(\pi_{X_0}(X_1))\subset\iota_{X_0}(\C^\dagger(X_0)\) and \(x_1\in \iota_{X_1}(\pi_{X_1}(X_0))\subset\iota_{X_1}(\C^\dagger(X_1)\). Let \(W\in \Y\) be also a vertex in \(\PY\) and \(w\in\iota_W(\C^\dagger(W))\) such that \(d_{\CY}(x_i,w)\geq 2L\) for \(i=0,1\). Then either
\[d_W(x_0,x_1)\simt 0\]
or for \(i=0,1\),
\[d_W(x_i,w)\st L\,.\]
If \(X_0,X_1\) are velcrot, \(x_0=(X_0,\alpha)\), \(x_1=(X_1,\alpha)\) for some \(\alpha\in \C^\dagger(X_0)\cap\C^\dagger(X_1)\), and let \(w,W\) be as above, then  \(d_W(x_0,x_1)\simt 0\).
\end{lemma}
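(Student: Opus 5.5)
The plan is to imitate the proof of \autoref{lem: PC dist control} (the Bestvina--Bromberg--Fujiwara analogue of \cite[Lemma~3.10]{bestvina2015constructing}), translating statements about \(\dP\) into statements about \(d_{\CY}\) and using \autoref{lem: coarse eq for curves} to replace the modified distances between points of \(\CY\) by projection distances of boundaries. Write \(x_0=(X_0,\alpha_0)\), \(x_1=(X_1,\alpha_1)\) and \(w=(W,\omega)\). By hypothesis \(\alpha_0\in\pi_{X_0}(X_1)\) and \(\alpha_1\in\pi_{X_1}(X_0)\), so \(x_0\) and \(x_1\) are joined by an edge of type \ref{L3} and \(d_{\CY}(x_0,x_1)=L\).

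First case: \(W\) is velcrot to neither \(X_0\) nor \(X_1\). Then \(d_W(x_0,x_1)=d_W(X_0,X_1)\), and because \(\dP(X_0,X_1)=1\) we have \(\Y_K(X_0,X_1)=\emptyset\), hence \(d_W(X_0,X_1)\le K<L\). This bound alone is too weak, so we use the dichotomy. Suppose \(d_W(X_0,X_1)>\Theta\). Then \(W\in\Y_\Theta(X_0,X_1)\), and \(W\) should lie coarsely between \(X_0\) and \(X_1\). Apply the coarse triangle inequality \eqref{eq: coarse triangle ineq} together with \ref{V}. Since \(d_W(X_0,X_1)>\Theta\), the triple inequality gives \(\min\{d_W(X_0,w),d_W(w,X_1)\}\) only when combined with \(w\)'s position, so we argue as follows: \ref{V} applied to \(X_0,W,X_1\) yields \(d_{X_0}(W,X_1)\simt0\) and \(d_{X_1}(X_0,W)\simt0\). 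In this case we aim to prove directly that \(d_W(x_i,w)\st L\). The idea is that \(d_{\CY}(x_i,w)\ge 2L\) forces the projection of \(w\) to \(W\) to be far from \(\pi_W(X_i)\). If instead \(d_W(x_i,w)\) were much smaller than \(L\), we could build a short path from \(x_i\) to \(w\): go by an \ref{L3} edge from \(x_i\) to \((W,\beta)\) with \(\beta\in\pi_W(X_i)\), which is allowed provided \(\dP(X_i,W)=1\), and then travel inside \(\C^\dagger(W)\) to \(\omega\), at cost about \(d^\pi_W(X_i,\omega)\) by \autoref{prop: fine curve graph bi-lip}. The total would be below \(2L\), a contradiction. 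Adjacency \(\dP(X_i,W)=1\) should come from \ref{VII} and \ref{VIII}. Any \(V\in\Y_K(X_i,W)\) satisfies \(d_V(X_0,X_1)\ge d_V(X_i,W)-\Theta\) by monotonicity, which contradicts \(\Y_K(X_0,X_1)=\emptyset\) once \(K\) is large relative to \(\Theta\). If \(d_W(X_0,X_1)\le\Theta\), we are in the first alternative.

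Second case: \(W\) is velcrot to, say, \(X_0\). Here \(d_W(x_0,x_1)=d^\pi_W(\{\alpha_0\}\cup\partial X_0,\{\alpha_1\}\cup\partial X_1)\). The curve \(\alpha_0\) is disjoint from an arc of \(\partial X_1\cap X_0\), and \(\alpha_1\) is disjoint from \(\partial X_0\). By \autoref{prop_disjoint_subsurface_proj} and \autoref{cor: velcrot same base}, all four sets therefore project to \(W\) within a uniformly bounded set near \(\pi_W(X_1)\). However, \(\partial X_0\) itself projects to \(W\) coarsely like \(\pi_W(X_0)\), which is bounded by \autoref{lem: velcrot small proj}. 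This gives \(d_W(x_0,x_1)\simt0\). If the boundedness fails because of essential-intersection issues, we instead argue that \(d_W(x_i,w)\st L\) by the same short-path contradiction, now using \ref{L2} edges between \(W\) and \(X_0\).

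The velcrot clause is the simplest case: \(x_0\) and \(x_1\) carry the same curve \(\alpha\). The two projection sets \(\{\alpha\}\cup\partial X_0\) and \(\{\alpha\}\cup\partial X_1\) share \(\alpha\), and \(\partial X_0\) and \(\partial X_1\) project coarsely to the same set by \autoref{lem: velcrot small proj}, or, when \(W\) is velcrot to them, by \autoref{cor: velcrot same base}. So \(d_W(x_0,x_1)\simt0\) follows from \autoref{prop_disjoint_subsurface_proj} and the triangle inequality. The main obstacle will be the short-path construction in the first case. That step requires showing \(\dP(X_i,W)=1\) under \(d_W(X_0,X_1)>\Theta\), and it requires controlling the \(\C^\dagger(W)\) length by \(d_W(x_i,w)\) via \autoref{lem: coarse eq for curves}. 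I would try monotonicity \ref{VII} first.
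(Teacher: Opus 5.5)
Your overall strategy is the paper's. You split according to whether $W$ is velcrot to $X_0$ or $X_1$. In the pairwise non-velcrot case you use the inequality on triples \ref{V} to make $d_{X_i}(X_{1-i},W)$ small, then build a short path $x_i\rightsquigarrow w$ through an \ref{L3} edge to contradict $d_{\CY}(x_i,w)\geq 2L$. Your organisation of that case differs slightly: you split on $d_W(X_0,X_1)>\Theta$ and deduce $\dP(X_i,W)=1$ from monotonicity \ref{VII}. The paper instead first disposes of $\dP(X_i,W)\geq 2$ via \autoref{rem: non-velcrot PC dist control}, then splits on $d_{X_i}(X_{1-i},W)\geq\Theta$. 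The two are contrapositives of each other, and yours is if anything more direct. However, several steps do not work as written.

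\textbf{The missing leg in the short path.} In the main case there is no \ref{L3} edge from $x_i=(X_i,\alpha_i)$ to $\iota_W(\C^\dagger(W))$ unless $\alpha_i\in\pi_{X_i}(W)$, and you only know $\alpha_i\in\pi_{X_i}(X_{1-i})$. You need a first leg inside $\iota_{X_i}(\C^\dagger(X_i))$ from $\alpha_i$ to $\pi_{X_i}(W)$. Its length is at most about $d^\pi_{X_i}(X_{1-i},W)$, which is $\pt d_{X_i}(X_{1-i},W)\simt 0$ by \ref{II}. This is exactly where the consequence of \ref{V} that you derived (and then never used) enters, and it is what the paper does.

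\textbf{The adjacency step.} As you phrase it, it fails. From $d_V(X_0,X_1)\geq d_V(X_i,W)-\Theta>K-\Theta$ you get no contradiction with $\Y_K(X_0,X_1)=\emptyset$, however large $K$ is. Use \ref{VII} as stated, with no loss: $d_V(X_i,W)\leq d_V(X_0,X_1)$ for $V$ not velcrot to $X_0,X_1$. If $V\in\Y_K(X_i,W)$ happens to be velcrot to $X_{1-i}$, use \ref{III} instead to get $d_V(X_i,W)\simt d_{X_{1-i}}(X_i,W)\simt 0$.

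\textbf{Case 2 and the velcrot clause.} Here your justifications misuse the lemmas, and the hedge ``if the boundedness fails'' should not be needed.
\begin{itemize}
\item When $W$ is velcrot to $X_0$, \autoref{lem:velcrot not intersec} gives $\pi_W(\partial X_0)=\emptyset$, so $\partial X_0$ simply contributes nothing. \autoref{lem: velcrot small proj} concerns two velcrot surfaces projected to a third and does not apply here. Mere boundedness of $\pi_W(X_0)$ would not suffice anyway; you need it close to the other projections.
\item The curve $\alpha_1\in\pi_{X_1}(X_0)$ is controlled because it lies in $X_1$ and is disjoint from $\partial X_1$. It is not disjoint from $\partial X_0$ in general, only from one arc of $\partial X_0\cap X_1$.
\item Combining this with $\alpha_0$ being disjoint from an essential arc of $\partial X_1$ in $X_0$, \autoref{prop_disjoint_subsurface_proj} gives $d_W(x_0,x_1)\leq 24$ unconditionally. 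So in this case you should commit to the first alternative; no detour through \ref{L2} edges is needed.
\item In the velcrot clause, \autoref{cor: velcrot same base} is inapplicable, since $\partial X_1$ does not meet $X_0$ essentially. Instead, if $W$ is velcrot to neither $X_j$, then $d_W(x_0,x_1)=d_W(X_0,X_1)=0$ by the definition of the modified distance.
\item Otherwise, every nonempty projection in $\{\alpha\}\cup\partial X_0$ and $\{\alpha\}\cup\partial X_1$ comes either from $\alpha$ or from a boundary $\partial X_j$ disjoint from $\alpha$, because $\alpha$ lies in the interior of both surfaces. Then \autoref{prop_disjoint_subsurface_proj} bounds the diameter.
\end{itemize}
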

\begin{proof}
Let us first prove the last part of the statement. Let \(X_0,X_1\) be velcrot and \(x_0=(X_0,\alpha)\), \(x_1=(X_1,\alpha')\) for some \(\alpha'\in \pi_{X_1}(\alpha)\). If \(W\) is velcrot to either \(X_0\) or \(X_1\), then either \(\alpha\) or \(\alpha'\) also intersects \(W\) essentially, which in turn by \autoref{prop_disjoint_subsurface_proj} implies that
\[d_W(x_0,x_1)=d^\pi_W(\{\alpha\}\cup \partial X_0,\{\alpha'\}\cup\partial X_1)\leq M\simt 0\,,\]
since we also have \(\alpha\cap\alpha'=\alpha\cap \partial X_0=\alpha'\cap \partial X_1=\emptyset\). Otherwise, \(W\) is not velcrot to \(X_0\) nor \(X_1\). Then \(\partial X_0\) and \(\partial X_1\) both intersect \(W\) essentially, and by the coarse equality \ref{II} and \autoref{lem: velcrot small proj}, we can conclude
\[d_W(x_0,x_1)=d_W(X_0,X_1)\leq d^\pi_W(X_0,X_1)\leq M\simt 0.\]

Now, let us assume that \(X_0,X_1\) are not velcrot. If \(W\) is velcrot to either \(X_0\). Suppose that \(x_0=(X_0,\alpha)\) and \(x_1=(X_1,\beta)\). Note that \(\alpha\in \pi_{X_0}(X_1)\) intersecting \(W\) essentially and, in particular, \(\alpha\cap \partial X_1=\emptyset\). As \(\beta\in\C^\dagger(X_1)\), we also have \(\beta\cap \partial X_1=\emptyset\). Then by \autoref{prop_disjoint_subsurface_proj} and the triangle inequality
\[d_W(x_0,x_1)=d^\pi_W(\alpha,\{\beta\}\cup\partial X_1)\leq d^\pi_W(\alpha,\partial X_1)+d^\pi_W(\beta,\partial X_1)\leq 24\simt 0.\]
The case where \(W\) is velcrot to \(X_1\) can be shown similarly.

Finally, if \(X_0,X_1,W\) are pairwise non-velcrot, then the proof is the same as in \cite[Lemma~4.5]{bestvina2015constructing}. Nevertheless, we provide the full proof here for the convenience of the reader.

If \(\dP(X_0,W)\geq 2\) or \(\dP(X_1,W)\geq 2\), then by \autoref{rem: non-velcrot PC dist control}, we can see that \(d_W(x_0,x_1)=d_W^\pi(X_0,X_1)\leq d_W(X_0,X_1)\simt 0\).

Now, it remains the case where \(\dP(X_0,W)=\dP(X_1,W)=1\). We observe that if \(d_{X_0}(X_1,W)\geq \Theta\) for \(\Theta>0\) from \autoref{thm: BBF prerequi}, then by the inequality on triples~\ref{V}, we get \(d_W(X_0,X_1)\simt 0\). The same estimate holds when \(d_{X_1}(X_0,W)\geq \Theta\).

So we may assume in the following that \(d_{X_0}(X_1,W),d_{X_1}(X_0,W)<\Theta\simt 0\). Suppose that \(x_0=(X_0,\alpha)\), \(x_1=(X_1,\beta)\) and \(w=(W,\gamma)\). Consider a path made up of a path in \(\C^\dagger(X_0)\) connecting \(\alpha\) to \(\pi_{X_0}(\gamma)\), an edge from \(\pi_{X_0}(W)\) to \(\pi_W(X_0)\), and a path in \(\C^\dagger(W)\) connecting \(\pi_W(X_0)\) to \(\gamma\). This would yield the inequality
\[d_{\CY}(x_0,w)\leq d_{X_0}^\pi (\alpha,\gamma)+L+d^\pi_W(\alpha,\gamma).\]
Since \(\alpha\in \pi_{X_0}(X_1)\), we have \(d_{X_0}^\pi (\alpha,\gamma)\leq d^\pi_{X_0}(X_1,\gamma)+12\) by \autoref{prop: bdd diam}. By \autoref{lem: coarse eq for curves}, we can conclude
\[2L\leq d_{\CY}(x_0,w)\pt d_{X_0}(X_1,w)+L+d_W(x_0,w)\simt L+d_W(x_0,w),\]
which gives the estimate \(d_W(x_0,w)\st L\). The same bound also holds for \(d_W(x_1,w)\).
\end{proof}

\begin{lemma}\label{lem: fork guarding}
For \(K>\) sufficiently large the following holds. Let \(x_0\) and \(x_1\)
be adjacent vertices in \(\CY\) and let \(Y\) be a vertex in \(\PY\) such that \(d_{\CY}\big(x_i,\iota_Y(\C^\dagger(Y))\big)\geq 4L\). If \(W\) is a guard for \(Y\) with \(W \in \Y_{K/2}(x_0, Y )\) and \(W \notin \Y_{K/2}(x_1, Y )\), then there exists a guard \(W '\) for \(Y\) with \(W' \in \Y_{K/2}(x_1, Y )\) and \(W \in \Y_{\Theta}(W', Y )\).
\end{lemma}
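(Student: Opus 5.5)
The plan is to transplant the proof of \autoref{lem: barrier induction} into \(\CY\). Where that proof used \autoref{lem: PC dist control}, we will use \autoref{lem: C dist control}, which plays the same role here as \cite[Lemma~4.6]{bestvina2015constructing} does for its quasi-tree.

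\textbf{Step 1: transfer the hypothesis on \(W\) from \(x_0\) to \(x_1\).} Write \(x_i=(X_i,\alpha_i)\). Since \(W\) is a guard for \(Y\), \autoref{rem: guard adjacent} gives \(\Y_K(W,Y)=\emptyset\), so \(\dP(W,Y)=1\). Every \(\iota_W(\gamma)\) with \(\gamma\in\pi_W(Y)\) is therefore joined by an \ref{L3}-edge of length \(L\) to \(\iota_Y(\C^\dagger(Y))\). Combined with \(d_{\CY}(x_i,\iota_Y(\C^\dagger(Y)))\geq 4L\), this gives a point \(w\in\iota_W(\pi_W(Y))\) with \(d_{\CY}(x_i,w)\geq 3L\geq 2L\).

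We split according to the type of the edge \(x_0x_1\).
\begin{itemize}
\item For an \ref{L1}- or \ref{L2}-edge, \(d_W(x_0,x_1)\simt 0\) follows directly from \autoref{prop_disjoint_subsurface_proj} and the last part of \autoref{lem: C dist control}.
\item For an \ref{L3}-edge, \autoref{lem: C dist control} gives a dichotomy. Either \(d_W(x_0,x_1)\simt 0\), or \(d_W(x_i,w)\st L\) for \(i=0,1\).
\end{itemize}
The second alternative already gives \(d_W(x_1,Y)\st L-O(1)>K/2\) by \eqref{eq: coarse triangle ineq}, because \(w\) projects close to \(\pi_W(Y)\) and \(L>K\). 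This contradicts \(W\notin\Y_{K/2}(x_1,Y)\). Hence in every case \(d_W(x_0,x_1)\simt 0\). By the coarse triangle inequality we then get
\[d_W(x_1,Y)\simt d_W(x_0,Y)>K/2,\]
so \(W\in\Y_\Theta(x_1,Y)\) for \(K\) large.

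\textbf{Step 2: find \(W'\).} Consider the finitely many velcrot classes of \(\Y_{K/2}(x_1,Y)\) given by \ref{VI}, ordered via \ref{VIII} and \autoref{rem: order}, with the order taken in \(\Y_\Theta(X_1,Y)\) after replacing \(x_1\) by \(X_1\) using \autoref{lem: coarse eq for curves}. Every element of \(\Y_{K/2}(x_1,Y)\) is not velcrot to \(W\), since \(W\notin\Y_{K/2}(x_1,Y)\) and \ref{III} holds.

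We claim some element of this set lies below \(W\). If instead every element lay above \(W\), then \(W\) would act as a guard from \(x_1\) in the sense of \autoref{lem: K/2 empty means guard}. Then \(\Y_K(x_1,W)\) would be coarsely empty by the argument of \autoref{lem: maximal is guard}, and a chain of at most \(\sim 2L\) through an \ref{L3}-edge would connect \(x_1\) to \(\iota_Y(\C^\dagger(Y))\). This contradicts the \(4L\) hypothesis. Choose \(W'<W\) in \(\Y_{K/2}(x_1,Y)\) maximal among such elements, with nothing of \(\Y_\Theta(x_1,Y)\) strictly between \(W'\) and \(W\).

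From \ref{VIII} we get \(d_W(W',Y)\st d_W(x_1,Y)\st K/2\), hence \(W\in\Y_\Theta(W',Y)\).

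\textbf{Step 3: \(W'\) is a guard for \(Y\).} This repeats the last paragraph of \autoref{lem: barrier induction} verbatim, since that argument only involves vertices of \(\PY\), monotonicity \ref{VII}, and the guard property of \(W\). Take \(X\) with \(W'\in\Y_\Theta(X,Y)\) and \(V\in\Y_K(X,Y)\). Then \(V\leq W\), and if \(V>W'\), then \autoref{lem: K/2 empty means guard} together with \ref{VII} would place \(V\) strictly between \(W'\) and \(W\) in \(\Y_\Theta(x_1,Y)\), which is impossible.

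\textbf{Main obstacle.} The delicate step is Step 1, in particular the \ref{L3} case. There one must check that the \(4L\) distance from \(x_i\) to the blown-up copy of \(Y\) really forces \(d_{\CY}(x_i,w)\geq 2L\) for a suitable \(w\in\iota_W(\C^\dagger(W))\) that projects near \(\pi_W(Y)\). One must also handle the situation where \(W\) is velcrot to \(X_0\) or \(X_1\), which is where the modified definition of \(d_W\) on curves (\autoref{lem: coarse eq for curves}) is needed.
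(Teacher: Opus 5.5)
Your proposal is correct and is essentially the paper's proof. The paper also applies \autoref{lem: C dist control} to a point \(w\in\iota_W(\pi_W(Y))\), which is at distance at least \(3L\) from both \(x_i\) because \(\dP(W,Y)=1\); it rules out the second alternative using \(W\notin\Y_{K/2}(x_1,Y)\), concludes \(W\in\Y_\Theta(x_1,Y)\), and then reruns the proof of \autoref{lem: barrier induction}.

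The differences are minor. The paper splits into \(X_0=X_1\) (shown to be vacuous) versus \(X_0\neq X_1\), rather than by edge type. It also gets an element below \(W\) directly: \(\Y_K(x_1,Y)\neq\emptyset\), since otherwise \(x_1\) would be within \(K+L<4L\) of \(\iota_Y(\C^\dagger(Y))\), and the guard property of \(W\) then puts every element of \(\Y_K(x_1,Y)\) below \(W\). Your contradiction via a chain through \(\iota_W(\C^\dagger(W))\) also works: it has length about \(K+2L<4L\), with the \(\C^\dagger(W)\)-segment bounded by \(d_W(x_1,Y)\leq K/2\).

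Working in \(\Y_K\) rather than \(\Y_{K/2}\) also makes non-velcrotness to \(W\) a genuine consequence of \ref{III}. In your version it holds only up to \(\Theta\), which is harmless, since \(W'<W\) already forces it.
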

\begin{proof}
Suppose that \(X_0,X_1\in \PY\) with \(x_i=(X_i,\alpha_i)\) and \(\alpha_i\in\C^\dagger(X_i)\) for \(i=0,1\). We start with the remark that \(Y,W\) are not velcrot, nor are \(Y,X_i\) for \(i=0,1\). The latter is a consequence of the assumption \(d_{\CY}\big(x_i,\iota_Y(\C^\dagger(Y))\big)\geq 4L\).

If \(X_0=X_1\) while \(W\) is not velcrot to them, then \(d_W(x_0,Y)=d_W(x_1,Y)\) and the lemma is vacuous. Otherwise, if \(W\) is velcrot (or equal) to \(X_0=X_1\), then we have
\begin{align*}
    4L & \leq d_{\C^\dagger(\Y)} \big(x_i,\iota_Y(\C^\dagger(Y))\big) \\
       & \leq d_{\C^\dagger(\Y)} \big(x_i,\pi_Y(W)\big)\\
       & \leq d_{\C^\dagger(\Y)}(x_i,\pi_W(Y)) +L\\
       & \leq d^\pi_W(\alpha_i, Y)+2L,
\end{align*}
which implies that \(d_W(x_i,Y)=d^\pi_W(\alpha_i, Y)\geq 2L\), resulting \(d_W(x_1,Y)>K/2\) as \(L\simt K\). So in this case, the lemma is also vacuous.

Now assume that \(X_0\neq X_1\). Noticing that
\[d_{\CY}\big(x_i,\iota_W(\pi_W(Y)\big)\geq d_{\CY}\big(x_i,\iota_Y(\C^\dagger(Y))\big)-L\geq 3L,\]
we can apply \autoref{lem: C dist control} to \(w\in \iota_W(\pi_W(Y))\), resulting either \(d_W(x_0,x_1)\simt 0\) or \(d_W(x_i,w)\st L\) for \(i=0,1\). If \(d_W(x_1,w)\st L> K\), then \(d_W(x_1,w)>K/2\), contradicting our assumption. Hence, we have \(d_W(x_0,x_1)\simt 0\).

We now claim that \(W\in\Y_{\Theta}(x_1,Y)\). If \(W\) is not velcrot to \(X_1\), then by coarse triangle inequality~\eqref{eq: coarse triangle ineq}, we have
\[d_W(x_1,Y)\simt d_W(x_0,x_1)+d_W(x_1,Y)\st d_W(x_0,Y)>K/2,\]
which forces \(d_W(x_1,Y)>\Theta\) for sufficiently large \(K>0\). If \(W\) is velcrot to \(X_1\), then \(d_W(X_1,Y)=d^\pi_W(\alpha_1,Y)\) and \(d^\pi_W(\alpha_1,x_0)=d^\pi_W(\alpha_1,\{\alpha_0\}\cup \partial X_0)\simt 0\), which again by triangle inequality~\eqref{eq: trian ineq proj dist}, gives
\[d^\pi_W(\alpha_1,Y)\simt d^\pi_W(\alpha_1,Y)+d^\pi_W(\alpha_1,x_0)\geq d^\pi_W(X_0,Y)\simt d_W(X_0,Y)>K/2.\]
This also indicates that \(d_W(x_1,Y)>\Theta\) for sufficiently large \(K>0\).

Now, let us check that \(\Y_K(x_1,Y)\) is not empty. Indeed, otherwise they are adjacent in \(\PY\) and we have
\[d_{\CY}\big(x_1,\iota_Y(\C^\dagger(Y))\big)\leq d^\pi_{X_1}(x_1,Y)+L\simt d_{X_1}(x_1,Y)+L\leq K+L<4L,\]
a contradiction to our assumption. Since \(W\) is a guard for \(Y\), every element in \(\Y_K(x_1,Y)\) should be less than \(W\) in \(\Y_\Theta(x_1,Y)\) after a slightly generalised version of \autoref{lem: K/2 empty means guard}, of which the proof is a direct adaption of the proof of \autoref{lem: K/2 empty means guard}. Therefore, there exist elements of \(\Y_K(x_1,Y)\), not velcrot to \(W\), that are less than \(W\) in \(\Y_\Theta(x_1,Y)\). Now, applying the same arguments in the proof of \autoref{lem: barrier induction} yields the desired result.
\end{proof}

Similar to what we have done in \(\PY\), let us define for a path \(\{x_0,x_1,\dots,x_k\}\) in \(\CY\) a \emph{barrier} \(Y\in \Y\) between this path and \(Z\in\Y\) by an element such that \(Y\in \Y_\Theta(x_i,Z)\) for \(0\leq i\leq k\). Note that it is possible that \(x_i\in\iota_Y(\C^\dagger(Y))\).

Similarly, we have a slightly generalised version of \autoref{lem: barrier implies small proj}:
\begin{lemma}
If there is a barrier \(Y\in\Y\) between a path \(\{x_0, \dots , x_k\}\) in \(\CY\) and \(Z\in \Y\), then \(d_Z(x_i,x_j)\leq \Theta\) for all \(i,j\).
\end{lemma}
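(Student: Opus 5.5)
The argument mirrors \autoref{lem: barrier implies small proj}. Apply the fine Behrstock inequality to each triple consisting of $Z$, the barrier $Y$, and the vertex $x_i$. Then glue the resulting bounds together with the coarse triangle inequality \eqref{eq: coarse triangle ineq} for the modified projection distances on $\CY$.

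Fix $i$ and write $x_i=(X_i,\alpha_i)$ with $\alpha_i\in\C^\dagger(X_i)$. Since $Y\in\Y_\Theta(x_i,Z)$, we have $d_Y(x_i,Z)>\Theta$. In particular $Y$ is not velcrot to $Z$, since otherwise the modified projection distance would essentially vanish by \ref{II} and \autoref{lem: velcrot small proj}. By \autoref{lem: coarse eq for curves},
\[
d^\pi_Y(\{\alpha_i\}\cup\partial X_i,\partial Z)\simt d_Y(x_i,Z)>\Theta .
\]
Hence some curve of $\partial X_i$ or $\alpha_i$ projects to $Y$ far from $\pi_Y(Z)$. The goal is then to show $d_Z(x_i,Y)\pt 0$, i.e.\ that the projections of $x_i$ and of $Y$ to $Z$ are uniformly close. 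I split into cases according to how $X_i$ sits relative to $Y$ and $Z$.

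\emph{Case 1: $X_i$ is velcrot to neither $Y$ nor $Z$.} Then $d_Y(x_i,Z)=d_Y(X_i,Z)$, and \ref{II} gives $d^\pi_Y(X_i,Z)\st d_Y(X_i,Z)>\Theta>M$. The fine Behrstock inequality (\autoref{thm: berhstock}) yields $d^\pi_Z(X_i,Y)<M$, so $d_Z(x_i,Y)=d_Z(X_i,Y)\le d^\pi_Z(X_i,Y)<M$.

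\emph{Case 2: $X_i$ is velcrot to $Y$.} Replacing $\partial X_i$ by $\partial Y$ costs only bounded error: use \autoref{lem: velcrot small proj} for the projection to $Z$, and the fact that $\alpha_i$ is disjoint from $\partial X_i$. This reduces to controlling $d^\pi_Z(\alpha_i,Y)$. Since $\alpha_i$ projects far from $\pi_Y(Z)$ in $\C^\dagger(Y)$, I would run the same Leininger-type argument as in \autoref{thm: berhstock}, with $\alpha_i$ (together with $\partial X_i$) in place of $\partial X_1$. Many essential intersections inside $Y$ between $\alpha_i$ and $\partial Z$ force an essential arc of $\partial Y\cap Z$ disjoint from $\alpha_i$. \autoref{prop_disjoint_subsurface_proj} then bounds $d^\pi_Z(\alpha_i,Y)$.

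\emph{Case 3: $X_i$ is velcrot to $Z$.} Then $d_Y(x_i,Z)$ is comparable to $d_Y(X_i,Z)$, which is small by \ref{II} and \autoref{lem: velcrot small proj}. This contradicts the barrier hypothesis for $\Theta$ large, so this case cannot occur.

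Finally, for any $i,j$ the coarse triangle inequality \eqref{eq: coarse triangle ineq} gives
\[
d_Z(x_i,x_j)\pt d_Z(x_i,Y)+d_Z(Y,x_j)\pt 0 .
\]
Taking $\Theta$ to dominate three times all the additive constants involved, as was done at the end of \autoref{thm: BBF prerequi}, yields $d_Z(x_i,x_j)\le\Theta$.

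The main obstacle is Case 2. There the relevant projection involves the curve $\alpha_i$ rather than a subsurface boundary, so \autoref{thm: berhstock} does not apply verbatim, and one must check that the bigon-removal surgery and the counting of intersections still go through with $\alpha_i$. If that adaptation fails, the fallback is to replace $\alpha_i$ by an element of $\pi_Y(\alpha_i)$, i.e.\ a curve of $Y$ disjoint from an essential arc of $\alpha_i$, and to invoke \autoref{thm: bgit} on a geodesic in $\C^\dagger(S)$ from $\alpha_i$ to $\partial Z$ that stays disjoint from $Y$'s boundary region.
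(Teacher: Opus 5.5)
You have the paper's overall skeleton: Behrstock's inequality on the triple \(X_i,Y,Z\), then the coarse triangle inequality through the barrier. Case 1 is fine. The gap is in where you let the curve \(\alpha_i\) matter.

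It matters in exactly one configuration: \(X_i\) velcrot to \emph{both} \(Y\) and \(Z\). This can occur, because velcrotness is not transitive and only \(Y,Z\) are known to be non-velcrot. Your Case 3 dismisses it by asserting that \(d_Y(x_i,Z)\) is comparable to \(d_Y(X_i,Z)\). That holds (by definition) only when \(X_i\) is not velcrot to \(Y\). When \(X_i\) is velcrot to \(Y\), the second clause of the definition of \(d_Y\) on \(\CY\) applies. Since \(\pi_Y(\partial X_i)=\emptyset\) by \autoref{lem:velcrot not intersec}, \(d_Y(x_i,Z)\) is then really \(d^\pi_Y(\alpha_i,\partial Z)\). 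This is a statement about a curve, and neither \ref{II} nor \autoref{lem: velcrot small proj} controls it. Your Case 2 reaches this configuration only through the Leininger-type ``Behrstock inequality for a curve'', which you leave unverified, so as written this case is not proved.

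The missing step is short, and it is how the paper begins. Since \(X_i\) is velcrot to \(Y\) and to \(Z\), \(\alpha_i\) meets both essentially. Pick \(\gamma\in\pi_Z(\alpha_i)\) running close to \(\alpha_i\), with \(\gamma\cap\alpha_i=\gamma\cap\partial Z=\emptyset\) and \(\gamma\) still meeting \(Y\) essentially. Two applications of \autoref{prop_disjoint_subsurface_proj} give \(d^\pi_Y(\alpha_i,\partial Z)\le d^\pi_Y(\alpha_i,\gamma)+d^\pi_Y(\gamma,\partial Z)\le 2M<\Theta\). This contradicts \(Y\in\Y_\Theta(x_i,Z)\). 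Combined with the easy subcase (\(X_i\) velcrot to \(Z\) but not to \(Y\), where \(d_Y(x_i,Z)=d_Y(X_i,Z)=0\)), this shows that no \(X_i\) is velcrot to \(Z\).

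Once that is known, the obstacle you flag in Case 2 disappears. Since no \(X_i\) is velcrot to \(Z\), the first clause of the definition gives \(d_Z(x_i,x_j)=d_Z(X_i,X_j)\), so \(\alpha_i\) drops out entirely. If \(X_i\) is velcrot to \(Y\), then \(d_Z(X_i,Y)=0\) by the definition of the modified projection distance. Even at the level of \(d^\pi\), the two facts in the first sentence of your Case 2 already finish it. First, \(\alpha_i\cap\partial X_i=\emptyset\) makes \(\diam(\pi_Z(\alpha_i)\cup\pi_Z(\partial X_i))\) uniformly bounded by \autoref{prop_disjoint_subsurface_proj}. Second, \autoref{lem: velcrot small proj} gives \(d^\pi_Z(X_i,Y)<M\). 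So neither a Leininger adaptation nor the bounded-geodesic-image fallback is needed.

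The paper then concludes as you do. It uses \(d_Z(X_j,Y)<\theta\) (by \ref{V}, or \autoref{lem: barrier implies small proj}) when \(X_j\) is not velcrot to \(Y\), and \(d_Z(X_j,Y)=0\) when it is. The coarse triangle inequality through \(Y\) then finishes the proof.
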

\begin{proof}
Let \(x_i=(\alpha_i,X_i)\). We will first see that \(X_i\) is not velcrot to \(Z\). Indeed, suppose for contradiction that \(X_i,Z\) are velcrot. If \(X_i\) is not velcrot to \(Y\), then
\[\Theta<d_Y(x_i,Z)=d_Y(X_i,Z)=0, \]
contradiction. Then \(X_i\) is velcrot to \(Y\) and, therefore, \(\alpha_i\) intersects \(Y\) essentially. Now, let \(\gamma\in \pi_Z(\alpha_i)\) be such that some subarcs of \(\gamma\) is contained in a very small neighbourhood of \(\alpha_i\), forcing that \(\gamma\) also intersects \(Y\) essentially. Note that \(\alpha_i\cap \gamma=\gamma\cap \partial Z=\emptyset\). By \autoref{prop_disjoint_subsurface_proj} and triangle inequality~\eqref{eq: trian ineq proj dist}, we have
\[d_Y(x_i,Z)=d^\pi_Y(\alpha,Z)\leq d^\pi_Y(\alpha,\gamma)+d^\pi_Y(\gamma,Z)\leq 2M<\theta<\Theta,\]
also a contradiction. 

Now, as \(X_i\) is not velcrot to \(Z\), we have \(d_Z(x_i,x_j)=d_Z(X_i,X_j)\) for all \(i,j\). So we need only to show that \(d_Z(X_i,X_j)\leq \Theta\). Note that \(\{X_0,\dots,X_k\}\) also yield a path in \(\PY\). If neither \(X_i\) nor \(X_j\) is velcrot to \(Y\), then the desired result follows from \autoref{lem: barrier implies small proj}. If exactly one of the two is velcrot to \(Y\), say \(X_i\), then \(d_Z(X_i,Y)=0\), while \(d_Z(X_j,Y)<\theta\) by inequality on triples~\ref{V}. Hence, as \(Z,Y\) are not velcrot, the coarse triangle inequality~\eqref{eq: coarse triangle ineq} yields
\[d_Z(X_i,X_j)\pt d_Z(X_i,Y)+d_Z(X_j,Y)<\theta,\]
giving \(d_Z(X_i,X_j)<\Theta\). Similarly, if both \(X_i,X_j\) are velcrot to \(Y\), then we also have
\[d_Z(X_i,X_j)\pt d_Z(X_i,Y)+d_Z(X_j,Y)=0,\]
and, thus, \(d_Z(X_i,X_j)<\Theta\).
\end{proof}

Now using the exact arguments of the proof of \autoref{prop: barrier} while replacing the path in \(\PY\) by a path in \(\CY\), \autoref{lem: PC dist control} by \autoref{lem: C dist control} and \autoref{lem: barrier induction} by \autoref{lem: fork guarding}, we can conclude:
\begin{proposition}\label{prop: barrier C}
The following holds if \(K>0\) is sufficiently large. Let \(\{x_0, x_1, \dots , x_k\}\) be a path in \(\CY\) and \(Z\) an element in \(\Y\) such that \(d_{\CY}(x_i,\iota_Z(\C^\dagger(Z)) ) \geq 4L\) for all \(i\). Then there is a barrier \(W\) between the path and \(Z\). In particular, \(d_Z (x_0, x_i) <\Theta\) for all \(i\). \QEDD
\end{proposition}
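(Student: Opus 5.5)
We will follow the inductive scheme in the proof of \autoref{prop: barrier}, now run along the path \(\{x_0,\dots,x_k\}\) in \(\CY\). The aim is to build a sequence of guards \(W_0,\dots,W_k\) for \(Z\) with two properties: \(W_i\in\Y_{K/2}(x_i,Z)\), and for \(j<i\) either \(W_i=W_j\) or \(W_j\in\Y_\Theta(W_i,Z)\). Then \(W\coloneqq W_0\) will serve as the barrier.

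\textbf{Initial guard.} Write \(x_0=(X_0,\alpha_0)\). Since \(d_{\CY}(x_0,\iota_Z(\C^\dagger(Z)))\geq 4L\), the subsurfaces \(X_0\) and \(Z\) are not velcrot. Moreover, \(\Y_K(x_0,Z)\neq\emptyset\): otherwise \(X_0\) and \(Z\) would be adjacent in \(\PY\). Then an \ref{L3} edge, together with a path in \(\C^\dagger(X_0)\) of length \(\simt d_{X_0}(x_0,Z)\leq K\), would bring \(x_0\) within \(K+L<4L\) of \(\iota_Z(\C^\dagger(Z))\). This is the same computation as in the proof of \autoref{lem: fork guarding}.

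Using \ref{VI}, choose pairwise non-velcrot representatives of \(\Y_{K/2}(x_0,Z)\), and order them by \ref{VIII}. The order properties hold for \(x_0\) in place of a vertex of \(\PY\) by \autoref{lem: coarse eq for curves}. Let \(W_0\) be the maximal element. By the argument of \autoref{lem: maximal is guard}, \(W_0\) is a guard for \(Z\).

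\textbf{Induction step.} Suppose \(W_i\) has been chosen. If \(W_i\in\Y_{K/2}(x_{i+1},Z)\), set \(W_{i+1}=W_i\). Otherwise, \autoref{lem: fork guarding} applies, since \(x_i,x_{i+1}\) are adjacent and both lie at distance \(\geq 4L\) from \(\iota_Z(\C^\dagger(Z))\). It produces a guard \(W_{i+1}\in\Y_{K/2}(x_{i+1},Z)\) with \(W_i\in\Y_\Theta(W_{i+1},Z)\).

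As in \autoref{prop: barrier}, we then check that \(W_{i+1}\) is not velcrot to any earlier distinct \(W_j\). If it were, velcrot coarse equality \ref{III} would give \(d_{W_j}(W_i,Z)\simt d_{W_{i+1}}(W_i,Z)\). The left side exceeds \(\Theta\) by the induction hypothesis, while the right side is \(<\theta\) by fine Behrstock and \ref{II}; since \(\Theta\) is chosen large relative to \(\theta\), this is a contradiction. With the new guard pairwise non-velcrot to the earlier ones, monotonicity \ref{VII} upgrades \(W_j\in\Y_\Theta(W_i,Z)\) to \(W_j\in\Y_\Theta(W_{i+1},Z)\) for all \(j<i+1\).

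\textbf{Barrier.} From \(W_0\in\Y_\Theta(W_i,Z)\) and \(W_i\in\Y_{K/2}(x_i,Z)\), monotonicity gives \(d_{W_0}(x_i,Z)\geq d_{W_0}(W_i,Z)>\Theta\). This requires a version of \ref{VII} with \(x_i\in\CY\) as an endpoint, which follows from \autoref{lem: coarse eq for curves} and the coarse triangle inequality \eqref{eq: coarse triangle ineq}. Hence \(W_0\in\Y_\Theta(x_i,Z)\) for every \(i\), so \(W_0\) is a barrier. The final claim \(d_Z(x_0,x_i)<\Theta\) then follows from the generalised barrier lemma proved just above.

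\textbf{Main obstacle.} The delicate step is transporting the \(\PY\)-order and monotonicity statements (\ref{VII}, \ref{VIII}, \autoref{lem: maximal is guard}) to endpoints \(x_i\in\CY\), especially when some \(X_i\) is velcrot to a guard \(W_j\). In that case \(d_{W_j}(x_i,Z)\) is defined via \(d^\pi_{W_j}(\{\alpha_i\}\cup\partial X_i,\partial Z)\) rather than \(d_{W_j}(X_i,Z)\). I would handle this case by case, exactly as in \autoref{lem: fork guarding}. When \(W_j\) is velcrot to \(X_i\), the projection of \(\alpha_i\) to \(W_j\) lies within bounded distance of the corresponding quantity by \autoref{prop_disjoint_subsurface_proj} and \autoref{cor: velcrot same base}. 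This costs only an additive error absorbed into \(\simt\), and is harmless once \(K\gg\Theta\).
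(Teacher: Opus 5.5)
Your proposal is correct and follows the paper's route. The paper proves this statement by rerunning the inductive guard construction of \autoref{prop: barrier} along the path in \(\CY\), with \autoref{lem: fork guarding} (which internally uses \autoref{lem: C dist control}) in place of \autoref{lem: barrier induction}; your extra checks (that \(\Y_K(x_0,Z)\neq\emptyset\), so an initial guard exists, and that monotonicity must be adapted when some \(X_i\) is velcrot to a guard) supply details the paper leaves implicit rather than changing the argument.
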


\subsection{Hyperbolicity} 

In this subsection, we will show that \(\CY\) is Gromov hyperbolic, using Guessing Geodesics Lemma.

Define the map \(\sigma\colon \CY\to \PY\) by \((X,\alpha)\mapsto X\).

\begin{lemma}\label{lem: Y'-Z'-Z}
Let \(y=(Y,\eta)\) be a vertex in \(\CY\) and \(Z\in \Y\) not velcrot to \(Y\in \Y\). Suppose that \(z\) is one nearest point in \(\iota_Z(\C^\dagger(Z))\) to \(y\). Let \(\ell\) be a geodesic connecting \(y\) to \(z\). If the vertex on \(\ell\) next to \(y\) is in \(\iota_{Z'}(\C^\dagger(Z'))\) with \(Z,Z'\) velcrot, then \(d_{\CY}(y,z)=2L\) and \(d_Z(y,z)\simt 0\).
\end{lemma}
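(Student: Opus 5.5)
Let \(y'\) be the vertex on \(\ell\) next to \(y\). Since \(y'=(Z',\beta)\) with \(Z'\) velcrot to \(Z\), while \(Y\) is not velcrot to \(Z\) (and hence, by \autoref{cor: velcrot implies isotopic} and \ref{Y2}, not velcrot to \(Z'\) either), the edge \(y y'\) cannot be of type \ref{L1} or \ref{L2}. So it is an \ref{L3} edge of length \(L\). This forces \(\dP(Y,Z')=1\), \(\eta\in\pi_Y(Z')\) and \(\beta\in\pi_{Z'}(Y)\). The plan has two parts: bound \(d_{\CY}(y,z)\) from above by \(2L\), and from below by \(2L\); then read off \(d_Z(y,z)\simt 0\) from the structure of the resulting geodesic.

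\textbf{Upper bound.} By \autoref{prop: witness} there is \(W\subset Z\cap Z'\) homotopic to both, and \(\partial Y\) intersects \(W\) essentially. I will choose \(\gamma\in\pi_W(\partial Y)\subset \C^\dagger(Z)\cap\C^\dagger(Z')\). Then I want an edge from \((Z',\beta)\) or \((Z',\gamma)\) to \((Z,\gamma)\) of type \ref{L2}, which has length \(L\). Comparing \(\beta\) with \(\gamma\) costs at most a bounded number of \ref{L1} edges, by \autoref{prop: bdd diam}, so a naive path has length \(2L+O(1)\). To get exactly \(2L\) I would instead use the freedom in \(\ell\) being a geodesic to a \emph{nearest} point. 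Since \(y'\) lies on a geodesic from \(y\) to the nearest point \(z\), every remaining step must make progress, and the only progress-making step from \(\iota_{Z'}\) into \(\iota_Z\) is a single \ref{L2} edge. The reason is that \ref{L1} steps inside \(\iota_{Z'}\) followed by an \ref{L2} jump cost more than a direct \ref{L2} jump \((Z',\beta)\to(Z,\beta')\), \(\beta'\in\pi_Z(\beta)\), which always exists. Hence \(d(y',z)=L\), and if \(z\) is the nearest point, \(d(y,z)\le 2L\).

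\textbf{Lower bound.} Any path from \(y\) into \(\iota_Z(\C^\dagger(Z))\) must leave \(\iota_Y\). Since \(Y\) is not velcrot to \(Z\), the first step off \(\iota_Y\) is an edge of length \(L\), and the path must still reach \(Z\)'s fibre. This requires a second edge of length \(L\) unless the first edge lands directly in \(\iota_Z\). That happens only if \(\dP(Y,Z)=1\), and then \(\ell\) could have been replaced by a single \ref{L3} edge of length \(L<2L\). I will need to rule this out using the hypothesis that the geodesic passes through \(Z'\neq Z\). This is the delicate point: I expect the paper's intended reading is that the length is \(2L\) because the next vertex is in a different but velcrot fibre, so the final edge is an \ref{L2} edge. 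In that case \(d_{\CY}(y,z)=d(y,y')+d(y',z)=L+L\) exactly, since \(L>12\) forbids any detour through \ref{L1} edges being shorter.

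\textbf{Projection estimate.} Write \(z=(Z,\beta')\) with \(\beta'\in\pi_Z(\beta)\), so \(\beta'\) equals or is disjoint from \(\beta\), and \(\beta\in\pi_{Z'}(Y)\) is disjoint from an essential arc of \(\partial Y\cap Z'\). Since \(z\in\iota_Z\), \(d_Z(y,z)=d^\pi_Z(\{\eta\}\cup\partial Y,\{\beta'\}\cup\partial Z)\). The terms involving \(\partial Z\) contribute nothing new, since \(\pi_Z\) of a curve in \(Z\) is itself. Using \(\eta\cap\partial Y=\emptyset\), \(\beta'\cap\beta=\emptyset\), and \(\beta\) disjoint from an arc of \(\partial Y\) (which also meets \(Z\) essentially by \autoref{cor: velcrot same base}), \autoref{prop_disjoint_subsurface_proj} and the triangle inequality bound everything by a multiple of \(12\). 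This gives \(d_Z(y,z)\simt 0\). I expect the main obstacle to be the exact equality \(d_{\CY}(y,z)=2L\), rather than \(\le 2L+O(1)\), which hinges on the bookkeeping of which edge types can follow \(y'\).
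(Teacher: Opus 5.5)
\textbf{The gap: you wrongly rule out an \ref{L2} first edge.} You argue that because $Y$ is not velcrot to $Z$ and $Z'$ is velcrot to $Z$, $Y$ is not velcrot to $Z'$, citing \autoref{cor: velcrot implies isotopic} and \ref{Y2}. Those results only say that otherwise $Y$ would be homotopic to $Z$. Homotopic subsurfaces need not be velcrot (\autoref{cor: homo not intersec = velcrot}), and velcrotness is not transitive.

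Here is a counterexample. Let $Z\subset Z'$ be $Z'$ minus an open boundary collar. Let $Y\subset Z'$ be $Z'$ minus a thinner collar and a thin finger that runs from the collar across $Z$ along an essential arc of $Z$ and exits $Z$ again. Both $Y$ and $Z$ are velcrot to $Z'$ by \autoref{lem: annul homo velcrot}. Yet $\partial Y$ meets $Z$ in essential arcs, so $Y$ and $Z$ are not velcrot by \autoref{lem:velcrot not intersec}.

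So the first edge $yy'$ can be an \ref{L2} edge. In that case $\beta$ is a projection of $\eta$, not of $\partial Y$, and $\partial Y$ need not meet $Z'$ essentially at all. Your projection estimate rests on $\beta\in\pi_{Z'}(Y)$, that is, on $\beta$ avoiding an essential arc of $\partial Y\cap Z'$, and this is unavailable here. The paper's proof treats exactly this case first.

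\textbf{The repair is short.} Your length computation is unaffected: $Z'\neq Y$, and both \ref{L2} and \ref{L3} edges have length $L$. For the estimate in the \ref{L2} case, route through $\eta$:
\begin{itemize}
\item $\eta\cap\partial Y=\emptyset$;
\item $\beta$ avoids an essential arc of $\eta\cap Z'$, or equals $\eta$;
\item $\beta'$ is obtained from $\beta$ in the same way;
\item $\eta$ meets $Z$ essentially, because it meets $Z'$ essentially and hence meets the witness $W\subset Z\cap Z'$ from \autoref{prop: witness}.
\end{itemize}
Three applications of \autoref{prop_disjoint_subsurface_proj} then bound $d_Z(y,z)$. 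This is the paper's chain $d^\pi_Z(Y,\eta)+d^\pi_Z(\eta,\alpha)$.

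\textbf{The lower bound can be simplified.} You are right that $Z'\neq Z$ must be read into the statement; if $Z'=Z$ one gets $d_{\CY}(y,z)=L$. But your lower-bound paragraph need not consider other paths or the case $\dP(Y,Z)=1$. Since $\ell$ is a geodesic, its length equals $d_{\CY}(y,z)$. It contains the length-$L$ edge $yy'$ plus at least one further length-$L$ edge, because \ref{L1} edges never change the subsurface and $Z'\neq Z$. Together with your upper bound this gives exactly $2L$. It also shows that the part of $\ell$ from $y'$ to $z$ is a single \ref{L2} edge, which is what justifies writing $z=(Z,\beta')$ with $\beta'$ projected from $\beta$.
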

\begin{proof}
Since \(Z,Z'\) are velcrot, by \autoref{prop: witness}, we can find \(W\subset Z\cap Z'\) isotopic to both \(Z\) and \(Z'\). Let \(z'\coloneqq(Z',\alpha)\) be the vertex on \(\ell\) next to \(y\) is in \(\iota_{Z'}(\C^\dagger(Z'))\). By our assumption, we have
\[d_{\CY}(y,z)\leq d_{\CY}(y,z')+d_{\CY}(z',z)\leq 2L.\]

If \(Y,Z'\) are velcrot and \(\alpha\in \pi_Z(\eta)\), then \(\eta\) also intersects \(W\) essentially, so we may choose \(\alpha\subset W\). In turn, the path \(y\rightsquigarrow z'\rightsquigarrow (Z,\alpha)\) gives a path of length \(2L\) from \(y\) to \(\iota_Z(\C^\dagger(Z))\). This indicates that we can take \(z=(Z,\alpha)\) and that \(d_{\CY}(y,z)=2L\). Moreover, as \(\partial Y\cap \eta=\eta\cap \alpha=\emptyset\) and \(\partial Y\) intersects \(Z\) essentially, we can conclude
\[d_Z(y,z)=d^\pi_Z(Y,\alpha)\leq d^\pi_Z(Y,\eta)+d^\pi_Z(\eta,\alpha)\leq 2M\simt 0,\]
by triangle inequality~\eqref{eq: trian ineq proj dist} and \autoref{prop_disjoint_subsurface_proj}.

If \(Y,Z'\) are not velcrot, then the same result can be concluded via replacing \(\eta\) by \(\partial Y\) and repeating the discussion above.
\end{proof}

\begin{proposition}\label{prop: projection consistent}
Let \(x\) be a vertex in \(\CY\) and \(Z\in \Y\). Suppose that \(z\) is one nearest point in \(\iota_Z(\C^\dagger(Z))\) from \(x\). Then
\[d_Z(x,z)\pt 3K.\]
\end{proposition}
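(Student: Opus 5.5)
Let \(x=(X,\eta)\) and let \(z=(Z,\zeta)\) be a nearest point of \(\iota_Z(\C^\dagger(Z))\) to \(x\). Fix a geodesic \(\ell=\{x=x_0,x_1,\dots,x_n=z\}\) in \(\CY\). The plan is to split according to how \(\ell\) leaves \(x\). We use the coarse triangle inequality~\eqref{eq: coarse triangle ineq} to bound \(d_Z(x,z)\) by the sum of \(d_Z\) over a bounded number of consecutive steps of \(\ell\), and show each step contributes at most about \(K\).

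First we dispose of the degenerate cases.

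\emph{Case 1: \(X\) is velcrot (or equal) to \(Z\).} Then \(\eta\) meets \(Z\) essentially. Either \(z\) can be taken as \((Z,\zeta)\) with \(\zeta\in\pi_Z(\eta)\) via an \ref{L2} edge, or \(x\) already lies in \(\iota_Z(\C^\dagger(Z))\) and \(z=x\). In both subcases \(d_Z(x,z)=d^\pi_Z(\{\eta\}\cup\partial X,\{\zeta\}\cup\partial Z)\) is bounded by a multiple of \(M\) using \autoref{prop_disjoint_subsurface_proj}. One subtlety: a nearest point need not be this particular \(\zeta\). But any nearest point is within distance \(L\) of \(x\), and the edges involved only connect curves that are disjoint or equal. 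Tracking this through \autoref{prop_disjoint_subsurface_proj} again gives a uniform bound.

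\emph{Case 2: \(X\) is not velcrot to \(Z\), and the vertex after \(x\) on \(\ell\) lies in some \(\iota_{Z'}(\C^\dagger(Z'))\) with \(Z'\) velcrot to \(Z\).} Then \autoref{lem: Y'-Z'-Z} gives \(d_Z(x,z)\simt0\) directly.

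In the general case, the key observation is that \(z\) is the first point of \(\ell\) in \(\iota_Z(\C^\dagger(Z))\). Moreover, by minimality, no vertex of \(\ell\) before the last one or two lies in \(\iota_{Z'}(\C^\dagger(Z'))\) for \(Z'\) velcrot to \(Z\); otherwise an \ref{L2} edge would give a shorter path. Let \(j\) be the first index with \(d_{\CY}\big(x_j,\iota_Z(\C^\dagger(Z))\big)<4L\). The segment \(\{x_0,\dots,x_j\}\) satisfies the hypothesis of \autoref{prop: barrier C}, so \(d_Z(x_0,x_j)<\Theta\).

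The remaining tail \(\{x_j,\dots,x_n\}\) has length less than \(4L\) plus one edge. It therefore consists of boundedly many \ref{L3}/\ref{L2} edges, at most about \(5\), interspersed with \ref{L1} edges inside single fine curve graphs. We estimate the tail edge by edge:
\begin{itemize}
\item An \ref{L3} edge between non-velcrot \(X_i,X_{i+1}\), neither velcrot to \(Z\), contributes \(d_Z(X_i,X_{i+1})\). This is below \(K\) because \(\Y_K(X_i,X_{i+1})=\emptyset\) when \(\dP=1\).
\item An \ref{L2} edge, or a run of \ref{L1} edges in a surface not velcrot to \(Z\), contributes \(\simt0\). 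This follows from \autoref{lem: C dist control} (last part), or because \(d_Z\) only depends on the subsurface there.
\item The final edge into \(\iota_Z(\C^\dagger(Z))\) is an \ref{L3} or \ref{L2} edge from some \((W,\beta)\) to \((Z,\zeta)\). In the \ref{L3} case, \(\zeta\in\pi_Z(W)\), which gives \(d_Z\simt0\) by \autoref{prop: bdd diam}. In the \ref{L2} case, the argument of \autoref{lem: Y'-Z'-Z} applies.
\end{itemize}

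The main obstacle is making the tail count sharp enough to reach \(3K\) rather than some larger multiple. Since \(L>K\) and the tail has length less than \(4L+L\), it contains at most \(4\) long edges. Of these, at least one is the final entry edge, contributing \(\simt0\), and an \ref{L2} step contributes \(\simt0\). So at most three edges are genuine \ref{L3} steps each contributing less than \(K\). Summing with the coarse triangle inequality then gives \(d_Z(x,z)\pt\Theta+3K\simt 3K\) up to constants depending only on \(\theta\).

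If this count turns out off by one, I would instead choose \(j\) with the threshold \(3L\). I would then rederive \autoref{prop: barrier C} with that threshold, since \autoref{lem: C dist control} only needs \(2L\).
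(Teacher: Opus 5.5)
Your proposal is correct and follows essentially the paper's argument: truncate at the last vertex at distance $\geq 4L$ from $\iota_Z(\C^\dagger(Z))$, apply \autoref{prop: barrier C} to the initial segment, and bound the tail. The tail has length $<5L$, so at most four long edges (equivalently, a path of length at most $4$ in $\PY$ after applying $\sigma$). At most three of these are genuine \ref{L3} steps contributing $<K$ each, and a velcrot entry into $Z$ is handled by \autoref{lem: Y'-Z'-Z}.

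The one slip is that \autoref{prop: barrier C} must be applied to $\{x_0,\dots,x_{j-1}\}$ rather than $\{x_0,\dots,x_j\}$, since $x_j$ violates the $4L$ hypothesis. This is exactly why the tail has length $<5L$, which is the bound you then use, so your count of three is right and the fallback to a $3L$ threshold is unnecessary.
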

\begin{proof}
We first assume that \(d_{\CY}(x,z)\geq 4L\). Then there exists a last point \(y\in \CY\) on a geodesic connecting \(z\) to \(x\) such that \(d_{\CY}\big(y,\iota_Z(\C^\dagger(Z))\big)\geq 4L\). By \autoref{prop: barrier C}, we have \(d_Z(x,y)<\Theta\).

Note that a path in \(\CY\) of length at most \(kL-1\) is sent by \(\sigma\) to a path in \(\PY\) of length at most \(k-1\). By our choice of \(y\), we have \(d_{\CY}(y,z)\leq 5L-1\). Therefore, the geodesic from \(y\) to \(z\) in \(\CY\), denoted \(\ell\), will be mapped to a path \(\sigma(\ell)\) of length at most \(4\) in \(\PY\). As \(Y\coloneqq \sigma(y)\) is not velcrot to \(Z\), so the path we obtained in \(\PY\) is of length at least \(1\).

Let \(Z'\) be the last vertex in \(\sigma(\ell)\subset \PY\) before \(Z\). We claim that it is the only vertex in \(\sigma(\ell)\) that can be velcrot to \(Z\). Indeed, let \(y'\coloneqq(Y',\eta)\in \ell\subset \CY\) be the last vertex such that \(\sigma(y')\) is not velcrot to \(Z\) and \(w=(W,\beta)\) be the next vertex in \(\ell\) with \(W\) velcrot to \(Z\). Then \(d_{\CY}\big(y',\iota_Z(\C^\dagger(Z))\big)\leq 2L\), this is because one can find a path \(y'\rightsquigarrow w \rightsquigarrow (Z,\pi_Z(\beta))\) of length \(2L\) in \(\CY\) from \(y'\) to \(\iota_Z(\C^\dagger(Z))\). In turn, this forces \(\dP(Y',Z)\leq 2\), so either \(W=Z'\) or \(W=Z\).

Since \(\dP(Y,Z')\leq 3\) and no vertex on \(\sigma(\ell)\) between \(Y\) and \(Z'\) is velcrot to \(Z\), by coarse triangle inequality~\ref{IV}, we have
\(d_Z(Y,Z')\pt 3K\). If \(Z,Z'\) are not velcrot, then \(z\in \pi_Z(Z')\), so \(d_Z(Y,Z')=d_Z(y,z)\pt 3K\). Since \(d_Z(x,y)\simt 0\), by coarse triangle inequality~\ref{IV} again, we get \(d_Z(x,z)\pt 3K\) as desired. If \(Z,Z'\) are velcrot, then setting \(y'=(Y',\eta)\) as above, we are in the situation of \autoref{lem: Y'-Z'-Z} and \(d_Z(y,z)\pt d_Z(y,y')+d_Z(y',z)\simt d_Z(y,y')\), after applying coarse triangle inequality~\ref{IV}. However, as \(\dP(Y,Y')\leq 2\), we will have \(d_Z(y,y')=d_Z(Y,Y')\pt 2K\) via applying coarse triangle inequality~\ref{IV} as in the beginning of this paragraph, which further yields
\begin{align*}
d_Z(x,z) &\pt d_Z(x,y)+d_Z(y,z)\\
& \simt d_Z(x,y)+d_Z(y,y')\\
&\simt d_Z(y,y')\\
&\pt 2K<3K.
\end{align*}

Finally, for the situation where \(d_{\CY}\big(y,\iota_Z(\C^\dagger(Z))\big)\leq 4L\), it suffices to discuss in the same way as the previous three paragraphs by replacing \(x\) in lieu of \(y\) and we can obtain the same upper bound as \(d_Z(x,z) \pt 3K\).
\end{proof}

\begin{corollary}\label{cor: projection consistent}
For every \(Z\in \Y\) and \(Y\in \Y\) not velcrot to \(Z\), the nearest point projection \(\iota_Y(\C^\dagger(Y))\to \iota_Z(\C^\dagger(Z))\) is in a uniform neighbourhood of the bounded set \(\iota_Z(\pi_Z(Y))\).
\end{corollary}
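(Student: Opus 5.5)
Fix \(Z\in\Y\) and \(Y\in\Y\) not velcrot to \(Z\). Let \(y=(Y,\eta)\in\iota_Y(\C^\dagger(Y))\) be arbitrary, and let \(z=(Z,\zeta)\) be a nearest point of \(\iota_Z(\C^\dagger(Z))\) to \(y\). The plan is to show that \(d^\dagger_Z(\zeta,\pi_Z(Y))\) is bounded by a constant depending only on \(\theta\). Then \autoref{prop: fine curve graph bi-lip} converts this into a uniform bound on \(d_{\CY}\big(z,\iota_Z(\pi_Z(Y))\big)\). The set \(\iota_Z(\pi_Z(Y))\) is bounded by \autoref{prop: bdd diam}, since \(\partial Y\) consists of finitely many curves and each has projection of diameter at most \(12\).

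The first step applies \autoref{prop: projection consistent} with \(x=y\), which gives \(d_Z(y,z)\pt 3K\). Because \(Y\) is not velcrot to \(Z\), we have \(\partial Y\) intersecting \(Z\) essentially. Then \autoref{lem: coarse eq for curves} gives
\[d^\pi_Z(\{\eta\}\cup\partial Y,\{\zeta\}\cup\partial Z)\simt d_Z(y,z)\pt 3K.\]
The quantity on the left is a diameter of a union of projections that contains both \(\pi_Z(\partial Y)\) and \(\pi_Z(\zeta)=\{\zeta\}\), using \(\zeta\subset Z\). So the \(\C^\dagger(Z)\)-distance from \(\zeta\) to \(\pi_Z(Y)\) is at most roughly \(3K\) plus a constant depending on \(\theta\). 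The constant \(K\) is fixed once \(\Theta\) is chosen, so this bound is uniform.

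The step I expect to be the main obstacle is the bookkeeping in this projection. The term \(\pi_Z(\partial Z)\) is not meaningful, and the definition of \(d_Z(y,z)\) varies according to whether \(Y\) or \(Z\) is velcrot to \(X\). Here \(Z\) is velcrot to itself, so case (2) of the definition applies. To handle this, I will spell out that in this case \(d_Z(y,z)\) is literally the diameter in \(\C^\dagger(Z)\) of \(\pi_Z(\eta)\cup\pi_Z(\partial Y)\cup\{\zeta\}\), with the \(\partial Z\) contribution empty. If \(\eta\) fails to meet \(Z\) essentially, only \(\pi_Z(\partial Y)\) remains, which causes no harm. The consequence is that every nearest-point image of \(\iota_Y(\C^\dagger(Y))\) lies within distance \(\pt 3K+12\) of \(\iota_Z(\pi_Z(Y))\) in \(\C^\dagger(Z)\), and therefore within a uniform \(\CY\)-neighbourhood of it.
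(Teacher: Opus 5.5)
Your proposal is correct and follows the paper's own argument. You apply Proposition~\ref{prop: projection consistent} with \(x=y\) to get \(d_Z(y,z)\pt 3K\), read this through Lemma~\ref{lem: coarse eq for curves} (here it holds by definition, since \(z\in\iota_Z(\C^\dagger(Z))\)) as a bound on a diameter containing both \(\zeta\) and \(\pi_Z(\partial Y)\), and transfer the bound to \(\CY\) via \(\iota_Z\). Your extra bookkeeping about the empty \(\partial Z\) contribution and about the case where \(\eta\) misses \(Z\) only makes explicit what the paper leaves implicit.
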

\begin{proof}
Let \(y=(Y,\alpha)\in \iota_Y(\C^\dagger(Y))\) and \(z=(Z,\beta)\in \iota_Z(\C^\dagger(Z))\) be the image of \(y\). By \autoref{lem: coarse eq for curves} and \autoref{prop: projection consistent}, we have
\[3K\st d_Z(y,z)\simt d_Z^\pi(\partial Y,\beta),\]
which implies that the distance between \(z\) and \(\iota_Z(\pi_Z(Y))\) is uniformly bounded.
\end{proof}

\begin{proposition}\label{prop: path rigid}
Let \(x,z\in \CY\), \(X=\sigma(x)\), and \(Z=\sigma(z)\). If \(Y\in \Y_\Theta(x,z)\), then any path from \(x\) to \(z\) in \(\CY\) contains a vertex \(w\in\CY\) such that
\begin{itemize}
    \item \(d_{\CY}(w,\iota_Y(\C^\dagger(Y)))<4L,\)
    \item \(d_Y(x,w)\pt K,\)
\end{itemize}
and it follows that \(d_{\CY}\big(w,\iota_Y(\pi_Y(x))\big)\pt 4L+4K\). A similar inequality holds for \(z\) in place of \(x\).
\end{proposition}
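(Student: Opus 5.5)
The plan is to follow the proof of \cite[Lemma~4.7]{bestvina2015constructing}, choosing \(w\) as the first vertex of the path that comes within \(4L\) of \(\iota_Y(\C^\dagger(Y))\), and to control \(d_Y(x,\cdot)\) up to that vertex with \autoref{prop: barrier C}.

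Fix a path \(x=x_0,x_1,\dots,x_n=z\) in \(\CY\). The first step is to show that some \(x_j\) satisfies \(d_{\CY}\big(x_j,\iota_Y(\C^\dagger(Y))\big)<4L\). Suppose instead that every vertex stays at distance at least \(4L\). Then \autoref{prop: barrier C}, applied to the whole path and to \(Y\), gives \(d_Y(x_0,x_n)<\Theta\). This contradicts \(Y\in\Y_\Theta(x,z)\).

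So let \(j\) be the first index with \(d_{\CY}\big(x_j,\iota_Y(\C^\dagger(Y))\big)<4L\), and set \(w\coloneqq x_j\). The first bullet then holds by construction.
\begin{itemize}
\item If \(j=0\), then \(w=x\) and \(d_Y(x,w)\simt 0\), by \autoref{lem: coarse eq for curves} together with \autoref{prop: bdd diam}.
\item If \(j\geq1\), apply \autoref{prop: barrier C} to the subpath \(\{x_0,\dots,x_{j-1}\}\), which stays at least \(4L\) away. This gives \(d_Y(x_0,x_{j-1})<\Theta\).
\end{itemize}

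In the second case it remains to bound the single step \(d_Y(x_{j-1},x_j)\), and I expect this to be the main obstacle. Write \(x_{j-1}=(X_{j-1},\alpha)\) and \(x_j=(X_j,\beta)\), and split according to the edge type.
\begin{itemize}
\item For an \ref{L1} or \ref{L2} edge, the curves \(\alpha,\beta\) are disjoint or equal, and the underlying surfaces are equal or velcrot. Then \autoref{prop_disjoint_subsurface_proj} and \autoref{lem: velcrot small proj} give \(d_Y(x_{j-1},x_j)\simt 0\).
\item For an \ref{L3} edge, we have \(\dP(X_{j-1},X_j)=1\), hence \(\Y_K(X_{j-1},X_j)=\emptyset\). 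This gives \(d_Y(X_{j-1},X_j)\leq K\) whenever \(Y\) is velcrot to neither surface.
\item If \(Y\) is velcrot to \(X_j\), then \(\beta\) meets \(Y\) essentially or lies in \(\pi_{X_j}(X_{j-1})\). In either case \(d^\pi_Y(\{\alpha\}\cup\partial X_{j-1},\{\beta\}\cup\partial X_j)\) is uniformly bounded via \autoref{prop_disjoint_subsurface_proj}, using \(\beta\cap\partial X_{j-1}=\emptyset\).
\item \(Y\) cannot be velcrot to \(X_{j-1}\), because \(x_{j-1}\) is far from \(\iota_Y(\C^\dagger(Y))\) while an \ref{L2} edge has length \(L\).
\end{itemize}
Combining these cases with the coarse triangle inequality \eqref{eq: coarse triangle ineq} gives \(d_Y(x,w)\pt\Theta+K\simt K\).

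For the final estimate, let \(y\) be a nearest point of \(\iota_Y(\C^\dagger(Y))\) to \(w\), so \(d_{\CY}(w,y)<4L\). \autoref{prop: projection consistent} gives \(d_Y(w,y)\pt 3K\), and the triangle inequality gives \(d_Y(x,y)\pt 4K\). By \autoref{lem: coarse eq for curves}, \(y=(Y,\gamma)\) then satisfies \(d^\pi_Y(\gamma,\pi_Y(x))\pt 4K\). Since \(\iota_Y\) is \(1\)-Lipschitz on \(\C^\dagger(Y)\) by \autoref{prop: fine curve graph bi-lip}, we get \(d_{\CY}\big(w,\iota_Y(\pi_Y(x))\big)\pt 4L+4K\).

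The case of \(z\) follows by reversing the path and using the symmetry \ref{I}.
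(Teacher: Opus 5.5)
Your proposal is correct and follows the paper's route: take \(w\) to be the first vertex of the path within \(4L\) of \(\iota_Y(\C^\dagger(Y))\), control the prefix with \autoref{prop: barrier C}, and bound the last step by a case analysis on the velcrotness of the underlying surfaces and the edge type. You then finish with a nearest point in \(\iota_Y(\C^\dagger(Y))\) and \autoref{prop: projection consistent}, as the paper does; your observation that \(Y\) cannot be velcrot to the surface of \(x_{j-1}\) also justifies the paper's ``say to \(W\)'' in its second case.
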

\begin{proof}
By \autoref{prop: barrier C}, every path from \(x\) to \(z\) must intersect the \(4L\)-neighbourhood of \(\iota_Y(\C^\dagger(Y))\) for \(Y\) not velcrot to \(X\) or \(Z\). Let \(\ell\) be an arbitrary path from \(x\) to \(z\) in \(\CY\). But if \(Y\) is velcrot, then the situation becomes trivial. Now, let \(w\) be the first element on \(\ell\) with \(d_{\CY}(w,\iota_Y(\C^\dagger(Y)))<4L\). There is nothing to prove if \(w=x\), so we may also assume that \(w'\) is the vertex preceding \(w\). Suppose that \(w=(W,\alpha)\) and \(w'=(W',\alpha')\). Then \(W\) and \(W'\) are either adjacent in \(\PY\), or \(W=W'\). Then we have the following situations:
\begin{mycases}
    \item If \(W,W',Y\) are pairwise non-velcrot, then \(d_Y(w,w')=d_Y(W,W')<K\).
    \item If \(W,W'\) are not velcrot but \(Y\) is velcrot to one of them, say to \(W\), then \(d_Y(w,w')=d^\pi_Y(\alpha,W')\) and \(\alpha\in \pi_W(W')\). By the definition of the subsurface projection, \(\alpha\) is either disjoint from \(\partial W'\) or contained in \(\partial W\), so \(d^\pi_Y(\alpha,W')<M\pt K\) by \autoref{prop_disjoint_subsurface_proj}.
    \item If \(W,W'\) are velcrot and \(Y\) is velcrot to at least one of them (can be both of them), then \(d_Y(w,w')=d^\pi_Y(\alpha,\alpha')\), whereas \(\alpha\cap \alpha'=\empty\) or \(\alpha=\alpha'\), which further implies that \(d_Y(w,w')=d^\pi_Y(\alpha,\alpha')<M\pt K\) by \autoref{prop_disjoint_subsurface_proj}.
    \item If \(W,W'\) are velcrot but \(Y\) is not velcrot to them, then by \autoref{lem: velcrot small proj} and coarse equality~\ref{II}, we have \(d_Y(w,w')=d_Y(W,W')\simt d_Y^\pi(W,W')<M\pt K\). 
\end{mycases}
Hence, we can conclude that \(d_Y(w,w')\pt K\). Note that \(d_Y(x,w')\simt 0\) by \autoref{prop: barrier C}. So by coarse triangle inequality, we have \(d_Y(w,x)\pt K\).

Now, let \(\widetilde{w}\in \CY\) be a nearest point from \(w\) to \(\iota_Y(\C^\dagger(Y))\). We have by our assumption that \(d_{\CY}(w,\widetilde{w})<4L\). By \autoref{prop: projection consistent}, we see that \(d_Y(\widetilde{w},w)\pt 3K\). Now, the triangle inequality yields
\begin{align*}
d_{\CY}\big(w,\iota_Y(\pi_Y(x))\big)&\leq d_{\CY}(w,\widetilde{w})+d_{\CY}\big(\widetilde{w},\iota_Y(\pi_Y(x))\big)\\
&\leq d_{\CY}(w,\widetilde{w})+d^\pi_Y(\widetilde{w},w)+d_Y^\pi(w,x)\\
&= d_{\CY}(w,\widetilde{w})+d_Y(\widetilde{w},w)+d_Y(w,x)\\
&\pt 4L+3K+K=4L+4K.
\end{align*}
This gives the desired coarse upper bound.
\end{proof}

\begin{definition}[Standard path]\label{def: standard path}
Let \(x=(X,\alpha)\) and \(y=(Y,\beta)\) be two arbitrary vertices in \(\CY\). If \(X,Y\) are not velcrot, then \emph{standard path} between \(x,y\) is a path passing through \(\iota_X(\C^\dagger(X))\), \(\iota_{W_i}(\C^\dagger({W_i}))\), and \(\iota_Y(\C^\dagger(Y))\) in the natural order, where \(W_i\in \Y_K(X,Y)\) are pairwise non-velcrot such that any \(W\in \Y_K(X,Y)\) is velcrot to one of \(W_i\)'s, and within each \(\iota_{W}(\C^\dagger({W}))\) the standard path is the \(\iota_W\)-image of a geodesic segment in \(\C^\dagger(W)\). If \(X,Y\) are velcrot, then a \emph{standard path} between \(x\) and \(y\) is a path consisting of the \(\iota_X\)-image of a geodesic segment in \(\C^\dagger(X)\) that connects \(\alpha\) to some \(\gamma\in \pi_X(\beta)\) and an edge between \((X,\gamma)\) and \((Y,\beta)\) if \(X\neq Y\). We will denote by \(\LL(x,y)\) the union of all standard paths between \(x\) and \(y\).
\end{definition}

In the following, we will show that these standard paths serve as guessing geodesics for the application of Guessing Geodesics Lemma.

\begin{lemma}\label{lem: stand path proj closed}
There exists \(K'>0\) sufficiently large such that the following holds. Let \(x,z\in \CY\) and \(Y\in \Y_K(x,z)\cup\{\sigma(x),\sigma(z)\}\). If \(\ell\subset\CY\) is a standard path between \(x\) and \(z\) such that \(\ell \cap \iota_Y(\C^\dagger(Y))\neq \emptyset\), then there is a geodesic segment \([\alpha,\beta]\subset \C^\dagger(Y)\) with \(\iota_Y([\alpha,\beta])=\ell\cap \iota_Y(\C^\dagger(Y))\) and \(d_Y^\pi(x,\alpha),d_Y^\pi(\beta,z)\pt K'\).
\end{lemma}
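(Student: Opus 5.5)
The plan is to read off the entry and exit points of a standard path $\ell$ in $\iota_Y(\C^\dagger(Y))$ directly from \autoref{def: standard path}, and then bound their projections using the coarse equality and the order properties of \autoref{thm: BBF prerequi}.

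\textbf{The case $Y=\sigma(x)$.} Here the segment inside $\iota_X(\C^\dagger(X))$ starts at $\alpha$, the curve of $x$ itself. So $d^\pi_Y(x,\alpha)\le 12$ by \autoref{prop: bdd diam}.

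\textbf{The case $Y=\sigma(z)$.} This is symmetric to the previous case, with $\beta$ being the curve of $z$.

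\textbf{The case $Y\in\Y_K(x,z)$, not velcrot to $\sigma(x)$ or $\sigma(z)$.} By construction $Y$ is velcrot to some $W_i$ in the ordered list $\sigma(x)=W_0<W_1<\dots<W_n<W_{n+1}=\sigma(z)$. After possibly invoking \ref{III} and \autoref{cor: velcrot same base}, we may assume $Y=W_i$.

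\emph{Identifying the entry and exit points.} The standard path enters $\iota_Y(\C^\dagger(Y))$ through an \ref{L3}-edge from $\iota_{W_{i-1}}(\C^\dagger(W_{i-1}))$. By the definition of \ref{L3}, the entry point $\alpha$ lies in $\pi_Y(W_{i-1})$, and likewise the exit point $\beta$ lies in $\pi_Y(W_{i+1})$. Here $W_{i-1},W_i$ and $W_i,W_{i+1}$ are adjacent in $\PY$; this is the content of \autoref{prop: PK connect}, whose proof shows that consecutive elements are joined by an edge.

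\emph{Reducing to a projection estimate.} Using \autoref{lem: coarse eq for curves} and \ref{II}, we get
\[d^\pi_Y(x,\alpha)\le d^\pi_Y(x,W_{i-1})+12\simt d_Y(X,W_{i-1}).\]
So it suffices to show that $d_Y(X,W_{i-1})$ is uniformly bounded.

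\emph{Bounding $d_Y(X,W_{i-1})$.} The order property \ref{VIII}, applied to the triple $W_{i-1}<W_i$ inside $\Y_\Theta(X,Z)$ together with \autoref{rem: order}, gives $d_{W_i}(X,W_{i-1})\le\theta'$ directly.

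\emph{Absorbing the discrepancy between $x$ and $X$.} Finally, we pass from $d_Y(X,\cdot)$ to $d^\pi_Y(x,\cdot)$. This is done with \autoref{lem: coarse eq for curves}, since the curve of $x$ is disjoint from $\partial X$. Symmetrically, we bound $d^\pi_Y(\beta,z)$ through $W_{i+1}$.

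\textbf{Main obstacle.} The hardest part is the velcrot bookkeeping. First, $Y$ may be velcrot to $\sigma(x)$ or $\sigma(z)$, which puts us in \ref{L2} territory. Second, $Y$ may equal a velcrot representative rather than $W_i$ itself. Third, the neighbours $W_{i\pm1}$ might be the endpoints $X$ or $Z$.

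For the first issue, I would handle it as in the definition of standard path for velcrot endpoints. The entry point then satisfies $\alpha\in\pi_Y(\cdot)$ of a disjoint curve, and \autoref{prop_disjoint_subsurface_proj} gives a bound of $12$.

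For the second issue, \autoref{cor: velcrot same base} costs only an additive $M$.

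For the third issue, \autoref{lem: velcrot small proj} controls the error.

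All of these errors are absorbed into $K'$, which is chosen as a constant multiple of $\theta'+\Theta+M$.
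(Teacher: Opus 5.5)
Your main case is correct, and it takes a genuinely different route from the paper.

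\textbf{The paper's route.} It argues by contradiction. Suppose the entry vertex $v=(Y,\alpha)$ had $d^\pi_Y(x,v)>K'$. The paper applies \autoref{prop: path rigid} to the part $\ell'$ of $\ell$ before $Y$ and finds vertices of $\ell'$ near $\iota_Y(\pi_Y(x))$ and near $\iota_Y(\pi_Y(v'))$. It then counts $L$-edges: each passage between consecutive non-velcrot pieces moves the $Y$-projection by at most $K+2\theta<L$. This forces $\ell'$ into the velcrot class of $Y$, contradicting the definition of a standard path.

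\textbf{Your route.} You compute locally.
\begin{enumerate}
\item The entering edge is an \ref{L3}-edge, so $\alpha\in\pi_{W_i}(W_{i-1})$.
\item Item (ii) of \autoref{rem: order}, applied to $W_{i-1}<W_i$, gives $d_{W_i}(X,W_{i-1})\le\theta'$.
\item \ref{II} converts this into a bound on $d^\pi_{W_i}(X,W_{i-1})$, since $X,W_{i-1},W_i$ are pairwise non-velcrot.
\item \autoref{lem: coarse eq for curves} passes from $X$ to $x$.
\item Item (iv) handles the exit point in the same way.
\end{enumerate}

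\textbf{Comparison.} Your argument gives an explicit $K'\approx\theta'+2\theta+O(1)$, independent of $L$. It also sidesteps a delicate step in the paper: there, the number of $L$-edges on a subpath of $\ell$ is used as a lower bound for $d_{\CY}(v_0,w_0)$, which is only legitimate if $\ell$ is geodesic, and standard paths need not be. The paper's argument, in exchange, does not use the order relations (ii)/(iv). It relies on $\ell$ meeting the velcrot class of $Y$ in a single segment, together with path rigidity.

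\textbf{Bookkeeping to fix.}
\begin{enumerate}
\item You do not need \ref{III} or \autoref{cor: velcrot same base} to assume $Y=W_i$. The vertices of $\ell$ are pairs $(W,\gamma)$ with $W\in\{X,W_1,\dots,W_n,Z\}$, so $Y$ is literally one of these. Likewise, when $W_{i-1}=X$ the bound is trivial, since the curve of $x$ is disjoint from $\partial X$ and $\alpha\in\pi_{W_1}(X)$. \autoref{lem: velcrot small proj} plays no role there.
\item For $Y=\sigma(x)$ and $Y=\sigma(z)$ you only recorded the trivial half. Take $Y=X$ with $X,Z$ non-velcrot. The real content is the exit point $\beta\in\pi_X(W_1)$, or $\beta\in\pi_X(Z)$ if $\Y_K(X,Z)=\emptyset$, and you need $d^\pi_X(W_1,Z)$ small. \autoref{rem: order} does not directly apply, since $X\notin\Y_\Theta(X,Z)$. \autoref{thm: berhstock} does apply, because $d^\pi_{W_1}(X,Z)\ge d_{W_1}(X,Z)>K$. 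The case $Y=Z$ is symmetric via $W_n$.
\item Your claimed bound of $12$ in the velcrot-endpoint case is wrong when $Y=\sigma(z)\neq\sigma(x)$. There $\ell\cap\iota_Y(\C^\dagger(Y))=\{z\}$, so the statement would require $d^\pi_Y(x,z)$ to be bounded. That is false: put the curves of $x$ and $z$ far apart inside a common witness from \autoref{prop: witness}. This instance has to be excluded from the statement. The paper's ``follows directly from the definition'' overlooks it as well.
\end{enumerate}
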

\begin{proof}
If \(\sigma(x),\sigma(z)\) are velcrot, then the desired result follows directly from the definition. Hence, we may assume in the following that \(\sigma(x),\sigma(z)\) are not velcrot.

Note that \(\ell \cap \iota_Y(\C^\dagger(Y)\) is a path between \((Y,\alpha)\) and \((Y,\beta)\).

Let \(v=(Y,\alpha)\) and let \(v'=(Y',\gamma')\) be the vertex on \(\ell\) immediately preceding \(v\). Hence, \(\gamma'\in \pi_{Y'}(Y)\) and \(\alpha\in \pi_Y(Y')\). We claim that \(d_Y^\pi(x,v)=d_Y^\pi(x,\alpha)\pt K'\). For \(d_Y^\pi(\beta,z)\), it is a symmetric case. Suppose for contradiction that \(d_Y^\pi(x,v)> K'\) with \(K'\) sufficiently large. Then we further claim that \(\ell'\coloneqq \ell\cap \LL(x,v')\) must intersect \(\iota_Y(\C^\dagger(Y))\), which is a contradiction after our definition of a standard path. By \autoref{lem: coarse eq for curves}, as \(K'<d_Y^\pi(x,z)\simt d_Y(x,z)\) and \(K'\gg K\), we can conclude that \(Y\in \Y_K(x,z)\). By \autoref{prop: path rigid}, we can find \(v_0,w_0\in \ell'\) such that \(d_{\CY}\big(v_0, \iota_Y(\pi_Y(x))\big)\pt 4L+4K\) and \(d_{\CY}\big(w_0, \iota_Y(\pi_Y(v'))\big)\pt 4L+4K\). In particular, \(d_{\CY}(v_0,w_0)\pt 8L+8K+d_Y(x,v')\). If \(\ell'\) is disjoint from \(\iota_{Y''}(\C^\dagger(Y''))\) for any \(Y''\) velcrot to \(Y\), then we can estimate that the number of \(W\in\Y\) such that \(\ell'\) passes through \(\iota_W(\C^\dagger(W))\) is at least 
\[\frac{d^\pi_Y(x,v')}{K+2\theta}-1,\]
as the diameter of the projections to \(Y\) of the union of two consecutive non-velcrot \(W\)'s is at most \(K+2\theta\) by \autoref{prop: mod dist coarse to proj dist}. Thus, the number of edges of length \(L\) that the subsegment of \(\ell'\) between \(v_0\) and \(w_0\) passes through is at least \(d^\pi_Y(x,v')/(K+2\theta)\), and we have the inequality
\[\frac{Ld^\pi_Y(x,v')}{K+2\theta}\leq d_{\CY}(v_0,w_0)\pt 8L+8K+d^\pi_Y(x,v'),\]
which is contradictory if \(0<K'<d^\pi_Y(x,v')\) is sufficiently large as \(L/(K+2\theta)>1\). Hence, the subsegment of \(\ell'\) between \(v_0\) and \(w_0\) passes through \(\iota_{Y''}(\C^\dagger(Y''))\) for some \(Y''\) velcrot to \(Y\). But by our definition of a standard path, \(Y''\) has to be \(Y\), which will also contradict the assumption on \(v'\). Therefore, \(d^\pi_Y(x,v')< K'\). As \(v=(Y,\alpha)\) and \(v'=(Y',\gamma')\) with \(\alpha\in \pi_Y(Y')\), so \(\gamma'\cap \partial Y'=\alpha\cap \partial Y'=\emptyset\). By \autoref{prop_disjoint_subsurface_proj} and \eqref{eq: trian ineq proj dist}, as \(Y',Y\) are not velcrot, we can also conclude that 
\[d^\pi_Y(x,v)\leq d^\pi_Y(x,v')+d^\pi_Y(v',v)\leq d^\pi_Y(x,v')+d^\pi_Y(\gamma',Y')+d^\pi_Y(Y',\alpha)<K+24\pt K'.\]
This completes the proof.
\end{proof}

To show that \(\CY\) is Gromov hyperbolic, we will make use of the Guessing Geodesics Lemma (\autoref{prop: ggl}). 

\begin{theorem}\label{thm: CY is hyperbolic}
The space \(\CY\) is \(\delta\)-hyperbolic and for each \(X\in \Y\), the fine curve graph \(\C^\dagger(X)\) is embedded in \(\C^\dagger(\Y)\) via a bi-Lipschitz map \(\iota_X\).
\end{theorem}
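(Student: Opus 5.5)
The bi-Lipschitz part is already \autoref{prop: fine curve graph bi-lip}, so only hyperbolicity needs proof. I will apply the Guessing Geodesics Lemma (\autoref{prop: ggl}) with \(\mathcal{L}(x,y)\) taken to be the union \(\LL(x,y)\) of standard paths from \autoref{def: standard path}. Strictly speaking \(\CY\) has edges of length \(L\). I will either subdivide those edges into \(L\) unit edges, which changes nothing up to isometry of the vertex set, or apply the lemma to the graph metric and note that this is bi-Lipschitz to \(d_{\CY}\). Each \(\LL(x,y)\) is connected, since every standard path contains \(x\) and \(y\). So it remains to check (G2) and then (G1).

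For (G2), take \(x,y\) adjacent. If the edge is of type \ref{L1}, or of type \ref{L2} between velcrot surfaces, the standard path is a geodesic of bounded length in one fine curve graph followed by at most one \(L\)-edge. In the type \ref{L2} case I use \autoref{prop_disjoint_subsurface_proj}, which bounds \(d^\pi\) of disjoint curves by \(12\). If the edge is of type \ref{L3}, then \(\dP(X,Y)=1\), so \(\Y_K(X,Y)=\emptyset\). The standard path then runs only through \(\C^\dagger(X)\) and \(\C^\dagger(Y)\), and by \autoref{lem: stand path proj closed} its pieces in each have length \(\pt K'\). Hence \(\diam\LL(x,y)\) is bounded in terms of \(K',L\).

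For (G1), fix \(x,y,z\) and a vertex \(v\in\LL(x,z)\) with \(v\in\iota_W(\C^\dagger(W))\), where \(W\in\Y_K(x,z)\cup\{\sigma(x),\sigma(z)\}\). By the coarse triangle inequality \eqref{eq: coarse triangle ineq}, \(d_W(x,z)\pt d_W(x,y)+d_W(y,z)\). I split into two cases.

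\emph{Case 1:} \(W\) lies in \(\Y_K(x,y)\) or \(\Y_K(y,z)\), or is velcrot to an element of one of these, or equals one of the endpoints \(\sigma(\cdot)\). By \autoref{lem: stand path proj closed}, the segment \(\ell\cap\iota_W(\C^\dagger(W))\) is a geodesic in \(\C^\dagger(W)\) whose endpoints are \(K'\)-close to \(\pi_W(x)\) and \(\pi_W(z)\). The corresponding segments of \(\LL(x,y)\) and \(\LL(y,z)\) run from \(\pi_W(x)\) to \(\pi_W(y)\) and from \(\pi_W(y)\) to \(\pi_W(z)\). By hyperbolicity of \(\C^\dagger(W)\) with its uniform constant \(\delta\), the segment near \(v\) lies in the uniform neighbourhood of their union. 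Velcrot replacements cost a bounded amount by \autoref{cor: velcrot same base} and the \ref{L2} edges.

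\emph{Case 2:} \(d_W(x,y)\) and \(d_W(y,z)\) are both \(\le K\). Then \(d_W(x,z)\pt 2K\), and \autoref{lem: stand path proj closed} makes the part of \(\ell\) inside \(\C^\dagger(W)\) of length \(\pt 2K+2K'\). So it suffices to show that \(\iota_W(\pi_W(x))\) is close to \(\LL(x,y)\cup\LL(y,z)\). Here I use \autoref{prop: path rigid}, choosing the threshold \(K\) in the definition of standard paths larger than \(\Theta\) so that \(W\in\Y_\Theta(x,z)\). The concatenation of a standard path from \(x\) to \(y\) with one from \(y\) to \(z\) is a path from \(x\) to \(z\). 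Hence it contains a vertex within \(\pt 4L+4K\) of \(\iota_W(\pi_W(x))\), and so within bounded distance of \(v\). This also covers vertices of \(\ell\) on the \(L\)-edges between consecutive \(W_i\).

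The main obstacle is Case 2 near the endpoints: when \(W\) is velcrot to \(\sigma(x)\) or \(\sigma(z)\), or when \(\sigma(x),\sigma(z)\) are velcrot, \(d_W\) is the modified curve version. I would handle these with \autoref{lem: coarse eq for curves} and \autoref{prop: projection consistent}, which places the nearest-point projection of \(x\) to \(\iota_W(\C^\dagger(W))\) within \(\pt 3K\) of \(\pi_W(x)\). With that bound, the same rigidity argument goes through. Once (G1) and (G2) hold with a uniform \(\lambda\), \autoref{prop: ggl} gives \(\delta\)-hyperbolicity.
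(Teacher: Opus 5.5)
Your proposal is correct and follows the paper's proof. Both apply the Guessing Geodesics Lemma to the unions \(\LL(\cdot,\cdot)\) of standard paths, control the pieces of a standard path inside \(\C^\dagger(W)\) by thin triangles in the uniformly hyperbolic \(\C^\dagger(W)\) together with \autoref{lem: stand path proj closed}, and locate the remaining pieces with \autoref{prop: path rigid}.

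The differences are organisational. You split cases by whether \(W\) (up to velcrotness) lies on the other two standard paths, and you apply path rigidity to the concatenation \(\ell_1\ast\ell_2\). The paper instead thresholds \(d_W\) at \(\Theta\) and applies rigidity to each side separately. You also verify (G2) and handle the length-\(L\) edges, which the paper leaves implicit.

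One small point in your Case 1: when \(W\) lies on only one of the other two standard paths, the missing side of the triangle in \(\C^\dagger(W)\) has length \(\pt K\). Say this explicitly, because that is why this side lies in a bounded neighbourhood of the standard path that does pass through \(W\).
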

\begin{proof}
The map \(\iota_X\) is indeed the desired bi-Lipschitz map after \autoref{prop: fine curve graph bi-lip}. Now, it suffices to show that the graph \(\CY\) is \(\delta\)-hyperbolic.

Let \(x,y,z\in \CY\) be arbitrary. We will show that there exists an \(\lambda>0\) such that any standard path between \(y,z\) is contained in a \(\lambda\)-neighbourhood of the union of any standard paths between \(x,y\) and \(x,z\).

Let \(\ell\) be a standard path between \(y\) and \(z\), and let \(W\in \Y_K(y,z)\cup\{\sigma(y),\sigma(z)\}\) be arbitrary such that \(\ell\) passes through \(\iota_W(\C^\dagger(W))\). Also consider \(\ell_1\) a standard path between \(x\) and \(y\), as well as \(\ell_2\) a standard path between \(x\) and \(z\). We recall from \cite{bowden2022quasi} that \(\C^\dagger(X)\) for any \(X\in \Y\) is \(\delta'\)-hyperbolic for some uniform \(\delta'>0\).

First, consider the case where \(d_W(x,y),d_W(x,z)>\Theta\). By our assumption, \(W\) is contained in \(\Y_\Theta(x,y)\) and \(\Y_\Theta(x,z)\). By \autoref{prop: path rigid}, there exists a \(v\in \ell_1\) such that
\[ d_{\CY}\big(v,\iota_W(\pi_W(x))\big),d_{\CY}\big(v,\iota_W(\pi_W(y))\big)<4L+4K\]
and together with \autoref{lem: coarse eq for curves} \(d^\pi_W(v,x), d^\pi_W(v,y)\pt K\). This implies that the \(\iota_W\)-image of the geodesic segment between \(\pi_W(x)\) and \(\pi_W(y)\) is within a distance \(\pt 4L+6K\), \emph{i.e.} the \(\iota_W\)-image of the geodesic segment between \(\pi_W(x)\) and \(\pi_W(y)\) is contained in a uniformly bounded neighbourhood of \(\ell_1\). The same arguments also imply that the \(\iota_W\)-image of the geodesic segment between \(\pi_W(x)\) and \(\pi_W(y)\) is contained in a uniformly bounded neighbourhood of \(\ell_2\). Since \(\C^\dagger(W)\) is uniformly hyperbolic, the \(\iota_W\)-image of the geodesic in \(\C^\dagger(W)\) between \(\pi_W(y),\pi_W(z)\) is within Hausdorff distance to those between \(\pi_W(x),\pi_W(y)\) and between \(\pi_W(x),\pi_W(z)\). But \(\iota_W\)-image of the geodesic in \(\C^\dagger(W)\) between \(\pi_W(y)\) and \(\pi_W(z)\) is also contained in a uniform neighbourhood of \(\ell\) after \autoref{lem: stand path proj closed} and \autoref{prop: fine curve graph bi-lip}. So we can conclude that \(\ell\cap \iota_W(\C^\dagger(W))\) is within a uniform neighbourhood of \(\ell_1\cup\ell_2\).

Suppose now \(d_W(x,y)\leq \Theta\). As \(W\in \Y_K(y,z)\cup\{\sigma(y),\sigma(z)\}\), we can conclude that \(d_W(x,z)>\Theta\) after \eqref{eq: coarse triangle ineq}, or \(\sigma(y),\sigma(z)\) are velcrot and \(d_W^\pi(y,z)\leq K\). The first situation can be concluded with the same arguments as above. The latter situation is trivial because \(\ell\) is contained in a uniform neighbourhood of \(y,z\in \ell_1\cup \ell_2\).

In conclusion, we have shown that \(\ell\) is contained in a uniform neighbourhood of \(\ell_1\cup \ell_2\). Since \(\ell_1,\ell_2,\ell\) are chosen independently and the constant for the uniform neighbourhood does not depend on the choice of \(\ell_i\)'s, we can conclude that \(\LL(y,z)\) is contained in a uniform neighbourhood of \(\LL(x,y)\cup \LL(x,z)\). Now, applying the Guessing Geodesics Lemma (\autoref{prop: ggl}), we can conclude that \(\CY\) is \(\delta\)-hyperbolic.
\end{proof}

\subsection{Application to quasi-morphisms} An important application of the blown-up fine projection complex is building quasi-morphisms on the group acting on this complex by isometries via the famous Bestvina--Fujiwara machinery.

\begin{proposition}\label{prop: ind loxo}
Let \(S,\Y,\C^\dagger(\Y),\Homeo_0(S)\) be as above. Then for each subsurface \(X\in \Y\), there exist \(f,g\in\Homeo_0(X;\partial X)< \Homeo_0(S)\) acting by independent loxodromic isometries on \(\C^\dagger(\Y)\). Moreover, the two elements \(f,g\) can be taken smooth.
\end{proposition}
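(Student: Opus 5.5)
The plan is to produce two elements of \(\Homeo_0(X;\partial X)\) that act loxodromically and independently on \(\C^\dagger(X)\). Their action then transfers to \(\CY\) through the bi-Lipschitz embedding \(\iota_X\) of \autoref{prop: fine curve graph bi-lip}. To control independence, which concerns the whole group \(\Homeo_0(S)\) acting on \(\CY\), I will use the hierarchy structure: the projections \(\pi_Y\) for \(Y\) not velcrot to \(X\), and \autoref{prop: path rigid}.

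\textbf{Step 1: construction.} Choose a finite set \(P\subset \inte(X)\) and two smooth homeomorphisms \(f,g\) supported in the interior of \(X\), fixing \(P\). Take them isotopic rel \(\partial X\cup P\) to independent pseudo-Anosov maps of \(X\setminus P\), or, in the once-bordered torus case, to the maps treated in \autoref{sec-6}. Arrange that their stable and unstable laminations in \(\C^s(X\setminus P)\) are pairwise distinct; for instance take \(g=hfh^{-1}\) for a generic smooth \(h\) fixing \(P\). By the argument of \cite{bowden2022quasi} and \autoref{lem: dagger}, these maps act loxodromically on \(\C^\dagger(X)\). Since \(f,g\) fix \(X\) and \(\iota_X\) is an equivariant bi-Lipschitz map, the orbit of \((X,\alpha)\) grows linearly in \(\CY\). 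Hence \(f,g\) are loxodromic on \(\CY\), with quasi-axes \(\iota_X(A_f)\) and \(\iota_X(A_g)\).

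\textbf{Step 2: independence.} Suppose, for contradiction, that for every segment \(J\subset \iota_X(A_g)\) there is \(\phi\in\Homeo_0(S)\) with \(\phi J\) in the \(B\)-neighbourhood of \(\iota_X(A_f)\). Take \(J\) long. Then \(\phi J\) lies near \(\iota_{\phi(X)}(\C^\dagger(\phi X))\). If \(\phi(X)\) is not velcrot to \(X\), \autoref{cor: projection consistent} says the nearest point projection of \(\iota_{\phi X}(\C^\dagger(\phi X))\) to \(\iota_X(\C^\dagger(X))\) has uniformly bounded image near \(\iota_X(\pi_X(\phi X))\). This contradicts a long segment \(\phi J\) fellow-travelling \(\iota_X(A_f)\). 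So \(\phi(X)\) is velcrot to \(X\). By \autoref{prop: witness} there is then \(Z\subset X\cap\phi(X)\) homotopic to both, and \(\phi\) can be adjusted by a homeomorphism supported near \(\partial X\) so that it preserves \(X\). The adjustment costs bounded distance by \autoref{cor: velcrot same base} and \autoref{prop: fine curve graph bi-lip}.

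We are now reduced to \(\phi\) essentially preserving \(X\) and to the independence of \(f\) and \(g\) on \(\C^\dagger(X)\) under the full stabiliser of \(X\), not just under \(\Homeo_0(X;\partial X)\). To handle this, puncture at a finite set \(P'\supset P\) chosen so that \(\phi J\) and \(A_f\) remain in minimal position. Then apply \autoref{lem: dagger} to pass to \(\C^s(X\setminus P')\). There, \(f\) and \(\phi g\phi^{-1}\) would be pseudo-Anosov maps with fellow-travelling long axis segments. The WPD property of pseudo-Anosov maps on (surviving) curve graphs, together with the choice of distinct laminations, forbids this once the segment length exceeds a constant.

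\textbf{Main obstacle.} The hard step is this last reduction. The number of punctures \(P'\) depends on \(\phi\), so I expect to need the metric WPD statements of \cite{choi2025metric, hensel2025rotation}: the coarse stabiliser of a long segment of the axis of \(f\) is covered by finitely many translates of \(\varepsilon\)-coarse elements. Combining this with a choice of \(h\) that makes \(g=hfh^{-1}\) avoid those finitely many classes should give the contradiction, and hence \(f\nsim g\).
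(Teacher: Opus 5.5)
\textbf{The gap is the last step, independence inside \(\C^\dagger(X)\).}

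Your overall reduction is the paper's. Loxodromicity passes through the bi-Lipschitz map \(\iota_X\), and the case where \(\phi(X)\) is not velcrot to \(X\) is handled by \autoref{cor: projection consistent}. In the velcrot case you replace \(\phi\) by \(h'\phi\) with \(h'|_Z=\Id_Z\) and \(h'\phi(X)=X\). The bounded cost there is really \(d_{\CY}(\phi x,h'\phi x)\leq 2L\), obtained from two \ref{L2}-edges through \(\iota_Z(\C^\dagger(Z))\) using \(\pi_Z(\phi\alpha)=\pi_Z(h'\phi\alpha)\). \autoref{cor: velcrot same base} and \autoref{prop: fine curve graph bi-lip} alone do not compare vertices lying in different fibres.

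After the reduction, you need a segment \(J\subset A_g\) such that no translate of \(J\) by an element preserving \(X\) lies in \(\mathcal{N}_{B+2L}(A_f)\) inside \(\iota_X(\C^\dagger(X))\). Your proposal does not establish this: the closing paragraph only says metric WPD ``should give the contradiction''. The paper does not reprove this either. It takes \(f,g\in\Diff_0(X;\partial X)\) from \cite{bowden2022quasi}, already independent for the \(\Homeo_0(X)\)-action on \(\C^\dagger(X)\), regards \(h'\phi\) as an element of \(\Homeo_0(X)\), and concludes at once. That imported independence is the missing ingredient.

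\textbf{Your substitute for this step does not work as stated.} Suppose \(g=hfh^{-1}\) with \(h\) in the acting group, for instance \(h\in\Homeo_0(X;\partial X)\), the natural way to move the laminations while keeping \(g\) in \(\Homeo_0(X;\partial X)\). Then \(hA_f\) is a quasi-axis of \(g\), and \(h^{-1}J\subset A_f\) for every segment \(J\subset A_g\). So \(f\sim g\) by the very definition, however different the laminations are. Distinctness of laminations is not the Bestvina--Fujiwara criterion, and conjugation by a group element always gives dependence. If instead \(h\notin\Homeo_0(S)\), this trivial obstruction disappears, but nothing in your proposal then proves independence.

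The puncturing step also breaks down. \(f\) is pseudo-Anosov relative to \(P\) and need not preserve a larger set \(P'\), while \(\phi g\phi^{-1}\) fixes \(\phi(P)\) rather than \(P\). Hence neither map acts on \(\C^s(X\setminus P')\), and there is no WPD statement there to invoke. The fix is to drop the conjugation construction and take independent \(f,g\) from \cite{bowden2022quasi}, as the paper does.
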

\begin{proof}
By \cite{bowden2022quasi}, we can find \(f,g\in \Diff_0(X;\partial X)\subset \Homeo_0(S)\) that acts on \(\C^\dagger(X)\) by independent loxodromic isometries. We claim that via the \(\Homeo_0(S)\)-action on \(\C^\dagger(\Y)\), two elements \(f,g\) also act by independent loxodromic elements.

Since the embedding \(\iota_X\colon \C^\dagger(X)\hookrightarrow \CY\) is bi-Lipschitz, and \emph{a fortiori} quasi-isometric, by \autoref{prop: fine curve graph bi-lip}, we can conclude that \(f,g\) also yield loxodromic elements on \(\C^\dagger(\Y)\). Hence, to prove the claim, it suffices to show that they are independent.

Let \(A_f, A_g\subset \iota_X(\C^\dagger(X))\) be, respectively, quasi-axes of \(f\) and \(g\). Let \(B>0\) and \(h\in \Homeo_0(S)\) be arbitrary. Now, we fall into two possibilities:
\begin{mycases}
\item Suppose that \(hX\) is not velcrot to \(X\). Then \(hA_g\subset \iota_{hX}(\C^\dagger(hX))\). Let \(z\) be any vertex contained in \(\iota_{hX}(\C^\dagger(hX))\cap \mathcal{N}_B(A_f)\). By definition, there exists \(x=(\alpha,X)\in A_f\) such that \(d_{\CY}(x,z)\leq B\). Take \(x'\in \iota_{hX}(\C^\dagger(hX))\) to be one nearest point projection of \(x\) to the subspace \(\iota_{hX}(\C^\dagger(hX))\). By \autoref{cor: projection consistent}, there exists a uniform \(N>0\) such that
\[d_{\CY}\big(x',\iota_{hX}(\pi_{hX}(\alpha))\big)\leq N,\]
but as \(\alpha\cap \partial X=\emptyset\), we can further deduce from \autoref{prop_disjoint_subsurface_proj} that 
\[d_{\CY}\big(x',\iota_{hX}(\pi_{hX}(X))\big)\leq N,\]
for a sufficiently large \(N\) independent from the choice of \(B>0\) and \(h\in \Homeo_0(S)\). But \(d_{\CY}(x',z)\leq B-L\). Hence, we can conclude
\[d_{\CY}\big(z,\iota_{hX}(\pi_{hX}(X))\\big)\leq B-L-N,\]
which is uniformly bounded. Now, we need only to take a segment \(J\subset A_g\) such that the diameter is large than \(2(B-N-L)\) to see \(J\not\subset \mathcal{N}_B(A_f)\).
\item If \(hX\) is velcrot to \(X\), then by \autoref{prop: witness}, we can find a subsurface \(Z\subsetneqq X\cap hX\) that is isotopic to both \(X\) and \(hX\). Now take an isotopy \(h'\in \Homeo_0(S)\) such that \(h'|_Z=\Id_Z\) and \(h'hX=X\). In this way, we may consider \(h'h\in \Homeo_0(X)\). Note that every vertex on \(hA_g\) has essential subsurface projection on \(Z\), so do \(h'hA_g\). Moreover, as \(h'|_Z=\Id_Z\), we can conclude that for any \(x=(\alpha,X)\in A_g\), the subsurface projection \(\pi_Z(h\alpha)=\pi_Z(h'h\alpha)\), as \(h\alpha\cap Z=h'h\alpha\cap Z\). Therefore, by our construction of \(\CY\), we have
\[d_{\CY}(hx,h'hx)\leq 2L.\]
Note that \(h'h\in \Homeo_0(X)\) and \(f\nsim g\) for \(\Homeo_0(X)\)-action on \(\C^\dagger(X)\) and thus on \(\iota_X(\C^\dagger(X))\subset \CY\), there exists \(J\subset A_g\) such that \(h'hJ\not\subset \mathcal{N}_{B+2L}(A_f)\). This further yields \(hJ\not\subset \mathcal{N}_{B}(A_f)\).
\end{mycases}
Hence, we can finally conclude that \(f\nsim g\) for \(\Homeo_0(S)\)-action on \(\CY\).
\end{proof}

Now, we are able to conclude the non-sporadic cases in \autoref{thm: qm version}.
\begin{proof}[Proof of the non-sporadic cases in \autoref{thm: qm version}]
    Let \(\Y\) be a collection of essential subsurfaces satisfying \ref{Y1} and \ref{Y2} that contains the subsurface \(\Sigma\). Then the action of \(\Homeo_0(S)\) on \(\CY\) admits two independent loxodromic elements (\autoref{prop: ind loxo}). Hence, by \autoref{Bestvina--Fujiwara machinery}, we can construct homogeneous quasi-morphisms
    \[\varphi\colon \Homeo_0(S)\to \R\]
    such that \(\varphi\neq 0\) on the two smooth independent loxodromic elements.
    
    To make \(\varphi\) a \(C^0\)-continuous quasi-morphism, we will need the further modification. Note that the restriction \(\varphi|_{\Diff_0(S)}\) is also a homogeneous quasi-morphism on \(\Diff_0(S)<\Homeo_0(S)\) and, using Kotschick's automatic continuity arguments (see \cite{kotschick2008stable} for the original result and also \cite[Theorem~A.6]{bowden2022quasi} for the statement we use), it is \(C^0\)-continuous. By Whitney approximation theorem (see for example \cite[Theorem~6.26]{lee2002intro}), the subgroup \(\Diff_0(S)<\Homeo_0(S)\) is dense. While \(\Homeo_0(S)\) is metrisable, one can extend \(\varphi|_{\Diff_0(S)}\) to a \(C^0\)-continuous function, which will also be a homogeneous quasi-morphism on \(\Homeo_0(S)\).
\end{proof}
\section{Projection complex for once-bordered torus}\label{sec-6}

In this last part, we will briefly explain why and how our construction also works for essential subsurfaces that are homeomorphic to a once-bordered torus.

\subsection{Subsurface projection and velcrotness}

Let \(\Sigma\) be a once bordered torus. Its \emph{fine curve graph} is defined slightly differently compared to the other surfaces. Let us still denote by \(\C^\dagger(\Sigma)\) its fine curve graph. Its vertices are then essential curves on \(\Sigma\) and we attach an edge to two vertices if the associated curves \emph{intersect transversely at most once}. As in \cite[Section 5.2]{bowden2022quasi}, this graph is Gromov hyperbolic and of infinite diameter.

A similar distance estimate to \autoref{prop: intersection} also holds for this version of fine curve graph:
\begin{lemma}[Lemma 2.7, \cite{bowden2022rotation}]\label{lem: intersection}
Let \(\Sigma\) and \(\C^\dagger(\Sigma)\) be as above. Then for any transverse \(\alpha,\beta\in\C^\dagger(\Sigma)\), we have
\[d^\dagger_\Sigma(\alpha,\beta)\leq 2 |\alpha\cap \beta|+2.\]
\end{lemma}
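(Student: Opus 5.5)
The plan is to induct on \(n=|\alpha\cap\beta|\), using the same surgery that proves \autoref{prop: intersection}, adapted to the once-bordered torus. Throughout, \(\Sigma\) denotes the once-bordered torus.

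\textbf{Base case.} If \(n\le 1\), then \(\alpha\) and \(\beta\) are joined by an edge by definition of \(\C^\dagger(\Sigma)\). So \(d^\dagger_\Sigma(\alpha,\beta)\le 1\le 2n+2\).

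\textbf{Inductive step: choosing the surgered curve.} Suppose \(n\ge 2\) and the bound holds for all transverse pairs with fewer intersection points. Pick two points \(p,q\in\alpha\cap\beta\) that are consecutive along \(\beta\). Let \(b\subset\beta\) be the arc between them, so the interior of \(b\) is disjoint from \(\alpha\). The points \(p,q\) cut \(\alpha\) into two arcs \(a_1,a_2\). Hence \(\gamma_i\coloneqq a_i\cup b\) are simple closed curves, and with suitable orientations \([\gamma_1]+[\gamma_2]=\pm[\alpha]\) in \(H_1\) of the closed torus obtained by capping \(\partial\Sigma\) with a disc.

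An essential curve in \(\Sigma\) is neither inessential nor peripheral. Every simple closed curve in \(\Sigma\) is either non-separating, hence of nonzero class in the capped torus, or separating, hence inessential or peripheral. Therefore essential curves are exactly those with nonzero homology class in the capped torus. Since \([\alpha]\neq 0\), at least one \(\gamma_i\), say \(\gamma=\gamma_1\), has nonzero class and is therefore essential in \(\Sigma\).

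\textbf{Inductive step: perturbing and counting.} Next, push \(\gamma\) slightly off \(a_1\) and \(b\) to a transverse curve \(\gamma'\), isotopic to \(\gamma\) and supported in a small neighbourhood. Pushed to the appropriate side, \(\gamma'\) meets \(\alpha\) only near the corners \(p,q\), in at most one point. The two corners are handled by the choice of side; the number of points is \(0\) or \(1\) according to whether the intersection signs at \(p,q\) agree. Hence \(d^\dagger_\Sigma(\alpha,\gamma')\le 1\).

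For \(\beta\), the curve \(\gamma'\) meets it only near interior points of \(\beta\cap a_1\), of which there are at most \(n-2\), and possibly near the corners \(p,q\). Choosing the push-off direction near \(p\) and \(q\) so that it stays on the side of \(b\) away from the continuation of \(\beta\) should give \(|\gamma'\cap\beta|\le n-1\). The induction hypothesis then yields \(d^\dagger_\Sigma(\gamma',\beta)\le 2(n-1)+2\), so
\[d^\dagger_\Sigma(\alpha,\beta)\le 1+2n\le 2n+2.\]
The extra slack of \(+1\) absorbs one additional corner intersection if the local picture forces it.

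\textbf{Expected main obstacle.} The delicate step is this local analysis at the corners \(p,q\). One must check that, in each of the sign configurations, a push-off of \(\gamma\) can be chosen that meets \(\alpha\) at most once and \(\beta\) strictly fewer than \(n\) times. If this fails for the essential choice \(\gamma_1\), I would fall back on the slack. Allowing \(|\gamma'\cap\beta|\le n\) but \(|\gamma'\cap\alpha|=0\), I would then apply the same surgery to the pair \((\gamma',\beta)\) with the roles of the curves swapped, reducing the count in two steps at the cost of distance \(2\). This still fits the linear bound \(2n+2\).
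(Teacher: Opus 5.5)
The paper gives no proof of this lemma; it quotes it from Bowden--Hensel--Webb. Your surgery induction therefore stands as a self-contained argument, and it is correct. The base case, the choice of consecutive points \(p,q\) along \(\beta\), and the homology argument showing that one of \(a_1\cup b\), \(a_2\cup b\) is essential in \(\Sigma\) are all fine. The step you flag as delicate also goes through, with no need for the fallback, but your justification of it should be replaced. The push-off side is a single global choice, because \(\gamma\) is two-sided; you cannot pick it separately near \(p\) and near \(q\). Also, ``the side of \(b\) away from the continuation of \(\beta\)'' does not describe it, since that continuation is collinear with \(b\).

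Argue as follows. Push \(\gamma=a_1\cup b\) to the side which, along \(a_1\), is the side of \(\alpha\) into which \(b\) departs at \(p\). Near \(p\) the push-off then lies in the quadrant spanned by \(a_1\) and \(b\), so it creates no intersection there. Now compare the intersection signs of \(\alpha\) and \(\beta\) at \(p\) and \(q\).

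\begin{itemize}
\item If the signs are opposite, \(b\) returns to \(q\) from that same side of \(\alpha\). The corner at \(q\) is then also convex. So \(\gamma'\) is disjoint from \(\alpha\), and it meets \(\beta\) only next to the at most \(n-2\) points of \(\beta\cap\mathrm{int}(a_1)\).
\item If the signs agree, \(b\) returns from the other side. The push-off must then go around the reflex side of the corner at \(q\). There it crosses the continuation of \(\beta\) past \(q\) exactly once, and \(a_2\) exactly once.
\end{itemize}

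In every case \(|\gamma'\cap\alpha|\le 1\) and \(|\gamma'\cap\beta|\le n-1\), with all crossings transverse. So the induction closes with \(d^\dagger_\Sigma(\alpha,\beta)\le 1+2(n-1)+2=2n+1\).

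Drop the remark that the extra \(+1\) ``absorbs one additional corner intersection''. Slack in the distance cannot compensate for an intersection count that fails to drop, because the inductive hypothesis needs strictly fewer than \(n\) points. In fact the same recursion gives the sharper bound \(d^\dagger_\Sigma(\alpha,\beta)\le\max\{1,|\alpha\cap\beta|\}\).
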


Now let \(S\) be a closed orientable surface of at least \(2\) genus and \(\Sigma\subset S\) be an essential subsurface that is a once-bordered torus. Similarly, we say that \(\alpha\in \C^\dagger(S)\) intersects \(\Sigma\) essentially if \(\alpha\cap \Sigma\) contains an essential curve or arc on \(\Sigma\). In that case, if \(\alpha\subset \Sigma\), we can set \(\pi_\Sigma(\alpha)=\{\alpha\}\); otherwise, we can also define the subsurface projection \(\pi_\Sigma(\alpha)\subset \C^\dagger(\Sigma)\) by the collection of essential curves on \(\Sigma\) that can taken as a boundary component of a regular neighbourhood of \(\partial \Sigma\) and an essential arc in \(\alpha\cap \Sigma\). With \autoref{lem: intersection}, the same arguments from \cite{long2025connected} yield the following:

\begin{proposition}
Let \(\Sigma, \C^\dagger(\Sigma)\) be as above and let \(\alpha\in \C^\dagger(S)\) be a curve intersecting \(\Sigma\) essentially. Then we have
\[\diam_{\C^\dagger(\Sigma)}\big(\pi_\Sigma(\alpha)\big)\leq 7.\]
If both \(\alpha,\beta\in \C^\dagger(S)\) intersect \(\Sigma\) essentially and \(\alpha\cap \beta=\emptyset\), then we also have
\[\diam_{\C^\dagger(\Sigma)}\big(\pi_\Sigma(\alpha)\cup \pi_\Sigma(\beta)\big)\leq 7.\]
\end{proposition}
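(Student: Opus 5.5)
The plan is to mirror the arguments for \autoref{prop: bdd diam} and \autoref{prop_disjoint_subsurface_proj} from \cite{long2025connected}, with \autoref{lem: intersection} replacing \autoref{prop: intersection}. Throughout, an element of \(\pi_\Sigma(\alpha)\) is a boundary component \(\gamma_a\) of a regular neighbourhood \(N(\partial\Sigma\cup a)\), where \(a\) is an essential arc of \(\alpha\cap\Sigma\). Since \(\Sigma\) has one boundary component, \(N(\partial\Sigma\cup a)\) is a pair of pants with two boundary curves besides \(\partial\Sigma\). Because \(a\) is essential and \(\Sigma\) is a once-bordered torus, both of these curves are essential and both are isotopic to the non-separating curve dual to \(a\) in the torus. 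The case \(\alpha\subset\Sigma\) is trivial, so we assume \(\alpha\not\subset\Sigma\).

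\textbf{Step 1: one arc.} Fix an essential arc \(a\). Two admissible choices of \(\gamma_a\), coming from different regular neighbourhoods or from the two sides of the pants, can be pushed off \(a\cup\partial\Sigma\) into arbitrarily thin collars. After a small perturbation they become transverse and meet in at most \(2\) points, since they run parallel to \(a\) and to \(\partial\Sigma\) and can cross only near the endpoints of \(a\). \autoref{lem: intersection} then gives distance at most \(2\cdot 2+2=6\). Alternatively, one can nest the neighbourhoods so that the curves are disjoint, in which case they are adjacent.

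\textbf{Step 2: two arcs.} Let \(a,b\) be essential arcs with disjoint interiors; they come from the same \(\alpha\), or from disjoint \(\alpha,\beta\). Take \(\gamma_a\) inside a very thin neighbourhood of \(\partial\Sigma\cup a\) and \(\gamma_b\) inside a slightly thicker neighbourhood of \(\partial\Sigma\cup b\). Intersections can then occur only where \(\gamma_a\) runs along \(\partial\Sigma\) and \(\gamma_b\) leaves the collar along \(b\), or symmetrically. Each endpoint of \(b\) on \(\partial\Sigma\) contributes at most one crossing with each of the boundary-parallel strands, and the same holds for the endpoints of \(a\). A careful choice of nesting should give \(|\gamma_a\cap\gamma_b|\le 2\), hence a distance bound of \(6\) by \autoref{lem: intersection}. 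Combining with Step 1 through the triangle inequality then yields the claimed diameter bound \(7\), with one unit of slack for passing between the two admissible curves attached to a single arc.

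The main obstacle is the bookkeeping in Step 2: counting intersection points near \(\partial\Sigma\) sharply enough to reach the constant \(7\), instead of a weaker constant such as \(2\cdot 4+2\) obtained from a naive count. If the sharp count fails, I would first check whether disjoint arcs in a once-bordered torus are forced to be parallel or to cut \(\Sigma\) into a disc. In that case a suitable choice of \(\gamma_a,\gamma_b\) meets at most once and is adjacent in \(\C^\dagger(\Sigma)\), which gives a cheaper route to a uniform bound. Only uniformity matters for the later applications.
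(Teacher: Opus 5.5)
Your overall plan, which is to rerun the arguments of \cite{long2025connected} with \autoref{lem: intersection} in place of \autoref{prop: intersection}, is exactly what the paper does; it gives no further detail. The sketch you give, however, does not deliver the constant \(7\).

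\textbf{The counting gap.} The curves \(\gamma_a,\gamma_b\) of Step~2 are \emph{special} choices (thin versus thicker). The diameter, by contrast, concerns \emph{arbitrary} \(\gamma\in\pi_\Sigma(\alpha)\) and \(\gamma'\in\pi_\Sigma(\beta)\), or both in \(\pi_\Sigma(\alpha)\). These are frontier curves of arbitrary regular neighbourhoods \(N\supset\partial\Sigma\cup a\) and \(N'\supset\partial\Sigma\cup b\), and they can cross the other arc in an uncontrolled way. So you need one transition at \emph{each} end. Choose the special neighbourhoods inside \(\inte N\) and \(\inte N'\), so that \(\gamma\cap\gamma_a=\emptyset=\gamma'\cap\gamma_b\). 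This nesting is the valid form of Step~1; the ``small perturbation'' version changes the vertices and so bounds nothing. With \(|\gamma_a\cap\gamma_b|\le 2\) and \autoref{lem: intersection}, the triangle inequality then gives \(1+6+1=8\), not \(7\). Your ``one unit of slack'' pays for only one of the two transitions. In addition, the count \(|\gamma_a\cap\gamma_b|\le 2\), which carries the whole content of Step~2, is only asserted (``should give'').

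\textbf{How to repair it.} A closer look makes the count both rigorous and sharper. Put \(\gamma_a\) at collar depth \(\varepsilon\) and \(\gamma_b\) at depth \(\varepsilon'>\varepsilon\), with bands around the disjoint arcs \(a,b\) thin enough to be disjoint. Then the \(\partial\Sigma\)-parallel strand of \(\gamma_a\) misses \(\gamma_b\) entirely, because the \(b\)-strand of \(\gamma_b\) turns at depth \(\varepsilon'\) and never reaches depth \(\varepsilon\). So only the second of the two crossing types you list occurs: the \(a\)-strand of \(\gamma_a\) passes through the \(\partial\Sigma\)-parallel strand of \(\gamma_b\). This gives exactly one transverse crossing for each endpoint of \(a\) lying on the arc \(c'\) of \(\partial\Sigma\setminus\{p_b,q_b\}\) that \(\gamma_b\) follows, where \(p_b,q_b\) are the endpoints of \(b\). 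Both arcs of \(\partial\Sigma\setminus\{p_b,q_b\}\) give admissible curves, namely the two non-peripheral boundary components of \(N(\partial\Sigma\cup b)\). By pigeonhole, one of these arcs contains at most one endpoint of \(a\); choose it. Then \(|\gamma_a\cap\gamma_b|\le 1\), so \(\gamma_a,\gamma_b\) are adjacent. Hence any two elements of \(\pi_\Sigma(\alpha)\cup\pi_\Sigma(\beta)\) are at distance at most \(3\), or at most \(2\) if they come from the same arc (through a common thinner neighbourhood). This is well within \(7\).

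\textbf{Two smaller points.} First, the case \(\alpha\subset\Sigma\) is trivial only for the first claim; for the second claim it still needs a line. Second, the non-peripheral boundary curves of \(N(\partial\Sigma\cup a)\) are disjoint from \(a\). They therefore have the slope of \(a\), not that of the curve dual to \(a\).
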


Let \(\Sigma_1\) and \(\Sigma_2\) be two essential subsurfaces of \(S\) that are once-bordered tori. We say that \(\Sigma_1\) intersects \(\Sigma_2\) \emph{essentially} if \(\partial \Sigma_1\cap \Sigma_2\) contains an essential arc on \(\Sigma_2\). Similarly, we say that \(\Sigma_1,\Sigma_2\) are \emph{velcrot} if the collection of common curves on \(\Sigma_1\) and \(\Sigma_2\) is unbounded in both \(\C^\dagger(\Sigma_1)\) and \(\C^\dagger(\Sigma_2)\). 

Many of the previous discussions can be concluded in a similar way, although some additional care should be taken for the cases of once-bordered tori. Here, we will include bare-bones proofs of some main results.

\begin{proposition}\label{prop: tori witness}
Let \(\Sigma_1\) and \(\Sigma_2\) be two essential subsurfaces of \(S\) that are once-bordered tori. Then \(\Sigma_1\) and \(\Sigma_2\) are velcrot if and only if there exists \(\Sigma\subset \Sigma_1\cap \Sigma_2\) isotopic to both \(\Sigma_i\) for \(i=1,2\).
\end{proposition}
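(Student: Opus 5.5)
We follow the proof of \autoref{prop: witness}, adapting its three ingredients (\autoref{lem:velcrot not intersec}, \autoref{lem: annul homo velcrot} and the surgery argument) to the once-bordered torus version of \(\C^\dagger(\Sigma)\), whose edges join curves meeting transversely at most once.

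\emph{The ``if'' direction.} Let \(\Sigma\subset\Sigma_1\cap\Sigma_2\) be isotopic to both \(\Sigma_i\). Then \(\C^\dagger(\Sigma)\subset\C^\dagger(\Sigma_1)\cap\C^\dagger(\Sigma_2)\), and it suffices to show that the inclusion \(\C^\dagger(\Sigma)\hookrightarrow\C^\dagger(\Sigma_i)\) is unbounded. Take transverse \(\alpha,\beta\in\C^\dagger(\Sigma)\) and a finite set \(P\subset\Sigma\) making them minimal. The analogue of \autoref{lem: dagger} for the once-bordered torus (as in \cite[Section~5.2]{bowden2022quasi}) identifies \(d^\dagger_{\Sigma}(\alpha,\beta)\) with a distance in the surviving-curve-type graph of \(\Sigma\setminus P\). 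Since \(\Sigma\setminus P\) and \(\Sigma_i\setminus P\) are isotopic, that graph is the same as the one for \(\Sigma_i\setminus P\), so the distances in \(\C^\dagger(\Sigma)\) and \(\C^\dagger(\Sigma_i)\) agree. Because \(\C^\dagger(\Sigma)\) has infinite diameter, \(\Sigma_1\) and \(\Sigma_2\) are velcrot.

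\emph{The ``only if'' direction.} The first step is the torus version of \autoref{lem:velcrot not intersec}: if \(\Sigma_1,\Sigma_2\) are velcrot, then \(\partial\Sigma_1\cap\Sigma_2\) contains no essential arc of \(\Sigma_2\), and vice versa. Suppose instead that \(a\subset\partial\Sigma_1\cap\Sigma_2\) is such an arc. Every common curve \(\alpha\in\C^\dagger(\Sigma_1)\cap\C^\dagger(\Sigma_2)\) lies in \(\Sigma_1\), so it is disjoint from \(a\). The disjointness estimate in the proposition above (diameter \(\le 7\)) then puts \(\alpha\) within bounded distance of \(\pi_{\Sigma_2}(\partial\Sigma_1)\), which itself has bounded diameter. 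This contradicts the unboundedness of the common curves in \(\C^\dagger(\Sigma_2)\).

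With this in hand, \(\partial\Sigma_1\cap\Sigma_2\) consists only of inessential arcs and curves in \(\Sigma_2\), and symmetrically. After a small perturbation making the boundaries transverse, each such arc cuts off a disc together with a subarc of \(\partial\Sigma_2\), or is peripheral. We then do the bigon surgery of \autoref{fig: bigon surg}: isotope \(\Sigma_2\) inward, supported in small neighbourhoods of these discs, to remove the inessential intersections one at a time. This yields \(\Sigma\subset\Sigma_2\) isotopic to \(\Sigma_2\) whose boundary is disjoint from \(\partial\Sigma_1\).

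Now \(\partial\Sigma\) and \(\partial\Sigma_1\) are disjoint single curves, and since common curves are essential in both surfaces, \(\Sigma\cap\Sigma_1\) contains essential curves of both. Because \(\Sigma\) is a once-bordered torus, \(\partial\Sigma\) must be peripheral in \(\Sigma_1\); otherwise it would give an essential arc or curve and contradict the first step. Hence, after shrinking \(\Sigma\) slightly, we get \(\Sigma\subset\Sigma_1\cap\Sigma_2\) isotopic to both \(\Sigma_1\) and \(\Sigma_2\).

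The main obstacle is this final topological step. We must exclude the configuration in which \(\partial\Sigma\) lies outside \(\Sigma_1\), or the two tori are disjoint or nested but non-isotopic. This will be handled by the genus count: a once-bordered torus contains no essential once-bordered torus other than peripheral ones.
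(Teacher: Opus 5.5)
Your proposal is correct and follows the paper's route. The ``if'' direction uses the once-bordered-torus analogue of \autoref{lem: dagger} to make \(\C^\dagger(\Sigma)\hookrightarrow\C^\dagger(\Sigma_i)\) isometric, and the ``only if'' direction transplants \autoref{prop: witness}: first rule out essential boundary arcs, then shrink to a common isotopic copy, using bigon surgery where the paper uses a regular neighbourhood of \(\partial X\cup\partial X'\). One correction to your justification: the peripherality of \(\partial\Sigma\) in \(\Sigma_1\), when \(\partial\Sigma\subset\Sigma_1\), follows from \(\partial\Sigma\) being a separating essential curve in a once-bordered torus; it does not follow from your first step, which is about \(\partial\Sigma_2\) and not \(\partial\Sigma\).
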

\begin{proof}
For the ``if'' part, following the arguments from \autoref{lem: annul homo velcrot}, we can conclude that the natural inclusions \(\C^\dagger(\Sigma)\hookrightarrow\C^\dagger(\Sigma_i)\) for \(i=1,2\) are isometric embedding, as we also have the distance formula as in \autoref{lem: dagger} for once-bordered tori after the comments from \cite[Section 5.2]{bowden2022quasi}. For the ``only if'' part, we apply the same arguments as in \autoref{prop: witness}.
\end{proof}

We similarly define the projection distance as in \autoref{sec: surf surg}. The proof of the following corollary goes \emph{verbatim} as \autoref{cor: velcrot same base}:
\begin{corollary}\label{cor: velcrot same base tori}
There exists \(M>0\) that verifies the following. Let \(\Sigma_1,\Sigma_2\) be two essential velcrot once-bordered tori on \(S\). Then 
\[|d^\pi_{\Sigma_1}(x,z)-d^\pi_{\Sigma}(x,z)|<M,\]
for any \(x,z\in \C^\dagger(S)\) that intersect \(\Sigma_1,\Sigma_2\) essentially. \QEDD
\end{corollary}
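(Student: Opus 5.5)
The plan is to follow the proof of \autoref{cor: velcrot same base} almost line by line, replacing the non-sporadic inputs by their once-bordered torus analogues. First, by \autoref{prop: tori witness}, choose a once-bordered torus \(\Sigma\subset \Sigma_1\cap\Sigma_2\) isotopic to both \(\Sigma_1\) and \(\Sigma_2\). (The statement as printed reads \(d^\pi_\Sigma\); I interpret it as \(d^\pi_{\Sigma_2}\), and the argument will in fact compare both \(d^\pi_{\Sigma_i}\) to \(d^\pi_\Sigma\).)

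\textbf{Step 1: projections to \(\Sigma\).} Check that \(x,z\) intersect \(\Sigma\) essentially. An essential arc of \(x\cap\Sigma_1\) crosses the annular collar \(\Sigma_1\setminus\inte(\Sigma)\), whose two boundary components are isotopic. Its restriction to \(\Sigma\) therefore contains an essential arc of \(\Sigma\); if instead \(x\subset\Sigma_1\) is an essential curve, it meets \(\Sigma\) in an essential curve or arc. Consequently, as in the earlier proof, a curve built from \(\partial\Sigma\) and an essential arc of \(x\cap\Sigma\) is also built from a regular neighbourhood of \(\partial\Sigma_1\) together with an essential arc of \(x\cap\Sigma_1\), up to pushing along the collar. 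This gives \(\pi_\Sigma(x)\subset\pi_{\Sigma_1}(x)\), or at least \(\pi_\Sigma(x)\) lies within uniformly bounded distance of \(\pi_{\Sigma_1}(x)\) in \(\C^\dagger(\Sigma_1)\). The same holds for \(z\) and for \(\Sigma_2\).

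\textbf{Step 2: the inclusion is isometric.} Show that \(\C^\dagger(\Sigma)\hookrightarrow\C^\dagger(\Sigma_i)\) is an isometric embedding on pairs of transverse curves. Given transverse \(\alpha,\beta\in\C^\dagger(\Sigma)\), choose a finite set \(P\subset\Sigma\) putting them in minimal position. The once-bordered torus version of \autoref{lem: dagger}, cited from \cite[Section 5.2]{bowden2022quasi} in \autoref{prop: tori witness}, computes both distances in the same surviving graph, since \(\Sigma\setminus P\) and \(\Sigma_i\setminus P\) are isotopic. For non-transverse pairs, I would perturb slightly and use the bounded effect of small perturbations, which follows from the edge rule (at most one transverse intersection) and \autoref{lem: intersection}.

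\textbf{Step 3: combine.} Pick \(\alpha\in\pi_\Sigma(x)\) and \(\beta\in\pi_\Sigma(z)\), possibly after a small perturbation. By Step 2, \(d_{\Sigma_1}(\alpha,\beta)=d_\Sigma(\alpha,\beta)=d_{\Sigma_2}(\alpha,\beta)\). The earlier proposition bounds the diameter of \(\pi_{\Sigma_i}(x)\) by \(7\). Combined with Step 1, this gives \(|d^\pi_{\Sigma_i}(x,z)-d_\Sigma(\alpha,\beta)|\leq 14+C\), and the triangle inequality then yields the claim with a uniform \(M\).

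\textbf{Main obstacle.} I expect Step 1 to be the delicate point. The torus projection is defined through regular neighbourhoods of \(\partial\Sigma\cup\text{arc}\), and the boundary used there changes from \(\partial\Sigma\) to \(\partial\Sigma_1\). My first approach is to isotope the neighbourhood across the collar annulus, which fixes the essential curve class while moving the curve off the collar. I would then bound the discrepancy using the disjointness clause of the earlier proposition (disjoint curves have projections within \(7\)), applied to the arc and its push-off.
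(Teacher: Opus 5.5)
Steps 2 and 3 are fine, and your route is the paper's. The paper's proof is the non-sporadic argument copied verbatim, and that argument simply asserts \(\pi_Z(x)\subset\pi_X(x)\) ``with the natural inclusion map''. The gap is exactly the step you flagged, and it cannot be closed: neither \(\pi_\Sigma(x)\subset\pi_{\Sigma_1}(x)\) nor any uniformly bounded-distance version of it holds.

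The collar \(\Sigma_1\setminus\inte(\Sigma)\) is an annulus only up to homeomorphism. It can carry long fingers reaching deep into \(\Sigma_1\), so the component \(a_1\) of \(x\cap\Sigma_1\) containing your arc \(c\) may leave and re-enter \(\Sigma\) arbitrarily often. A curve \(\gamma\in\pi_\Sigma(x)\) built from \(c\) avoids \(c\) and \(\partial\Sigma\), but not the rest of \(a_1\). The disjointness clause you want to use concerns two disjoint curves of \(S\), which you do not have. Pushing across the collar is an isotopy, and isotopies give no control of distances in \(\C^\dagger(\Sigma_1)\).

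Here is a concrete counterexample.
\begin{itemize}
\item Let \(\Sigma_0\) be \(\Sigma_1\) minus a thin collar \(A_0\) of \(\partial\Sigma_1\). Pick \(p\in\inte(A_0)\) and a loop \(\ell\) at \(p\) filling \(\Sigma_1\). Set \(\Sigma=\phi_n(\Sigma_0)\) with \(\phi_n=\mathrm{Push}(\ell)^n\), a homeomorphism of \(\Sigma_1\) fixing \(\partial\Sigma_1\) and \(p\) and isotopic to the identity. Then \(\Sigma\subset\Sigma_1\) are nested and isotopic, hence velcrot.
\item Fix \(x\) with \(x\cap\Sigma_1\) a single essential arc \(a_1\) missing \(p\), and fix \(\gamma_1\in\pi_{\Sigma_1}(x)\). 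Every \(\gamma\in\pi_\Sigma(x)\) lies in \(\Sigma\), so it misses \(p\) and \(\partial\Sigma=\phi_n(\partial\Sigma_0)\).
\item A fine path can be moved off \(p\) by a small homeomorphism supported near \(p\). Curves meeting at most once are at distance at most \(2\) in \(\C(\Sigma_1\setminus p)\). Hence
\[d^\dagger_{\Sigma_1}(\gamma,\gamma_1)\geq \tfrac12\, d_{\C(\Sigma_1\setminus p)}(\gamma,\gamma_1)\geq \tfrac12\big(d_{\C(\Sigma_1\setminus p)}(\phi_n([\partial\Sigma_0]),\gamma_1)-1\big).\]
The right-hand side tends to infinity by Kra's theorem, since \(\mathrm{Push}(\ell)\) is pseudo-Anosov on \(\Sigma_1\setminus p\).
\item Take \(z\) meeting \(\Sigma_1\) in a parallel copy of \(c\), extended across the collar \(\phi_n(A_0)\) to \(\partial\Sigma_1\) and closed up outside \(\Sigma_1\). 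Then \(\gamma\in\pi_\Sigma(z)\). Also \(\gamma\) is disjoint from \(\partial\Sigma_1\cup(z\cap\Sigma_1)\), hence from some curve of \(\pi_{\Sigma_1}(z)\).
\item Therefore \(d^\pi_\Sigma(x,z)\le 14\), while \(d^\pi_{\Sigma_1}(x,z)\ge d^\dagger_{\Sigma_1}(\gamma,\gamma_1)-1\) is unbounded in \(n\).
\end{itemize}

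So the uniform estimate your Step 3 needs fails. So does the statement as printed, and so does its non-sporadic model. An argument of your shape needs extra control on how \(x\) and \(z\) cross the collar. For instance, if each component of \(x\cap\Sigma_1\) meets \(\Sigma\) in a single arc, then your \(\gamma\) is disjoint from \(\partial\Sigma_1\cup a_1\), and Step 1 holds with constant \(1\).
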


Now, using the same arguments from \autoref{sec: surf surg}, we can also conclude the following results:
\begin{theorem}[Fine Behrstock's inequality for once-bordered tori]\label{thm: berhstock for torus}
There exists \(M>0\) such that the following holds. Let \(\Sigma_1,\Sigma_2,\Sigma_3\) be three essential subsurfaces of \(S\) that are once-bordered tori. Suppose that \(\Sigma_i\) and \(\Sigma_j\) \((i\neq j)\) are either overlapping or isotopic. If \(d^\pi_{\Sigma_1}(\Sigma_2,\Sigma_3)>M\), then \(d^\pi_{\Sigma_2}(\Sigma_1,\Sigma_3),d^\pi_{\Sigma_3}(\Sigma_1,\Sigma_2)<M\). \QEDD
\end{theorem}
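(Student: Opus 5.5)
We follow the proof of \autoref{thm: berhstock} essentially verbatim, replacing each non-sporadic ingredient by its once-bordered-torus counterpart. The needed inputs are the torus analogue of \autoref{prop_disjoint_subsurface_proj} (the proposition above: disjoint essential arcs or curves project within distance \(7\)), the witness characterisation \autoref{prop: tori witness}, and \autoref{cor: velcrot same base tori}.

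\textbf{Step 1: a torus version of \autoref{lem: velcrot small proj}.} Suppose \(\Sigma_2,\Sigma_3\) are velcrot and \(\partial\Sigma_2,\partial\Sigma_3\) meet \(\Sigma_1\) essentially. By \autoref{prop: tori witness} there is a torus \(\Sigma\subset\Sigma_2\cap\Sigma_3\) isotopic to both. Then \(\partial\Sigma\) is disjoint from \(\partial\Sigma_2\) and \(\partial\Sigma_3\). We need \(\partial\Sigma\) to still meet \(\Sigma_1\) essentially. In the isotopic case, if \(\Sigma\) were velcrot to \(\Sigma_1\), the witness argument (nested witnesses) would make \(\Sigma_1\) velcrot to \(\Sigma_2\), contradicting the hypothesis. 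Applying the disjointness bound twice then gives \(d^\pi_{\Sigma_1}(\Sigma_2,\Sigma_3)\le 14\).

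\textbf{Step 2: reduction to transverse boundaries without inessential intersections.} Combining Step 1 with the triangle inequality \eqref{eq: trian ineq proj dist}, we may replace each \(\Sigma_i\) by a velcrot perturbation, changing projection distances by a bounded amount. So we may assume the three boundary curves meet pairwise transversely with no triple points. Next we perform the bigon surgery of \autoref{fig: bigon surg}: an inessential arc of \(\partial\Sigma_3\cap\Sigma_2\) cobounds a disc with a subarc of \(\partial\Sigma_2\), and we push \(\Sigma_2\) off it by an isotopy supported near the disc. Since each \(\partial\Sigma_i\) is a single curve and each step lowers the intersection count, finitely many steps yield shrunken tori \(\Sigma_2'\subset\Sigma_2\), \(\Sigma_3'\subset\Sigma_3\) with no inessential intersections. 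By \autoref{prop: tori witness} these are velcrot to the originals, so again by Step 1 and the triangle inequality we may assume \(\partial\Sigma_2\) and \(\partial\Sigma_3\) meet only in essential arcs. The same applies to the pairs involving \(\Sigma_1\).

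\textbf{Step 3: the Leininger-type counting argument.} Take \(M\) large. By \autoref{lem: intersection} and the disjointness bound, \(d^\pi_{\Sigma_1}(\Sigma_2,\Sigma_3)>M\) forces every essential arc of \(\partial\Sigma_2\cap\Sigma_1\) to meet every essential arc of \(\partial\Sigma_3\cap\Sigma_1\) at least three or four times. In particular \(\partial\Sigma_3\) passes through \(\Sigma_1\) between consecutive crossings, and so cannot stay inside \(\Sigma_1\) along an essential arc of \(\partial\Sigma_2\cap\Sigma_3\). Following the usual argument, between two consecutive intersection points of \(\partial\Sigma_2\) with \(\partial\Sigma_3\) inside \(\Sigma_1\) there is a subarc of \(\partial\Sigma_2\) lying in \(\Sigma_3\) and disjoint from \(\partial\Sigma_1\). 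Because we arranged no inessential intersections, this subarc is essential in \(\Sigma_3\). The disjointness bound then gives \(d^\pi_{\Sigma_3}(\Sigma_1,\Sigma_2)\le 7\), and symmetrically \(d^\pi_{\Sigma_2}(\Sigma_1,\Sigma_3)\le 7\), up to the bounded perturbation errors.

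\textbf{Main obstacle.} The delicate point is Step 3 in the torus setting. Edges of \(\C^\dagger(\Sigma)\) allow one transverse intersection, and projections are built from a regular neighbourhood of the arc union \(\partial\Sigma\). So the translation ``large projection distance \(\Rightarrow\) many intersections of essential arcs'' must use \autoref{lem: intersection}, and the threshold must account for the extra \(+2\) and the single-boundary geometry. I would first check, using \autoref{lem: intersection}, that \(p\) intersections between the chosen arcs give projection distance at most \(4p+C\). I would then verify that an essential arc of a once-bordered torus cannot be isotoped into a collar of \(\partial\Sigma_3\), so that the found subarc really yields a well-defined projection.
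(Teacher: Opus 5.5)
Your proposal is correct and follows the paper's own route. The paper proves this statement by rerunning the proof of \autoref{thm: berhstock}: velcrot perturbation and bigon surgery, controlled by the torus analogues of \autoref{lem: velcrot small proj} and \autoref{cor: velcrot same base}, then the Leininger-type intersection count, with the torus projection bounds and \autoref{lem: intersection} substituted. That is exactly your Steps 1--3.

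One point you should make explicit is that the \(\Sigma_i\) must pairwise intersect essentially (i.e.\ be pairwise non-velcrot). This hypothesis appears in \autoref{thm: berhstock} but was dropped from this statement, and your Step 3 uses it; without it, the convention \(d^\pi=\infty\) makes the statement false, e.g.\ for \(\Sigma_1=\Sigma_2\).
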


\begin{proposition}\label{prop: finiteness for torus}
There exists a constant \(M>0\), such that the following holds. For any two curves \(\alpha,\beta\in \C^\dagger(S)\), there are finitely many essential subsurfaces \((\Sigma_i)_{i=1}^n\) of \(S\) that are once-bordered tori such that if \(Z\) is an essential subsurface of \(S\) that is also a once-bordered torus, with \(M<d^\pi_Z(\alpha,\beta)<\infty\), then \(Z\) is velcrot to one of \(\Sigma_i\)'s. In particular, these \(\Sigma_i\)'s can be made pairwise non-velcrot with \(M<d^\pi_{\Sigma_i}(\alpha,\beta)<\infty\). \QEDD
\end{proposition}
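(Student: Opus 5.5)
We follow the proof of \autoref{prop: finiteness} step by step, with \autoref{prop: tori witness} in place of \autoref{prop: witness} and the diameter bound $7$ above in place of \autoref{prop: bdd diam} and \autoref{prop_disjoint_subsurface_proj}.

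\emph{Step 1: reduce to transverse $\alpha,\beta$.} Perturb $\alpha,\beta$ to transverse curves $\alpha',\beta'$ with $\alpha\cap\alpha'=\beta\cap\beta'=\emptyset$. By the disjointness estimate for once-bordered tori, $d^\pi_Z(\alpha',\beta')\geq d^\pi_Z(\alpha,\beta)-14$. So it suffices to treat transverse curves, at the cost of enlarging $M$.

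\emph{Step 2: reduce to the filling case.} If $\alpha\cup\beta$ does not fill $S$, let $S'$ be the subsurface it fills. Suppose $Z$ is a once-bordered torus containing an essential curve of $Z$ disjoint from $S'$. Then $\alpha$, $\beta$ and that curve lie on a path of length two in $\C^\dagger(S)$. Applying \autoref{thm: bgit}, which holds for once-bordered tori, bounds $d^\pi_Z(\alpha,\beta)$. Otherwise $Z$ is essentially contained in $S'$, and we argue inside $S'$, where $\alpha\cup\beta$ cuts the surface into discs and annuli around the boundary of $S'$. From now on assume $\alpha\cup\beta$ cuts $S$ into finitely many discs.

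\emph{Step 3: put $\partial Z$ in normal position.} Take $Z$ with $M<d^\pi_Z(\alpha,\beta)<\infty$. Shrink $Z$ to $Z'\subset Z$ with $\partial Z'$ transverse to $\alpha\cup\beta$ and no triple points. By \autoref{prop: tori witness}, $Z'$ is velcrot to $Z$, and by \autoref{cor: velcrot same base tori}, $d^\pi_{Z'}(\alpha,\beta)$ is still large.

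We next remove bigons. Suppose $\partial Z'$ crosses some arc of $(\alpha\cup\beta)\setminus(\alpha\cap\beta)$ more than twice. Since $\partial Z'$ is a single curve and the projection distance is large, one expects a bigon between $\partial Z'$ and $\alpha$ or $\beta$. If no such bigon existed, the essential arcs of $\alpha\cap Z'$ and $\beta\cap Z'$ would meet boundedly often. \autoref{lem: intersection} would then bound $d^\pi_{Z'}(\alpha,\beta)$ by a constant, contradicting the hypothesis. Perform the bigon surgery of \autoref{fig: bigon surg}. This isotopy is supported near a disc, so it gives $Z''\subset Z'$, still a once-bordered torus, isotopic and hence velcrot to $Z'$ by \autoref{prop: tori witness}. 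After finitely many surgeries, $\partial Z''$ meets each arc at most twice.

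\emph{Step 4: finiteness and conclusion.} Apply \autoref{lem: dual graph}. Its proof is purely combinatorial and does not use non-sporadicity. It isotopes $Z''$, within the closed complementary discs, to one of finitely many canonical subsurfaces $\Sigma$ bounded by neighbourhoods of the dual graph; we keep only those canonical subsurfaces that are once-bordered tori. The isotopy creates no essential intersection, so $Z''$ and $\Sigma$ are velcrot. Two applications of \autoref{prop: tori witness} then give $Z$ velcrot to $\Sigma$: first find $W\subset Z''\cap\Sigma\subset Z\cap\Sigma$ isotopic to both, then note that $W$ is isotopic to $Z$. Finally discard redundant members to make the $\Sigma_i$ pairwise non-velcrot, keeping only those with $M<d^\pi_{\Sigma_i}(\alpha,\beta)<\infty$.

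The main obstacle is Step 3: extracting a bigon from a large projection distance in the once-bordered torus. Here edges correspond to curves meeting at most once, and the projection uses the regular neighbourhood of $\partial\Sigma$ together with an arc. I would handle it through \autoref{lem: intersection}, using the estimate $d\leq 2|\alpha\cap\beta|+2$ in place of the curve-graph intersection bound.
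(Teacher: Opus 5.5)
Your proposal follows the paper's own route. The paper gives no separate argument for once-bordered tori: it reruns the proof of \autoref{prop: finiteness} (transverse perturbation, reduction to the filling case via \autoref{thm: bgit}, bigon surgery, then the dual-graph finiteness of \autoref{lem: dual graph}), substituting \autoref{prop: tori witness}, \autoref{cor: velcrot same base tori} and the torus projection bounds, exactly as you do.

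One caution on your final step, which the paper does not justify either. Velcrotness is not transitive, so dropping a canonical surface merely because it is velcrot to a retained one does not obviously preserve the covering property. Likewise, dropping those with \(d^\pi_{\Sigma_i}(\alpha,\beta)\le M\) is only safe after shifting \(M\) by the constant of \autoref{cor: velcrot same base tori}. The ``in particular'' clause therefore needs more than that one line.
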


Finally, we remark that for \autoref{thm: berhstock main}, the arguments are the same as \autoref{thm: berhstock} and it suffices to apply the results for tori or non-sporadic surfaces accordingly.

\subsection{Projection complexes and quasi-morphisms}
By now, we have prepared all the necessary prerequisites for running the machinery established in \autoref{sec-4} and \autoref{sec-5}.

To be precise, we can pick \(\Y\) to be the collection of all essential subsurfaces on \(S\) that are homeomorphic to a once-bordered torus. Then they also enjoy the properties in \autoref{thm: BBF prerequi}, which is sufficient to construct a quasi-tree out of \(\Y\), also denoted by \(\PY\), where \(K>0\) is a sufficiently large number. This space is indeed unbounded, as we can apply the same arguments from \autoref{subsec: puncturing} for their surviving curve graphs.

Now, we build the blown-up projection complex in the same way as before, \emph{cf.} \ref{L1}, \ref{L2} and \ref{L3} in \autoref{sec-5A}. This gives us a \(\delta\)-hyperbolic space, still denoted by \(\CY\), on which \(\Homeo_0(S)\) acts by isometries.

Recall that for a once-bordered torus \(\Sigma\), if we consider the action of \(\Homeo_0(\Sigma)\) on the associated fine curve graph \(\C^\dagger(\Sigma)\), it also has two elements in \(\Diff_0(\Sigma)\subset \Homeo_0(\Sigma)\) that act as independent loxodromic isometries on \(\C^\dagger(\Sigma)\), see \cite[\S 5.2]{bowden2022quasi}. Using the same arguments as before, we can also conclude:
\begin{proposition}
Let \(S\) be a closed surface of genus at least \(2\) and \(\Y\) be the collection of all essential subsurfaces on \(S\) that are homeomorphic to a once-bordered torus. Let \(\CY,\Homeo_0(S)\) be as above. Then for each subsurface \(\Sigma\in \Y\), there exist \(f,g\in \Homeo_0(S)\) such that \(f(\Sigma)=g(\Sigma)=\Sigma\) acting by independent loxodromic isometries on \(\CY\). \QEDD
\end{proposition}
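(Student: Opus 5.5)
The plan is to mirror the proof of \autoref{prop: ind loxo}, replacing the non-sporadic ingredients by their once-bordered torus counterparts.

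\textbf{Step 1: independent loxodromics on the torus.} Fix \(\Sigma\in\Y\). By \cite[\S 5.2]{bowden2022quasi}, there are \(f,g\in\Diff_0(\Sigma;\partial\Sigma)\) acting on \(\C^\dagger(\Sigma)\) by independent loxodromic isometries. Extending them by the identity gives elements of \(\Homeo_0(S)\) with \(f(\Sigma)=g(\Sigma)=\Sigma\).

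\textbf{Step 2: loxodromicity in \(\CY\).} I would first check that the analogue of \autoref{prop: fine curve graph bi-lip} holds, namely that \(\iota_\Sigma\colon\C^\dagger(\Sigma)\to\CY\) is bi-Lipschitz. The collapsing argument goes through unchanged, with two substitutions:
\begin{itemize}
\item the bound \(12\) from \autoref{prop_disjoint_subsurface_proj} is replaced by the bound \(7\) from the torus projection proposition;
\item the bounded geodesic image theorem \autoref{thm: bgit} is used in its once-bordered torus form.
\end{itemize}
Since a quasi-isometric embedding preserves positive translation length, \(f\) and \(g\) are then loxodromic on \(\CY\).

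\textbf{Step 3: independence.} Let \(A_f,A_g\subset\iota_\Sigma(\C^\dagger(\Sigma))\) be quasi-axes, and let \(h\in\Homeo_0(S)\) and \(B>0\) be arbitrary. There are two cases.

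\emph{Case \(h\Sigma\) not velcrot to \(\Sigma\).} I invoke the torus version of \autoref{cor: projection consistent}. This needs \autoref{prop: barrier C} and \autoref{prop: projection consistent}, whose proofs only use the properties in \autoref{thm: BBF prerequi}; those hold for tori by \autoref{thm: berhstock for torus}, \autoref{cor: velcrot same base tori} and \autoref{prop: finiteness for torus}. The consequence is that \(\mathcal{N}_B(A_f)\cap\iota_{h\Sigma}(\C^\dagger(h\Sigma))\) lies within a uniformly bounded distance of the bounded set \(\iota_{h\Sigma}(\pi_{h\Sigma}(\Sigma))\). Hence any long enough segment \(J\subset A_g\) has \(hJ\not\subset\mathcal{N}_B(A_f)\).

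\emph{Case \(h\Sigma\) velcrot to \(\Sigma\).} By \autoref{prop: tori witness} there is a torus \(Z\subset\Sigma\cap h\Sigma\) isotopic to both. Choose \(h'\in\Homeo_0(S)\) with \(h'|_Z=\Id\) and \(h'h\Sigma=\Sigma\). Then \(d_{\CY}(hx,h'hx)\le 2L\) for every \(x\in A_g\), by edges of type \ref{L2}. Independence of \(f,g\) for the action of the stabiliser of \(\Sigma\) on \(\C^\dagger(\Sigma)\) then provides \(J\) with \(h'hJ\not\subset\mathcal{N}_{B+2L}(A_f)\), and therefore \(hJ\not\subset\mathcal{N}_B(A_f)\).

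\textbf{Main obstacle.} The delicate point is the second case. Independence of \(f,g\) in \cite{bowden2022quasi} is stated for the group acting on \(\C^\dagger(\Sigma)\), but \(h'h\) restricted to \(\Sigma\) need not fix \(\partial\Sigma\) pointwise. I would handle this by noting that the relevant group is \(\Homeo(\Sigma)\), which acts on \(\C^\dagger(\Sigma)\) by isometries. Independence in the Bestvina--Fujiwara sense is meant with respect to the full isometric action, and \cite{bowden2022quasi} establishes it for all of \(\Homeo(\Sigma)\) through the surviving curve graph of \(\Sigma\) punctured at finitely many points. If that is not literally available, I would instead choose \(f,g\) as pseudo-Anosov maps relative to a finite set, whose quasi-axes have distinct endpoints not related by any homeomorphism of \(\Sigma\).
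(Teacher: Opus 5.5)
Your proposal is correct and is essentially the paper's proof. The paper only says that the argument of \autoref{prop: ind loxo} carries over with the once-bordered torus inputs, which is exactly what your Steps 1--3 do. On the subtlety you flag, the paper simply regards \(h'h\) as an element of \(\Homeo_0(\Sigma)\) with the boundary allowed to move (justified because \(h'h\in\Homeo_0(S)\) preserves \(\Sigma\)) and uses independence for that action. So you need not pass to all of \(\Homeo(\Sigma)\). Note also that your endpoint-based fallback would not by itself give Bestvina--Fujiwara independence, since that is a uniform condition on long segments of the axes.
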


Now, using \autoref{Bestvina--Fujiwara machinery} and Kotschick's automatic continuity arguments, we can also conclude the cases for once-bordered tori in \autoref{thm: main}, \emph{i.e.} if \(\Sigma\subset S\) is an essential subsurface homeomorphic to a once-bordered torus, then there exists a \(C^0\)-continuous homogeneous quasi-morphisms \(\varphi\) on \(\Homeo_0(S)\) with \(\varphi(g)\neq 0\) for some \(g\in \Homeo_0(\Sigma;\partial \Sigma)<\Homeo_0(S)\).
\section{Asymptotic dimension}

In this last section, we will discuss some remarks on the asymptotic dimension of fine curve graphs and blown-up fine projection complexes. 

Let \(X\) be a metric space. We will denote by \(\asdim(X)\) its asymptotic dimension. This quantity takes integral values and can be defined for general metric spaces. Also, it is a quasi-isometric invariance, see, for example, \cite{bell2008asymptotic}.

For a geodesic Gromov hyperbolic space \(X\), its asymptotic dimension is closely related to the topological dimension of its Gromov boundary \(\partial X\). The following result is due to \cite[Proposition~6.2]{buyalo2008dimensions} for proper cases and is generalised to non-proper situations in \cite[Proposition~2.5]{kopreski2025asymp}:
\begin{proposition}\label{prop: bnd dim}
Let \(X\) be a geodesic Gromov hyperbolic space with a compact subset \(Z\subset \partial X\). Then \(\asdim (X)\geq \dim(Z)+1\).
\end{proposition}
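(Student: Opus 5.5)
The plan is to reduce to a statement about a model space built directly from \(Z\), the \emph{hyperbolic cone} over \(Z\), and then use the topological fact that \(\dim(Z\times[0,1])=\dim Z+1\) for compact metric \(Z\).

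First I fix a base point \(o\in X\) and a visual metric \(d_v\) on \(\partial X\) with parameter \(a>1\), so that \(d_v(\xi,\eta)\asymp a^{-(\xi|\eta)_o}\). Since \(X\) need not be proper, geodesic rays may not exist. However, every boundary point is represented by a \((1,C\delta)\)-quasi-geodesic ray \(\gamma_\xi\) issuing from \(o\) (see \cite{vasala2005gromov, das2017geometry}), and I choose one such ray for each \(\xi\in Z\). I then define
\[\Phi\colon Z\times[0,\infty)\to X,\qquad \Phi(\xi,t)=\gamma_\xi(t).\]
I equip \(Z\times[0,\infty)\) with the cone metric
\[\rho\big((\xi,t),(\eta,s)\big)=t+s-2\min\{t,s,(\xi|\eta)_o\}.\]
A thin-triangle argument (the rays \(\gamma_\xi,\gamma_\eta\) fellow-travel up to time \(\approx(\xi|\eta)_o\) and diverge linearly afterwards) should show that \(\Phi\) is a quasi-isometric embedding, with constants depending only on \(\delta\). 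Because asymptotic dimension is monotone under coarse embeddings, this gives \(\asdim X\geq\asdim(Z\times[0,\infty),\rho)\). Compactness of \(Z\) is used here only to ensure the Gromov products and \(d_v\) are uniformly comparable; the real use of compactness comes in the next step.

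It then remains to prove that the hyperbolic cone over a compact metric space \((Z,d_v)\) has asymptotic dimension at least \(\dim Z+1\). Suppose \(\asdim\le n\). For every \(r\) there is then a cover \(\mathcal U\) of the cone with Lebesgue number \(r\), mesh bounded by some \(D(r)\), and multiplicity \(n+1\). The standard partition-of-unity construction gives a map from the cone to the nerve \(N(\mathcal U)\), which has dimension at most \(n\), that is \(O(1/r)\)-Lipschitz and whose point preimages lie in sets of diameter at most \(D(r)\). I would precompose this map with a continuous parametrisation
\[\Psi_T\colon Z\times[0,1]\to \text{cone},\qquad (\xi,s)\mapsto(\xi,\,T+s\Lambda),\]
for large \(T\) and \(\Lambda\). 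The aim is an \(\varepsilon\)-map \(Z\times[0,1]\to N(\mathcal U)\), meaning a continuous map with fibres of diameter less than \(\varepsilon\), for every \(\varepsilon>0\). Since \(Z\times[0,1]\) is compact, this would give \(\dim(Z\times[0,1])\le n\), hence \(\dim Z+1\le n\), as required. The fibre estimate is straightforward. A set of \(\rho\)-diameter at most \(D\) pulls back to pairs with \(|s-s'|\le D/\Lambda\) and \((\xi|\eta)_o\ge T-D\), so its \(d_v\)-diameter is at most \(\asymp a^{-(T-D)}\). Both quantities tend to \(0\) as \(T,\Lambda\to\infty\).

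The main obstacle is continuity. With respect to \(\rho\), the map \(\Psi_T\) is badly non-Lipschitz in the \(Z\)-direction: two points at visual distance \(\varepsilon\ll a^{-T}\) are still at \(\rho\)-distance roughly \(2(T-\log_a(1/\varepsilon))\), which is large. So the composite with the nerve map need not be continuous. My first attempt is to work on a \emph{discretisation} instead. I take a finite \(a^{-T}\)-net of \(Z\), which exists by compactness, and define the map to the nerve on the net. I then extend it to \(Z\times[0,1]\) by a partition of unity subordinate to \(a^{-T}\)-balls, that is, by convex combinations in the simplicial structure of \(N(\mathcal U)\). Neighbouring net points at height \(\ge T\) are \(\rho\)-close, up to a constant depending only on \(\delta\), so by choosing \(r\) much larger than that constant they land in a common simplex, and the convex combinations stay inside \(N(\mathcal U)\). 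If this bookkeeping fails, I would fall back on the arguments of \cite[Proposition~6.2]{buyalo2008dimensions} and \cite[Proposition~2.5]{kopreski2025asymp}. Those pass through the capacity (Assouad–Nagata type) dimension of \(\partial X\) and handle exactly this scale mismatch.
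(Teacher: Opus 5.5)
The paper gives no proof of this statement. It quotes \cite[Proposition~6.2]{buyalo2008dimensions} for proper spaces and \cite[Proposition~2.5]{kopreski2025asymp} for the non-proper case, so your fallback is exactly the paper's treatment. Your cone-plus-\(\varepsilon\)-map strategy is a genuinely self-contained alternative to that citation. However, the discretisation step, which you yourself flag as the main obstacle, fails as written.

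Take net points \(\nu,\nu'\) at visual distance about \(a^{-T}\). Then \((\nu|\nu')_o\approx T\), so at height \(t=T+s\Lambda\) their cone distance is \(\rho\approx 2(t-T)=2s\Lambda\). They are \(O(\delta)\)-close only at heights up to about \(T\), not at heights \(\ge T\). Your fibre estimate in the \([0,1]\)-direction forces \(\Lambda\ge D(r)/\varepsilon\), and \(D(r)\ge r\). So near the top of the strip, neighbouring net points are farther apart than the mesh. Their nerve images can then have supports spanning no simplex, and your convex combinations leave \(N(\mathcal U)\). No choice of \(r\) rescues this, since you would need \(r\gg\Lambda\gg D(r)\ge r\).

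The repair is to discretise at the \emph{top} scale, i.e. take a finite \(a^{-(T+\Lambda)}\)-net. Neighbouring net points then have Gromov product \(\ge T+\Lambda-c\), so they are within \(2c\) in \(\rho\) at every height in \([T,T+\Lambda]\). The fibre estimate is unchanged: it only uses heights \(\ge T\), plus an extra \(O(a^{-(T+\Lambda)})\) error in \(d_v\).

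Two further points also need fixing.
\begin{enumerate}
\item ``Landing in a common simplex'' is not automatic, because the closed star of a vertex is not convex. Use instead the nerve map with coordinates proportional to \(\max\{0,\rho(x,\mathrm{Co}(Z)\setminus U)-r/2\}\). Its support is \(\{U: B(x,r/2)\subset U\}\), so any family of points lying within \(r/2\) of one of them has a simplex as the union of its supports.
\item \(\rho\) satisfies the triangle inequality only up to \(O(\delta)\), because boundary Gromov products are only \(2\delta\)-ultrametric. So obtain covers of \(\mathrm{Co}(Z)\) by pulling back covers of \(X\) through \(\Phi\); these have Lebesgue number \(r-C\) and mesh \(D+C\). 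Note that \(t\mapsto\rho((\nu,t),y)\) is \(1\)-Lipschitz, so each \(s\mapsto\varphi(\nu,T+s\Lambda)\) is continuous. Working on the cone rather than in \(X\) thus sidesteps the fact that quasi-geodesic rays need not be continuous.
\end{enumerate}

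With these changes, set \(F(\xi,s)=\sum_\nu\psi_\nu(\xi)\,\varphi(\nu,T+s\Lambda)\) and pull back the open vertex stars under \(F\). This gives finite open covers of \(Z\times[0,1]\) of arbitrarily small mesh and multiplicity \(\le n+1\). Hence \(\dim(Z\times[0,1])\le n\), and you never need the dimension of a possibly locally infinite metric nerve. The classical equality \(\dim(Z\times[0,1])=\dim Z+1\) for compacta then finishes the argument.
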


To conclude \autoref{thm: asdim}, we now have reduced the problem to finding subspaces in \(\partial\C^\dagger(X)\) with arbitrarily large topological dimension. We remark that topological dimension is a homeomorphism invariance, see for example \cite[\S1.6]{engelking1978dimension}. So, it suffices to embed spaces of arbitrarily large topological dimension into \(\partial\C^\dagger(X)\). To this end, we recall the following topological interpretation of some boundary points on \(\partial \C^\dagger(X)\):
\begin{theorem}[Theorem~1.1, \cite{bowden2024boundary}]\label{thm: bnd}
Suppose that \(F\subset X\) is a finite collection of points, \(\rho\) a complete hyperbolic metric of finite area on \(X\setminus F\), and \(\lambda\subset X\setminus F\) a minimal geodesic lamination, which is not disjoint from any essential simple closed curve on \(X\setminus F\). Then \(\lambda\) determines a unique boundary point \(\xi_\lambda\in\partial \C^\dagger(X)\). Moreover, the stabiliser of \(\xi_\lambda\) consists exactly those homeomorphisms of \(X\) that preserves the lamination \(\lambda\) as a subset on \(X\).
\end{theorem}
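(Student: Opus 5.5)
The plan is to reduce everything to the surviving curve graph \(\C^s(X\setminus F)\) via \autoref{lem: dagger}, and then import the Klarreich-type description of the Gromov boundary there.

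\textbf{Step 1 (isometric model).} Every simple closed \(\rho\)-geodesic of \(X\setminus F\) that is essential in \(X\) is a vertex of \(\C^\dagger(X)\). Any two such geodesics are in minimal position on \(X\setminus F\). Hence, by \autoref{lem: dagger} applied with \(P=F\), the fine distance between them equals their distance in \(\C^s(X\setminus F)\). So the set \(\mathcal{G}_\rho\) of these geodesics spans a subset of \(\C^\dagger(X)\) that is isometric to the vertex set of \(\C^s(X\setminus F)\).

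\textbf{Step 2 (existence of \(\xi_\lambda\)).} Since \(\lambda\) is minimal and meets every essential simple closed curve of \(X\setminus F\), it is filling for the surviving graph. I would take \(\rho\)-geodesic curves \(\alpha_n\in\mathcal{G}_\rho\) converging to \(\lambda\) in the Hausdorff (or measure) topology. I would then show that \((\alpha_n\mid\alpha_m)_{\alpha_0}\to\infty\) in \(\C^s(X\setminus F)\). For this I would use the surviving-graph analogue of Klarreich's theorem, obtained from Hamenstädt-style train-track splitting sequences together with the uniform hyperbolicity of \(\C^s\) (\cite[\S5.2]{bowden2022quasi}). By Step~1 the same Gromov products hold in \(\C^\dagger(X)\), so \((\alpha_n)\) converges to a point \(\xi_\lambda\in\partial\C^\dagger(X)\). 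If \((\beta_n)\) is another approximating sequence, then \((\alpha_n\mid\beta_n)\to\infty\) by the same argument, which gives uniqueness. Independence from the choice of \(\rho\) follows because straightening curves with respect to another metric is an isotopy rel \(F\), and \autoref{lem: dagger} shows that such an isotopy preserves distances to a fixed finite family.

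\textbf{Step 3 (stabiliser).} If \(h(\lambda)=\lambda\), then \(h(\alpha_n)\) are curves on \(X\setminus h(F)\) approximating \(\lambda\) for the metric \(h_*\rho\). Combining Step~2 for \((h(F),h_*\rho)\) with the metric independence, I would add a common finite set \(F\cup h(F)\) and re-straighten; this yields \(h\xi_\lambda=\xi_\lambda\).

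For the converse, suppose \(h(\lambda)\neq\lambda\). I would choose a finite set \(F'\supset F\cup h(F)\) with extra punctures placed in complementary regions that separate \(\lambda\) from \(h(\lambda)\). Then \(\lambda\) and \(h(\lambda)\) become distinct filling laminations of \(X\setminus F'\), so they give distinct boundary points of \(\C^s(X\setminus F')\) and hence, by Step~1, of \(\C^\dagger(X)\). I expect this to be the main obstacle. Adding punctures may break minimality or the filling property, so one must check that \(\lambda\) remains filling in \(X\setminus F'\). One must also verify that the boundary point defined using \(F'\) agrees with the one defined using \(F\), that is, that the approximating geodesic sequences for the two puncture sets stay at bounded Gromov-product distance. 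My first attempt would be to compare the two sequences through the forgetful map \(\C^s(X\setminus F')\to\C^s(X\setminus F)\), which is coarsely Lipschitz, and to show that it sends filling sequences to filling sequences.
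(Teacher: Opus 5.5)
The paper does not prove this statement; it imports it from \cite{bowden2024boundary}. Your proposal therefore has to stand on its own, and it has a genuine gap exactly where sequences that are geodesic for \emph{different} data are compared.

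\autoref{lem: dagger} computes $d^\dagger$ only between curves that are simultaneously in minimal position relative to one finite set. Straightening a whole family with respect to $\rho'$ preserves distances \emph{within} that family, so each straightened sequence converges somewhere. But it gives no control on $d^\dagger(\alpha_n,\alpha_n')$, and hence none on whether the two limits agree. Worse, the claim your justification would establish is false: namely, that the limit is unchanged by straightening, i.e.\ by an isotopy rel $F$. Take $h\in\Homeo(X)$ supported in a small disc that avoids $F$ and meets $\lambda$, so that $h$ is isotopic to the identity rel $F$, with $h(\lambda)\neq\lambda$. For $\rho'=h_*\rho$, the $\rho'$-geodesic representative of $[\alpha_n]_{X\setminus F}$ is exactly $h(\alpha_n)$. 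Your independence would then give $h\xi_\lambda=\lim h(\alpha_n)=\xi_\lambda$, contradicting the stabiliser statement you are proving. It would also make the map in \autoref{prop: embed} constant once the discs $D_i$ avoid $F$. More directly, curves isotopic rel $F$ can be arbitrarily far apart in $\C^\dagger(X)$: iterate a point-push of a point $p\notin F$ along a filling loop and apply \autoref{lem: dagger} on $X\setminus(F\cup\{p\})$. The content of the theorem is that $\xi_\lambda$ depends on $\lambda$ as a \emph{subset} of $X$. That is precisely what the single-metric model $\C^s(X\setminus F)\hookrightarrow\C^\dagger(X)$ cannot detect.

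Consequently Step~3 is established in neither direction.
If $h(\lambda)=\lambda$, you must compare the $\rho$-geodesics $\alpha_n$ with the $h_*\rho$-geodesics $h(\alpha_n)$ on $X\setminus h(F)$. Here $h(F)\neq F$ genuinely occurs, e.g.\ when $h$ is supported in a complementary region of $\lambda$ and moves its puncture. Re-straightening on $X\setminus(F\cup h(F))$ replaces these curves by different ones and reintroduces the same uncontrolled comparison.
For the converse, beyond the obstacles you already list, realising $\lambda$ and $h(\lambda)$ by geodesics of one metric on $X\setminus F'$ again changes the subsets. Also, the forgetful map $\C^s(X\setminus F')\to\C^s(X\setminus F)$ corresponds to isotopy rel $F$, which, as above, is not distance-controlled in $\C^\dagger(X)$.

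What is missing is a genuinely fine ingredient, in two parts:
\begin{itemize}
\item a criterion showing that approximating sequences of the \emph{same subset} $\lambda$ share one limit, whenever each is geodesic for some data $(F,\rho)$ for which $\lambda$ is geodesic; this gives $\mathrm{Stab}(\lambda)\subset\mathrm{Stab}(\xi_\lambda)$;
\item a way to read $\lambda$ off $\xi_\lambda$, for instance that no sequence of curves converging to $\xi_\lambda$ can eventually avoid a fixed open set meeting $\lambda$; combined with the Hausdorff convergence $h^{\pm1}(\alpha_n)\to h^{\pm1}(\lambda)$, this yields the reverse inclusion.
\end{itemize}

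Finally, even existence in Step~2 rests on a Klarreich-type theorem for $\C^s(X\setminus F)$ that you assume rather than prove. When $|F|\geq 2$ it does not follow from Klarreich's theorem for $\C(X\setminus F)$. Complements of discs containing at least two points of $F$ are holes for $\C^s(X\setminus F)$. In the ordinary curve graph their projections are collapsed by the non-surviving boundary curve.
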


\begin{remark}
Although \autoref{thm: bnd} is originally stated for closed surfaces, for surfaces with boundary components, the same arguments apply and we can conclude the same result for the geometric interpretation of boundary points of the associated fine curve graph.
\end{remark}

The following construction is communicated to us by Frédéric Le Roux:
\begin{proposition}\label{prop: embed}
Let \(S\) be a compact connected oriented non-sporadic surface, possibly with boundary components. Then there is a homeomorphic embedding \([0,1]^n\hookrightarrow\partial \C^\dagger(S)\), for any positive integer \(n>0\).
\end{proposition}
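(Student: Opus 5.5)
We use \autoref{thm: bnd}: minimal filling laminations on punctured copies of \(S\) give boundary points, and two such points coincide exactly when the laminations coincide as subsets of \(S\). The idea is to produce an \(n\)-parameter family of laminations that vary continuously with the parameter and are pairwise distinct. Fix a finite set \(F\subset \inte(S)\), a hyperbolic metric \(\rho\) on \(S\setminus F\), and a minimal filling geodesic lamination \(\lambda\subset S\setminus F\), for instance the stable lamination of a pseudo-Anosov map of \(S\setminus F\) relative to \(F\). Because \(\lambda\) fills, it is not disjoint from any essential curve, so \(\xi_\lambda\in\partial\C^\dagger(S)\) is defined.

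Next choose \(n\) pairwise disjoint small closed discs \(D_1,\dots,D_n\subset S\setminus F\) meeting \(\lambda\). For each \(i\), take a continuous family of homeomorphisms \(h^i_{t}\), \(t\in[0,1]\), supported in \(D_i\) with \(h^i_0=\Id\), such that the sets \(h^i_t(\lambda\cap D_i)\) are pairwise distinct. For example, let \(h^i_t\) translate a tiny flow-box transverse to one leaf by an amount proportional to \(t\). Define
\[\Phi(t_1,\dots,t_n)\coloneqq h^1_{t_1}\circ\cdots\circ h^n_{t_n}\in\Homeo_0(S),\qquad \lambda_{\mathbf t}\coloneqq\Phi(\mathbf t)(\lambda).\]
Each \(\lambda_{\mathbf t}\) is a minimal filling lamination with respect to the metric \(\Phi(\mathbf t)_*\rho\) on \(S\setminus\Phi(\mathbf t)(F)=S\setminus F\), so it defines \(\xi_{\mathbf t}\coloneqq\Phi(\mathbf t)\cdot\xi_\lambda\). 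The supports are disjoint, so \(\lambda_{\mathbf t}=\lambda_{\mathbf s}\) forces \(h^i_{t_i}(\lambda\cap D_i)=h^i_{s_i}(\lambda\cap D_i)\) for every \(i\), hence \(\mathbf t=\mathbf s\). Injectivity of \(\mathbf t\mapsto\xi_{\mathbf t}\) then follows from the uniqueness clause of \autoref{thm: bnd}: \(\Phi(\mathbf s)^{-1}\Phi(\mathbf t)\) fixes \(\xi_\lambda\) exactly when it preserves \(\lambda\).

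Since \([0,1]^n\) is compact and \(\partial\C^\dagger(S)\) is Hausdorff (metrisable), it remains to show that \(\mathbf t\mapsto\xi_{\mathbf t}\) is continuous. This is the main obstacle. The map \(\mathbf t\mapsto\Phi(\mathbf t)\) is \(C^0\)-continuous, so it suffices to show that the orbit map \(g\mapsto g\cdot\xi_\lambda\) is continuous from \(\Homeo_0(S)\) with the \(C^0\) topology to \(\partial\C^\dagger(S)\). I would first try to cite the continuity of the \(\Homeo(S)\)-action on the boundary from \cite[Proposition~6.4]{long2025connected}, whose proof I expect to include joint continuity. Failing that, I would argue directly. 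Approximate \(\xi_\lambda\) by curves \(\gamma_k\) (for example \(\gamma_k=f^k\gamma\) for the pseudo-Anosov \(f\)) whose Gromov products \((\gamma_k\cdot\xi_\lambda)_o\) tend to infinity. For \(g\) \(C^0\)-close to \(g_0\), the curves \(g\gamma_k\) and \(g_0\gamma_k\) can be taken to cross transversely in a uniformly bounded number of points after a perturbation, or to be disjoint from a common nearby curve, so \autoref{prop: intersection} keeps \(d^\dagger(g\gamma_k,g_0\gamma_k)\) bounded independently of \(k\). A boundedly perturbed quasi-geodesic ray converges to the same boundary point with controlled Gromov products. Making this uniform in \(k\) is the delicate point. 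To handle it, I would use that \(g_0^{-1}g\) is a small homeomorphism isotopic to the identity relative to a fine puncture set, so that \autoref{lem: dagger} gives a bound in the surviving curve graph. Once continuity holds, a continuous injection from a compact space into a Hausdorff space is a homeomorphism onto its image, which gives the embedding \([0,1]^n\hookrightarrow\partial\C^\dagger(S)\).
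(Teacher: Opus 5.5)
Your proposal is correct and follows essentially the same route as the paper. Both arguments use commuting families of homeomorphisms supported in disjoint discs meeting \(\lambda\); the paper uses an Alexander-trick isotopy of a twist in each \(D_i\) where you use a flow-box translation. Both then get injectivity from the stabiliser clause of \autoref{thm: bnd}, continuity from the continuity of the \(\Homeo_0(S)\)-action on \(\partial\C^\dagger(S)\) cited from \cite[\S 6]{long2025connected}, and conclude with the compact-to-Hausdorff argument.

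Since the paper settles continuity by exactly that citation, your fallback sketch is unnecessary. Its claimed bound on \(d^\dagger(g\gamma_k,g_0\gamma_k)\) uniform in \(k\) does not follow from \(C^0\)-closeness. It is also not needed: for each neighbourhood of \(g_0\cdot\xi_\lambda\), a single fixed \(k\) suffices.
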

\begin{proof}
Let us fix a geodesic lamination \(\lambda\subset X\setminus F\) and let \(\xi_\lambda\in \C^\dagger(X)\) be its associated boundary point as in \autoref{thm: bnd}. Now, take \(n\) disjoint discs \(D_1,\dots,D_n\subset S\) with \(D_i\cap \lambda\neq \emptyset\). For each \(D_i\), consider a homeomorphism \(f_i\in \Homeo_0(S)\) such that \(\mathrm{supp}(f_i)\subset \inte(D_i)\) and that \(f_i(\lambda)\neq \lambda\). To be precise, up to a local homeomorphism, \(D_i\) can be taken as a unit disc in \(\R^2\) with \(\lambda\cap D_i\) identified with horizontal segments, so \(f_i\) can be taken as a Dehn twist along a simple closed curve \(\alpha\) in \(D_i\) such that \(\alpha\) is transverse to some leaves in \(\lambda\). Then Alexander's trick yields an isotopy \(f_i^t\in \Homeo_0(X)\) for \(t\in [0,1]\) such that \(\mathrm{supp}(f^t_i)\subset \inte(D_i)\) and \(f^t_i(\lambda)\neq f^s_i(\lambda)\neq \lambda\) for all \(0<s\neq t\leq 1\) with \(f^1_i= f_i\) and \(f_i^0=\Id\). As \(f^t_i\) and \(f^s_j\) have disjoint support for any \(0< s,t\leq 1\) and any \(i\neq j\), these two elements commute. Therefore, it yields a continuous embedding of \(p\colon [0,1]^n\hookrightarrow \Homeo_0(X) \) by
\[p\big((t_i)_{1\leq i\leq n}\big)\coloneqq \prod_{i=1}^n f_i^{t_i}\in \Homeo_0(S).\]
Note that \(p(x)(\lambda)\neq p(y)(\lambda)\) whenever \(x\neq y\). As \(\Homeo_0(X)\) acts on \(\partial \C^\dagger(X)\) continuously \cite[\S6]{long2025connected}, it turns out that by \([0,1]^n\ni x\mapsto p(x)(\lambda)\), following \autoref{thm: bnd}, we have built a continuous and injective map \([0,1]^n\hookrightarrow \partial\C^\dagger(X)\). As \([0,1]^n\) is a compact space and \(\partial\C^\dagger(X)\) is Hausdorff, this map is a homeomorphism upon its image.
\end{proof}

With Le Roux's construction, we can now conclude \autoref{thm: asdim}.

\begin{proof}[Proof of \autoref{thm: asdim}]
Let \(Z_n\subset \partial\C^\dagger(S)\) be the image of \([0,1]^n\) under the homeomorphic embedding constructed in \autoref{prop: embed}. As \(\dim(Z_n)=n\), by \autoref{prop: bnd dim}, we can conclude that \(\asdim\big(\C^\dagger(S)\big)\geq n+1\). But \(n>0\) can be chosen arbitrarily large, so \(\asdim\big(\C^\dagger(S)\big)=\infty\).
\end{proof}

Unlike the construction in \cite{bestvina2015constructing}, as \(\C^\dagger(X)\) is quasi-isometrically embedded in \(\CY\) (\autoref{prop: fine curve graph bi-lip}), we thus have:
\begin{corollary}\label{cor: CY infinite asdim}
The graph \(\CY\) constructed above has infinite asymptotic dimension. \QEDD
\end{corollary}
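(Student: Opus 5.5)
The plan is to deduce this exactly as \autoref{thm: asdim} is deduced, transported through the embedding $\iota_X$. Fix any $X\in\Y$. If $X$ is non-sporadic, \autoref{prop: fine curve graph bi-lip} (equivalently \autoref{thm: CY is hyperbolic}) says that $\iota_X\colon \C^\dagger(X)\hookrightarrow\CY$ is bi-Lipschitz with a constant independent of $X$. In particular it is a quasi-isometric embedding. Asymptotic dimension is monotone under coarse (hence quasi-isometric) embeddings, so $\asdim(\CY)\geq \asdim(\C^\dagger(X))$. By \autoref{thm: asdim} applied to the compact non-sporadic surface $X$ (which may have boundary; \autoref{prop: embed} is stated for that case), $\asdim(\C^\dagger(X))=\infty$. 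Hence $\asdim(\CY)=\infty$.

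The one point to make explicit is monotonicity. A quasi-isometric embedding $f\colon A\to B$ is a quasi-isometry onto its image $f(A)$. So $\asdim(A)=\asdim(f(A))$ by quasi-isometry invariance, which the paper cites from \cite{bell2008asymptotic}. Also $\asdim(f(A))\leq\asdim(B)$, since restricting uniformly bounded covers of $B$ of multiplicity at most $n+1$ to a subspace preserves both properties. Here $\C^\dagger(X)$ carries its own simplicial metric, and $\iota_X(\C^\dagger(X))$ carries the restriction of $d_{\CY}$. The bi-Lipschitz statement compares precisely these two metrics.

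An alternative route avoids subspace monotonicity and gives a more intrinsic proof. Since $\iota_X$ is a quasi-isometric embedding between geodesic hyperbolic spaces, it induces a topological embedding $\partial\C^\dagger(X)\hookrightarrow\partial\CY$. The cubes $Z_n$ from \autoref{prop: embed} therefore give compact subsets of $\partial\CY$ of dimension $n$. Then \autoref{prop: bnd dim} yields $\asdim(\CY)\geq n+1$ for all $n$. I would present the first argument and mention this second one only as a remark.

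For the case where $\Y$ consists of once-bordered tori, the same argument applies provided $\asdim(\C^\dagger(\Sigma))=\infty$. I would obtain this by rerunning \autoref{prop: embed}, using the boundary interpretation of \autoref{thm: bnd} for the torus version of the graph. This is the only step that is not purely formal; if it is not available, I would restrict the statement to non-sporadic $\Y$. I expect no real obstacle in the non-sporadic case, which is immediate from the cited results.
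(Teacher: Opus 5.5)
Your argument is correct and is essentially the paper's: the corollary is deduced from the bi-Lipschitz (hence quasi-isometric) embedding $\iota_X\colon\C^\dagger(X)\hookrightarrow\CY$ of \autoref{prop: fine curve graph bi-lip}, together with \autoref{thm: asdim} and monotonicity of asymptotic dimension under quasi-isometric embeddings. Your caveat about the once-bordered torus case is well placed, since \autoref{thm: asdim} is only proved for non-sporadic surfaces and the paper's own justification (in the introduction) likewise uses a non-sporadic $X\in\Y$.
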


\bibliographystyle{alphaurl}
\bibliography{references}

\noindent{\sc Yongsheng JIA}\\
\noindent{\sc Manchester University, Department of Mathematics, M13 9PL, Manchester, UK}

\noindent{\it Email address:} {\tt \href{mailto:manchesterjia1999@gmail.com}{manchesterjia1999@gmail.com}}

\vspace{1cm}

\noindent{\sc Yusen LONG}\\
\noindent{\sc Université Paris-Est Créteil, CNRS, LAMA UMR8050, F-94010 Créteil, France}

\noindent{\it Email address:} {\tt \href{mailto:yusen.long@u-pec.fr}{yusen.long@u-pec.fr}}

\end{document}